\documentclass[a4paper, 12pt]{article}
\usepackage{mathtext}
\usepackage{amsmath}
\usepackage{amsfonts}
\usepackage{amssymb}
\usepackage{mathrsfs}
\usepackage{amsthm}
\usepackage{titling}

\usepackage{float}
\usepackage{graphicx}
\PassOptionsToPackage{usenames,dvipsnames}{color}
\usepackage{color}
\usepackage[colorlinks=true, linkcolor=blue, citecolor=red, unicode=true, pageanchor=false,pdfstartview=FitH]{hyperref}

\DeclareMathOperator{\sign}{sign}
\DeclareMathOperator{\Ker}{Ker}
\newcommand{\df}{\buildrel\mathrm{def}\over=}
\newcommand{\Bell}{\boldsymbol{B}_{\eps}}
\newcommand{\BMO}{\mathrm{BMO}}
\newcommand{\eps}{\varepsilon}
\newcommand{\W}{\mathfrak{W}_{\eps_0}}
\newcommand{\Wone}{\mathfrak{W}_{1}}
\newcommand{\Drt}{D_{\mathrm{R}}}
\newcommand{\Dlt}{D_{\mathrm{L}}}

\newcommand{\Cii}[1]{_{{}_{\scriptstyle #1}}}
\newcommand{\cii}[1]{_{{}_{#1}}}
\newcommand{\BNorm}[1]{\|#1\|_{{}_{\BMO(I)}}}
\newcommand{\Av}[2]{\langle {#1}\rangle_{{}_{\scriptstyle #2}}}
\newcommand{\av}[2]{\langle {#1}\rangle_{{}_{#2}}}
\newcommand{\Ch}{\Omega_{\mathrm{ch}}}
\newcommand{\Bch}{B^{\mathrm{ch}}}
\newcommand{\Alv}{\Omega_{\mathrm{cup}}}

\newcommand{\Balv}{B^{\mathrm{cup}}}
\newcommand{\Rt}{\Omega_{\mathrm{R}}}
\newcommand{\Lt}{\Omega_{\mathrm{L}}}
\newcommand{\LtRt}{\Omega_{\mathrm{LR}}}
\newcommand{\RtLt}{\Omega_{\mathrm{RL}}}
\newcommand{\Brt}{B^{\mathrm{R}}}
\newcommand{\Blt}{B^{\mathrm{L}}}
\newcommand{\Bltrt}{B^{\mathrm{LR}}}
\newcommand{\Brtlt}{B^{\mathrm{RL}}}
\newcommand{\Ang}{\Omega_{\mathrm{ang}}}
\newcommand{\Bang}{B^{\mathrm{ang}}}
\newcommand{\RTroll}{\Omega_{\mathrm{tr},\mathrm{R}}}
\newcommand{\LTroll}{\Omega_{\mathrm{tr},\mathrm{L}}}
\newcommand{\RtRt}{\Omega_{\mathrm{RR}}}
\newcommand{\LtLt}{\Omega_{\mathrm{LL}}}
\newcommand{\Brtrt}{B^{\mathrm{RR}}}
\newcommand{\Bltlt}{B^{\mathrm{LL}}}
\newcommand{\Brtroll}{B^{\mathrm{tr},\mathrm{R}}}
\newcommand{\Bltroll}{B^{\mathrm{tr},\mathrm{L}}}
\newcommand{\ex}{\varphi}
\newcommand{\dc}{\gamma}
\newcommand{\mrt}{m\cii{{\mathrm{R}}}}
\newcommand{\mlt}{m\cii{{\mathrm{L}}}}
\tolerance = 400

\theoremstyle{plain}
\theoremstyle{plain}\newtheorem{Le}{Lemma}[section]
\theoremstyle{definition}\newtheorem{Def}[Le]{Definition}
\theoremstyle{plain}\newtheorem{St}[Le]{Statement}
\theoremstyle{plain}\newtheorem{Th}[Le]{Theorem}
\theoremstyle{plain}\newtheorem{Cor}[Le]{Corollary}
\theoremstyle{plain}\newtheorem{Prop}[Le]{Proposition}
\theoremstyle{remark}\newtheorem{Rem}[Le]{Remark}

\numberwithin{equation}{section}

\author{Paata~Ivanisvili\thanks{Supported by Chebyshev Laboratory (SPbU), RF Government grant no. 11.G34.31.0026.}
\and Nikolay~N.~Osipov\thanksmark{1}\thanksgap{1ex}\thanks{Supported by RFBR, grant no. 11-01-00526.}\thanksgap{1ex}\thanks{Supported by Rokhlin grant.}
\and Dmitriy~M.~Stolyarov\thanksmark{1}\thanksgap{1ex}\thanksmark{2}
\and Vasily~I.~Vasyunin\thanks{Supported by RFBR, grant no. 11-01-00584.} 
\and Pavel~B.~Zatitskiy\thanksmark{1}\thanksgap{1ex}\thanksmark{3}}
\title{\vskip-2cm Bellman function for extremal problems in $\BMO$}

%
%





\begin{document}
\maketitle
{\small
\begin{center}
\vskip-1cm
St.~Petersburg Department 
of Steklov Mathematical Institute RAS,\\Fontanka 27, St.~Petersburg, 
Russia
\end{center}
\begin{center}
Chebyshev Laboratory (SPbU), 14th Line 29B, Vasilyevsky Island, St.~Petersburg, Russia
\end{center}
\begin{center}
Saint Petersburg State University, Universitetsky prospekt 28,\\Peterhof, St.~Petersburg, Russia.
\end{center}
}
\begin{center}
{\em
ivanishvili.paata@gmail.com\\
nicknick@pdmi.ras.ru\\
dms239@mail.ru\\
vasyunin@pdmi.ras.ru\\
paxa239@yandex.ru 
} 
\end{center}
\begin{abstract}
In this paper we develop the method of finding sharp estimates by using 
a Bellman function. In such a form the method appears in the proofs of
the classical John--Nirenberg inequality and $L^p$ estimations of
$\BMO$ functions. In the present paper we elaborate a method of
solving the boundary value problem for the homogeneous Monge--Amp\`ere
equation in a parabolic strip for sufficiently smooth boundary conditions.
In such a way, we have obtained an algorithm for constructing an exact Bellman
function for a large class of integral functionals on the $\BMO$ space.
\end{abstract}
\newpage
\tableofcontents

\section{History of the problem and description of our results}
\subsection{History and formulation of the problem}
	We consider extremal problems for integral functionals on the $\BMO$ space that is defined on some interval $I\subset\mathbb{R}$.
	First, we introduce some notation. By $I$ and $J$ we always denote intervals on~$\mathbb{R}$. By $\Av{\varphi}{J}$ we denote
	the average of a function~$\varphi$ over an interval~$J$:
  $$
  	\Av{\varphi}{J} \df \frac{1}{|J|}\int\limits_{J}\varphi,
  $$
	where $|J|$ is the length of the interval. We consider the $\BMO$ space endowed with the quadratic 
	norm\footnote{We call the expression~$\BNorm{\varphi}$ a norm, although we must
	factorize by the constant functions in order to obtain a normed space.}:
	$$
		\BMO(I) \df \Big\{\varphi \in L^1(I)\mid\; 
		\BNorm{\varphi}^2 \!\!\df\; \sup_{J\subset I}\Av{|\varphi-\Av{\varphi}{J}|^2}{J} < \infty\Big\}.
	$$	
	Details on $\BMO$ can be found in~\cite{Koosis} or~\cite{Stein}.	
	By $\BMO_\eps(I)$ we denote the ball of radius~$\eps$ in this space:
	$$
		\BMO_{\eps}(I) \df \big\{\varphi \in \BMO(I) \mid\; \BNorm{\varphi} \le \eps\big\}.
	$$

	Now we consider several well-known inequalities for functions in $\BMO(I)$.
	First, there is a double estimate claiming the equivalence of any $p$-norm (${0\!<\!p\!<\!\infty}$) and the initial quadratic norm:
	\begin{equation}\label{e13}
		c_p\BNorm{\varphi} \le \sup_{J\in I}\Av{|\varphi-\Av{\varphi}{J}|^p}{J}^{1/p} \le C_p\BNorm{\varphi}. 
	\end{equation}
	Second, the weak-form John--Nirenberg inequality claims that the measure of the set where 
	some function ${\varphi \in \BMO(I)}$ deviates from its average by more than a certain value $\lambda >0$, decreases exponentially in $\lambda$:
	\begin{equation}\label{e14}
		\frac{1}{|I|}\big|\big\{t\in I\mid\; |\varphi(t)-\Av{\varphi}{I}| \geq \lambda\big\}\big|\le 
		c_1 e^{-c_2\lambda/\BNorm{\varphi}}.
	\end{equation}
	And the third inequality can be obtained from the previous one by integration. It is called the integral 
	John--Nirenberg inequality and may be treated as the reverse Jensen inequality for functions 
	in $\BMO_\eps(I)$ and the exponent. 
	Namely, there exist a number $\eps_0 > 0$ and a positive function $C(\eps)$,~${0<\eps<\eps_0}$, such that 
	\begin{equation}\label{e15}
		\Av{e^{\varphi}}{I} \le C(\eps) e^{\av{\varphi}{I}}
	\end{equation}
	for all $\varphi \in \BMO_{\eps}(I)$.
	
	There exist various proofs of these inequalities. For example, in Koosis' book~\cite{Koosis}, Garnett's martingale proof is presented.
	In Stein's book~\cite{Stein}, the proof, based on the duality of $\BMO$ and $H^1$, can be found. We are interested in 	sharp constants in
	inequalities of this kind. One of the methods that are employed to obtain sharp constants, is called the Bellman 
	function method.	
	The history of this method can be found, e.g. in~\cite{NTV}.
	
	Now, we consider the following Bellman function:	
	\begin{equation}\label{e11}
		\Bell(x_1,x_2;\,f) \;\df\!\! 
		\sup_{\varphi \in \BMO_{\eps}(I)}\big\{\Av{f\circ\varphi}{I} \mid\; \Av{\varphi}{I} = x_1,\Av{\varphi^2}{I} = x_2\big\},
	\end{equation}
	where $f$ is some function on $\mathbb{R}$ (we postpone the discussion of the class that~$f$ may belong to). 
	We often omit~$f$ in the notation and merge two variables into one, i.e. we write
	$\Bell(x_1,x_2)$, or $\Bell(x ;\,f)$, or simply $\Bell(x)$, where $x = (x_1,x_2)$.
	
	There are two points worth noting. First, $\Bell$ does not depend on the interval~$I$
	participating in the definition above. 
	Second, if we replace supremum by infimum in~(\ref{e11}), we will obtain 
	the function $-\Bell(x_1,x_2;\,-f)$. In the beginning of Section~\ref{s21}, all this will be discussed in detail.
	
	If we set $f(u)=|u|^p$, then after obtaining analytical expressions for $\Bell(x;\,f)$ and 
	$-\Bell(x;\,-f)$,
	we will get estimate~(\ref{e13}) with the sharp constants $c_p$ and $C_p$ as a corollary. All this was  done 
	in~\cite{SlVa2}.	
	Setting $f(u) = \chi\cii{(-\infty,-\lambda]\cup[\lambda,\infty)}(u)$, we obtain the Bellman function that gives us
	the sharp constants for the weak John--Nirenberg inequality (see~(\ref{e14})). This function was found in~\cite{Va}.	
	Finally, setting $f(u) = e^u$, we obtain the Bellman function for the integral John--Nirenberg inequality 
	(see~(\ref{e15})). The analytical expression for this function was found in~\cite{Va2} and~\cite{SlVa};
	the sharp constants $\eps_0=1$ and $C(\eps) = e^{-\eps}(1-\eps)^{-1}$
	were obtained as a corollary.
	
	In this paper, we construct the function $\Bell(x_1,x_2;\,f)$ not for a function $f$ fixed, but for 
	some wide class of functions, which is described in the next section.

\subsection{Description of our results}
	We will see later that in the formulas for~$\Bell$ the integrals of the following expressions participate:  
	$$
		f^{(r)}(t)e^{\pm {t}/{\eps}},\quad r=0,1,2,3.
	$$
	Therefore, the following space is required:
	$$
		\W \df C^2(\mathbb{R}) \cap W_3^1(\mathbb{R},w_{\eps_0}),
	$$
	where $\eps_0>0$ and $w_{\eps_0}(t) \df e^{-{|t|}/{\eps_0}}$.
	The space on the right of the intersection sign is a weighted Sobolev space. Functions in this space, together with their first 
	three derivatives, are 
	integrable with the weight~$w_{\eps_0}$. We note that~$\W$ is defined as an intersection of a set of functions and a set of equivalence 
	classes. But this definition becomes reasonable if we read it left to right: 
	if a function~$f$ belongs to~$\W$, then it is twice continuously differentiable and $f\in W_3^1(\mathbb{R},w_{\eps_0})$. 
	
	Also, we will see that the behavior of~$\Bell$ depends strongly on the sign of $f'''$.
	We introduce a subset	$\W^N\subset\W$ of functions
we deal with. Any function of this class has  $2N+1$ points  
$$
-\infty\le c_0<v_1<c_1<v_2<\ldots<v_N<c_N\le+\infty
$$
on the extended real line such that
\begin{enumerate}
	\item[\textup{1)}] $f'''>0$ a.e. on $(v_k,c_k)$ and on $(-\infty,c_{0})$. Also,  
$f'''<0$  a.e. on $(c_k,v_{k+1})$ and on $(c_{N},\infty)$;
	\item[\textup{2)}] $|c_k-v_j| \ge 2\eps_{0}$.
\end{enumerate}
We build the function $\Bell(x;\,f)$ for $f \in \W^N$ and $\eps<\eps_0$.
		
	It is worth mentioning that not all the functions listed in the previous section belong to $\W^{N}$ or even to $\W$. For example, the 
	function $|\cdot|^p$ with $p<2$ 
	and the function $\chi\cii{(-\infty,-\lambda]\cup[\lambda,\infty)}(\cdot)$ are not smooth enough (although, if $p>2$, the function $|\cdot|^p$ 
	satisfies all the conditions required). Moreover, by the first 
	point of our assumptions, $f'''\ne 0$ a.e., so $\W^{N}$ does not contain
	functions quadratic on intervals of positive measure. 
	All the restrictions imposed on $f$ are technical,
	and we will lift most of them in future papers (see Chapter~\ref{s7}). 
		
	Next, consider the parabolic strip (see Figure~\ref{fig:pp}):
	\begin{equation}\label{e12}
		\Omega_\eps \df \big\{(x_1, x_2) \in \mathbb{R}^2\mid\; x_1^2 \le x_2 \le x_1^2+\eps^2\big\}.
	\end{equation}

\begin{figure}[ht]
\begin{center}
\includegraphics{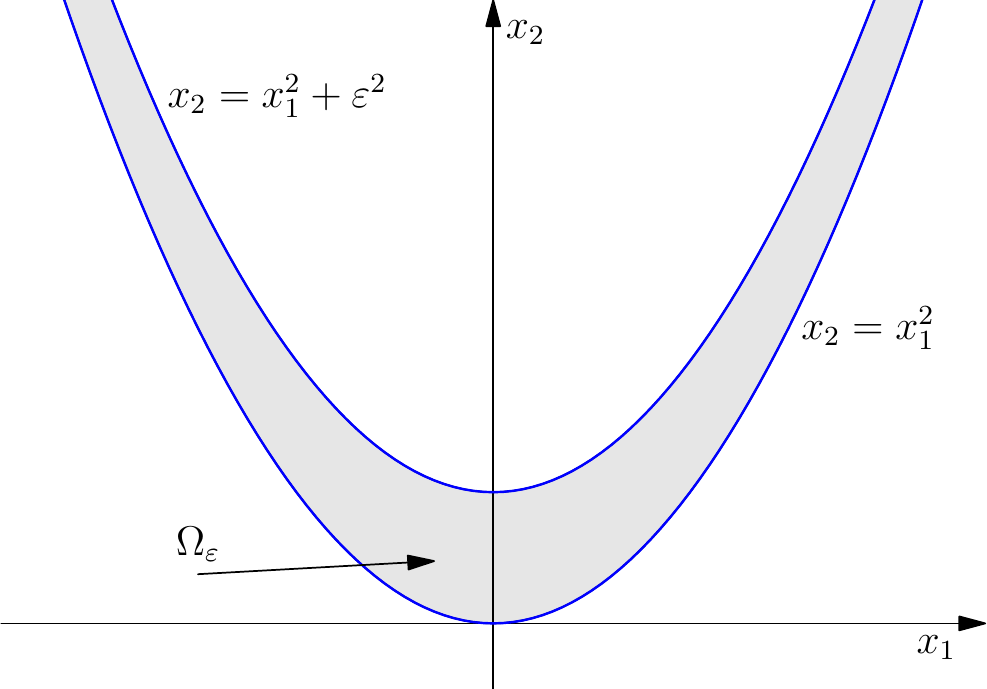}
\caption{The parabolic strip $\Omega_{\varepsilon}$.}
\label{fig:pp}
\end{center}
\end{figure}

	It is easy to prove (and this will be done in the beginning of the next chapter) that $\Omega_\eps$ is the domain of~$\Bell$ (in the sense that
	$\Omega_\eps$ consists of all the points $(x_1,x_2)$ such that the supremum in~(\ref{e11}) is taken over a nonempty set for them), and that
	$\Bell$ satisfies the boundary condition $\Bell(x_1,x_1^2) = f(x_1)$ on the lower parabola. 	
	
	We will also see that $\Bell$ is \emph{locally concave}, i.e. concave on every convex subset of 
	$\Omega_\eps$.
	We give the definition of the local concavity in another form that is more suitable for our purposes.
	\begin{Def}\label{D1}
		A function $G$, defined on some set ${\Omega \subset \mathbb{R}^n}$, is called locally concave in $\Omega$ 
		if the inequality $$G(\alpha_{-}x^- + \alpha_{+}x^+) \ge \alpha_-G(x^-) + \alpha_+G(x^+)$$ is fulfilled 
		for every straight-line 
		segment $[x^-,x^+]\subset\Omega$
		and every pair of numbers ${\alpha_-,\alpha_+ \ge 0}$ such that
		$\alpha_- + \alpha_+ = 1$.
	\end{Def}			 
	By $\Lambda_{\eps,f}$ we denote the class of continuous functions that are locally concave in $\Omega_\eps$ and 
	satisfy the boundary condition mentioned above:
	$$
		\Lambda_{\eps,f} \df \big\{G\in C(\Omega_\eps)\mid\; \mbox{$G$ is locally concave};\;
		G(u,u^2)=f(u)\;\forall\, u\in\mathbb{R}\big\}.
	$$		
	Now we are ready to describe our results. 
	
	{Suppose $f\in \W^{N}$, where $\eps_0>0$ and $N\in\mathbb{Z}_+$.}
	\begin{enumerate}
	\item[\textup{(a)}]
				\textit{For $0<\eps<\eps_0$\textup, the function $\Bell(x;\,f)$ belongs to $\Lambda_{\eps,f}$. Moreover\textup,}
				$$
					\Bell(x;\,f) = \inf\limits_{G\in\Lambda_{\eps,f}}G(x);
				$$
	\item[\textup{(b)}]
				\textit{For each $\eps$\textup{,} $0<\eps<\eps_0$\textup, we construct 
				an expression for $\Bell$ in terms of~$f$.}
		\end{enumerate}

	Statement~(a) means that the problem of finding the Bellman function~$\Bell$ can be reformulated in geometric terms: it is equivalent to the problem 
	of finding the minimal locally concave function in $\Omega_\eps$ that  satisfies a certain boundary condition. 
	We believe that this remains true in a 
	more general setting without 
	any assumptions about the sign of $f'''$. Unfortunately, we do not know how to prove the local concavity 
	of~$\Bell$ directly.
	The fact that~$\Bell$ is locally concave will follow from
	an explicit expression for this function (and the restrictions on $f'''$ are required in order to find 
	this expression). In the next chapter, we will discuss this problem in more detail.
	
	Concerning~(b), by \emph{an expression in terms of~$f$} we mean a rather complicated construction, 
	which consists of various integral and differential transformations of~$f$. 
	Roots of some equations that cannot be solved in elementary functions also participate.
	We are going to find such an expression employing the vast theory, which is continued to be developed in this 
	article. 
	Using this theory, we will solve the homogeneous Monge--Amp\`ere
	equation $B_{x_1x_1}B_{x_2x_2} - B_{x_1x_2}^2 = 0$ (this identity, which means that the Hessian $\frac{d^2\!B}{dx^2}$ 
	is degenerate, allows
	us to consider~$B$ as a Bellman function candidate). After that we will find
	functions on which the supremum in the definition of the Bellman function is attained (after we get such 
	functions, we will be able to prove that the candidate~$B$ coincides with the true Bellman function). 
	The development of these ideas is the main purpose of
	the paper; statements~(a) and~(b) are corollaries of our results.  
	
	\section{General principles}
	Throughout this chapter, we assume that $0<\eps<\eps_0$, $f \in \W$, and $\Bell$ is the Bellman function defined by~(\ref{e11}). 
	In Section~\ref{s21}, we will prove that 
	the domain of~$\Bell$ is~$\Omega_\eps$ and obtain the boundary condition for~$\Bell$ on the lower parabola $x_2= x_1^2$. 
	We will also explain why the 
	assumption of the 
	local concavity of~$\Bell$ is reasonable (but the fact that $\Bell$ is, indeed, locally concave will become 
	clear only after we find an explicit expression for~$\Bell$; for
	this, we will employ the additional restriction $f\in\W^{N}$).
	In Section~\ref{s22}, we will prove that every locally concave function with the same domain and boundary condition as $\Bell$, is pointwise greater 
	than $\Bell$. Thus, we must find a minimal locally concave function on~$\Omega_\eps$ satisfying some boundary condition.
	In Section~\ref{s23}, we will describe a general method of finding such functions that is based on solving a homogeneous Monge--Amp\`ere equation. 
	A solution of such an equation may be considered as a Bellman function candidate. 	
	But to ensure that this candidate is, indeed, the Bellman function required, we must build for each 
	point $x\in\Omega_\eps$ \emph{an optimizer}. An optimizer is a function $\ex\in\BMO_\eps(I)$ such that the supremum from
	definition~(\ref{e11}) of $\Bell$ is attained on it.
	In Section~\ref{s24}, we will present some general considerations on how to build optimizers. 
	
	\subsection{Main properties of  \texorpdfstring{function~$\Bell$}{Bellman function}}\label{s21}
	\paragraph{Preliminaries.}
	First, using linear transformation of one interval into another, we obtain the following fact.
	\begin{Rem}
		The function~$\Bell$ does not depend on the interval~$I$ participating in its definition.
	\end{Rem}	
	Second, if we need the lower estimate, we can replace supremum by infimum in~(\ref{e11}). Instead of this, we can 
	solve the supremum problem for the boundary function~$-f$.
	\begin{Rem}\label{rem1}
		The function~$-\Bell(x_1,x_2;\,-f)$ coincides with 
		\begin{equation*}
			\Bell^{\min}(x_1,x_2;\,f) \; \df\!\! 
			\inf_{\varphi \in \BMO_{\eps}(I)}\big\{\Av{f\circ\varphi}{I} \mid\; \Av{\varphi}{I} = x_1,\Av{\varphi^2}{I} = x_2\big\}.
		\end{equation*}
	\end{Rem}	
	Third, if two extremal problems correspond to boundary functions~$f$ such that their difference is a quadratic polynomial, 
	then this problems are, in fact, equivalent.
	Namely, the definition of the Bellman function and the linearity of averages imply the following fact.
	\begin{Rem}\label{rem2}
   For any real numbers $a$, $b$, $c$, and $d$, we have
   $$
   	\Bell(x_{1},x_{2};\,af(t)+bt^{2}+ct+d) \\
   	= |a|\Bell(x_{1},x_{2};\,(\sign{a})f(t))+bx_{2}+cx_{1}+d.
   $$
  \end{Rem}  
  Thus, if we know the Bellman function for $(\sign{a}) f(t)$, then we can easily construct it for
  $af(t)+bt^{2}+ct+d$. Similarly, we can make a linear change of variable in the boundary condition.
	\begin{Rem}\label{remm2}
	For any real numbers $\alpha$ and $\beta$, we have
	$$
	\boldsymbol{B}_{\eps}(x_{1},x_{2};\,f(\alpha t +\beta))=\boldsymbol{B}_{|\alpha|\eps}(\alpha x_{1}+\beta, \alpha^{2}x_{2}+2\alpha \beta   x_{1}+\beta^{2}; \, f(t)).
	$$
	\end{Rem}
		
	\paragraph{The domain of $\Bell$ and the boundary condition.}
	\begin{St}\label{S1}
		The function $\Bell(x;\,f)$ has the following properties\textup{:}
		\begin{enumerate}
			\item[\textup{(i)}]
				its domain is the parabolic strip~$\Omega_\eps$ defined by the formula~\textup{(\ref{e12}),} 
				i.e.\textup, the set over which the supremum in~\textup{(\ref{e11})} is taken\textup, is non-empty for those
				and only those $x=(x_1,x_2)$ that lie in $\Omega_\eps$				
				\textup{(}however\textup, the function~$\Bell$ can take the value $+\infty$ there\textup{);}
			\item[\textup{(ii)}]
				the boundary condition $\Bell(x_1,x_1^2) = f(x_1)$ is satisfied\textup{.}
		\end{enumerate}
	\end{St}
	\begin{proof}
		Consider statement~(i). It is easy to see that the estimate ${x_1^2 \le x_2}$ is fulfilled due 
		to the Cauchy--Schwarz inequality and the estimate ${x_2 \le x_1^2+\eps^2}$ follows from the requirement
		${\varphi \in \BMO_{\eps}(I)}$. Therefore, $\Omega_{\eps}$ contains the domain of $\Bell$. On the other hand, 
		if the estimate ${x_1^2 \le x_2 \le x_1^2+\eps^2}$ is fulfilled, we can easily construct a function 
		${\varphi \in \BMO_{\eps}(I)}$ whose average equals $x_1$ and square deviation equals $\sqrt{x_2 - x_1^2}$.
		For example, we may take the function
		$$
			\varphi(t)=\begin{cases}
					\textstyle x_1+\sqrt{x_2 - x_1^2}, &t\in I_-;\\
					\textstyle x_1-\sqrt{x_2 - x_1^2}, &t\in I_+,
				\end{cases} 
		$$
		where $I_-$ and $I_+$ are the left and right halves of $I$, respectively. This means that 
		$\Omega_{\eps}$ is contained in the domain of $\Bell$.
	
		Statement~(ii) is trivial. Indeed, the identity $x_2 = x_1^2$ means that all the functions $\varphi$ over which
		the supremum is taken, do not deviate from
		their average $x_1$. Therefore, the set of such functions consists of a single element $\varphi(t)\equiv x_1$. This
		implies the condition required.
	\end{proof}
	
	\paragraph{Local concavity.}
	Now we discuss the concavity of $\Bell$. Let $x^\pm \in \Omega_\eps$, and let ${\alpha_{\pm}}$ be numbers such that  
	$\alpha_{\pm}\ge 0$, $\alpha_- + \alpha_+ = 1$,
	and the point $\alpha_-x^- + \alpha_+x^+$ gets into $\Omega_\eps$. 
	We split~$I$ into two subintervals~$I_-$ and~$I_+$ such that $|I_{\pm}| = \alpha_{\pm}|I|$.
	Further, we choose two functions $\varphi\Cii{\pm} \in \BMO_{\eps}(I_{\pm})$ such that 
	${\big(\Av{\varphi\Cii{\pm}}{I_{\pm}}, \Av{\varphi\Cii{\pm}^2}{I_{\pm}}\big) = x^{\pm}}$ and these functions
	almost realize the supremum on the corresponding intervals. The latter means that 
	$$
		\langle{f(\varphi\Cii\pm)}\rangle\Cii{I_{\pm}} \ge \Bell(x^{\pm}) - \eta,
	$$
	where $\eta > 0$ is a small value. 
	Consider the function
	$$
		\varphi(t)=
			\begin{cases}
				\varphi\Cii{-}(t),&t\in I_-;\\
				\varphi\Cii{+}(t),&t\in I_+.
			\end{cases}
	$$
	First, since the point 
	${\big(\Av{\varphi}{I}, \Av{\varphi^2}{I}\big)} = {\alpha_-x^- + \alpha_+x^+}$ gets into $\Omega_\eps$,
	we have $\Av{\varphi^2}{I}-\Av{\varphi}{I}^2 \le \eps^2$.
	Second, it is clear that $\varphi\in \BMO_\eps(I_-)\cap\BMO_\eps(I_+)$. 
	However, these conditions are not sufficient for the function $\varphi$ to get into $\BMO_\eps(I)$
	(it is worth mentioning that this problem does not arise in the case of the dyadic $\BMO$ space; see~\cite{SlVa}).	
	If we could for every $\eta$ choose functions $\varphi\Cii{\pm}$
	such that their concatenation~$\varphi$ gets into $\BMO_\eps(I)$, then the following inequalities
	would be fulfilled:
	\begin{multline*}
		\Bell(\alpha_-x^- + \alpha_+x^+) 
		\ge \Av{f(\varphi)}{I} = 
		\alpha_-\Av{f(\varphi\Cii{-})}{I_{-}}+\alpha_+\Av{f(\varphi\Cii{+})}{I_{+}} \\\ge
		\alpha_-\Bell(x^-)+\alpha_+\Bell(x^+)-\eta.
	\end{multline*}
	Letting $\eta \rightarrow 0$, we would get the concavity of $\Bell$. 
	But in the continuous case the method described above does not work, because $\varphi$ may lay outside $\BMO_\eps(I)$. 
	It turns out that the function~$\Bell$ is only locally concave. But this will be clear only after we
	construct an explicit expression for~$\Bell$. Nevertheless, the heuristic method that we will use to build a Bellman 
	candidate,
	is based on the fact that the local concavity condition is satisfied:
	 \begin{enumerate}
		\item[\textup{(iii)}]
			\emph{the function $\Bell$ is locally concave in the parabolic strip $\Omega_\eps$.}
	\end{enumerate}
	
\subsection{Locally concave majorants}\label{s22}	
	In this section, we prove that every function in $C(\Omega_\eps)$ with properties (ii) and (iii) (we recall
	that the set of such functions is denoted 
	by~$\Lambda_{\eps,f}$) majorizes~$\Bell$. Namely, we verify the following statement.
	\begin{St}\label{S2}
		Suppose $0<\eps<\eps_0$\textup, $f\in\W$\textup, and $G\in\Lambda_{\eps,f}$. 
		Then $\Bell(x;\,f) \le G(x)$ for all $x \in \Omega_\eps$.
	\end{St}
	In order to prove this statement, we need some preparation.

	\paragraph{Auxiliary lemmas.}
	First, we need the following geometric lemma, which was proved both in~\cite{Va2} and~\cite{SlVa}. 
	\begin{Le}\label{L1}
		Suppose $\eps_1 > \eps$. Then for any interval $I\subset\mathbb{R}$ and any function 
		$\varphi \in \BMO_\eps(I)$ there exists a partition $I={I_{-}\cup I_{+}}$ such that the line segment 
		with the endpoints $x^{\pm} = \big(\Av{\varphi}{I_{\pm}},\Av{\varphi^2}{I_{\pm}}\big)$ lies in
		$\Omega_{\eps_1}$ entirely. Moreover\textup, the parameters $\alpha_{\pm} = |I_\pm|/|I|$ can be chosen to be 
		separated from $0$ and $1$ uniformly in $I$ and $\varphi$.
	\end{Le}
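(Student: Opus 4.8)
The plan is to prove Lemma~\ref{L1} by a compactness/continuity argument based on the intermediate value theorem, applied to a one-parameter family of splittings of the interval $I$.

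\medskip

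\textbf{Setup of the splitting parameter.} Normalize $I=[0,1]$. For $\alpha\in[0,1]$ write $I_-(\alpha)=[0,\alpha]$ and $I_+(\alpha)=[\alpha,1]$, and set
$$
x^-(\alpha)=\big(\Av{\varphi}{I_-(\alpha)},\Av{\varphi^2}{I_-(\alpha)}\big),\qquad
x^+(\alpha)=\big(\Av{\varphi}{I_+(\alpha)},\Av{\varphi^2}{I_+(\alpha)}\big),
$$
with $x^-(\alpha)$ understood to be $(\varphi(0+),\varphi(0+)^2)$-type boundary behaviour in the degenerate limit (one handles $\alpha\to0,1$ by passing to the point $(\Av{\varphi}{I},\Av{\varphi^2}{I})$ of the whole interval, which lies in $\Omega_\eps\subset\Omega_{\eps_1}$). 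Both maps are continuous in $\alpha$ on $(0,1)$. The barycenter identity gives that the whole segment $[x^-(\alpha),x^+(\alpha)]$ always passes through the fixed point $x_I=(\Av{\varphi}{I},\Av{\varphi^2}{I})\in\Omega_\eps$, since $\alpha x^-(\alpha)+(1-\alpha)x^+(\alpha)=x_I$.

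\medskip

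\textbf{Reducing "segment in $\Omega_{\eps_1}$" to a scalar inequality.} A segment lies in $\Omega_{\eps_1}$ iff both its endpoints lie in $\Omega_{\eps_1}$ and it does not dip below the lower parabola $x_2=x_1^2$; but since $x_2\ge x_1^2$ automatically for every averaging point (Cauchy--Schwarz), and the lower parabola is convex, a chord between two points with $x_2\ge x_1^2$ stays above it. Hence it suffices to control the "height above the lower parabola" of the two endpoints, i.e. the quantities $d_\pm(\alpha):=\Av{\varphi^2}{I_\pm(\alpha)}-\Av{\varphi}{I_\pm(\alpha)}^2$, and ensure $\max(d_-(\alpha),d_+(\alpha))\le\eps_1^2$. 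Each $d_\pm(\alpha)\le\eps^2$ individually because $\varphi\in\BMO_\eps(I_\pm(\alpha))$ (subintervals inherit the $\BMO$ bound), so in fact $d_\pm(\alpha)\le\eps^2<\eps_1^2$ for \emph{every} $\alpha$. This already shows any interior $\alpha$ gives a segment in $\Omega_{\eps_1}$; the only remaining issue is the uniform separation of $\alpha_\pm$ from $0$ and $1$.

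\medskip

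\textbf{Getting the uniform separation.} The naive bound above lets $\alpha$ be arbitrary, but we must choose it \emph{bounded away} from $0,1$ by a constant depending only on $\eps,\eps_1$ (not on $I,\varphi$). Here is where the gap $\eps_1>\eps$ is used. Consider $\alpha=\tfrac12$. Then
$$
d_-(\tfrac12)+d_+(\tfrac12)=\Av{\varphi^2}{I}-\tfrac12\big(\Av{\varphi}{I_-}^2+\Av{\varphi}{I_+}^2\big),
$$
and expanding $\Av{\varphi^2}{I}=\tfrac12\Av{\varphi^2}{I_-}+\tfrac12\Av{\varphi^2}{I_+}$ shows the right-hand side equals $\tfrac12 d_-(\tfrac12)+\tfrac12 d_+(\tfrac12)+\tfrac14(\Av{\varphi}{I_-}-\Av{\varphi}{I_+})^2\le\eps^2+\tfrac14(\Av{\varphi}{I_-}-\Av{\varphi}{I_+})^2$; combined with the trivial $|\Av{\varphi}{I_-}-\Av{\varphi}{I_+}|^2\le 4\Av{\varphi^2}{I}-4\Av{\varphi}{I}^2\le 4\eps^2$... one sees the midpoint split need not work for $\eps_1$ close to $\eps$. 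So instead I would argue: if for some $\alpha$ the segment fails to lie in $\Omega_{\eps_1}$ it can only be because an endpoint is too high, i.e. $d_\pm(\alpha)>\eps_1^2$; but $d_\pm(\alpha)\le\eps^2$, contradiction — hence \emph{every} $\alpha\in(0,1)$ works, and we may simply take $\alpha=\tfrac12$, giving $\alpha_\pm=\tfrac12$, trivially separated. \emph{This is the point I expect to need the most care}: one must check that $\varphi\in\BMO_\eps(I)$ really does force $\varphi\in\BMO_\eps(J)$ for every subinterval $J\subset I$ — which is immediate from the definition, since the supremum in $\BNorm{\cdot}$ over subintervals of $J$ is a sub-supremum of that over subintervals of $I$ — and that the endpoints and the whole segment genuinely stay inside the closed strip including its lower boundary. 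The argument for the degenerate cases $\alpha\to 0$ or $1$ (the segment collapses to the single point $x_I\in\Omega_\eps$) should be spelled out so the partition into \emph{nondegenerate} halves is well-defined; choosing $\alpha=\tfrac12$ once and for all sidesteps this entirely, and yields the uniform constants $\alpha_\pm=\tfrac12$ claimed in the statement.
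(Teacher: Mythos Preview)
Your argument has a genuine gap at the step ``Reducing `segment in $\Omega_{\eps_1}$' to a scalar inequality.'' You assert that a segment lies in $\Omega_{\eps_1}$ once both endpoints lie in $\Omega_{\eps_1}$ and the segment stays above the lower parabola. This is false: the set $\{x_2\le x_1^2+\eps_1^2\}$ below the upper parabola is the \emph{complement} of a convex region, hence is not convex, so a chord between two points beneath the upper parabola can rise above it. Concretely, writing the endpoints as $(a,a^2+s)$ and $(b,b^2+t)$, the height of the chord above the lower parabola at parameter $\lambda$ is
\[
h(\lambda)=\lambda s+(1-\lambda)t+\lambda(1-\lambda)(a-b)^2,
\]
and $\max_\lambda h(\lambda)$ is \emph{not} controlled by $\max(s,t)$ alone: the term $\lambda(1-\lambda)(a-b)^2$ can push it above $\eps_1^2$. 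This extra term is the entire content of the lemma; the gap $\eps_1>\eps$ is needed precisely to absorb it, and only for a well-chosen $\alpha$.

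Taking $\alpha=\tfrac12$ does not work in general. For $\varphi(t)=\eps\log t$ on $[0,1]$ (which has $\BMO$-norm exactly $\eps$), the split at $\alpha=\tfrac12$ yields $d_-=\eps^2$, $d_+=(1-2(\log 2)^2)\eps^2\approx 0.04\,\eps^2$, and $(a-b)^2=(2\eps\log 2)^2\approx 1.92\,\eps^2$; maximizing $h(\lambda)$ gives a value $\approx 1.12\,\eps^2$. Thus for any $\eps_1$ with $\eps<\eps_1<\sqrt{1.12}\,\eps$ the midpoint chord leaves $\Omega_{\eps_1}$. The actual proof (which the paper defers to~\cite{Va2} and~\cite{SlVa}) must genuinely select $\alpha$ depending on $\varphi$ so that the chord is short enough or oriented well enough relative to the upper parabola; the uniform separation from $0$ and $1$ then comes from a quantitative continuity/compactness argument using $\eps_1-\eps>0$, not from a fixed universal choice like $\tfrac12$.
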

	\hskip-1.8pt We also need the following statement about truncations of functions in $\BMO(I)$.
	\begin{Le}\label{L2}
		Let $\varphi \in \BMO(I)$\textup, $c,d \in \mathbb{R}$\textup, and $c<d$.
		Let $\varphi_{c,d}$ be the truncation of $\varphi$\textup{:}
		$$
			\varphi_{c,d}(t) \df 
			\begin{cases}
				d, &\varphi(t) > d;\\
				\varphi(t), &c \le \varphi(t) \le d;\\
				c, &\varphi(t) < c.
			\end{cases}  
		$$
		Then $\Av{\varphi_{c,d}^2}{J} - \Av{\varphi_{c,d}}{J}^2 \le \Av{\varphi^2}{J} - \Av{\varphi}{J}^2$ for every
		interval $J \subset I$.
	\end{Le}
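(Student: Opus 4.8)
The plan is to reduce the inequality to two elementary facts: on a fixed interval the variance equals the best constant $L^2$-approximation error, and the truncation map is $1$-Lipschitz.

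First I would record the identity
$$
	\Av{\psi^2}{J}-\Av{\psi}{J}^2=\min_{a\in\mathbb{R}}\Av{(\psi-a)^2}{J},
$$
which holds for every $\psi\in L^2(J)$ and follows by expanding $\Av{(\psi-a)^2}{J}=\Av{\psi^2}{J}-2a\Av{\psi}{J}+a^2$ and minimizing this quadratic in $a$ (the minimum is attained at $a=\Av{\psi}{J}$). Since $c,d\in\mathbb{R}$, the truncation $\varphi_{c,d}$ is bounded by $\max\{|c|,|d|\}$, hence belongs to $L^2(J)$ for every $J\subset I$, so all the averages in the statement are finite and there is no integrability issue to worry about.

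Next, observe that $\varphi_{c,d}(t)=T(\varphi(t))$ with $T(u)\df\max\{c,\min\{d,u\}\}$, and that $T\colon\mathbb{R}\to[c,d]$ is nondecreasing and $1$-Lipschitz: it is a composition of the $1$-Lipschitz maps $u\mapsto\min\{d,u\}$ and $v\mapsto\max\{c,v\}$ (a short case check on the positions of $u,u'$ relative to $c$ and $d$ also gives $|T(u)-T(u')|\le|u-u'|$ directly). Then, setting $m=\Av{\varphi}{J}$ and applying the Lipschitz bound pointwise with $u=\varphi(t)$, $u'=m$, we get $|\varphi_{c,d}(t)-T(m)|\le|\varphi(t)-m|$ for all $t\in J$; averaging the square over $J$ and using the displayed identity (with the particular, possibly non-optimal, constant $a=T(m)$ on the left-hand side) yields
$$
	\Av{\varphi_{c,d}^2}{J}-\Av{\varphi_{c,d}}{J}^2\le\Av{(\varphi_{c,d}-T(m))^2}{J}\le\Av{(\varphi-m)^2}{J}=\Av{\varphi^2}{J}-\Av{\varphi}{J}^2,
$$
which is exactly the asserted inequality. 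Alternatively, one may express the variance on $J$ as $\tfrac12$ of the double average of $|\psi(s)-\psi(t)|^2$ over $J\times J$ and insert the Lipschitz bound inside that double integral, which dispenses with the minimizing constant altogether. I do not expect any real obstacle here: the only steps that require a line of justification are the variance-as-minimum identity and the $1$-Lipschitz property of $T$, both routine, and everything else is a direct averaging argument valid simultaneously for every $J\subset I$.
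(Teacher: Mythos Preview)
Your proof is correct. The variance-as-minimum identity combined with the $1$-Lipschitz property of the truncation map $T(u)=\max\{c,\min\{d,u\}\}$ gives the inequality cleanly, and your remark that $\varphi_{c,d}\in L^\infty(J)\subset L^2(J)$ takes care of any integrability concern on the left-hand side; on the right-hand side, note that $\varphi\in L^2(J)$ follows from the very definition of $\BMO(I)$ used in the paper (the quadratic oscillation is finite and $\varphi\in L^1$), so everything is well defined.

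As for comparison: the paper does not supply its own proof of this lemma; it merely cites \cite{SlVa2} and \cite{SlVa}. So there is nothing in the present paper to compare your argument against. Your approach is the standard and most economical one, and the alternative you mention (writing the variance as $\tfrac12\Av{|\psi(s)-\psi(t)|^2}{J\times J}$ and applying the Lipschitz bound inside the double integral) is an equally valid variant.
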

	A proof can be found in~\cite{SlVa2}, it is also contained implicitly in~\cite{SlVa}. This lemma immediately implies the following fact.
	\begin{Cor}
		If ${\varphi \in \BMO_\eps(I)}$\textup{,} then $\varphi_{c,d} \in \BMO_\eps(I)$.
	\end{Cor}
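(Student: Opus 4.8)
The plan is to read off $\varphi_{c,d}\in\BMO_\eps(I)$ directly from Lemma~\ref{L2}; the only genuine ingredient beyond that lemma is the elementary identity identifying the averaged square deviation with a variance. So first I would record that for any interval $J$ and any $g\in L^2(J)$,
$$
\Av{|g-\Av{g}{J}|^2}{J}=\Av{g^2}{J}-\Av{g}{J}^2,
$$
obtained by expanding the square and using $\Av{\Av{g}{J}\,g}{J}=\Av{g}{J}^2$. Consequently the quadratic $\BMO$ norm introduced at the start of the paper can be rewritten as
$$
\BNorm{\psi}^2=\sup_{J\subset I}\big(\Av{\psi^2}{J}-\Av{\psi}{J}^2\big),
$$
which is exactly the shape in which Lemma~\ref{L2} is phrased.

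Next I would fix $\varphi\in\BMO_\eps(I)$ and an arbitrary interval $J\subset I$, and chain Lemma~\ref{L2} with the bound $\BNorm{\varphi}\le\eps$:
$$
\Av{\varphi_{c,d}^2}{J}-\Av{\varphi_{c,d}}{J}^2\;\le\;\Av{\varphi^2}{J}-\Av{\varphi}{J}^2\;\le\;\BNorm{\varphi}^2\;\le\;\eps^2.
$$
Taking the supremum over all $J\subset I$ and invoking the reformulation of the norm above gives $\BNorm{\varphi_{c,d}}\le\eps$. Before concluding I would also note that $\varphi_{c,d}\in L^1(I)$, which is immediate since $\varphi_{c,d}$ differs from $\varphi$ only where $\varphi$ exits the band $[c,d]$, and there it is replaced by a bounded value; hence $\varphi_{c,d}\in\BMO_\eps(I)$.

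There is essentially no obstacle here: all the analytic content sits in Lemma~\ref{L2}, and the corollary is a one-line consequence once the averaged square deviation is recognized as $\Av{\psi^2}{J}-\Av{\psi}{J}^2$. The only thing to be a little careful about is matching this ``supremum of variances'' picture with the definition of $\BNorm{\cdot}$ given in the introduction; everything else is routine.
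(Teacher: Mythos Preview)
Your argument is correct and is exactly the immediate deduction the paper has in mind: the Corollary is stated without proof as an obvious consequence of Lemma~\ref{L2}, and your chain of inequalities together with the variance reformulation of $\BNorm{\cdot}$ is precisely that deduction.
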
 
		
	Now we discuss how a function $f \in \W$ and its first two derivatives behave at infinity.
	\begin{Le}\label{L3}
		If $f \in \W$\textup, then the following limit relations are fulfilled\textup{:}
		\begin{equation}\label{e25}
			f^{(r)}(u)e^{-|u|/\eps_0} \to 0\quad\mbox{as}\quad u\to\pm\infty\quad\mbox{for}\quad r = 0,1,2.
		\end{equation}
	\end{Le}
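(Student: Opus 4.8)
The plan is to bootstrap from the membership $f \in \W$, which by definition means $f \in C^2(\mathbb{R})$ and $f, f', f'', f''' \in L^1(\mathbb{R}, w_{\eps_0})$ with $w_{\eps_0}(t) = e^{-|t|/\eps_0}$. The cleanest route is to show first that $f''(u)e^{-|u|/\eps_0} \to 0$, then descend to $r=1$ and $r=0$ by integrating the derivative and using the case already in hand. So the first step is: consider, for $u > 0$, the quantity $f''(u)e^{-u/\eps_0}$ and write its derivative as $\big(f'''(u) - \tfrac{1}{\eps_0}f''(u)\big)e^{-u/\eps_0}$, which is integrable over $(0,\infty)$ because $f''' \in L^1(\mathbb{R},w_{\eps_0})$ and $f'' \in L^1(\mathbb{R},w_{\eps_0})$. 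Hence $f''(u)e^{-u/\eps_0}$ has a finite limit as $u \to +\infty$; since $f''e^{-u/\eps_0}$ is itself integrable on $(0,\infty)$ (again as $f'' \in L^1(w_{\eps_0})$), that limit must be $0$. The same argument on $(-\infty,0)$ with $e^{+u/\eps_0}$ handles $u \to -\infty$.

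Next I would descend one derivative at a time. For $r = 1$: the derivative of $f'(u)e^{-u/\eps_0}$ is $\big(f''(u) - \tfrac{1}{\eps_0}f'(u)\big)e^{-u/\eps_0}$, which is in $L^1(0,\infty)$ for the same reasons, so $f'(u)e^{-u/\eps_0}$ converges as $u \to +\infty$; and it is integrable on $(0,\infty)$, forcing the limit to be $0$. (Alternatively, one may avoid the explicit integrability-of-the-function step by noting that a monotone-in-the-tail-or-not argument plus the already-proven vanishing of $f''e^{-u/\eps_0}$ pins the limit, but the $L^1$-forces-zero trick is uniform and slick, so I would use it throughout.) For $r = 0$: the derivative of $f(u)e^{-u/\eps_0}$ is $\big(f'(u) - \tfrac{1}{\eps_0}f(u)\big)e^{-u/\eps_0} \in L^1(0,\infty)$, so $f(u)e^{-u/\eps_0}$ has a limit, and since $f e^{-u/\eps_0} \in L^1(0,\infty)$ the limit is $0$. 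Symmetrically at $-\infty$. This gives all three relations in (\ref{e25}).

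There is no serious obstacle here; the only point requiring a small amount of care is the logical structure of the "$L^1$ function with a limit must have limit $0$" step — one should phrase it as: if $g \in L^1(0,\infty)$ and $\lim_{u\to\infty} g(u)$ exists, then that limit is $0$ (otherwise $|g|$ would be bounded below by a positive constant near $+\infty$, contradicting integrability). One must be mildly attentive that the weight used on $(0,\infty)$ is $e^{-u/\eps_0}$ while on $(-\infty,0)$ it is $e^{u/\eps_0}$, i.e. in both cases $e^{-|u|/\eps_0}$, so the sign in the exponent flips in the product-rule computation; this changes $-\tfrac{1}{\eps_0}$ to $+\tfrac{1}{\eps_0}$ but not the integrability of any term. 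Finally, I would remark that this lemma is exactly what licenses the integration-by-parts manipulations of $\int f^{(r)}(t)e^{\pm t/\eps}\,dt$ for $\eps < \eps_0$ that appear in the construction of $\Bell$, since then $e^{\pm t/\eps}$ decays slower than $e^{\mp t/\eps_0}$ grows — wait, more precisely $e^{t/\eps}w_{\eps_0}(t) = e^{t(1/\eps - 1/\eps_0)} \cdot$ (sign-dependent) still ties back to the finiteness of the relevant integrals, which is why the hypothesis $\eps < \eps_0$ is imposed elsewhere; but for the present statement only $f \in \W$ is used.
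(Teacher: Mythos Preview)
Your proposal is correct and follows essentially the same approach as the paper's proof: compute the derivative of $f^{(r)}(u)e^{-|u|/\eps_0}$, observe it lies in $L^1$ by the weighted-Sobolev hypothesis, deduce that the limit exists, and then use integrability of $f^{(r)}e^{-|u|/\eps_0}$ itself to force the limit to vanish, descending from $r=2$ to $r=1$ to $r=0$. The only cosmetic difference is that the paper writes a single integral $\int_0^u(\cdot)'\,dt$ with a $\sign u$ factor rather than splitting into the two half-lines, and it handles $r=2$ explicitly before invoking ``similar reasoning'' for $r=1,0$.
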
	
	\begin{proof}
		\begin{align*}
			f''(u)e^{-{|u|}/{\eps_0}} - f''(0) &= 
			\int\limits_{0}^{u}\left(f''(t)e^{-|t|/\varepsilon_{0}}\right)'\,dt\\ 
			&=
			\int\limits_{0}^{u}f'''(t)e^{-|t|/\eps_0}\,dt - \eps_0^{-1}\sign{u}\int\limits_{0}^{u}f''(t)e^{-|t|/\eps_0}\,dt.
		\end{align*}
		Since $f \in \W$, we have the existence of the limits
	  $$
			\lim_{u \to \pm\infty}f''(u)e^{-|u|/\eps_0}.
		$$
		But if such limits exist, they must be equal to zero (because  $f''(u)e^{-|u|/\eps_0}$ is integrable). 
		Similar reasoning for $f'$ and $f$ gives (\ref{e25}).		
	\end{proof}
	
	We are ready to prove Statement~\ref{S2}. It is worth noting that statements of this kind are a commonplace of 
	the theory and they can be found in almost 
	every article on the Bellman function method in analysis (a classical example is the paper~\cite{NaTr}). 
	
\paragraph{Proof of Statement~\ref{S2}.}
	Let $0 < \tau < 1$. Consider the function 
	$$
			G_\tau(x_1,x_2) \df G(\tau x_1,\tau^2x_2).
	$$
	We also define $f_\tau(x_1) \df f(\tau x_1)$. It is easily seen that~$G_\tau$ is continuous and locally concave in 
	$\Omega_{\eps/\tau}$. 
	This function also satisfies the boundary condition
	$$
			G_\tau(x_1,x_1^2) = f_\tau(x_1).
	$$
	
	Next, consider a point $x\in\Omega_\eps$. Fix a function $\varphi \in \BMO_\eps(I)$ such that
	$x = \big(\Av{\varphi}{I},\Av{\varphi^2}{I}\big)$. 
	By $x^\sigma$ we denote the Bellman point generated by the same function~$\varphi$ and 
	a subinterval $\sigma \subset I$, i.e.
	$x^\sigma \df \big(\Av{\varphi}{\sigma}, \Av{\varphi^2}{\sigma}\big)$. 
	By Lemma~\ref{L1}, there exists a partition $I = I_- \cup I_+$ such that the segment with the endpoints
	$x^{I_-}$ and $x^{I_+}$ lies in $\Omega_{\eps/\tau}$ entirely. 
	Note that $x = x^I = \alpha_-x^{I_-} + \alpha_+x^{I_+}$, where $\alpha_{\pm} = |I_\pm|/|I|$. Using the local 
	concavity of $G_{\tau}$, we get the inequality
	\begin{equation}\label{e21}
			|I|\,G_{\tau}(x) \ge |I_-|\,G_{\tau}\big(x^{I_-}\big) + |I_+|\,G_{\tau}\big(x^{I_+}\big).
	\end{equation}
	We repeat the procedure described above for each subinterval $I_{\pm}$ (treating $\varphi$ as a function on the corresponding subinterval), after 
	that we repeat it again for each of
	four subintervals obtained in the previous step, and so on. 	
	After $n$ steps we have a collection~$D_n$ of $2^n$ subintervals that divide $I$. Using the
	local concavity of $G_{\tau}$ in each step, we get the estimate
	$$
			|I|\,G_{\tau}(x) \ge \sum_{\sigma \in D_n} |\sigma|\,G_{\tau}(x^\sigma) 
			= \int\limits_I G_{\tau}\big(x^n(t)\big)\,dt,
	$$
	where $x^n(t)$ is the step function taking the value $x^\sigma$ on each interval ${\sigma \in D_n}$\footnote{The 
	procedure just described is often 
	called \emph{the Bellman induction}.}. 		
	Since $\alpha_{\pm}$ can be chosen to be separated from $0$ and $1$ uniformly,
	the lengths of the intervals tend to zero as $n$ tends to infinity:
	${\max_{\,\sigma\in D_n} |\sigma| \to 0}$ as ${n \to \infty}$.
	By the Lebesgue differentiation theorem, this implies that 
	$$
		x^n(t)\to\big(\varphi(t),\varphi^2(t)\big)
	$$ 
	for almost all $t\in I$. 
	Suppose for a while that $\varphi \in L^\infty(I)$. 
	Then the values of the functions $x^n(t)$ lie in some compact subset of
	$\Omega_\eps$. Therefore, since $G_\tau(x)$ is continuous, the sequence of 
	functions $G_{\tau}\big(x^n(t)\big)$ is uniformly bounded. 
	Passing to the limit
	and using the boundary condition, we get
	$$
			|I|\,G_{\tau}(x) \ge \int\limits_I G_{\tau}\big(\varphi(t),\varphi^2(t)\big)\,dt = 
			\int\limits_I f_\tau\big(\varphi(t)\big)\,dt.
	$$
	Now we lift the boundedness of $\varphi$ and pass to the limit in $\tau$. Consider the truncations $\varphi_m(t) \df \varphi_{-m,m}(t)$ (the definition see in Lemma~\ref{L5}).
	By Lemma~\ref{L2}, they lie in the same ball $\BMO_\eps(I)$ as~$\varphi$. Thus, since the functions $\varphi_{m}$ are bounded, the estimate proved earlier is true for them:
	$$
			|I|\,G_{\tau}\big(\Av{\varphi_m}{I},\Av{\varphi_m^2}{I}\big) \ge 
			\int\limits_I f_\tau\big(\varphi_m(t)\big)\,dt.
	$$
	Since~$G$ is continuous, the left part tends to $G(x)$ as ${m\to\infty}$ and $\tau \to 1-$. 
	Thus, it remains to pass to the corresponding limits in the right part of the inequality.
	The continuity of $f$ implies that the integrands converge to $f\big(\varphi(t)\big)$ pointwise. 
	Therefore, in order to establish the convergence of 
	the integrals, it remains to find an integrable majorant. Due to relation~(\ref{e25}) for
	$r=0$ and the continuity of $f$, the estimate 
	$|f(s)| \leq C e^{{|s|}/{\eps_0}}$ is fulfilled.
	Then we have
	$$
			\big|f_\tau\big(\varphi_m(t)\big)\big| \leq C\exp{\tfrac{\tau|\varphi_m(t)|}{\eps_0}} 
			\leq C\exp{\tfrac{|\varphi(t)|}{\eps_0}} \leq C \left(\exp{\tfrac{\varphi(t)}{\eps_0}} + \exp{\tfrac{-\varphi(t)}{\eps_0}} \right).
	$$
	The last expression is integrable by the integral John--Nirenberg inequality (see \cite{Va2} or \cite{SlVa}), 
	because $\eps < \eps_0$, and both  
	$\varphi$ and $-\varphi$ are in $\BMO_{\varepsilon}(I)$. Passing to the limits, we finally get ${G(x) \ge \Bell(x)}$.
	
	\subsection{Monge--Amp\`ere equation}\label{s23}
	Let $B$ be the minimal function in $\Lambda_{\eps,f}$.
	Properties (i) and (ii), together with property (iii) being assumed and Statement~\ref{S2}, imply 
	that we may treat $B$ as a candidate for 
	the Bellman function~$\Bell$.	
	In this section, we present some reasoning (not intended to be rigorous)
	that allows us to reduce the problem of finding such a function to solving a certain 
	partial differential equation (homogeneous Monge--Amp\`ere equation). 

	As we will see later, for each point $x \in \Omega_\eps$, there exists a function ${\ex \in \BMO_{\eps}(I)}$ that 
	realizes 
	the supremum for the point~$x$ in the Bellman function definition (see (\ref{e11})), i.e. 
	$x = \big(\Av{\ex}{I},\Av{\ex^2}{I}\big)$ and $\Bell(x) = \Av{f(\ex)}{I}$. 
	If the functions~$G_{\tau}$ from Statement~\ref{S2} approximate $\Bell$, then for the optimizer~$\ex$
	there exists a partition of $I$ such that in~(\ref{e21}) the equality is almost attained. 
	In view of the local concavity of $G_{\tau}$, this means that this function is almost linear on the segment 
	$\big[x^{I_-},x^{I_+}\big] \subset \Omega_{\eps/\tau}$. 
	This yields that our candidate~$B$ must be linear along some vector~$\Theta_x$, i.e. its second derivative along~$\Theta_x$ vanishes 
	at~$x$:
	\begin{equation}\label{e22}
		\frac{\partial^2\!B}{\partial\Theta_x^2} = \bigg(\frac{d^2\!B}{dx^2}\,\Theta_x, \Theta_x\bigg) = 0,
	\end{equation}
	where
	$$
		\frac{d^2\!B}{dx^2} = 
		\begin{pmatrix}
			B_{x_1x_1} & B_{x_1x_2}\\
			B_{x_2x_1} & B_{x_2x_2}
		\end{pmatrix}
	$$
	(here all the functions are evaluated at $x$). 
	On the other hand, since the function~$B$ is locally concave, it follows that the matrix of its second derivatives is negative semidefinite:
	$$
		\frac{d^2\!B}{dx^2} \le 0.
	$$
	Next, by virtue of (\ref{e22}), it cannot be \emph{strictly} negative definite, so
	\begin{equation}\label{e23}
		\det{\bigg(\frac{d^2\!B}{dx^2}\bigg)} = B_{x_1x_1}B_{x_2x_2} - B_{x_1x_2}^2 = 0.
	\end{equation}
	This is the homogeneous Monge--Amp\`ere equation for $B$. Besides (\ref{e23}), the boundary condition
	${B(x_1,x_1^2) = f(x_1)}$ and the inequalities 
	${B_{x_1x_1} \le 0}$, ${B_{x_2x_2} \le 0}$ must be fulfilled.

	In order to solve equation~(\ref{e23}), we will use the following consideration: the integral curves of the vector field~$\Theta_x$ are straight
	lines and, what is more, all the partial derivatives of $B$ are constant along them.
	We formulate this principle in the following statement, which has been proved, for example, in~\cite{VaVo}. 
	\begin{St}\label{S3}
		Suppose $\Omega$ is a domain in $\mathbb{R}^2$\textup, and $G \in C^2(\Omega)$ is 
		a function satisfying the homogeneous Monge--Amp\`ere equation 
		on $\Omega$\textup{:}
		$$
			G_{x_1x_1}G_{x_2x_2} - G_{x_1x_2}^2 = 0.
		$$
		Let 
		$$
			t_1 = G_{x_1},\quad t_2 = G_{x_2},\quad \mbox{and}\quad t_0 = G - t_1x_1 - t_2x_2.
		$$
		Suppose $G_{x_1x_1}\ne 0$ or $G_{x_2x_2} \ne0$ at every point of~$\Omega$.
		Then the functions $t_1$\textup{,} $t_2$\textup, and $t_0$ are constant along the integral curves of the 
		vector field that annihilates 
		the quadratic form $\frac{d^2\!G}{dx^2}$ on $\Omega$. 
		The integral curves mentioned above \textup(the extremals\textup) are segments of the straight lines defined by the equation
		\begin{equation}\label{e24}
			x_1dt_1+x_2dt_2+dt_0 = 0.
		\end{equation}
	\end{St}
	Graphs of solutions of the homogeneous Monge--Amp\`ere equation are called \emph{developable} surfaces. 
 All the properties of such solutions can be formulated in geometric terms. For example, 
	the theorem presented above states that 
	a developable surface is \emph{ruled}. Concerning geometric interpretation, see, e.g.~\cite{Pog}. 

	In view of Statement~\ref{S3}, we can assume that our domain $\Omega_\eps$ can be split into subdomains of two kinds:
	domains where $\frac{d^2\!G}{dx^2}=0$  ($B$ is a linear function there) and 
  domains where $\dim\Ker\frac{d^2\!G}{dx^2}=1$. Latter domains are foliated 
  by straight-line segments such that the partial derivatives of~$B$ are 
  constant along them. 
	We will look for our Bellman function among the functions~$B$ corresponding to such foliations.
	The following definition fixes the notion of \emph{a Bellman candidate}.
	\begin{Def}\label{D2}
		Consider a subdomain $\widetilde \Omega \subset \Omega_\eps$ and a finite collection of pairwise disjoint 
		subdomains\footnote{It is worth noting that here the notion of a domain has a wider meaning than usually: 
		a domain is the union of a connected open set and any part of its boundary.} 
		$\widetilde\Omega^1,\dots,\widetilde\Omega^m \subset \widetilde \Omega$ whose union 
		is~$\widetilde \Omega$. Consider some function $B \in C(\widetilde \Omega)$ that is 
		locally concave in $\widetilde \Omega$ and 
		satisfies the boundary condition $B(x_1,x_1^2) = f(x_1)$. 
		Suppose $B\in C^1(\widetilde\Omega^i)$, $i=1\dots m$, and those subdomains~$\widetilde\Omega^i$
		where~$B$ is not linear, are foliated by non-intersecting straight-line segments such that the partial derivatives 
		of~$B$ are constant along them. Then we say that $B$ is a Bellman candidate in $\widetilde \Omega$.	
	\end{Def}
	From the above, it does not follow that such a function~$B$ solves the Monge--Amp\`ere equation. 
	However, all the Bellman candidates constructed below are $C^2$-smooth in each of the corresponding domains 
	$\widetilde\Omega^{1},\dots,\widetilde\Omega^{m}$. Thus, since~$B$ is linear along the extremals, 
	Monge--Amp\`ere equation~(\ref{e23}) is fulfilled in each domain~$\widetilde\Omega^{i}$ for such a candidate.
	
	Another useful observation, helping us to construct Bellman candidates, is that the extremals, intersecting the upper boundary of~$\Omega_\eps$,
	must be tangents to it (see \emph{Principle 2} on the page 8 of~\cite{SlVa2}).
	
	All of the above allows us to believe that our Bellman function can be found among the functions described 
	in Definition~\ref{D2}.
	If we find some Bellman candidate~$B$ on the whole domain~$\Omega_\eps$, the inequality $\Bell \le B$ will follow 
	immediately from 
	Statement~\ref{S2}. 
	In order to verify the converse estimate $\Bell \ge B$, we will construct, for each point $x \in \Omega_\eps$, a 
	function
	$\ex \in \BMO_\eps(I)$ such that $x = \big(\Av{\ex}{I},\Av{\ex^2}{I}\big)$ and $B(x) = \Av{f(\ex)}{I}$.
	Such functions are called \emph{optimizers}. General considerations on the construction of optimizers 
	are stated in the next section.
	
	\subsection{Optimizers}\label{s24}
	First, we fix the notion an optimizer.
	\begin{Def}\label{D3}
		Let $B$ be a Bellman candidate in the whole domain~$\Omega_\eps$. 
		A function~$\ex$ defined on some interval~$I$ is called an optimizer for a point 
		$x\in \Omega_\eps$ if the following conditions are satisfied:
		\begin{enumerate}
			\item[(1)] $\ex \in \BMO_\eps(I)$;
			\item[(2)] $\big(\Av{\ex}{I},\Av{\ex^2}{I}\big) = x$;
			\item[(3)] $\Av{f(\ex)}{I} = B(x)$.
		\end{enumerate}
	\end{Def}
	The first two properties mean that $\ex$ is one of the functions over which the supremum is taken in 
	definition~(\ref{e11}) of $\Bell$
	(we will call such functions \emph{test functions}). In view of Statement~\ref{S2}, the third property guarantees 
	that the test function~$\ex$ realizes this supremum. 
	Therefore, a Bellman candidate for which an optimizer can be constructed in each point $x\in\Omega_\eps$, coincides
	with $\Bell$.
	
	We notice that it suffices to consider
	only non-decreasing optimizers. Indeed, if we replace a function by its \emph{increasing rearrangement}, 
	the $\BMO$-norm does not increase (an increasing rearrangement of a function~$\varphi$ is 
	a non-decreasing function~$\varphi^{*}$ such that the measure of the set 
	$\{t \in I \mid\;\varphi(t) > \lambda\}$ is equal to the measure of the set
	$\{t \in I \mid\; \varphi^{*}(t) > \lambda\}$ for any $\lambda \in \mathbb{R}$).
	This statement was proved in~\cite{Ivo}. In~\cite{Kor}, it was employed for the calculation of the
	sharp constant~$c_2$ in John--Nirenberg inequality~(\ref{e14}). It is also clear that averages of the form
	$\av{h(\varphi)}{I}$ does not change when $\varphi$ is replaced by its increasing rearrangement.
	All this implies that the supremum in~(\ref{e11}) may be taken over the set of the non-decreasing functions
	satisfying the same conditions.
	
	We will construct optimizers using the notion of \emph{delivery curves}. The following reasoning, which is not 
	intended to be rigorous, will lead us to the corresponding definition. Consider a non-decreasing 
	optimizer~$\ex$. For it, each inequality in the Bellman induction (see the proof of Statement~\ref{S2}) turns into an  equality. Thus, we must split the interval
	in such a way that the corresponding points move along the extremals that foliate
	the subdomains where the Bellman function 
	is not linear (inside domains of the linearity, every segment is an extremal).
	If at each step of the Bellman induction we manage to choose an infinitesimal partition, i.e.
	cut off an arbitrarily small part from one side of the interval, then we get some curve inside the domain 
	(the coordinates of its points are, in fact, the averages of~$\ex$ and~$\ex^2$ over
	the larger of two intervals that are obtained after each cutting).
	If we cut off from the right side of the interval, then 
	a resulting curve is called a \emph{left} delivery curve (since we consider an increasing test function, 
	this curve lays on the \emph{left} of the point at which we begin the induction). 
	This heuristic reasoning leads us to the 
	following rigorous definition.
	\begin{Def}\label{D4}
		Suppose $\ex$ is some test function on ${I = [l,r]}$.
		A curve~$\dc$ is called a left delivery curve if it is defined by the formula
		\begin{equation}\label{eq7}
			\dc(s) =\big(\av{\ex}{[l,s]},\av{\ex^2}{[l,s]}\big),\quad s \in (l,r],
		\end{equation}
		and for all $s \in (l,r]$ the following equation is fulfilled:
		\begin{equation}\label{B_extr_curve}
			B(\dc(s)) = \av{f(\ex)}{[l,s]}.
		\end{equation}		
	\end{Def}

	Cutting off from the left side of the interval, we come to the notion of 
	a \emph{right} delivery curve (it lies on the \emph{right} of the initial point). 
	The corresponding definition is symmetric to the definition of a left delivery curve.
	\begin{Def}\label{D5}
		Suppose $\ex$ is some test function on $I = [l,r]$.
		A curve~$\dc$
		is called a right delivery curve if it is defined by the formula
		\begin{equation}\label{eq11}
			\dc(s) =  \big(\av{\ex}{[s,r]},\av{\ex^2}{[s,r]}\big),\quad s \in [l,r),
		\end{equation}
		and for all $s \in [l,r)$ the following equation is fulfilled:
		\begin{equation}\label{B_extr_curve_r}
			B(\dc(s)) = \av{f(\ex)}{[s,r]}.
		\end{equation}		
	\end{Def}
	Definitions~\ref{D4} and~\ref{D5} postulate that the restrictions ${\ex|}_{[l,s]}$ are optimizers 
	for the corresponding points~$\dc(s)$ of the left delivery curve (which lies, of course, in 
	$\Omega_\eps$ entirely), and the restrictions ${\ex|}_{[s,r]}$ are optimizers for the points $\dc(s)$ of the right 
	delivery curve. Therefore, if we build a delivery curve, we 
	automatically obtain the optimizers for all the points of this curve.
	
	According to the procedure described above, delivery curves run along extremals. 
	Thus, they can consist of some parts
	of extremals and arcs of the upper parabola. 
	Also, if we take only non-decreasing test functions, then left delivery curves will run from left to right and 
	right delivery curves will run from right to 
	left (for right delivery curves, we assume that the ``time''~$s$ runs backwards, i.e. from $r$ to $l$).
	
	We will build optimizers for some Bellman candidate~$B$ as follows. 
	We will draw various curves along the extremals corresponding to our 
	candidate. After that, we will construct functions $\ex\in L^1(I)$ that generate these curves 
	in the sense of~(\ref{eq7}) or~(\ref{eq11}). 
	Next, we will verify that the obtained functions belong to $\BMO_\eps(I)$ 
	and satisfy~(\ref{B_extr_curve}) or~(\ref{B_extr_curve_r}). 
	The condition $\ex \in \BMO_\eps(I)$
	can be derived from general geometric considerations. The fact is that
	all our delivery curves turn out to be convex, because they are graphs of some convex function. In addition, their curvatures will not be too large:
	as a rule, any tangent to such a curve will lie under the upper boundary of~$\Omega_\eps$.
  These properties can be explained by the fact that these curves must run along the upper parabola or straight
  extremals, which intersect the upper boundary tangentially.
	It turns out that if some function $\ex\in L^1(I)$ generates a curve with the properties described above, then 
	$\ex \in \BMO_\eps(I)$. We formulate the 
	corresponding statement in the local form, which is more convenient for further applications.
	
	\begin{Le}\label{L4}
	Let $\ex$ be an integrable function on $I = [l,r]$ and let $\dc$ be the curve generated by this function in 
	the sense of~\textup{(\ref{eq7})}.
	Suppose $\dc$ lies in~$\Omega_\eps$ entirely\textup, coincides with the graph of a convex function\textup, and
	is differentiable in some point $b \in I$. If the tangent to $\gamma$ at the point~$\gamma(b)$ lies below 
	the upper boundary of~$\Omega_\eps$\textup, then all the Bellman points 
	$x^{[a,b]} = \big(\av{\ex}{[a,b]},\av{\ex^2}{[a,b]}\big)$\textup{,} $l \le a < b$\textup, belong to $\Omega_\eps$.
		
	If the curve~$\dc$ is generated by~$\ex$ in the sense of~\textup{(\ref{eq11}),} then 
	the Bellman point~$x^{[a,b]}$ is in $\Omega_\eps$ provided 
	the tangent to $\dc$ at the point $\dc(a)$ lies below the upper parabola.
\end{Le}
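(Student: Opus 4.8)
The plan is to reduce the claim to a purely geometric statement about chords and tangents of the convex curve $\dc$, using the elementary fact that for a non-decreasing (or arbitrary) function $\ex$ on a subinterval $[a,b]\subset[l,r]$ the Bellman point $x^{[a,b]}$ is an affine combination of the endpoint values of the corresponding averaging curve. Concretely, in the first case (curve generated in the sense of~(\ref{eq7})), write $\dc$ as the graph of a convex function $\psi$ over the $x_1$-axis, so that $\dc(s)=\big(u(s),\psi(u(s))\big)$ where $u(s)=\av{\ex}{[l,s]}$. The identity
$$
|[l,b]|\,x^{[l,b]} = |[l,a]|\,x^{[l,a]} + |[a,b]|\,x^{[a,b]}
$$
shows that $x^{[a,b]}$ lies on the line through $\dc(a)$ and $\dc(b)$, on the far side of $\dc(b)$ from $\dc(a)$ (this is where one uses that $[a,b]$ is the "small" piece being cut off, matching the heuristic preceding Definition~\ref{D4}). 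Hence $x^{[a,b]}$ is obtained by extrapolating the chord $[\dc(a),\dc(b)]$ beyond $\dc(b)$.

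The core of the argument is then: first, $x^{[a,b]}$ lies above the lower parabola $x_2=x_1^2$. This is immediate from the Cauchy--Schwarz inequality $\av{\ex}{[a,b]}^2\le\av{\ex^2}{[a,b]}$ exactly as in the proof of Statement~\ref{S1}(i), and needs nothing about convexity. Second — the substantive part — $x^{[a,b]}$ lies below the upper parabola $x_2=x_1^2+\eps^2$. Here I would argue: the chord of $\dc$ from $\dc(a)$ to $\dc(b)$, being a chord of a convex graph, lies above the graph between $a$ and $b$ and below it outside; in particular its extension beyond $\dc(b)$ stays below the curve $\dc$, which itself lies in $\Omega_\eps$. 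That alone is not quite enough, since we extrapolate past the right endpoint $\dc(b)$ of the curve's known portion. The fix is to use the differentiability hypothesis at $b$: as $b$ is where we localize, let $a\to b$; the chord through $\dc(a),\dc(b)$ tends to the tangent line to $\dc$ at $\dc(b)$, which by hypothesis lies below the upper boundary of $\Omega_\eps$. By convexity, every chord $[\dc(a),\dc(b)]$ with $a<b$ has slope at most that of the tangent at $b$, so the extrapolated ray beyond $\dc(b)$ lies below the tangent line at $\dc(b)$, hence below the upper parabola. Combining the two inequalities gives $x^{[a,b]}\in\Omega_\eps$.

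The second assertion, for curves generated in the sense of~(\ref{eq11}), is proved by the symmetric argument: now $x^{[a,b]}$ is the extrapolation of the chord $[\dc(b),\dc(a)]$ beyond $\dc(a)$ (cutting off the left piece $[l,a]$... actually the "small" piece on the left), and the relevant tangent is the one at $\dc(a)$; the hypothesis that it lies below the upper parabola plays exactly the role the tangent at $b$ played above. The Cauchy--Schwarz bound again handles the lower parabola.

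The main obstacle I anticipate is bookkeeping the direction of extrapolation correctly — verifying that $x^{[a,b]}$ really lands \emph{outside} the segment $[\dc(a),\dc(b)]$ on the side of $\dc(b)$ (resp.\ $\dc(a)$) and not between the endpoints — since that is what forces us to invoke convexity and the tangent-line hypothesis rather than a trivial in-between argument. The inequality $|[a,b]|<|[l,b]|$ makes the affine coefficient of $x^{[a,b]}$ exceed $1$, which pins down the side; this should be stated carefully. Everything else is the convexity of a graph (chords below the graph outside the subtended interval, slopes monotone) together with the two one-line parabola estimates, so no heavy computation is needed.
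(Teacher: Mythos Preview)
Your proposal is correct and follows essentially the same route as the paper. The paper's version is marginally more streamlined: rather than comparing chord slopes to the tangent slope, it simply notes that by convexity $\dc(a)$ lies above the tangent at $\dc(b)$, and since $\dc(b)$ is a convex combination of $\dc(a)$ and $x^{[a,b]}$ (hence lies between them on the common line), the point $x^{[a,b]}$ must lie below that tangent, hence below the upper parabola; the Cauchy--Schwarz inequality handles the lower boundary. Your detour through ``chord below the curve outside the subtended interval'' is unnecessary---as you yourself realized---and can be dropped in favor of the one-line tangent comparison.
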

\begin{proof}
	We prove only the first half of the lemma, the proof of the second is similar. 
	Since the curve~$\dc$ is convex, the point~$\dc(a)$ must lie above the tangent to $\dc$ at
	the point~$\dc(b)$. 	
	The points $\dc(a)$, $x^{[a,b]}$, and $\dc(b)$ lie on one line and the last lies between the first two, because 
	it is their convex combination. Thus, the point~$x^{[a,b]}$ must lie below the tangent, and therefore, 
	below the upper boundary of~$\Omega_\eps$. On the other hand, by 
	the Cauchy--Schwartz inequality, the point~$x^{[a,b]}$ lies above the lower boundary.
\begin{figure}[H]
\begin{center}
\includegraphics{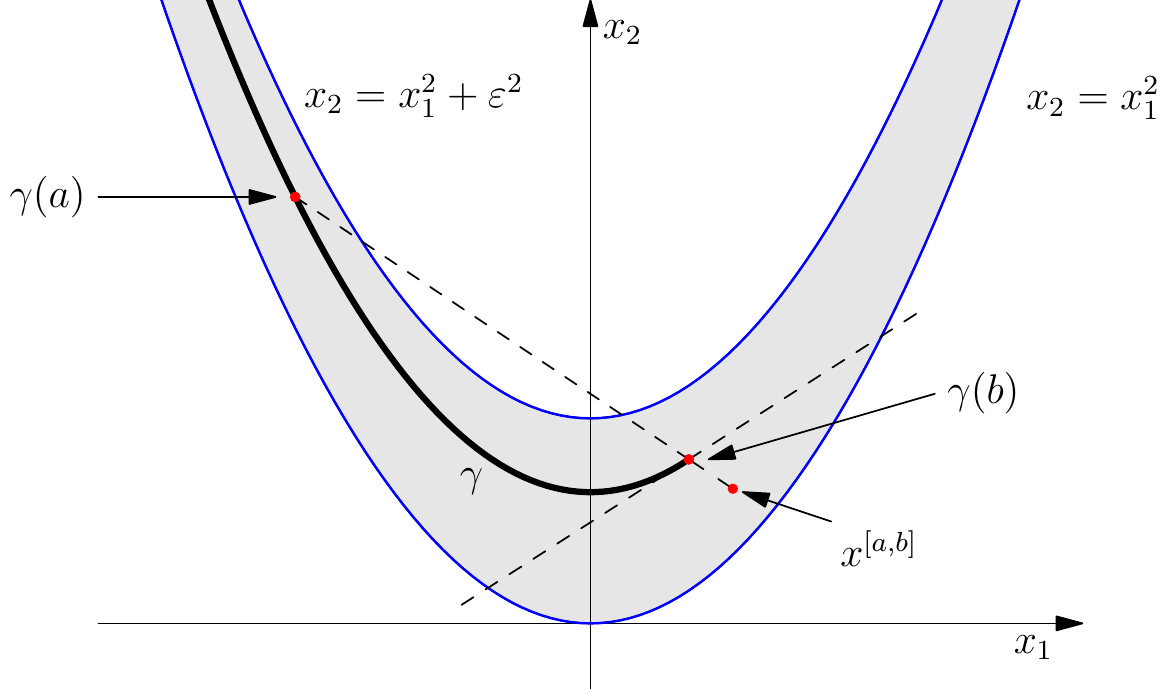}
\caption{Illustration to the proof of Lemma~\ref{L4}.}
\label{fig:pfp}
\end{center}
\end{figure}
	
	As we have already mentioned, the symmetric situation when $\dc$ and $\ex$ satisfy relation~(\ref{eq11}), 
	can be treated in a similar way.
	\end{proof}
	
\section{Homogeneous families of extremals}\label{s3}
	As already noted, an extremal intersecting the upper parabola must touch it.
	In this chapter, we assume that some subdomain of $\Omega_\eps$ is foliated by extremals that are tangential to the
	upper boundary, and look for a Bellman candidate in such a subdomain. In Section~\ref{s31}, we will see how such
	extremals must be arranged and how to calculate a Bellman candidate~$B$ corresponding to them (up to some
	constant of integration). In Section~\ref{s32}, we will consider the case when subdomains foliated by tangents are
	not bounded from one side. In such a situation, we will be able to specify the formula for our candidate~$B$, i.e.
	to get rid of the integration constant mentioned above. It is worth noting that all the arguments 
	in Sections~\ref{s31} and~\ref{s32} are, in fact, a repetition of the corresponding arguments in~\cite{SlVa2}.
	We state them here for completeness. In Section~\ref{s33}, using the approach described in Section~\ref{s24}, 
	we will find delivery curves and optimizers in the domains being considered. It will occur that the theory
	described in Sections~\ref{s31}, \ref{s32}, and \ref{s33} is sufficient to obtain $\Bell(x;\,f)$ 
	for $f\in\W^{0}$ with $c_{0}=\pm \infty$, i.e. when the sign of~$f'''$ does not change (the case $f'''<0$ corresponds to $c_0=-\infty$,
the case $f'''>0$ corresponds to $c_0=+\infty$). The corresponding theorems are stated in Section~\ref{s34}.

\subsection{Family of tangents to the upper boundary}\label{s31}
	Consider the tangent to the upper parabola at a point $(w, w^2+\eps^2)$. Its segment lying in $\Omega_\eps$
	is given by the following relation:
	\begin{equation}\label{e31}
		x_2 = 2w x_1 + \eps^2 - w^2,\quad \mbox{for}\quad x_1\in [w-\eps, w+\eps].
	\end{equation}	
	Consider some hypothetical family of extremals (they are segments of straight lines) such that 
	each of them is a tangent to the upper parabola. Parameterize this family 
	by the first coordinate of tangency points $w \in (w_1,w_2)$. 
	If the corresponding Bellman candidate~$B$ is not linear in both variables, then an extremal cannot 
	contain the whole segment~(\ref{e31}). Moreover, a tangency point $(w, w^2+\eps^2)$ is not an inner point of  
	an extremal, otherwise such an extremal 
	intersects with others. This can also be seen from convexity provided the function~$B$ is twice differentiable.
	Indeed, since the function $t_2 = B_{x_2}$ is constant along the extremals, it may be treated 
	as a function of $w$.
	Thus, $B_{x_2x_2} = t_2'(w)w_{x_2}$. Further, using equation~(\ref{e31}), we get
	$$
		w_{x_2} = \frac{1}{2(x_1-w)}.
	$$
	Fixing $w$, we see that on the corresponding extremal the sign of $B_{x_2x_2}$ changes in a neighborhood 
	of the point $x_1 = w$. But this contradicts the 
	condition $B_{x_2x_2} \le 0$. 

	Thus, each extremal line of our family lies either on the right of the point \mbox{$(w, w^2+\eps^2)$} or on the left of it. 
	Consider two families of extremals. The first 
	consists of segments of tangents to the upper parabola that lie on the right of their tangency points. 
	The second consists of those segments that lie on the left
	of the tangency points. 
	We make the substitution $w = u - \eps$ in the first case and $w = u + \eps$ in the second, i.e. we parameterize 
	the extremals by the first coordinate~$u$ of those points where they intersect the lower parabola. 
	The parameter~$u$ runs over some interval $(u_1, u_2) = (w_1\pm \eps,w_2 \pm \eps)$. 
	Therefore, our families of the right and left tangents are described, respectively, by the following equations:	
	\begin{enumerate}
		\item[(R)] $x_2 - 2(u-\eps)x_1 + u^2-2u\eps = 0$,\qquad $u\in(u_1,u_2)$,\quad $x_1 \in [u-\eps, u]$;
		\item[(L)] $x_2 - 2(u+\eps)x_1 + u^2+2u\eps = 0$,\qquad $u\in(u_1,u_2)$,\quad $x_1 \in [u, u+\eps]$.
	\end{enumerate}	
	We look for a Bellman candidate on subdomains of~$\Omega_\eps$ that are foliated by 
	families~(R) or~(L). We define such subdomains by $\Rt(u_1,u_2)$ and
	$\Lt(u_1,u_2)$, respectively (see Figures~\ref{fig:opk} and~\ref{fig:olk}). 
	
\begin{figure}[H]
\begin{center}
\includegraphics{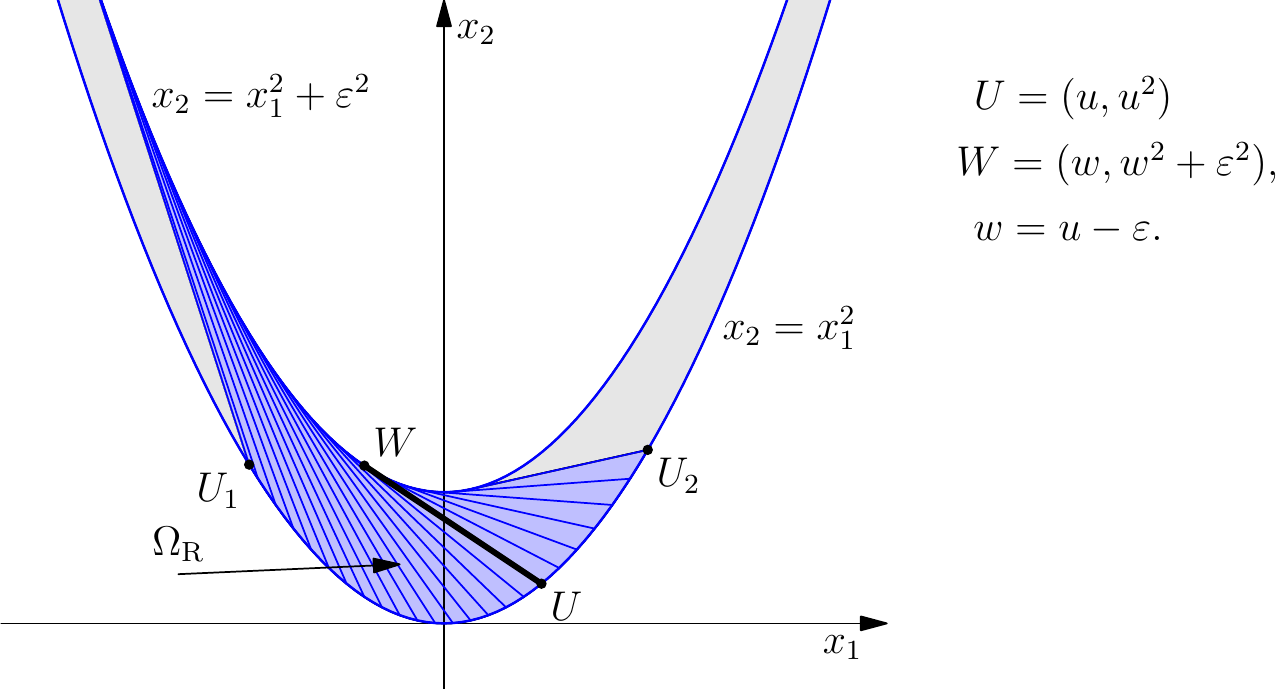}
\caption{A domain $\Rt$ with the right tangents.}
\label{fig:opk}
\end{center}
\end{figure}

	Expressing $u$ in terms of $x_1$ and $x_2$ for the tangents~(R) and~(L), we obtain, respectively, the following relations:	
	\begin{gather}
		\label{e32}
		u = u_{\mathrm{R}}(x_1,x_2) = x_1 +\Big(\eps - \sqrt{\eps^2-(x_{2}-x_{1}^{2})}\,\Big),\\
		\label{e33}
		u = u_{\mathrm{L}}(x_1,x_2) = x_1 - \Big(\eps - \sqrt{\eps^2-(x_{2}-x_{1}^{2})}\,\Big). \rule{0pt}{20pt} 	
	\end{gather}

	From now on, we establish the following rule for our notation.
	Any point on the lower boundary is denoted by a capital Latin letter 
	and the first coordinate of this point is denoted by the corresponding small letter.
	For example, we write $U$ for $(u,u^2)$ (see Figures~\ref{fig:opk} and~\ref{fig:olk}).
	
	Let $B$ be a Bellman candidate on~$\Rt$ or $\Lt$. Since the function~$B$ must be linear on the linear extremals and 
	satisfy the boundary condition 
	${B(U) = f(u)}$, it follows that $B$ can be written as
	\begin{equation}\label{e34}	
		B(x_1,x_2) = m(u)(x_1 - u) + f(u).
	\end{equation}
\begin{figure}[H]
\includegraphics{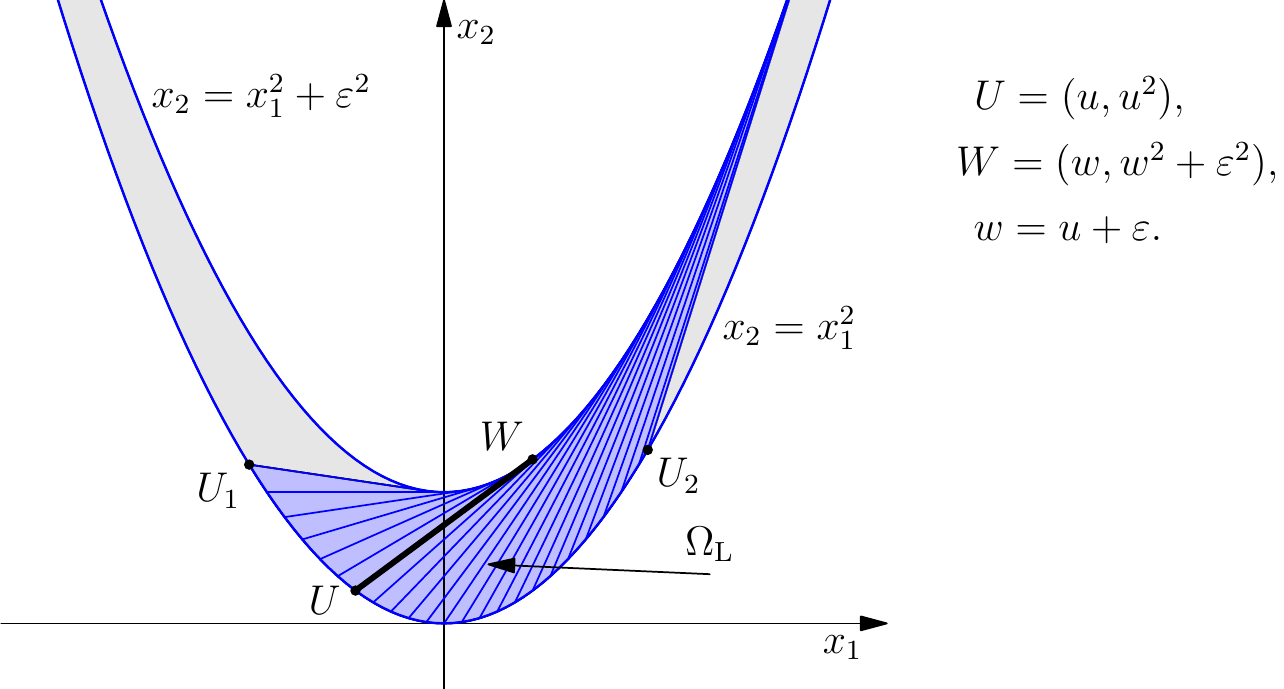}
\caption{A domain $\Lt$ with the left tangents.}
\label{fig:olk}
\end{figure}

	Consider case~(R). Using representation~(\ref{e34}) and the equation
	\begin{equation}\label{e38}
		u_{x_2} = \frac{1}{2(x_1-u+\eps)},
	\end{equation}
	by direct calculation, we obtain the following identity:
	$$
		t_2 = B_{x_2} = \frac{m'(u)}{2} - \frac{\eps m'(u) + m(u) - f'(u)}{2(x_1-u+\eps)}.
	$$
	If $u$ is fixed, the function $t_2$ must be constant. Therefore,
	\begin{gather}
		\label{e35}
		\eps m'(u) + m(u) - f'(u) = 0;\\
		\label{e36}
		t_2 = \frac{m'(u)}{2}.
	\end{gather}
	All the solutions of equation~(\ref{e35}) are of the form
	\begin{equation}\label{e37}
		\mrt(u) = e^{-u/\eps}\bigg(A + \eps^{-1}\int\limits_{u_1}^u f'(t)e^{t/\eps}\,dt\bigg),
	\end{equation}
	where $A$ is an integration constant. It is clear that
	\begin{equation}\label{tavt}
		A = e^{u_1/\eps}\mrt(u_1).
	\end{equation}
	Substituting solution~(\ref{e37}) into representation~(\ref{e34}) and expressing $u$ in terms of $x$ by~(\ref{e32}), we obtain 
	a family of functions (we still have a free parameter $A$) whose derivatives are constant along extremals~(R) foliating~$\Rt(u_1,u_2)$. 
	We denote such functions by~$\Brt(x;\,u_1,u_2)$.
	
	Next, by virtue of~(\ref{e36}), we can write
	$$
		\Brt_{x_2x_2} = t_2'(u)u_{x_2} = \frac{\mrt''(u)u_{x_2}}{2}.
	$$
	Using this equation and identity~(\ref{e38}), we see that the condition ${\Brt_{x_2x_2} \le 0}$ is equivalent to ${\mrt''(u) \le 0}$, 
	$u\in(u_1,u_2)$. 
	We recall that this condition is necessary for the local concavity of $\Brt(x;\,u_1,u_2)$.
	But in the situation being considered, this condition is also sufficient for the local concavity of the candidate. Indeed, if it is satisfied,
	then the function~$\Brt$ is concave along the direction $x_2$ and linear along an extremal. Since these directions are non-collinear, it follows
	that $\Brt$ is locally concave.
	
	Now we obtain a formula for $\mrt''$.
	Differentiating equation~(\ref{e35}) twice and solving it with respect to $m''$, we get
	\begin{equation}\label{e311}
		\mrt''(u) = e^{(u_1-u)/\eps}\mrt''(u_1) + \eps^{-1}e^{-u/\eps}\int\limits_{u_1}^u f'''(t)e^{t/\eps}\,dt.
	\end{equation}

	Reasoning for extremals~(L) in a similar way, we get the following relations:
	\begin{gather}
		\label{e39}
		-\eps m'(u) + m(u) - f'(u) = 0;\\
		\label{e323}
		t_2 = B_{x_2} = \frac{m'(u)}{2}.		
	\end{gather}
	All the solutions of equation~(\ref{e39}) have the following form:
	\begin{equation}\label{e310}
  	\mlt(u) = e^{u/\eps}\bigg(A + \eps^{-1}\int\limits_{u}^{u_2} f'(t)e^{-t/\eps}\,dt\bigg),  
  \end{equation}
  where 
  $$
  	A = e^{-u_2/\eps}\mlt(u_2).
  $$
  Setting $m(u) = \mlt(u)$ in~(\ref{e34}) and expressing $u$ in terms of $x$ by relation~(\ref{e33}), we obtain 
  the function $B(x) = \Blt(x;\,u_1,u_2)$ on $\Lt(u_1,u_2)$ with the partial derivatives that are constant along extremals~(L).
  The local concavity of the function~$\Blt$ is equivalent to the
	condition ${\mlt''(u) \ge 0}$ for all $u\in(u_1,u_2)$, and $\mlt''(u)$ satisfies
	\begin{equation}\label{e312}
  	\mlt''(u) = e^{(u-u_2)/\eps}\mlt''(u_2) + \eps^{-1}e^{u/\eps}\int\limits_{u}^{u_2} f'''(t)e^{-t/\eps}\,dt.
  \end{equation}
    
	We summarize this section.	 
	\begin{Prop}\label{S4}	
		Suppose the subdomain $\Rt(u_1,u_2) \subset \Omega_\eps$ is foliated by extremals~\textup{(R)} entirely. 
		Then a Bellman candidate in this subdomain has the form 
		\begin{equation}\label{e321}
  		\Brt(x;\,u_1,u_2) = \mrt(u)(x_1 - u) + f(u),
  		\end{equation}
  		where $\mrt(u)$ satisfies~\textup{(\ref{e37})} and $u = u_{\mathrm{R}}(x_1,x_2)$ 
  		can be calculated by~\textup{(\ref{e32})}. 
  		Besides\textup, the function $\mrt''(u)$ must satisfy ${\mrt''(u) \le 0}$\textup{,} $u\in(u_1,u_2)$.
  		
  		If the subdomain $\Lt(u_1,u_2) \subset \Omega_\eps$ is foliated by extremals~\textup{(L)} entirely\textup, 
  		then all the Bellman candidates in it have the form
  		\begin{equation}\label{e322}
  			\Blt(x;\,u_1,u_2) = \mlt(u)(x_1 - u) + f(u),
  		\end{equation}
  		where $\mlt(u)$ satisfies~\textup{(\ref{e310})} and $u = u_{\mathrm{L}}(x_1,x_2)$ 
  		is calculated by~\textup{(\ref{e33})}. 
  		Besides\textup, we must require that
  		${\mlt''(u) \ge 0}$\textup, $u\in(u_1,u_2)$.
  	\end{Prop}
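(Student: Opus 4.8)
The plan is to assemble the computations of this section into a proof, treating the right-tangent family~(R) in full and obtaining the left-tangent family~(L) by symmetry. First I would fix the form~(\ref{e321}). Suppose $\Rt(u_1,u_2)$ is foliated by extremals~(R) entirely and let $B$ be a Bellman candidate on it. Each line of~(R) is non-vertical and, by construction, meets the lower parabola at $U=(u,u^{2})$, where $B(U)=f(u)$ by the boundary condition, while by Definition~\ref{D2} the function~$B$ is affine along that line. Parameterizing the extremal through a point $x\in\Rt(u_1,u_2)$ by its first coordinate and writing $\mrt(u)$ for the slope of~$B$ in the $x_1$-direction along it, one gets $B(x)=\mrt(u)(x_1-u)+f(u)$ with $u=u_{\mathrm R}(x_1,x_2)$ as in~(\ref{e32}), which is exactly~(\ref{e34}).

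Next I would derive the equation for $\mrt$. Differentiating the last expression in $x_2$ by the chain rule, inserting $u_{x_2}=\bigl(2(x_1-u+\eps)\bigr)^{-1}$ from~(\ref{e38}) and using $x_1-u=(x_1-u+\eps)-\eps$, one is led to
\[
  t_2=B_{x_2}=\frac{\mrt'(u)}{2}-\frac{\eps\mrt'(u)+\mrt(u)-f'(u)}{2(x_1-u+\eps)}.
\]
By Definition~\ref{D2} (this is also the content of Statement~\ref{S3}) the partial derivative $t_2=B_{x_2}$ is constant along each extremal of~(R); since $x_1\mapsto(x_1-u+\eps)^{-1}$ is not constant along such a segment, the numerator $\eps\mrt'(u)+\mrt(u)-f'(u)$ must vanish, which is~(\ref{e35}), and then $t_2=\mrt'(u)/2$, i.e.~(\ref{e36}). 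Equation~(\ref{e35}) is a linear first-order ODE; multiplying it by $\eps^{-1}e^{u/\eps}$ and integrating from $u_1$ to $u$ gives~(\ref{e37}) with $A=e^{u_1/\eps}\mrt(u_1)$, that is~(\ref{tavt}).

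Then I would read off the concavity restriction. Since $t_2=\mrt'(u)/2$ depends on $x$ only through~$u$, a further differentiation gives $\Brt_{x_2x_2}=\tfrac12\mrt''(u)\,u_{x_2}=\mrt''(u)\bigl(4(x_1-u+\eps)\bigr)^{-1}$; on $\Rt(u_1,u_2)$ one has $x_1\in[u-\eps,u]$, so $x_1-u+\eps>0$, and the condition $\Brt_{x_2x_2}\le0$, which is forced by local concavity, is therefore equivalent to $\mrt''\le0$ on $(u_1,u_2)$. (One can add that $\mrt''\le0$ is also sufficient for local concavity of~$\Brt$: it is then concave in the $x_2$-direction and linear along the non-collinear extremal direction, with a degenerate Hessian.) Differentiating~(\ref{e35}) twice and solving for $m''$ yields formula~(\ref{e311}) for~$\mrt''$.

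The left-tangent case is entirely parallel: repeating the three steps for $\Lt(u_1,u_2)$ and family~(L), with $u_{x_2}=\bigl(2(x_1-u-\eps)\bigr)^{-1}$ and $x_1-u=(x_1-u-\eps)+\eps$, produces equation~(\ref{e39}), its solution~(\ref{e310}), the identity $\Blt_{x_2x_2}=\tfrac12\mlt''(u)\,u_{x_2}$, and~(\ref{e312}) as before; now $x_1\in[u,u+\eps]$ makes $x_1-u-\eps\le0$, hence $u_{x_2}<0$, so that $\Blt_{x_2x_2}\le0\iff\mlt''\ge0$. I expect no deep difficulty here --- the argument essentially repeats that of~\cite{SlVa2} --- and anticipate that the only points needing attention are the legitimacy of the reduction in the second step (checking that the candidate is regular enough away from the upper parabola, and that the extremals of~(R) are non-vertical so that the slope $\mrt(u)$ is well defined) and the sign bookkeeping in the last two steps, where $u_{x_2}$ has opposite signs for families~(R) and~(L) and is exactly what turns the requirement $\mrt''\le0$ into $\mlt''\ge0$.
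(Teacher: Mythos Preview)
Your proposal is correct and follows essentially the same route as the paper: the paper's Section~\ref{s31} carries out exactly these three steps (form~(\ref{e34}) from linearity plus the boundary condition, the ODE~(\ref{e35}) from constancy of $t_2$ along extremals, and the concavity equivalence $\Brt_{x_2x_2}\le0\iff\mrt''\le0$ via $u_{x_2}$), together with the sufficiency remark and the symmetric treatment of~(L). The sign bookkeeping you flag is handled identically, and the reference to~\cite{SlVa2} matches the paper's own acknowledgment.
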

  	
\subsection{Family of tangents coming from \texorpdfstring{$\pm\infty$}{the infinity}}\label{s32}
	Consider a domain $\Rt(-\infty,u_2)$ unbounded on the left and foliated by the right tangents.
	It turns out that the Bellman candidate $\Brt(x;\,-\infty,u_2)$ in it can be chosen uniquely by minimality 
	considerations. Similarly,
	if we consider a subdomain~$\Lt(u_1,+\infty)$ unbounded on the right, the minimal Bellman candidate 
	$\Blt(x;\,u_1,+\infty)$ can also be chosen uniquely.
	
	From~(\ref{e311}), (\ref{e35}), and~(\ref{tavt}) it follows easily that 
	$$
		\eps \mrt''(u)e^{u/\eps} = \big(f''(u_1) - \eps^{-1}f'(u_1)\big)e^{u_1/\eps} + \eps^{-1}A + 
		\int\limits_{u_1}^u f'''(t)e^{t/\eps}\,dt.
	$$
	Let $u_1 = -\infty$. Using limit relations~(\ref{e25}), we get
	$$
		\eps \mrt''(u)e^{u/\eps} = \eps^{-1}A + 
		\int\limits_{-\infty}^u f'''(t)e^{t/\eps}\,dt.
	$$
	Since $\mrt''(u) \le 0$ for all $u \in (-\infty,u_2)$ and integral on the right-hand side tends to zero 
	as $u\to-\infty$, we have $A \le 0$. On the other hand,
	$$
		\Brt(x;\,-\infty,u_2) = 
		e^{-u/\eps}\Bigg[A + \eps^{-1}\int\limits_{-\infty}^u f'(t)e^{t/\eps}\,dt\Bigg] 
		(x_1 - u) + f(u),
	$$
	and since $x_1 \le u$, the expression on the right is minimal when $A=0$. 
	Therefore, $\mrt(u) = \mrt(u;\,-\infty)$, where
	\begin{align}
		\label{e314}
		&\mrt(u;\,-\infty) = \eps^{-1}e^{-u/\eps}\int\limits_{-\infty}^u f'(t)e^{t/\eps}\,dt;\\
		\label{e315}
		&\mrt''(u;\,-\infty) = \eps^{-1}e^{-u/\eps}\int\limits_{-\infty}^u f'''(t)e^{t/\eps}\,dt.
	\end{align}
	
	Treating the case of left extremals~(L) for $u_2 = +\infty$ in a similar way, we have
	$\mlt(u) = \mlt(u;\,+\infty)$, where
	\begin{align}
		\label{e317}
		&\mlt(u;\,+\infty) = \eps^{-1}e^{u/\eps}\int\limits_u^{+\infty} f'(t)e^{-t/\eps}\,dt;\\
		\label{e318}
		&\mlt''(u;\,+\infty) = \eps^{-1}e^{u/\eps}\int\limits_u^{+\infty} f'''(t)e^{-t/\eps}\,dt.
	\end{align}
	
	We sum up the results of this section in the following proposition.
	\begin{Prop}\label{S5}
		Suppose the subdomain $\Rt(-\infty,u_2) \subset \Omega_\eps$ is foliated by extremals~\textup{(R)} entirely.
		In this domain\textup, we define the function~$\Brt$ by the formula
		\begin{equation}\label{e313}
			\Brt(x;\,-\infty,u_2) =  \mrt(u;\,-\infty)\,(x_1 - u)+ f(u),
		\end{equation}
		where $\mrt(u;\,-\infty)$ is given by~\textup{(\ref{e314})} and $u=u_{\mathrm{R}}(x_1,x_2)$ 
		can be calculated by~\textup{(\ref{e32})}. Also assume that
		$$
			\mrt''(u;\,-\infty) \le 0,\quad u\in(-\infty,u_2),
		$$
		where $\mrt''(u;\,-\infty)$ is calculated by~\textup{(\ref{e315})}.
		Then $\Brt(x;\,-\infty,u_2)$ is the minimal Bellman candidate in $\Rt(-\infty,u_2)$.
		
		Next\textup, suppose the subdomain $\Lt(u_1,+\infty) \subset \Omega_\eps$ is foliated by extremals~\textup{(L)}.
		In this domain\textup, we consider the function~$\Blt$ defined by the formula
		\begin{equation}\label{e316}
			\Blt(x;\,u_1,+\infty) = \mlt(u;\,+\infty)\,(x_1 - u) + f(u),
		\end{equation}
		where $\mlt(u;\,+\infty)$ is given by~\textup{(\ref{e317})} and $u = u_{\mathrm{L}}(x_1,x_2)$ can
		be calculated by~\textup{(\ref{e33})}. Also we assume that
		$$
			\mlt''(u;\,+\infty) \ge 0,\quad u\in(u_1,+\infty).
		$$
		Then $\Blt(x;\,u_1,+\infty)$ is the minimal Bellman candidate in $\Lt(u_1,+\infty)$.
	\end{Prop}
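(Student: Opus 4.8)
The plan is to realize a Bellman candidate on the unbounded domain $\Rt(-\infty,u_2)$ as a member of a one-parameter family, to use local concavity to bound the parameter, and then to observe that the extreme admissible value of the parameter makes the candidate pointwise smallest.

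First I would apply Proposition~\ref{S4} on every bounded piece $\Rt(u_1,u_2)$ with $u_1>-\infty$: a Bellman candidate $B$ on $\Rt(-\infty,u_2)$ restricts there to $\Brt(x;u_1,u_2)=\mrt(u)(x_1-u)+f(u)$, where $\mrt$ solves the linear first-order ODE~(\ref{e35}) and satisfies $\mrt''\le 0$. Since neither~(\ref{e35}) nor the sign condition depends on $u_1$, the function $\mrt$ solves~(\ref{e35}) on the whole ray $(-\infty,u_2)$ and hence equals $e^{-u/\eps}\big(A+\eps^{-1}\int_{-\infty}^{u}f'(t)e^{t/\eps}\,dt\big)$ for a single constant $A\in\mathbb{R}$; the improper integral converges because Lemma~\ref{L3} gives $|f'(t)|e^{t/\eps}\le C\,e^{t(1/\eps-1/\eps_0)}$ as $t\to-\infty$, which decays exponentially since $\eps<\eps_0$. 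The value $A=0$ reproduces $\mrt(u;-\infty)$ of~(\ref{e314}); likewise, formula~(\ref{e311}) passes to the limit $u_1\to-\infty$ --- the leftover boundary term $\big(f''(u_1)-\eps^{-1}f'(u_1)\big)e^{u_1/\eps}$ vanishes by Lemma~\ref{L3} --- and becomes
$$\eps\,\mrt''(u)e^{u/\eps}=\eps^{-1}A+\int_{-\infty}^{u}f'''(t)e^{t/\eps}\,dt,$$
so $A=0$ reproduces~(\ref{e315}).

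Next I would pin down the sign of $A$ and conclude. The right-hand side of the displayed identity tends to $\eps^{-1}A$ as $u\to-\infty$ (the integral vanishes in the limit), while $\mrt''\le 0$ must hold on all of $(-\infty,u_2)$; hence $A\le 0$. Two candidates with constants $A$ and $A'$ differ by $e^{-u/\eps}(A-A')(x_1-u)$, and on $\Rt$ one has $x_1\le u$, i.e.\ $x_1-u\le 0$; therefore lowering the constant towards $0$ can only increase the candidate. So among the admissible candidates ($A\le 0$, with $A=0$ admissible precisely under the standing hypothesis $\mrt''(u;-\infty)\le 0$) the one with $A=0$, namely $\Brt(x;-\infty,u_2)$ of~(\ref{e313}), is the pointwise minimum. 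The left statement is proved by the symmetric computation: starting from~(\ref{e39}) and~(\ref{e312}), the boundary terms at $u_2$ drop out as $u_2\to+\infty$ because $e^{-u_2/\eps}\le e^{-u_2/\eps_0}$ for $u_2>0$, the condition $\mlt''\ge 0$ forces the integration constant to be $\ge 0$, and since $x_1\ge u$ on $\Lt$ the minimal admissible value of that constant is again $0$, giving $\Blt(x;u_1,+\infty)$ of~(\ref{e316}) and~(\ref{e317}).

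The one genuinely delicate point is the passage to the unbounded endpoint: one has to check that every improper integral occurring in $\mrt(u;-\infty)$, $\mrt''(u;-\infty)$ and their left analogues converges, and that each ``boundary leftover'' produced by the ODE identities really tends to zero. Both facts are exactly what Lemma~\ref{L3} delivers, and it is here --- and only here --- that the strict inequality $\eps<\eps_0$ enters, making the weights $e^{\pm t/\eps}$ dominate the growth of $f$, $f'$, $f''$ permitted by membership in $\W$; everything else is the elementary bookkeeping indicated above.
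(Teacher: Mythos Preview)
Your argument is essentially the paper's own: parametrize the candidates by the integration constant $A$, pass to $u_1\to-\infty$ via Lemma~\ref{L3}, read off $A\le 0$ from $\mrt''\le 0$, and then use $x_1-u\le 0$ on $\Rt$ to identify $A=0$ as the pointwise minimum. One small wording slip: since $\partial B_A/\partial A=e^{-u/\eps}(x_1-u)\le 0$, the candidate is \emph{decreasing} in $A$, so moving $A$ upward to $0$ (not ``lowering'') is what decreases the candidate; your conclusion that $A=0$ is minimal is nonetheless correct.
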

		
\subsection{Optimizers for the families of tangents}\label{s33}
	Let $B$ be a Bellman candidate in the whole domain~$\Omega_\eps$. We also assume that some part~$\Rt(u_1,u_2)$ 
	of~$\Omega_\eps$ is foliated by the right extremal tangents. 

	 From Section~\ref{s24}, it follows that delivery curves in $\Rt$ run along the upper parabola or along the tangents.
	Also, it can be easily seen that these delivery curves must be left. Indeed, draw a delivery curve up to
	some point on the upper boundary. By the definition of delivery curves, if we cut off a small interval from 
	the domain of a test function, we get an optimizer for the Bellman point corresponding to the residual interval.
	This point must be close to the initial point, and the Bellman point corresponding to the small interval
	can be located far away, almost on the lower boundary. Since the points corresponding to this split 
	run almost along a right extremal tangent, 
	the distant point must be on the right of the initial point. Therefore, the curve runs from the left.
		
	Consider the point $W_1 = \big(u_1-\eps, (u_1-\eps)^2+\eps^2\big)$ on the upper parabola. 
	Suppose some convex delivery curve~$\dc$ runs from a neighbor subdomain 
	and ends at $W_1$ (i.e. $\dc(r) = W_1$). We will see that this curve can be continued
	up to each point of~$\Rt$ in the way shown on Figure~\ref{fig:kdpr}: 
	we continue it along the upper parabola and, after that, along the tangent leading to the destination point. 
	Therefore, we will obtain optimizers for all the points of~$\Rt$. 
	The point~$W_1$ is called \emph{an entry node}: the information from the neighbor 
	subdomain is transmitted through it only.
\begin{figure}[H]
\begin{center}
\includegraphics{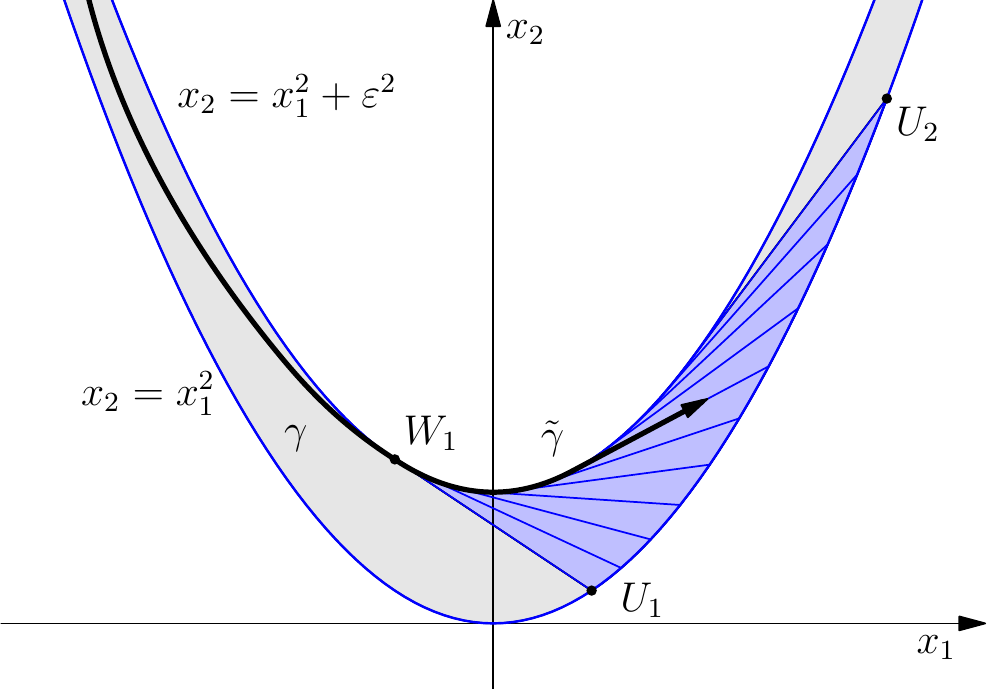}
\caption{Delivery curves in $\Rt$.}
\label{fig:kdpr}
\end{center}
\end{figure}
	
	For a subdomain~$\Lt(u_1,u_2)$ foliated by the left tangents, the situation is symmetric. 
	The point $W_2 = \big(u_2+\eps, (u_2+\eps)^2+\eps^2\big)$ is its entry node. If a convex right 
	delivery curve~$\gamma$ reaches this point (i.e. $\gamma(l)=W_2$), then $\gamma$ can be continued up to each 
	point in~$\Lt(u_1,u_2)$ (see Figure~\ref{fig:kdpl}).
\begin{figure}[H]
\begin{center}
\includegraphics{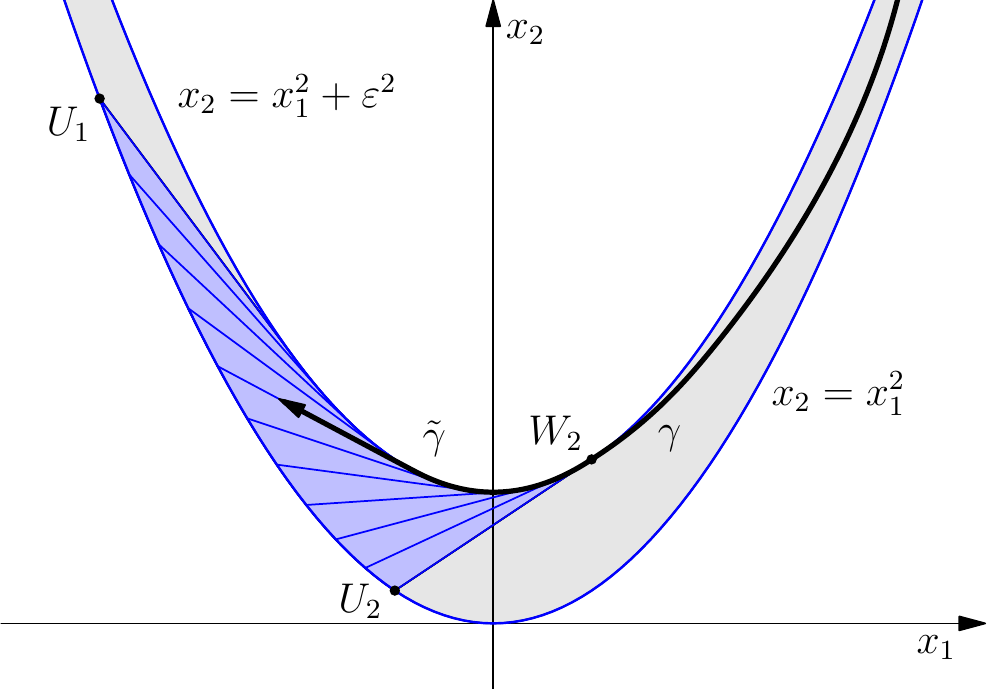}
\caption{Delivery curves in $\Lt$.}
\label{fig:kdpl}
\end{center}
\end{figure}

\paragraph{Points on the upper parabola.}
	Let $\dc$ be a convex left delivery curve that is generated by a test function~$\ex$ defined on the segment 
	$I=[l,r]$. Also, suppose it ends at the entry node~$W_1$ of the domain $\Rt(u_1,u_2)$. First, we prove that this curve can be 
	continued up to any point $W \in \Rt(u_1,u_2)$, lying on the upper boundary, in such a way that the resulting
	curve~$\widetilde \dc$ will also be a left delivery curve.	 
	
	Since delivery curves run either along extremals or along the upper parabola
	and extremals touch the upper parabola, we may assume that the convex curve~$\dc$ 
	also touches the upper parabola at the point~$W_1$. Thus, the curve~$\widetilde \dc$ cannot
	avoid being convex. But we do not regard these considerations as rigorous, and so 
	the convexity of~$\widetilde \dc$ will appear as a requirement.
		
	Now, continue the left delivery curve~$\dc$ along the upper parabola with preservation of the convexity. 
	In order to prove that the continuation~$\widetilde\dc$ is also a left delivery curve,
	we must construct a test function~$\widetilde \ex$ defined on some segment $[l,\widetilde r]$, $\widetilde r > r$,
	such that $\widetilde\dc$ is 
	generated by this function in the sense of~(\ref{eq7}) and relation~(\ref{B_extr_curve}) is fulfilled 
	for $\widetilde \ex$ and $\widetilde \dc$.

	We set $\widetilde \ex(s) = \ex(s)$ for $s \in I$. The question is how to define $\widetilde \ex(s)$ for $s>r$. 
	For $s>r$, the curve 
	$\widetilde\dc(s) = \big(\widetilde\dc_1(s), \widetilde \dc_2(s)\big)$ runs along the upper parabola, so
	$$
		\widetilde\dc_1(s)=\frac{1}{s-l}\int\limits_l^s \widetilde\ex(t)\,dt\quad\mbox{and}\quad
		\widetilde\dc_2(s)=\frac{1}{s-l}\int\limits_l^s {\widetilde\ex}^2(t)\,dt={\widetilde\dc}_1^2(s)+\eps^2.
	$$
	Therefore,
	$$
		\widetilde\ex^2(s)=\big([(s-l)\widetilde\dc_1]'\big)^2=\big((s-l)(\widetilde\dc_1^2+\eps^2)\big)',
	$$
	i.e.
	$$
		\widetilde\dc_1^2+2(s-l)\widetilde\dc_1\widetilde\dc_1'+((s-l)\widetilde\dc_1')^2=
		\widetilde\dc_1^2+2(s-l)\widetilde\dc_1\widetilde\dc_1'+\eps^2.
	$$
	Since we build the left delivery curve, we expect the function~$\widetilde\dc_1$ to be non-decreasing. 
	Therefore, taking the square root, we obtain
	\begin{equation}\label{extreq}
		\widetilde\dc_1'(s)=\frac{\eps}{s-l}.
	\end{equation}
	Note that the other root gives us the  backwards motion along the parabola.  
	Solving the equation~(\ref{extreq}), we get
	\begin{equation}\label{x_1}
		\widetilde\dc_1(s)=\eps\log(s-l) + c,
	\end{equation}
	and
	\begin{equation}\label{extrfn}
		\widetilde\ex(s)=\big((s-l)\widetilde\dc_1\big)'=\eps\log(s-l)+c+\eps.
	\end{equation}
	Now, using the continuity of the delivery curve at $s=r$, we obtain the constant in~(\ref{x_1}) and~(\ref{extrfn}):
	$$
		u_1-\eps=\dc_1(r)=\widetilde \dc_1(r)=\eps\log(r-l) + c.
	$$
	Therefore, $c=u_1-\eps\log(r-l)-\eps$ and equation~(\ref{extrfn}) takes the form
	\begin{equation}\label{eq10}
		\widetilde\ex(s)=\eps\log\frac{s-l}{r-l} + u_1, \quad s\in(r,\widetilde r],
	\end{equation}
	where the choice of $\widetilde r$ depends on the point we want to reach.

	Now we verify that $\widetilde\ex$ is an admissible test function and $\widetilde \dc$ 
	is a left delivery curve generated by this function, i.e, we prove the following statement.
	\begin{Prop}\label{S9}
		Consider a subdomain $\Rt(u_1,u_2)$\textup{,} $u_1 > -\infty$\textup, 
		foliated by the right tangents. Suppose some test function~$\ex$ defined on $I=[l,r]$ generates 
		a convex left delivery curve~$\dc$ that lies on the left of~$\Rt$ and ends at the entry node 
		$W_1=\big(u_1-\eps, (u_1-\eps)^2+\eps^2\big)$ 
		\textup{(}i.e.\textup, $\dc(r) = W_1$\textup{)}. 
		We continue this curve to the right along the upper parabola without leaving~$\Rt$. 
		If the resulting curve~$\widetilde \dc$ is convex\textup, then it is a left delivery curve generated by the
		test function
		$$	
			\widetilde\ex(s) =	\left\{ 
			\begin{aligned}
				\ex(s),\qquad &s \in I;\\
				\eps \log\frac{s-l}{r-l}+u_1,\;\; &s \in [r,\widetilde r].
			\end{aligned} \right.
		$$
	\end{Prop}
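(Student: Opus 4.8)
The statement to be checked unpacks, via Definitions~\ref{D3} and~\ref{D4}, into three assertions: (a) $\widetilde\ex\in\BMO_\eps([l,\widetilde r])$; (b) $\widetilde\dc$ is generated by $\widetilde\ex$ in the sense of~(\ref{eq7}), i.e. $\widetilde\dc(s)=\big(\av{\widetilde\ex}{[l,s]},\av{\widetilde\ex^2}{[l,s]}\big)$ for $s\in(l,\widetilde r]$; and (c) the delivery equation $B(\widetilde\dc(s))=\av{f(\widetilde\ex)}{[l,s]}$ holds for all such $s$. For $s\le r$ we have $\widetilde\ex=\ex$ and $\widetilde\dc=\dc$, so (b) and (c) on $[l,r]$ are exactly the hypotheses that $\dc$ is a left delivery curve for $\ex$, and every subinterval of $[l,r]$ obeys the $\BMO_\eps$ bound because $\ex$ does. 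Hence the whole proof is about the new arc $s\in[r,\widetilde r]$, and the plan is to handle (b), (c), (a) in turn.

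\emph{Generation (b).} The formula for $\widetilde\ex$ on $[r,\widetilde r]$ was produced by imposing that $\widetilde\dc$ stay on the upper parabola and be generated by $\widetilde\ex$; so here I would simply run the computation leading to~(\ref{eq10}) in reverse. The one thing to pin down is the integration constant: continuity of $\widetilde\dc_1$ at $s=r$ forces $\eps\log(r-l)+c=\dc_1(r)$, and $\dc_1(r)=\av{\ex}{[l,r]}=u_1-\eps$ since $\dc$ is generated by $\ex$ and $\dc(r)=W_1$ lies on the parabola. This gives $c=u_1-\eps\log(r-l)-\eps$ and the stated $\widetilde\ex$. A direct integration then yields $(s-l)\widetilde\dc_1(s)=\int_l^s\widetilde\ex$, and differentiating together with $\widetilde\dc_1'(s)=\eps/(s-l)$ (equation~(\ref{extreq})) gives $(s-l)(\widetilde\dc_1^2(s)+\eps^2)=\int_l^s\widetilde\ex^2$, which is~(\ref{eq7}) with $\widetilde\dc_2=\widetilde\dc_1^2+\eps^2$.

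\emph{Delivery equation (c).} On the new arc $\widetilde\dc(s)$ lies on the upper parabola inside $\Rt(u_1,u_2)$, where by~(\ref{e32}) the tangency parameter equals $u(s):=\widetilde\dc_1(s)+\eps=u_1+\eps\log\frac{s-l}{r-l}$, and by Proposition~\ref{S4},
$$
  B(\widetilde\dc(s))=\mrt(u(s))\big(\widetilde\dc_1(s)-u(s)\big)+f(u(s))=f(u(s))-\eps\,\mrt(u(s)).
$$
I would set $\psi(u):=e^{u/\eps}\big(f(u)-\eps\,\mrt(u)\big)$; differentiating and using the defining ODE~(\ref{e35}), $\eps\mrt'(u)+\mrt(u)-f'(u)=0$, collapses $\psi'(u)$ to $\eps^{-1}f(u)e^{u/\eps}$. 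Now split $\int_l^s f(\widetilde\ex)=\int_l^r f(\ex)+\int_r^s f(\widetilde\ex)$; the first term is $(r-l)B(\dc(r))=(r-l)\big(f(u_1)-\eps\mrt(u_1)\big)$ by the delivery equation for $\dc$ at $s=r$, and in the second I substitute $u=\widetilde\ex(t)$, so $t-l=(r-l)e^{(u-u_1)/\eps}$ and $dt=\eps^{-1}(t-l)\,du$, giving $\int_r^s f(\widetilde\ex)=\eps^{-1}(r-l)e^{-u_1/\eps}\int_{u_1}^{u(s)}f(u)e^{u/\eps}\,du$. Adding the two pieces, recognising $\psi$ via $\psi'=\eps^{-1}fe^{\cdot/\eps}$, and using $(r-l)e^{(u(s)-u_1)/\eps}=s-l$ reduces the sum to $(s-l)\big(f(u(s))-\eps\mrt(u(s))\big)=(s-l)B(\widetilde\dc(s))$, i.e.~(\ref{B_extr_curve}).

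\emph{Admissibility (a) and the main difficulty.} For $[a,b]\subset[l,\widetilde r]$ with $b\le r$, the Bellman point $x^{[a,b]}$ lies in $\Omega_\eps$ because $\ex\in\BMO_\eps([l,r])$. For $b\in(r,\widetilde r]$ I would apply Lemma~\ref{L4} to the curve $\widetilde\dc$ generated by $\widetilde\ex$ as in~(\ref{eq7}): $\widetilde\dc$ lies in $\Omega_\eps$ (over $[l,r]$ it is $\dc$, over $[r,\widetilde r]$ it sits on the upper parabola), it is the graph of a convex function by hypothesis (note $\widetilde\dc_1'=\eps/(s-l)>0$, and the two pieces have matching tangent at $s=r$ since $\dc$ touches the parabola there), it is differentiable at $b$ (a smooth point of the parabola), and its tangent there is the tangent line to the upper parabola, which lies weakly below the parabola by convexity of the latter. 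Lemma~\ref{L4} then gives $x^{[a,b]}\in\Omega_\eps$ for all $l\le a<b$, hence $\widetilde\ex\in\BMO_\eps([l,\widetilde r])$. The step I expect to require the most care is (c): the whole computation hinges on noticing that the ODE~(\ref{e35}) defining $\mrt$ is exactly the identity making the antiderivative $\psi$ close up, after which only bookkeeping of the change of variables remains; a secondary subtlety is that in (a) the tangent-to-the-parabola case is precisely the borderline situation admitted by Lemma~\ref{L4}, so one must check that lemma tolerates equality on the upper boundary.
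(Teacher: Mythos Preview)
Your proof is correct and follows essentially the same route as the paper: part~(b) reproduces the construction preceding the proposition, part~(a) is exactly the paper's one-line appeal to Lemma~\ref{L4}, and part~(c) is the same computation organized slightly differently. Where the paper solves for the constant $A$ from $B(\dc(r))$, integrates by parts to reach $\int_{u_1}^{u} f(t)\,d(e^{(t-u)/\eps})$, and then substitutes $t=\widetilde\ex(\tau)$, you package the same identity via the auxiliary function $\psi(u)=e^{u/\eps}\big(f(u)-\eps\,\mrt(u)\big)$ and the observation that the ODE~(\ref{e35}) makes $\psi'=\eps^{-1}fe^{\cdot/\eps}$; the two are the same calculation read in opposite directions.
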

	\begin{proof}
		The fact that $\widetilde\ex$ generates $\widetilde \dc$ in the sense of~(\ref{eq7}) 
		follows from the construction of $\widetilde\ex$ (see the above considerations). 
		It remains to verify two points. First, it must be proved that $\widetilde\ex$ belongs to 
		$\BMO_\eps([l,\widetilde r])$. Second, we must verify 
		relation~(\ref{B_extr_curve}) for the function~$\widetilde\ex$, the curve~$\widetilde \dc$, and 
		the candidate~$B$.
		
		The fact that $\widetilde \ex \in \BMO_\eps([l,\widetilde r])$ follows from the geometric lemma~\ref{L4}.
		Indeed, if $[a,b]\subset I$, then the Bellman point $x^{[a,b]}$ is in $\Omega_\eps$, because $\ex\in\BMO_\eps(I)$. 
		If $b>r$, then the conditions of the lemma just mentioned are fulfilled.
	
		We turn to verification of~(\ref{B_extr_curve}). In $\Rt$, the Bellman candidate~$B$  
		coincides with~$\Brt$ (see Proposition~\ref{S4}). 
		Therefore, we must check that 
		$$
			\Brt(\widetilde\dc(s);\,u_1,u_2) = \av{f(\widetilde\ex)}{[l,s]},\quad s\in(r,\widetilde r].
		$$
		By~(\ref{e321}) and~(\ref{e37}), we have
		\begin{equation}\label{eq9}
			\Brt(\widetilde\dc(s)) = -e^{-u/\eps}\bigg(\eps A + \int\limits_{u_1}^u f'(t)e^{t/\eps}\,dt\bigg) + f(u),
		\end{equation}
		where $u = \widetilde\dc_1(s)+\eps$.
		On the other hand, using the same relations and the continuity of~$B$, we get
		$$
			B(\dc(r)) = -\eps e^{-u_1/\eps} A + f(u_1).
		$$

		Now, expressing $A$ in terms of $B(\dc(r))$, substituting the resulting expression into~(\ref{eq9}), 
		and then integrating by parts, we have
		\begin{align*}
			\Brt(\widetilde\dc(s))
			&=e^{(u_1-u)/\eps} B(\dc(r)) -\int\limits_{u_1}^u f'(t)e^{(t-u)/\eps}\,dt - e^{(u_1-u)/\eps} f(u_1) + f(u)
			\\
			&=e^{(u_1-u)/\eps} B(\dc(r))+\int\limits_{u_1}^u f(t)\,d(e^{(t-u)/\eps}).
		\end{align*} 
		Further, using~(\ref{x_1}), we obtain
		$$
			e^{(u_1-u)/\eps} = e^{(\dc_1(r)-\dc_1(s))/\eps} = \frac{r-l}{s-l}.
		$$
		We make the substitution $t=\widetilde\ex(\tau)$. Using formula~(\ref{eq10}) and the previous equation, 
		we get
		$$
			e^{(t-u)/\eps} = \frac{\tau-l}{r-l} e^{(u_1-u)/\eps} = \frac{\tau-l}{s-l}.
		$$
		It follows from the above that $\tau$ runs over $(r,s]$ provided $t$ runs over $(u_1,u]$.
		Using the substitution just described and the fact that $B(\dc(r)) = \av{f(\ex)}{[l,r]}$, we have
		\begin{align*}
			\Brt(\widetilde\dc(s)) 
			&= \frac{r-l}{s-l} B(\dc(r)) + \frac{1}{s-l}\int\limits_r^s f(\widetilde\ex(\tau))\,d\tau
			\\
			&= \frac{1}{s-l}\int\limits_l^s f(\widetilde\ex(\tau))\,d\tau
			= \av{f(\widetilde\ex)}{[l,s]}.
		\end{align*}
		This concludes the proof.
	\end{proof}

	Similarly, we can prove a symmetric proposition for $\Lt(u_1,u_2)$, $u_2<+\infty$.
	\begin{Prop}\label{S10}
		Consider a subdomain $\Lt(u_1,u_2)$\textup, $u_2<+\infty$\textup, foliated by the left tangents. Suppose 
		some test function~$\ex$ defined on $I=[l,r]$ generates 
		a convex right delivery curve $\dc$ that lies on the right of~$\Lt$ and ends at the entry 
		node $W_2 = \big(u_2+\eps, (u_2+\eps)^2+\eps^2\big)$ 
		\textup{(}i.e.\textup, $\dc(l) = W_2$\textup{)}. We continue this curve to the left along the upper parabola 
		without leaving~$\Lt$. If the resulting curve~$\widetilde \dc$ is convex\textup, 
		then it is a right delivery curve generated by the
		test function
		$$	
			\widetilde\ex(s) =	\left\{ 
			\begin{aligned}
				\eps \log\frac{r-l}{r-s}+u_2,\;\; &s \in [\widetilde l,l];\\
				\ex(s),\qquad &s \in I.
			\end{aligned} \right.
		$$
	\end{Prop}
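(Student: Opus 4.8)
The proof is the mirror image of that of Proposition~\ref{S9}: one reflects the whole picture so that left and right delivery curves are interchanged, the endpoints $l$ and $r$ swap roles, the families~\textup{(R)} and~\textup{(L)} swap roles, and the parameters $u_1$ and $u_2$ swap roles. (Alternatively one could deduce the statement from Proposition~\ref{S9} by the change of variable $x_1\mapsto-x_1$, which preserves $\BMO_\eps$, negates averages, turns a left delivery curve into a right one, and replaces $f(u)$ by $f(-u)$; but rerunning the argument is just as short.) As in Proposition~\ref{S9}, there are three things to verify: that $\widetilde\ex$ generates $\widetilde\dc$ in the sense of~(\ref{eq11}); that $\widetilde\ex\in\BMO_\eps([\widetilde l,r])$; and that relation~(\ref{B_extr_curve_r}) holds for $\widetilde\ex$, $\widetilde\dc$, and the candidate~$B$.

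For the first point, on the new piece $s\in[\widetilde l,l]$ the curve $\widetilde\dc$ runs along the upper parabola, so $\widetilde\dc_2=\widetilde\dc_1^2+\eps^2$; writing $(r-s)\widetilde\dc_1=\int_s^r\widetilde\ex$ and $(r-s)\widetilde\dc_2=\int_s^r\widetilde\ex^2$ and differentiating in $s$, one gets $\big(\frac{d}{ds}[(r-s)\widetilde\dc_1]\big)^2=-\frac{d}{ds}\big[(r-s)(\widetilde\dc_1^2+\eps^2)\big]$, which collapses to $(r-s)^2(\widetilde\dc_1')^2=\eps^2$ exactly as in the computation preceding Proposition~\ref{S9}. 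Since a right delivery curve generated by a non-decreasing test function has $\widetilde\dc_1$ non-decreasing in $s$ (enlarging $[s,r]$ to the left lowers the average), the relevant root is $\widetilde\dc_1'(s)=\eps/(r-s)$, the other root describing the backwards motion along the parabola. Integrating gives $\widetilde\dc_1(s)=-\eps\log(r-s)+c$, whence $\widetilde\ex(s)=-\big((r-s)\widetilde\dc_1\big)'=\widetilde\dc_1(s)-\eps$; fixing $c$ by the continuity relation $\widetilde\dc_1(l)=\dc_1(l)=u_2+\eps$ yields the asserted formula $\widetilde\ex(s)=\eps\log\frac{r-l}{r-s}+u_2$, while on $I$ we simply set $\widetilde\ex=\ex$.

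The inclusion $\widetilde\ex\in\BMO_\eps([\widetilde l,r])$ follows from Lemma~\ref{L4} in its version for curves generated in the sense of~(\ref{eq11}). For $[a,b]\subset I$ the Bellman point $x^{[a,b]}$ lies in $\Omega_\eps$ because $\ex\in\BMO_\eps(I)$; for $a<l$ the point $\widetilde\dc(a)$ lies on the upper parabola, the tangent to $\widetilde\dc$ there is the tangent to the upper parabola (which lies below the upper boundary, since $x_1^2+\eps^2-(2wx_1-w^2+\eps^2)=(x_1-w)^2\ge0$), and the assumed convexity of $\widetilde\dc$ lets Lemma~\ref{L4} apply.

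For the last point, $B$ coincides with $\Blt$ on $\Lt$ by Proposition~\ref{S4}, so we must show $\Blt(\widetilde\dc(s);\,u_1,u_2)=\av{f(\widetilde\ex)}{[s,r]}$ for $s\in[\widetilde l,l)$. Using~(\ref{e322}) and~(\ref{e310}) with $u=\widetilde\dc_1(s)-\eps$ (so that $\widetilde\dc_1(s)-u=\eps$), evaluating at $s=l$ (where $u=u_2$), using $B(\dc(l))=\av{f(\ex)}{[l,r]}$ to eliminate the integration constant, and integrating by parts as in the proof of Proposition~\ref{S9}, one obtains $\Blt(\widetilde\dc(s))=e^{(u-u_2)/\eps}B(\dc(l))+\int_{u_2}^{u}f(t)\,d\big(e^{(u-t)/\eps}\big)$. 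From $\widetilde\dc_1(s)=-\eps\log(r-s)+c$ one has $e^{(u-u_2)/\eps}=\frac{r-l}{r-s}$, and the substitution $t=\widetilde\ex(\tau)=\eps\log\frac{r-l}{r-\tau}+u_2$ gives $e^{(u-t)/\eps}=\frac{r-\tau}{r-s}$, with $\tau$ ranging over $[s,l]$ as $t$ ranges over $[u,u_2]$; this turns the two terms into $\frac{1}{r-s}\int_l^r f(\widetilde\ex)$ and $\frac{1}{r-s}\int_s^l f(\widetilde\ex)$, whose sum is $\av{f(\widetilde\ex)}{[s,r]}$, as required. I do not expect a genuine obstacle here, the whole argument being the line-by-line reflection of Proposition~\ref{S9}; the only spots demanding a moment's care are the choice of sign of the square root when solving the parabola equation (forced by the direction in which a right delivery curve sweeps the upper parabola) and the invocation of Lemma~\ref{L4}, which needs $\widetilde\dc$ convex with all its tangents below the upper boundary.
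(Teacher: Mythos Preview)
Your proposal is correct and takes essentially the same approach as the paper, which simply states that Proposition~\ref{S10} is proved ``similarly'' to Proposition~\ref{S9} without writing out the mirror-image details. Your three-step verification (generation of $\widetilde\dc$ by $\widetilde\ex$, the $\BMO_\eps$ bound via Lemma~\ref{L4}, and the integration-by-parts identity for~(\ref{B_extr_curve_r})) is exactly the symmetric rerun the paper alludes to, and your sign choices and substitutions check out.
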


\paragraph{Points inside the domain.}
	We have explained how to continue delivery curves from entry node~$W_1$ (or~$W_2$) to a 
	point in~$\Rt$ (respectively, in~$\Lt$) lying on the upper parabola. 	
	It occurs that each of the other points in these subdomains (but the points on the lower boundary) 
	can be reached if we continue the delivery curve along the corresponding tangent that contains this point.
	
	Now, let $\dc$ be a left delivery curve that is generated by a test function~$\ex$ defined on $I=[l,r]$.
	Suppose we have continued this curve from the point~$\dc(r)$ along
	some straight-line segment that ends at some point~$U$ on the lower boundary (e.g. along an extremal tangent).
	We want to find a function~$\widetilde \ex$ that generates the resulting curve~$\widetilde\dc$. 
	Set $\widetilde \ex(s) = \ex(s)$ for $s\in I$ and consider the case $s>r$. 
	Since three points $\tilde \gamma(r)$, $\tilde \gamma(s)$, and $U$ lie on a single line, we have
	$$
		\frac{u^2 - \widetilde\dc_2(s)}{u-\widetilde\dc_1(s)} = \frac{u^2 - \dc_2(r)}{u-\dc_1(r)},
	$$
	i.e.
	$$
		\big(u-\dc_1(r)\big)\bigg((s-l)u^2 - \int\limits_l^s\!\! \widetilde\ex^2\bigg) = 
		\big(u^2-\dc_2(r)\big)\bigg((s-l)u - \int\limits_l^s\!\! \widetilde\ex\bigg).
	$$
	Differentiating this identity with respect to~$s$, we obtain the quadratic equation on $\widetilde\ex(s)$:
	$$
		\big(u-\dc_1(r)\big)\big(u^2-\widetilde\ex^2(s)\big) = \big(u^2-\dc_2(r)\big)\big(u-\widetilde\ex(s)\big).
	$$
	We will see that its solution $\widetilde\ex(s) = u$, $s>r$, is suitable for us. The second solution corresponds to 
	the reverse motion along the straight line containing the segment $[\dc(r),U]$.
	
	It turns out that the following three conditions are sufficient for $\widetilde\dc$ to be a delivery curve:
	the curve~$\widetilde\dc$ must still be convex,
	the straight line that contains $[\dc(r),U]$ must lie below the upper boundary of~$\Omega_\eps$, 
	and the Bellman candidate~$B$ must be linear along the segment $[\dc(r),U]$. In our 
	situation where a delivery curve in continued in $\Rt$ along one of the extremal tangents, all these
	conditions are surely satisfied.	

	We prove the following general proposition.
	\begin{Prop}\label{S11}
		Let $\dc$ be a convex left delivery curve that is generated by a test function~$\ex$ 
		defined on $I=[l,r]$. We draw a straight-line segment from the point $\dc(r)$ to some
		point $U$ on the lower boundary with preservation of the convexity. 
		Suppose $B$ 
		is linear on the segment $[\dc(r),U]$ and the line
		containing this segment lies below the upper boundary. Then we can 
		continue~$\dc$ up to any point inside $[\dc(r),U]$ so that the resulting 
		curve~$\widetilde \dc$ will also be a left delivery curve. In this case\textup, 
		the curve~$\widetilde \dc$ is generated by 
		the test function
		$$
			\widetilde\ex(s) =	
			\begin{cases}
				\ex(s), &s \in I;\\
				u, &s \in [r,\widetilde r].
			\end{cases}
	  $$
	\end{Prop}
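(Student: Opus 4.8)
\section*{Proof proposal}

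The plan is to follow the template already used for Propositions~\ref{S9} and~\ref{S10}. By the construction preceding the statement, $\widetilde\ex$ generates $\widetilde\dc$ in the sense of~(\ref{eq7}): the value $\widetilde\ex(s)=u$ for $s>r$ is exactly the root of the quadratic equation derived there, the other root being the reverse traversal of the segment. So it will remain to verify the two substantive conditions of Definition~\ref{D4}, namely that $\widetilde\ex\in\BMO_\eps([l,\widetilde r])$ and that the identity~(\ref{B_extr_curve}) holds along $\widetilde\dc$.

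First I would record the explicit shape of the continued curve. Computing the averages for $s>r$ gives $\av{\widetilde\ex}{[l,s]}=\frac{1}{s-l}\big((r-l)\dc_1(r)+(s-r)u\big)$ and likewise $\av{\widetilde\ex^2}{[l,s]}=\frac{1}{s-l}\big((r-l)\dc_2(r)+(s-r)u^2\big)$, so that $\widetilde\dc(s)=\frac{r-l}{s-l}\,\dc(r)+\frac{s-r}{s-l}\,U$; thus $\widetilde\dc$ runs along $[\dc(r),U]$ from $\dc(r)$ (attained at $s=r$) towards $U$, and $\widetilde r$ is chosen so that $\widetilde\dc(\widetilde r)$ is the prescribed interior point of the segment. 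I would also note that, $\dc$ being a left delivery curve, relation~(\ref{B_extr_curve}) at $s=r$ gives $B(\dc(r))=\av{f(\ex)}{[l,r]}$.

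For membership in $\BMO_\eps([l,\widetilde r])$ I would argue by cases on a subinterval $[a,b]\subset[l,\widetilde r]$, showing the Bellman point $x^{[a,b]}$ lies in $\Omega_\eps$: if $b\le r$ this is just $\ex\in\BMO_\eps(I)$; if $a\ge r$ then $\widetilde\ex\equiv u$ on $[a,b]$ and $x^{[a,b]}=(u,u^2)$ sits on the lower parabola; the only interesting case, $a<r<b$, is handled by the geometric Lemma~\ref{L4} applied to $\widetilde\ex$ and $\widetilde\dc$ at the point $b$. This is the step I expect to need the most care: to invoke Lemma~\ref{L4} I must check that $\widetilde\dc$ is convex (a hypothesis of the proposition), that it lies in $\Omega_\eps$ entirely, and that its tangent at $\widetilde\dc(b)$ lies below the upper parabola. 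The tangent condition is immediate, since for $b>r$ the curve coincides near $\widetilde\dc(b)$ with the line through $[\dc(r),U]$, which is assumed to be below the upper boundary; and the inclusion $\widetilde\dc\subset\Omega_\eps$ follows because the old part $\dc$ is a delivery curve and the new segment $[\dc(r),U]$ lies below the upper boundary (by hypothesis) and above the lower parabola (the region $\{x_2\ge x_1^2\}$ is convex and contains both $\dc(r)$ and $U$).

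Finally, for~(\ref{B_extr_curve}) I would use that $B$ is linear on $[\dc(r),U]$ together with the boundary condition $B(U)=f(u)$: substituting the convex combination above gives $B(\widetilde\dc(s))=\frac{r-l}{s-l}\,B(\dc(r))+\frac{s-r}{s-l}\,f(u)=\frac{1}{s-l}\big(\int_l^r f(\ex)+(s-r)f(u)\big)$, and since $\widetilde\ex\equiv u$ on $(r,s]$ the right-hand side is exactly $\av{f(\widetilde\ex)}{[l,s]}$. The bulk of the work is thus the case analysis around the junction $s=r$ for the $\BMO$ estimate; the rest is direct computation, paralleling the proof of Proposition~\ref{S9}.
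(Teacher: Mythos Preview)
Your proposal is correct and follows essentially the same approach as the paper: the convex-combination representation of $\widetilde\dc(s)$, the invocation of Lemma~\ref{L4} for the $\BMO_\eps$ membership, and the linearity of $B$ on $[\dc(r),U]$ for identity~(\ref{B_extr_curve}) are exactly the ingredients used there. Your case analysis for the $\BMO$ estimate and your explicit verification of the hypotheses of Lemma~\ref{L4} are more detailed than the paper's terse appeal to that lemma, but the argument is the same.
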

	\begin{proof}
		We must verify~(\ref{eq7}) and~(\ref{B_extr_curve}) for 
		$\widetilde \ex$, $\widetilde \dc$, and $B$. We must also make sure that 
		$\widetilde \ex \in \BMO_\eps([l,\widetilde r])$.
				
		Let $s \in (r,\widetilde r]$. For such $s$, we verify that the points of the curve 
		$\widetilde \dc(s) = \big(\av{\widetilde\ex}{[l,s]},\av{\widetilde\ex^2}{[l,s]}\big)$
		get into $[\dc(r),U]$. We also check that we can reach any point inside $[\dc(r),U]$
		provided $\widetilde r$ is sufficiently large.
		Indeed, we have the identity
		$$
			\int\limits_l^s \widetilde{\ex}^k(t)\,dt = \int\limits_l^r \ex^k(t)\,dt + (s-r)u^k,\quad 
			\mbox{for} \quad k=1,2,
		$$
		which implies the representation
		\begin{equation}\label{eq8}
			\widetilde\dc(s) = \alpha_- \dc(r) + \alpha_+ U,\quad
			\mbox{where}\quad \alpha_- = \frac{r-l}{s-l}\quad \mbox{and}\quad \alpha_+ = \frac{s-r}{s-l}.
		\end{equation}
		Thus, we have proved that $\widetilde \dc$ and $\widetilde \ex$ are related by~(\ref{eq7}). 
		
		The fact that $\widetilde \ex$ belongs to $\BMO_\eps([l,\widetilde r])$ follows 
		from the geometric lemma~\ref{L4}.
		
		It remains to verify equation~(\ref{B_extr_curve}). 
		Using the linearity of $B$ on $[\dc(r),U]$ and  
		representation~(\ref{eq8}), we obtain
		\begin{align*}
			B(\widetilde\dc(s))&=\alpha_- B(\dc(r))+\alpha_+ B(U)\\		
			&=\frac{r-l}{s-l}\av{f(\ex)}{[l,r]}+\frac{s-r}{s-l} f(u)\\			
			&=\frac{1}{s-l}\bigg(\int\limits_l^r f(\ex(t))\,dt+\int\limits_r^s f(u)\,dt\bigg)\\
			&=\av{f(\widetilde\ex)}{[l,s]}.
		\end{align*}
		The proposition is proved.
	\end{proof}
		
	Similarly, we can prove a symmetric statement for right delivery curves.
	\begin{Prop}\label{S12}
		Let $\dc$ be a convex right delivery curve that is generated by a test function~$\ex$ defined on $I=[l,r]$. 
		We draw a straight-line segment from the point $\dc(l)$ to some
		point $U$ on the lower boundary with preservation of the convexity. 
		Suppose $B$ is linear on the segment $[U,\dc(l)]$ and the line
		containing this segment lies below the upper parabola. Then we can continue~$\dc$ up to any point 
		inside $[U,\dc(l)]$ so that the resulting 
		curve~$\widetilde \dc$ will also be a right delivery curve. In this case\textup, 
		the curve~$\widetilde \dc$ is generated by the test function
		$$
			\widetilde\ex(s) =
			\begin{cases}
				u, &s \in [\widetilde l, l];\\
				\ex(s), &s \in I.
			\end{cases}
	  $$		
	\end{Prop}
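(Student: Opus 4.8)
The plan is to deduce this proposition from Proposition~\ref{S11} by a reflection argument, exactly as the sentence ``Similarly, we can prove a symmetric statement'' suggests; the symmetry here is the map $t\mapsto -t$ in the variable of the test function, combined with the map $x_1\mapsto -x_1$ on $\Omega_\eps$ (which leaves $\Omega_\eps$ invariant and sends the lower parabola to itself). First I would set up this reflection carefully: given the data of Proposition~\ref{S12} --- a convex right delivery curve $\dc$ generated by $\ex$ on $I=[l,r]$, a point $U=(u,u^2)$ on the lower boundary, and a Bellman candidate $B$ linear on $[U,\dc(l)]$ with the containing line below the upper parabola --- I pass to $\check\ex(s)\df -\ex(-s)$ on the reflected interval $\check I=[-r,-l]$, to $\check B(x_1,x_2)\df B(-x_1,x_2)$, and to the reflected point $\check U=(-u,u^2)$. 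One checks that $\check B$ is again a Bellman candidate (local concavity and the boundary condition are preserved since $(-u)^2=u^2$), and that the curve $\check\dc$ generated by $\check\ex$ in the sense of~(\ref{eq7}) is precisely the reflection of $\dc$; in particular convexity is preserved and $\check\dc$ becomes a \emph{left} delivery curve for $\check\ex$ (because cutting from the right of $\check I$ corresponds to cutting from the left of $I$), ending at $\check\dc(-l)$, the reflection of $\dc(l)$. The hypotheses of Proposition~\ref{S11} are then satisfied for $\check\ex$, $\check\dc$, $\check B$, and $\check U$.

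Next I would invoke Proposition~\ref{S11} to conclude that $\check\dc$ extends to a left delivery curve $\widetilde{\check\dc}$ generated by the test function equal to $\check\ex$ on $\check I$ and equal to the constant $-u$ on $[-l,\widetilde r\,']$ for an appropriate $\widetilde r\,'$. Reflecting back via $s\mapsto -s$, this says exactly that $\dc$ extends to a right delivery curve $\widetilde\dc$ on $[\widetilde l,r]$ (with $\widetilde l=-\widetilde r\,'$) generated by the function that equals $u$ on $[\widetilde l,l]$ and $\ex$ on $I$, which is the assertion of Proposition~\ref{S12}. I would also note that reaching ``any point inside $[U,\dc(l)]$'' transfers under reflection to ``any point inside $[\check U,\check\dc(-l)]$'', so the full strength of the conclusion carries over.

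Alternatively --- and perhaps more in the spirit of the paper, which gives fully explicit verifications --- one can simply repeat the proof of Proposition~\ref{S11} verbatim with the roles of $l$ and $r$ interchanged, i.e. work with averages over $[s,r]$ instead of $[l,s]$. Concretely, for $s\in[\widetilde l,l)$ one has $\int_s^r\widetilde\ex^{\,k}=\int_l^r\ex^k+(l-s)u^k$ for $k=1,2$, which yields the convex-combination representation $\widetilde\dc(s)=\alpha_-\dc(l)+\alpha_+U$ with $\alpha_-=\frac{r-l}{r-s}$ and $\alpha_+=\frac{l-s}{r-s}$; this checks~(\ref{eq11}). Membership $\widetilde\ex\in\BMO_\eps([\widetilde l,r])$ follows from the second half of Lemma~\ref{L4} (the one phrased for curves in the sense of~(\ref{eq11})), using the hypothesis that the line through $[U,\dc(l)]$ lies below the upper parabola. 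Finally, the linearity of $B$ on $[U,\dc(l)]$ together with the above representation gives $B(\widetilde\dc(s))=\alpha_-B(\dc(l))+\alpha_+B(U)=\alpha_-\av{f(\ex)}{[l,r]}+\alpha_+f(u)=\av{f(\widetilde\ex)}{[s,r]}$, which is~(\ref{B_extr_curve_r}).

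The only genuine point requiring care --- the ``main obstacle'', though it is minor --- is bookkeeping with orientations: making sure that under reflection a right delivery curve genuinely becomes a left one, that the ``time runs backwards'' convention for right delivery curves is handled consistently, and that the convexity hypothesis (a geometric, orientation-free condition) together with the ``below the upper parabola'' condition survives the reflection unchanged. Once the dictionary $s\leftrightarrow -s$, $x_1\leftrightarrow -x_1$ is fixed, everything else is a transcription of the already-proved Proposition~\ref{S11}, so I would keep the written proof to one or two sentences, exactly as the authors do for the other symmetric statements (Propositions~\ref{S10} and~\ref{S12}).
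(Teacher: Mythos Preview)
Your proposal is correct and matches the paper's approach: the paper simply states Proposition~\ref{S12} with the remark ``Similarly, we can prove a symmetric statement for right delivery curves,'' and your second route (repeating the proof of Proposition~\ref{S11} with averages over $[s,r]$, the representation $\widetilde\dc(s)=\tfrac{r-l}{r-s}\dc(l)+\tfrac{l-s}{r-s}U$, the second half of Lemma~\ref{L4}, and the linearity of $B$) is exactly that transcription. Your first route via the reflection $x_1\mapsto -x_1$, $s\mapsto -s$ is also valid and amounts to formalizing the word ``symmetric''; the only point of care, which you correctly flag, is that $\check B$ is a Bellman candidate for the reflected boundary function $f(-t)$ rather than for $f$, but this is harmless since Proposition~\ref{S11} applies to any Bellman candidate.
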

	Applying Propositions~\ref{S9} and~\ref{S11} for the case $\Rt(u_1,u_2)$ or Propositions~\ref{S10} and~\ref{S12} for the case
	$\Lt(u_1,u_2)$, we can continue delivery curves from entry nodes up to any points of these domains, except 
	the points on the lower boundary.	
	But for each point $U$ on the lower boundary, we can take the optimizer~$\ex$ to be equal to $u$ 
	on the whole interval~$I$, because of the boundary condition (although it is clear without any
	optimizers that $\Bell$ and $B$ coincide on the lower boundary).
	
	It is worth mentioning that we only continue delivery curves already constructed, but not build new ones, i.e.
	we require some information from the left neighbor of~$\Rt$ or from the right neighbor of~$\Lt$. 
	In~\cite{SlVa2}, the domain $\Rt(u_1,u_2)$ with $u_1 \ne -\infty$ 
	was called \emph{left-incomplete}, and the domain $\Lt(u_1,u_2)$ with $u_2 \ne +\infty$ was called
	\emph{right-incomplete}.
		
	\paragraph{Unbounded domains.}
	Discuss domains $\Rt(-\infty,u_2)$ and $\Lt(u_1,+\infty)$ unbounded on one side. As usual, 
	we treat in detail only $\Rt(-\infty,u_2)$ and left 
	delivery curves in it. It turns out that 
	we can draw a left delivery curve from $-\infty$ to every point of this domain 
	(except the points on the lower boundary). At this time, we do not need any extra information.
	
	Consider some curve $\dc = (\dc_1,\dc_2)$ that runs along the upper parabola from~$-\infty$ up to 
	some point $W = (w,w^2+\eps^2)$. According to the arguments 
	preceding Proposition~\ref{S9}, such a curve is generated by the function
	$$
		\ex(s)=\eps\log(s-l)+c+\eps
	$$
	defined on $I=[l,r]$, and
	$$
		\dc_1(s)=\eps\log(s-l) + c.
	$$
	We set $I=[l,r]=[0,1]$ and calculate~$c$:
	$$
		w = \dc_1(1) = c.
	$$
	As usual, the fact that $\ex$ lies in $\BMO_\eps(I)$ follows from Lemma~\ref{L4}.
	In order to prove equation~(\ref{B_extr_curve}), we must repeat the corresponding reasoning 
	from the proof of Proposition~\ref{S9}. But now we must integrate from
	$-\infty$ and the constant~$A$ is equal to zero.
	We got the following statement.
	\begin{Prop}\label{S13}
		Consider a subdomain $\Rt(-\infty,u_2)$ foliated by the right tangents.
		If a point $W = (w,w^2+\eps^2)$ of the upper parabola lies in this 
		subdomain\textup, then we can construct a left delivery 
		curve running along the upper parabola from~$-\infty$ to~$W$. Such a curve is generated by the test function
		$$
			\ex(s)=\eps\log s+w+\eps,\quad s\in[0,1].
		$$			
	\end{Prop}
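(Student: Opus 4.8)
The plan is to recognize Proposition~\ref{S13} as the unbounded-domain specialization of the computation carried out just before Proposition~\ref{S9}, performed with lower integration limit $-\infty$ and with the integration constant $A$ equal to $0$ (as forced by Proposition~\ref{S5}). First I would recall from the derivation around~(\ref{extreq})--(\ref{extrfn}) that a curve traversed along the upper parabola in the increasing direction is generated, in the sense of~(\ref{eq7}), by $\ex(s)=\eps\log(s-l)+c+\eps$ on an interval $[l,r]$, its first coordinate being $\dc_1(s)=\eps\log(s-l)+c$. Specializing to $I=[l,r]=[0,1]$ and imposing $\dc_1(1)=w$ gives $c=w$, hence $\ex(s)=\eps\log s+w+\eps$ as claimed; a short computation of $\av{\ex}{[0,s]}$ and $\av{\ex^2}{[0,s]}$ confirms that $\dc(s)=\big(\eps\log s+w,\,(\eps\log s+w)^2+\eps^2\big)$, so $\dc$ runs along the upper parabola, emanating from $-\infty$ as $s\to0+$ and reaching $W$ at $s=1$. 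Since $W\in\Rt(-\infty,u_2)$ and the first coordinate of $\dc(s)$ never exceeds $w$, the whole arc stays inside $\Rt(-\infty,u_2)$; and although $\ex(s)\to-\infty$ as $s\to0+$, the logarithm is integrable, so $\ex\in L^1([0,1])$ and all these averages are well defined.

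Next I would check $\ex\in\BMO_\eps([0,1])$ via Lemma~\ref{L4}. The curve $\dc$ coincides with the graph of the convex function $x_1\mapsto x_1^2+\eps^2$, it lies in $\Omega_\eps$, and it is differentiable everywhere; being the upper boundary of $\Omega_\eps$ itself, it has all its tangents lying (weakly) below that upper boundary. Hence Lemma~\ref{L4} applies at every point and yields $x^{[a,b]}\in\Omega_\eps$ for all $0\le a<b\le1$ (for $a=0$ the point $x^{[0,b]}$ equals $\dc(b)$, which already lies on the upper parabola). Since $-\ex$ obeys the same bound, this is precisely the assertion $\ex\in\BMO_\eps([0,1])$.

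It remains to verify the delivery-curve identity~(\ref{B_extr_curve}). On $\Rt(-\infty,u_2)$ the candidate $B$ coincides with $\Brt(\cdot;\,-\infty,u_2)$ by Proposition~\ref{S5}, so by~(\ref{e313}) and~(\ref{e314}), with $u=\dc_1(s)+\eps=\eps\log s+w+\eps$,
\[
	B(\dc(s))=-e^{-u/\eps}\int_{-\infty}^{u} f'(t)e^{t/\eps}\,dt+f(u).
\]
Integrating by parts and discarding the boundary term at $-\infty$ rewrites the right-hand side as $\eps^{-1}\int_{-\infty}^{u}f(t)e^{(t-u)/\eps}\,dt$. The substitution $t=\ex(\tau)$, under which $e^{(t-u)/\eps}=\tau/s$, $\eps^{-1}\,dt=d\tau/\tau$, and $\tau$ ranges over $(0,s]$ as $t$ ranges over $(-\infty,u]$, then turns this into $\frac{1}{s}\int_0^{s}f(\ex(\tau))\,d\tau=\av{f(\ex)}{[0,s]}$, which is exactly~(\ref{B_extr_curve}) with $l=0$.

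The one genuinely new point compared with Proposition~\ref{S9} — and the step I expect to require the most care — is precisely this passage of the lower limit to $-\infty$ together with the choice $A=0$: one must make sure that every boundary term produced by the integration by parts vanishes and that the improper integrals $\int_{-\infty}^{u}f^{(r)}(t)e^{t/\eps}\,dt$ converge. Both rest on the bound $|f(t)|\le Ce^{|t|/\eps_0}$ furnished by Lemma~\ref{L3} together with the strict inequality $\eps<\eps_0$ (so that $e^{t(1/\eps-1/\eps_0)}\to0$ as $t\to-\infty$); apart from that, the argument repeats the proof of Proposition~\ref{S9} essentially verbatim.
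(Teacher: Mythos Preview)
Your proposal is correct and follows essentially the same approach as the paper: derive $\ex$ and $\dc$ from the computation preceding Proposition~\ref{S9} with $I=[0,1]$ and $c=w$, invoke Lemma~\ref{L4} for the $\BMO_\eps$ bound, and repeat the integration-by-parts argument from the proof of Proposition~\ref{S9} with lower limit $-\infty$ and $A=0$. You have simply written out explicitly the details (the substitution $t=\ex(\tau)$ and the appeal to Lemma~\ref{L3} for the vanishing boundary terms) that the paper leaves implicit; the only superfluous remark is the comment about $-\ex$, since the conclusion $x^{[a,b]}\in\Omega_\eps$ from Lemma~\ref{L4} is already exactly the $\BMO_\eps$ condition.
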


	For $\Lt(u_1,+\infty)$, we can formulate a symmetric proposition about right delivery curves running
	along the upper parabola.
	\begin{Prop}\label{S14}
		Consider a subdomain $\Lt(-\infty,u_2)$ foliated by the left tangents.
		If a point $W = (w,w^2+\eps^2)$ of the upper parabola lies in this subdomain\textup, then we 
		can construct a right delivery curve running along the upper parabola from~$+\infty$ to~$W$. 
		Such a curve is generated by the test function
		$$
			\ex(s)=-\eps\log(1-s)+w-\eps,\quad s\in[0,1].
		$$			
	\end{Prop}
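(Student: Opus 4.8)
The plan is to repeat, mutatis mutandis, the proof of Proposition~\ref{S13} — equivalently, to symmetrise the argument of Proposition~\ref{S10} to the unbounded case — interchanging left and right delivery curves and $-\infty$ with $+\infty$. (Here $\Lt(u_1,+\infty)$ is meant: the subdomain foliated by the left tangents that is unbounded on the right.) First I would recover the test function from the requirement that the right delivery curve stay on the upper parabola. If a right delivery curve $\dc=(\dc_1,\dc_2)$ is generated by $\ex$ on $I=[l,r]$, then $\dc_1(s)=\frac1{r-s}\int_s^r\ex$ and $\dc_2(s)=\frac1{r-s}\int_s^r\ex^2$ by~(\ref{eq11}); differentiating in $s$ gives $\ex(s)=\dc_1(s)-(r-s)\dc_1'(s)$, and substituting the constraint $\dc_2=\dc_1^2+\eps^2$ yields $(r-s)^2(\dc_1')^2=\eps^2$. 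I would take the root $(r-s)\dc_1'=\eps$ — the other one is the backward motion along the parabola, exactly as in the computation preceding Proposition~\ref{S9} — integrate to get $\dc_1(s)=-\eps\log(r-s)+c$, hence $\ex(s)=\dc_1(s)-\eps$, then normalise $I=[0,1]$ and fix $c$ from the endpoint condition $\dc_1(l)=w$ (the curve ends at $W$ when the backward time $s$ reaches $l$). This produces $\dc_1(s)=-\eps\log(1-s)+w$ and $\ex(s)=-\eps\log(1-s)+w-\eps$, the claimed function; it is non-decreasing, lies in $L^1([0,1])$, and generates $\dc$ in the sense of~(\ref{eq11}) by construction.

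Next I would check the two remaining conditions of Definition~\ref{D5}. For $\ex\in\BMO_\eps([0,1])$ I would invoke the right–delivery–curve half of Lemma~\ref{L4}: $\dc$ is a convex arc lying inside $\Omega_\eps$, and its tangent at any point $\dc(a)$ is a tangent line to the upper parabola, which lies weakly below the upper boundary (since $2vx_1+\eps^2-v^2\le x_1^2+\eps^2$ for all $v$); hence every Bellman point $x^{[a,b]}$ belongs to $\Omega_\eps$. For~(\ref{B_extr_curve_r}) I would use that in $\Lt(u_1,+\infty)$ the candidate coincides with $\Blt(\,\cdot\,;u_1,+\infty)$ and that on the upper parabola $u_{\mathrm L}=x_1-\eps$, so along $\dc$ one has $u=\dc_1(s)-\eps=\ex(s)$ and $x_1-u=\eps$. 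Substituting this together with~(\ref{e317}) into~(\ref{e316}) (the integration constant already being zero in the unbounded case, cf.\ Proposition~\ref{S5}) and integrating by parts — using $f(t)e^{-t/\eps}\to0$ as $t\to+\infty$, which holds since $\eps<\eps_0$ by Lemma~\ref{L3} — should give $\Blt(\dc(s))=\eps^{-1}e^{u/\eps}\int_u^{+\infty}f(t)e^{-t/\eps}\,dt$. Finally I would substitute $t=\ex(\tau)$: from the explicit form of $\ex$ one gets $dt=\frac{\eps}{1-\tau}\,d\tau$, $e^{-t/\eps}=(1-\tau)e^{-(w-\eps)/\eps}$, and $e^{u/\eps}=\frac1{1-s}e^{(w-\eps)/\eps}$, while $t=u$ corresponds to $\tau=s$ and $t\to+\infty$ to $\tau\to r$; the exponential factors cancel and the right-hand side collapses to $\frac1{r-s}\int_s^r f(\ex(\tau))\,d\tau=\av{f(\ex)}{[s,r]}$, which is~(\ref{B_extr_curve_r}).

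I do not anticipate a genuine obstacle: this is a routine mirror image of Propositions~\ref{S9}--\ref{S13}, and every ingredient — Lemma~\ref{L4}, the decay~(\ref{e25}), and the explicit formulas~(\ref{e317}) and~(\ref{e316}) — is already in hand. The one point demanding care is the bookkeeping of orientation: for a right delivery curve the time $s$ runs backwards from $r$ to $l$, so one must be consistent about which endpoint of the parabolic arc is $W$ and which is $+\infty$, and about the sign chosen for the root of $(r-s)^2(\dc_1')^2=\eps^2$; reversing either of these would yield the reverse-time parametrisation instead of the one stated.
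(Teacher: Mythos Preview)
Your proposal is correct and follows exactly the approach the paper takes: the paper proves Proposition~\ref{S13} by deriving the test function from the parabola constraint (the arguments preceding Proposition~\ref{S9}), invoking Lemma~\ref{L4} for the $\BMO_\eps$ bound, and repeating the integration-by-parts and substitution argument of Proposition~\ref{S9} with $A=0$ and integration from $-\infty$; Proposition~\ref{S14} is then stated as the symmetric counterpart without a separate proof. Your write-up supplies precisely this mirror image in full detail, and you also correctly flag the typo in the statement ($\Lt(-\infty,u_2)$ should read $\Lt(u_1,+\infty)$).
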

	Concerning the points of $\Rt(-\infty,u_2)$ and $\Lt(-\infty,u_2)$ not lying on the upper boundary,
	we can continue our delivery curves up to them using Propositions~\ref{S11} and~\ref{S12}.
	Thus, we have obtained the optimizers for all the points of this domains. 
	In~\cite{SlVa2}, if no extra information from neighbors was required for a domain, it was called 
	\emph{complete}.  
	
\subsection{Function \texorpdfstring{$f'''$}{f'''} does not change its sign}\label{s34}
	It is stated in Proposition~\ref{S5} that the function $\Brt(x;\, -\infty, +\infty)$, 
	defined by~(\ref{e313}), is a Bellman candidate in the whole domain~$\Omega_\eps$
	provided $f$ satisfies some integral condition. Thus, from Statement~\ref{S2}, it follows that $\Bell\le \Brt$. 
	On the other hand, we have constructed (see the previous section)
	optimizers for all the points of the domain $\Rt(-\infty,+\infty) = \Omega_\eps$. 
	This gives us the converse inequality $\Bell\ge \Brt$. We come to the following theorem.
	\begin{Th}\label{T1}
		Suppose $0<\eps<\eps_0$\textup{,} $f \in \W$\textup, and 
		$$
			\int\limits_{-\infty}^u f'''(t)e^{t/\eps}\,dt \le 0, \quad \forall u \in \mathbb{R}.
		$$
		Then
		$$
			\Bell(x;\,f) = \Brt(x;\, -\infty, +\infty),
		$$
		where the function on the right is defined by~\textup{(\ref{e313})}.
	\end{Th}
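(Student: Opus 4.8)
The plan is to deduce the theorem by assembling three facts already established in the excerpt: the formula for the candidate in Proposition~\ref{S5}, the majorization principle Statement~\ref{S2}, and the optimizer construction of Section~\ref{s33}.

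First I would record that the integral hypothesis is precisely the condition under which Proposition~\ref{S5} applies with $u_1=-\infty$ and $u_2=+\infty$. Indeed, by~(\ref{e315}),
$$
\eps\, e^{u/\eps}\,\mrt''(u;-\infty)=\int\limits_{-\infty}^{u}f'''(t)e^{t/\eps}\,dt,
$$
so the assumption says exactly that $\mrt''(u;-\infty)\le 0$ for every $u\in\mathbb{R}$. (The defining integrals for $\mrt(u;-\infty)$ and $\mrt''(u;-\infty)$ converge: since $\eps<\eps_0$ we have $e^{t/\eps}\le e^{t/\eps_0}=w_{\eps_0}(t)$ on $(-\infty,0)$, and $f',f'''$ are integrable against $w_{\eps_0}$ because $f\in\W$.) Moreover, by~(\ref{e32}) the parameter $u_{\mathrm{R}}(x_1,x_2)=x_1+\bigl(\eps-\sqrt{\eps^2-(x_2-x_1^2)}\bigr)$ is well defined and lies in $[x_1,x_1+\eps]$ for every $(x_1,x_2)\in\Omega_\eps$, so each point of the strip lies on exactly one right tangent~(R); that is, $\Rt(-\infty,+\infty)=\Omega_\eps$. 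Proposition~\ref{S5} then guarantees that $\Brt(x;-\infty,+\infty)$, given by~(\ref{e313}), is the minimal Bellman candidate in all of $\Omega_\eps$; in particular it is continuous on $\Omega_\eps$, locally concave, and satisfies $\Brt(u,u^2)=f(u)$, hence it belongs to $\Lambda_{\eps,f}$.

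The bound $\Bell(x;f)\le\Brt(x;-\infty,+\infty)$ is then immediate from Statement~\ref{S2} applied to $G=\Brt(\cdot;-\infty,+\infty)$. For the reverse bound I would exhibit, for each $x\in\Omega_\eps$, a test function whose $f$-average equals $\Brt(x)$; since such a function competes in the supremum~(\ref{e11}), this forces $\Brt(x)\le\Bell(x)$. There are three cases. If $x=(u,u^2)$ lies on the lower parabola, take $\ex\equiv u$: then $\Av{f(\ex)}{I}=f(u)=\Brt(x)$ by the boundary condition. If $x=(w,w^2+\eps^2)$ lies on the upper parabola, Proposition~\ref{S13} produces the left delivery curve running from $-\infty$ to $x$ along the upper parabola, generated by $\ex(s)=\eps\log s+w+\eps$ on $[0,1]$; by the definition of a delivery curve, this $\ex$ is the required optimizer. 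If $x$ lies strictly between the parabolas, put $u=u_{\mathrm{R}}(x)$, so that $x$ is an interior point of the right-tangent segment joining $W_u=(u-\eps,(u-\eps)^2+\eps^2)$ on the upper parabola to $U=(u,u^2)$ on the lower one; then I would first use Proposition~\ref{S13} to deliver a left curve from $-\infty$ to $W_u$, and then apply Proposition~\ref{S11} to continue it along $[W_u,U]$ up to $x$. The hypotheses of Proposition~\ref{S11} are met: the line through $[W_u,U]$ is a tangent to the upper parabola and therefore lies below it; $\Brt$ is linear along this segment by its construction from the foliation~(R); and the concatenation of the parabola arc with the segment is convex, since the segment runs along the tangent line of the parabola at $W_u$, so the two pieces meet with the same slope and the overall slope is non-decreasing. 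The resulting function $\widetilde\ex$ lies in $\BMO_\eps(I)$ by Lemma~\ref{L4} and satisfies~(\ref{B_extr_curve}), whence $\Av{f(\widetilde\ex)}{I}=\Brt(x)$. Combining the two bounds yields $\Bell(x;f)=\Brt(x;-\infty,+\infty)$.

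With all the preparatory results in hand, no step poses a genuine difficulty; the part that calls for the most care is the lower-bound direction, where one must check that the constructions of Section~\ref{s33} really reach \emph{every} point of $\Omega_\eps$ above the lower boundary — i.e. that for each such $x$ the right tangent~(R) through $x$ indeed contains $x$ in its interior, and that the concatenated upper-parabola/tangent delivery curve is convex and never leaves $\Omega_\eps$, so that Propositions~\ref{S13} and~\ref{S11} apply verbatim. Both facts follow from the tangency of the extremals~(R) to the upper boundary, but this is where the verification is least automatic.
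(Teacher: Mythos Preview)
Your proposal is correct and follows exactly the paper's approach: invoke Proposition~\ref{S5} (the integral condition being precisely $\mrt''(u;-\infty)\le 0$) to see that $\Brt(\,\cdot\,;-\infty,+\infty)$ is a Bellman candidate on all of $\Omega_\eps$, apply Statement~\ref{S2} for the upper bound, and use the optimizers of Propositions~\ref{S13} and~\ref{S11} for the lower bound. You have simply spelled out in more detail what the paper compresses into a single paragraph.
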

	Using the second part of Proposition~\ref{S5} and the optimizers constructed in the previous section, 
	we get the symmetric theorem.
	\begin{Th}\label{T1b}
		Suppose $0<\eps<\eps_0$\textup{,} $f \in \W$\textup, and
		$$
			\int\limits_u^{+\infty} f'''(t)e^{-t/\eps}\,dt \ge 0, \quad \forall u \in \mathbb{R}.
		$$
		Then
		$$
			\Bell(x;\,f) = \Blt(x;\, -\infty, +\infty),
		$$
		where the function on the right is defined by~\textup{(\ref{e316})}.
	\end{Th}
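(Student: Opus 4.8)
The plan is to mirror the proof of Theorem~\ref{T1}, interchanging the roles of the right tangents~(R) and the left tangents~(L). As there, the identity will follow by combining an upper bound $\Bell(x;\,f)\le\Blt(x;\,-\infty,+\infty)$, obtained from Statement~\ref{S2}, with the matching lower bound, obtained by producing an optimizer for every point of $\Omega_\eps$. The hypothesis $\int_u^{+\infty} f'''(t)e^{-t/\eps}\,dt\ge 0$ for all $u$ plays exactly the role that $\int_{-\infty}^u f'''(t)e^{t/\eps}\,dt\le 0$ played before: by formula~(\ref{e318}) it is precisely the requirement $\mlt''(u;\,+\infty)\ge 0$ for all $u$, which is the hypothesis appearing in the second half of Proposition~\ref{S5}.

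For the upper bound I would first observe that the left tangents~(L), parameterized by $u\in\mathbb{R}$, foliate the whole strip: for any $(x_1,x_2)\in\Omega_\eps$ the value $u=u_{\mathrm L}(x_1,x_2)$ given by~(\ref{e33}) automatically lies in $[x_1-\eps,x_1]$, i.e. $x_1\in[u,u+\eps]$, so the point sits on the extremal~(L) with parameter~$u$. Hence $\Lt(-\infty,+\infty)=\Omega_\eps$, and the second half of Proposition~\ref{S5} says that $\Blt(x;\,-\infty,+\infty)$, defined through~(\ref{e316})--(\ref{e317}), is the minimal Bellman candidate on all of $\Omega_\eps$. In particular it is continuous, satisfies the boundary condition $\Blt(u,u^2;\,-\infty,+\infty)=f(u)$, and is locally concave (the local concavity being equivalent to $\mlt''\ge 0$, exactly as explained after Proposition~\ref{S4}), so it belongs to $\Lambda_{\eps,f}$; Statement~\ref{S2} then gives $\Bell(x;\,f)\le\Blt(x;\,-\infty,+\infty)$ for all $x\in\Omega_\eps$.

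For the lower bound I would build, for each $x\in\Omega_\eps$, an optimizer in the sense of Definition~\ref{D3}. If $x$ lies on the lower parabola, the constant test function does the job by the boundary condition. Otherwise, let $W$ be the point where the left tangent through $x$ touches the upper parabola; use Proposition~\ref{S14} to run a convex right delivery curve along the upper parabola from $+\infty$ down to~$W$, and then use Proposition~\ref{S12} to prolong it along that tangent segment down to $x$ (and on to the lower boundary if desired). The three hypotheses of Proposition~\ref{S12} hold here: the concatenated curve stays convex, being an arc of the upper parabola continued by its tangent line at the contact point; the line carrying the tangent segment lies below the upper boundary, again because it is tangent to the parabola; and $\Blt(\cdot\,;\,-\infty,+\infty)$ is linear along that segment because the segment is one of the extremals~(L). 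This produces a test function $\ex\in\BMO_\eps(I)$ realizing $x$ with $\Av{f(\ex)}{I}=\Blt(x;\,-\infty,+\infty)$, whence $\Bell(x;\,f)\ge\Blt(x;\,-\infty,+\infty)$, and the two bounds together give the theorem.

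I expect the only genuine work to be the careful verification of the geometric hypotheses in Propositions~\ref{S12} and~\ref{S14} --- convexity of the prolonged delivery curves and the position of the tangent lines under the upper parabola --- but these are precisely the facts already secured for $\Rt$ in Section~\ref{s33}, reflected, so this is more bookkeeping than obstacle. A cleaner route that sidesteps re-running the optimizer construction is to deduce the theorem directly from Theorem~\ref{T1}: apply Remark~\ref{remm2} with $\alpha=-1$, $\beta=0$ to $g(t)\df f(-t)$, noting that $g\in\W$ since the weight $w_{\eps_0}$ is even and that the substitution $t\mapsto -t$ turns the hypothesis on $f$ into $\int_{-\infty}^u g'''(t)e^{t/\eps}\,dt\le 0$ for all $u$; then Theorem~\ref{T1} applied to $g$, pushed back through the change of variables of Remark~\ref{remm2}, yields the claim for $f$ (the identification $\Brt(-x_1,x_2;\,-\infty,+\infty;\,g)=\Blt(x_1,x_2;\,-\infty,+\infty;\,f)$ being a routine computation using $u_{\mathrm R}(-x_1,x_2)=-u_{\mathrm L}(x_1,x_2)$ and $\mrt(-u;\,-\infty;\,g)=-\mlt(u;\,+\infty;\,f)$).
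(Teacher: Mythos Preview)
Your main approach is correct and is exactly the one the paper takes: it derives the theorem in one line from the second half of Proposition~\ref{S5} together with the optimizers built in Section~\ref{s33} (Propositions~\ref{S14} and~\ref{S12}), just as Theorem~\ref{T1} was derived from the first half of Proposition~\ref{S5} and Propositions~\ref{S13} and~\ref{S11}. Your alternative route via Remark~\ref{remm2} with $\alpha=-1$, reducing to Theorem~\ref{T1} applied to $g(t)=f(-t)$, is also correct (the identifications you list check out) and is a legitimate shortcut the paper does not spell out; it trades the re-verification of the optimizer geometry for a small computation matching $\Brt$ for $g$ with $\Blt$ for $f$.
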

	Obviously, these theorems treat the case where $f'''$ 
	has one and the same sign a.e. on~$\mathbb{R}$.
	\begin{Cor}\label{cor1}
		Suppose $0<\eps<\eps_0$ and $f \in\W^{0}$.
		If $c_{0}=-\infty$\textup{,} then 
		$$
			\Bell(x;\,f) = \Brt(x;\, -\infty, +\infty),
		$$
		and if $c_{0}=+\infty$\textup{,} then 
		$$
			\Bell(x;\,f) = \Blt(x;\, -\infty, +\infty).
		$$
	\end{Cor}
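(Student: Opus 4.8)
The plan is to deduce Corollary~\ref{cor1} directly from Theorems~\ref{T1} and~\ref{T1b}: once the sign conventions built into the definition of $\W^{N}$ are unwound for $N=0$, the integral hypotheses of those two theorems hold automatically, and nothing else is needed.

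First I would record that $\W^{0}\subset\W$, so Theorems~\ref{T1} and~\ref{T1b} are applicable to $f\in\W^{0}$. Next, specialize the definition of $\W^{N}$ to $N=0$: there is then a single point $c_{0}=c_{N}$ and no points $v_{k}$ at all, so conditions~1) and~2) collapse to the statement that $f'''>0$ a.e.\ on $(-\infty,c_{0})$ and $f'''<0$ a.e.\ on $(c_{0},+\infty)$. Consequently, if $c_{0}=-\infty$ the first interval is empty and $f'''<0$ a.e.\ on all of $\mathbb{R}$, while if $c_{0}=+\infty$ the second interval is empty and $f'''>0$ a.e.\ on all of $\mathbb{R}$.

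In the case $c_{0}=-\infty$, for every $u\in\mathbb{R}$ the integrand $t\mapsto f'''(t)e^{t/\eps}$ is nonpositive for a.e.\ $t\le u$, hence
$$
	\int\limits_{-\infty}^{u}f'''(t)e^{t/\eps}\,dt\le 0,\qquad u\in\mathbb{R};
$$
this is precisely the hypothesis of Theorem~\ref{T1}, which then yields $\Bell(x;\,f)=\Brt(x;\,-\infty,+\infty)$. Symmetrically, in the case $c_{0}=+\infty$ the integrand $t\mapsto f'''(t)e^{-t/\eps}$ is nonnegative for a.e.\ $t\ge u$, so $\int_{u}^{+\infty}f'''(t)e^{-t/\eps}\,dt\ge 0$ for all $u$; this is the hypothesis of Theorem~\ref{T1b}, giving $\Bell(x;\,f)=\Blt(x;\,-\infty,+\infty)$.

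There is no real obstacle here; the one point deserving a word is that the one-sided integrals above are finite. Near the finite endpoint this is clear, since $f'''$ is locally integrable and the exponential factor is bounded there; near $\mp\infty$ (for Theorem~\ref{T1}, resp.\ Theorem~\ref{T1b}) one factors $f'''(t)e^{\pm t/\eps}=\big(f'''(t)e^{-|t|/\eps_{0}}\big)\,e^{-|t|(1/\eps-1/\eps_{0})}$ on the corresponding half-line, where the first factor is integrable because $f\in W_{3}^{1}(\mathbb{R},w_{\eps_{0}})$ and the second is bounded because $\eps<\eps_{0}$. Hence both theorems genuinely apply. Finally I would note that the members of $\W^{0}$ with $c_{0}$ finite are deliberately outside the scope of this corollary: treating them requires gluing a domain $\Rt$ to a domain $\Lt$ along a shared extremal tangent, which is done in the later chapters.
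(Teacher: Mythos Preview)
Your proof is correct and follows exactly the route the paper intends: the corollary is stated immediately after Theorems~\ref{T1} and~\ref{T1b} with only the remark that ``obviously, these theorems treat the case where $f'''$ has one and the same sign a.e.\ on~$\mathbb{R}$,'' and you have simply made that remark explicit. One small inaccuracy in your closing aside: for $f\in\W^{0}$ with $c_{0}$ finite, $f'''$ changes sign from plus to minus, and the resulting structure is $\Lt$--cup--$\Rt$ (Theorem~\ref{T3}), not a direct gluing of $\Rt$ to $\Lt$ along a tangent; but this is tangential to the corollary itself.
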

	
\subsection{Examples}
\paragraph{Example 1. The exponential function.}
The Bellman functions for $f(t)=\pm e^t$ were constructed in~\cite{SlVa}.
The function $f(t)=e^t$ belongs to~$\W$ only if $\eps_0<1$. Therefore, all the further formulas are reasonable 
only for $\eps<1$. We see that the function $f'''(t)=e^t$ is positive on the whole line. 
Thus, by Corollary~\ref{cor1}, the domain~$\Omega_\eps$ is foliated entirely by the left tangents.
We come to the following formula:
\begin{align*}
\Bell(x_{1},x_{2};\,e^t)&=\mlt(u;\,+\infty)\,(x_{1}-u)+f(u)\\
&=(x_1-u)\cdot\eps^{-1}e^{u/\eps}\int\limits_{u}^{\infty}e^t \cdot e^{-t/\eps}\,dt\;+\;e^u\\
&=\left(\frac{x_1-u}{1-\eps}+1\right)e^u=\frac{1-\sqrt{x_{1}^{2}-x_{2}+\eps^{2}}}{1-\eps}\,e^u,
\end{align*}
where the function~$u$ for left tangents is defined by formula~(\ref{e33}):
$$
u(x_{1},x_{2}) =x_{1} - \Big(\eps - \sqrt{x_{1}^{2}-x_{2}+\eps^{2}}\, \Big).
$$

Similarly, if $f(t)=-e^t$, the whole domain is foliated by the right tangents. In this case, we have
\begin{align*}
\Bell(x_{1},x_{2};\,-e^t)&=\mrt(u;\,-\infty)\,(x_{1}-u)+f(u)\\
&=(x_1-u)\cdot\eps^{-1}e^{-u/\eps}\int\limits_{-\infty}^{u}(-e^t) \cdot e^{t/\eps}\,dt\;-\;e^u\\
&=-\left(\frac{x_1-u}{1+\eps}+1\right)e^u=-\frac{1+\sqrt{x_{1}^{2}-x_{2}+\eps^{2}}}{1+\eps}\,e^u,
\end{align*}
where the function~$u$ for right tangents is defined by formula~(\ref{e32}): 
$$
u(x_{1},x_{2}) =x_{1} + \Big(\eps - \sqrt{x_{1}^{2}-x_{2}+\eps^{2}}\, \Big).
$$
We recall that the Bellman function for $f(t)=-e^{t}$ solves the infimum problem for $f(t)=e^{t}$ 
(see Remark~\ref{rem1}).

\paragraph{Example 2. A third-degree polynomial.}
	The simplest example of a function $f$ such that the sign of $f'''$ does not change, 
	is an arbitrary third-degree polynomial.
	In such a case, it is sufficient to obtain the Bellman function for
	$f(t) = \pm{t^3}$ (see Remark~\ref{rem2}). We see that $f'''(t) = \pm{6}$. Thus, due to Corollary~\ref{cor1},
	the whole domain is foliated by the left tangents for $\Bell(x;\, t^{3})$ or, respectively, by the 
	right tangents for $\Bell(x;\, -t^{3})$. For any $\eps \in [0,+\infty)$, the analytical expression for the 
	Bellman function can be calculated by~(\ref{e316}) and~(\ref{e317}) (or by~(\ref{e313}) and~(\ref{e314}), 
	respectively).
	For $f'''(t)=6$, we have
	\begin{align*}
		\Bell(x_{1},x_{2};\, t^{3}) &= \mlt(u;\,+\infty)\,(x_{1}-u)+f(u)\\
		&=(x_{1}-u)\cdot\eps^{-1}e^{u/\eps} \int\limits_{u}^{\infty}3t^{2}e^{-t/\eps}\,dt+u^{3}\\
		&=(6\eps^{2}+3u^{2}+6 \eps u)(x_{1}-u)+u^{3},
	\end{align*} 
	where the function $u$ for left tangents is defined by formula (\ref{e33}): 
	\begin{equation*}
		u(x_{1},x_{2}) =x_{1} - \Big(\eps - \sqrt{x_{1}^{2}-x_{2}+\eps^{2}}\, \Big).
	\end{equation*}
	For $f'''(t) = -6$, we have
	\begin{align*}
		\Bell(x_{1},x_{2};\, -t^{3}) &= \mrt(u;\,-\infty)\,(x_{1}-u)+f(u)\\
		&=(x_{1}-u)\cdot\eps^{-1}e^{-u/\eps}\int\limits_{-\infty}^{u}-3t^{2}e^{t/\eps}\,dt-u^{3}\\
		&=(-6\eps^{2}-3u^{2}+6\eps u)(x_{1}-u)-u^{3},
	\end{align*} 
	where the function $u$ for right tangents is defined by formula~(\ref{e32}): 
	\begin{equation*}
	u(x_{1},x_{2}) =x_{1} + \Big(\eps - \sqrt{x_{1}^{2}-x_{2}+\eps^{2}}\, \Big).
	\end{equation*}
	It is worth noting that
	\begin{align*}
	\Bell(x_{1},x_{2};\,t^{3})&=-3(x_{1}^{2}-x_{2})\eps-2x_{1}^{3}+3x_{2}x_{1}+O(\eps^{-1}) \quad 
	\text{as} \quad \eps \to \infty;\\ \rule{0pt}{15pt}
	\Bell(x_{1},x_{2};\,-t^{3})&=-3(x_{1}^{2}-x_{2})\eps+2x_{1}^{3}-3x_{2}x_{1}+O(\eps^{-1})\quad 
	\text{as} \quad \eps \to \infty.
	\end{align*}	

\section{Transition from right tangents to left ones}
	In this chapter, we treat the case when there are two domains of left and right tangents simultaneously. 
	There is also a triangle domain between them, where our Bellman candidate is linear. The reader 
	can glance at Figure~\ref{fig:ppklu} to understand what is meant. 
	In Section~\ref{s41}, we will construct a function corresponding to such a foliation and obtain some 
	conditions guaranteeing that this function is a Bellman candidate.
	Again, we note that the arguments in Section~\ref{s41} partially repeat the corresponding arguments in~\cite{SlVa2}. 
	Further, in Section~\ref{s42}, we will build optimizers for the triangle domain of linearity. 
	Finally, in Section~\ref{s43}, we will summarize this
	chapter and describe the conditions on~$f$ under which $\Bell(x;\,f)$ corresponds to the foliation discussed.
	In particular, it turns out that the transition between right and left tangents can occur for
	$f\in\W^1$ with $c_0=-\infty$ and $c_1=+\infty$, i.e. if  the sign of~$f'''$ changes once from minus to plus. 

\subsection{Angle}\label{s41}
	Let	$u_1 < v < u_2$. Consider two subdomains $\Rt(u_1,v)$ and $\Lt(v,u_2)$ foliated by extremals~(R) and~(L), 
	respectively. We can see a subdomain in the form of \emph{an angle} lying between $\Rt$ and $\Lt$. 
	It is bounded by the upper parabola and by the right and left tangents coming from the point
	$V = (v,v^2)$ (see Figure~\ref{fig:ppklu}):	
	$$
		\Ang(v) \df \big\{x\in \mathbb{R}^{2} \mid v-\eps \le x_1 \le v+\eps,\; 2vx_1-v^2+2\eps|v-x_1|\le x_2 \le x_1^2+\eps^2\big\}.
	$$	
\begin{figure}[H]
\begin{center}
\includegraphics{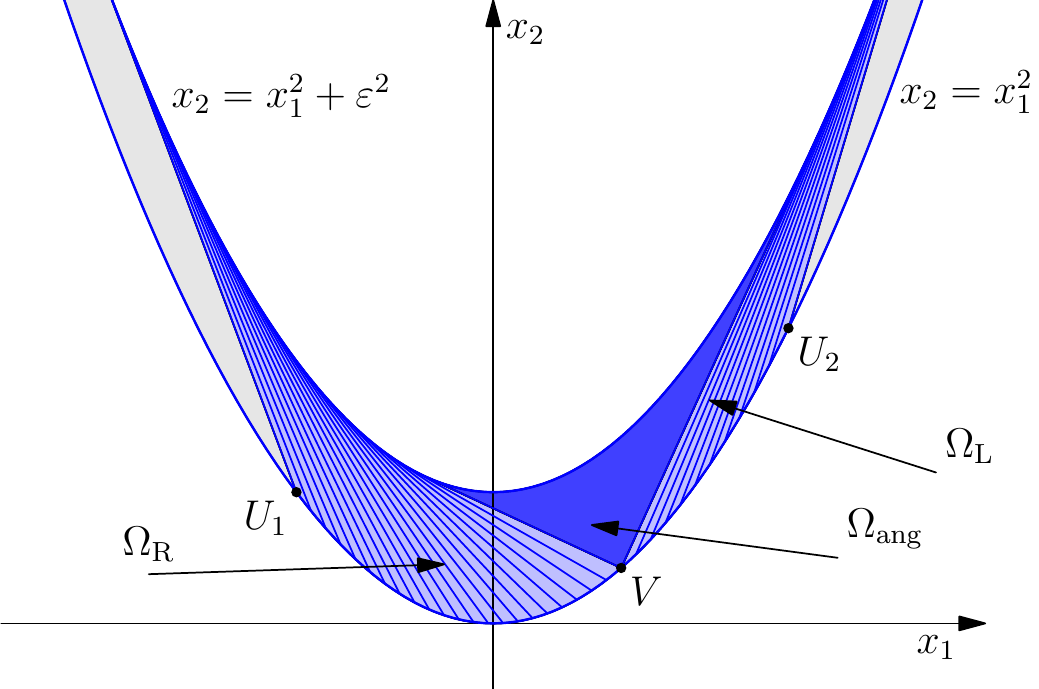}
\caption{Angle $\Ang$ lying between $\Rt$ and $\Lt$.}
\label{fig:ppklu}
\end{center}
\end{figure}
	Now we construct a Bellman candidate in the subdomain
	$$
		\RtLt(u_1,v,u_2) \df \Rt(u_1,v) \cup \Ang(v) \cup \Lt(v,u_2).
	$$
	We denote this candidate by $\Brtlt(x;\,u_1,v,u_2)$. The candidates in $\Rt$ and~$\Lt$ have been constructed already: 
	\begin{align*}
		&\Brtlt(x;\,u_1,v,u_2) = \Brt(x;\,u_1,v)\qquad \text{if}\quad x \in \Rt(u_1,v);\\ 
		&\Brtlt(x;\,u_1,v,u_2) = \Blt(x;\,v,u_2)\qquad \text{if}\quad x \in \Lt(v,u_2). 	
	\end{align*}
	We recall that $\Brt$ and $\Blt$ are, in fact, families of functions. 
	Concerning the domain $\Ang(v)$, the function we are looking for must be linear on it. 
	Indeed, by the continuity, the function~$\Brtlt$ is linear on the one-sided tangents that bound $\Ang(v)$.
	Thus, $\Brtlt$ is also linear on the whole subdomain~$\Ang(v)$ by the minimality. 
	Therefore, if $x \in \Ang(v)$, then
	$$
		\Brtlt(x;\,u_1,v,u_2) = \Bang(x;\,v) =  \alpha_1 x_1 + \alpha_2 x_2 + \alpha_0.
	$$
	Calculating the values of $\Bang$ in the vertices of the angle $\Ang(v)$, we have 
	$$
		\left\{ 
		\begin{aligned}
			&\alpha_1 v + \alpha_2 v^2 + \alpha_0 = f(v);\\
			&\alpha_1(v - \eps) + \alpha_2\big((v-\eps)^2 + \eps^2\big) + \alpha_0 = -\mrt(v)\eps + f(v);\\
			&\alpha_1(v + \eps) + \alpha_2\big((v+\eps)^2 + \eps^2\big) + \alpha_0 = \mlt(v)\eps + f(v).
		\end{aligned} \right.
	$$
	Solving this system, we obtain
	\begin{equation}\label{e319}	
	\begin{split}
		\alpha_1 &= \frac{\mrt(v)+\mlt(v)}{2} - \frac{\mlt(v)-\mrt(v)}{2\eps}\,v;\\
		\alpha_2 &= \frac{\mlt(v)-\mrt(v)}{4\eps};\\
		\alpha_0 &= \frac{\mlt(v)-\mrt(v)}{4\eps}\,v^2 - \frac{\mrt(v)+\mlt(v)}{2}\,v + f(v).
	\end{split}
	\end{equation}

	Now we discuss the concavity of $\Brtlt$. As it has already been verified, the local concavity of
	$\Brt(x;\,u_1,v)$ and $\Blt(x;\,v,u_2)$ is equivalent, respectively, to 
	the inequalities
	\begin{equation*}
	\begin{split}
		&\mrt''(u)\le 0\quad\mbox{for}\quad u \in (u_1,v);\\
		&\mlt''(u)\ge 0\quad\mbox{for}\quad u \in (v,u_2). 
	\end{split}
	\end{equation*}
	Suppose these inequalities are fulfilled. We want to obtain some conditions on~$v$ that are
	necessary and sufficient for the concatenation of $\Brt$, $\Bang$, and $\Blt$ to be locally concave.	
	In order for the function~$\Brtlt$ to be concave along the direction $x_{2}$, its derivative $\Brtlt_{x_2}$ 
	must be	monotonically decreasing in $x_{2}$. Therefore, the jumps of $\Brtlt_{x_2}$
	on the boundary of $\Ang$ must be non-positive. They are
	$$
		\delta_{\mathrm{R}} = \alpha_2 - \lim_{u\to v-}t_2(u)\quad\mbox{and}\quad 
		\delta_{\mathrm{L}} = \alpha_2 - \lim_{u\to v+}t_2(u).
	$$
	Using~(\ref{e35}) and~(\ref{e36}), we obtain
	$$
		\lim_{u\to v-}t_2(u) = \frac{\mrt'(v)}{2} = \frac{f'(v) - \mrt(v)}{2\eps},		
	$$
	and due to~(\ref{e39}) and~(\ref{e323}), we have
	$$
		\lim_{u\to v+}t_2(u) = \frac{\mlt'(v)}{2} = \frac{\mlt(v) - f'(v)}{2\eps}.
	$$
	Now, using formula~(\ref{e319}) for~$\alpha_2$, we get the expressions for the jumps:
	\begin{align*}
		\delta_{\mathrm{R}}\ &=\phantom{-}\frac{1}{4\eps}(\mrt(v)+\mlt(v)-2f'(v));\\		
		\delta_{\mathrm{L}}\ &=-\frac{1}{4\eps}(\mrt(v)+\mlt(v)-2f'(v)).
	\end{align*}
	We see that their signs are always different. On the other hand, both jumps are non-positive and, therefore, 
	are equal to zero. Thus, the condition
	$$
	\mrt(v)+\mlt(v) = 2f'(v)
	$$
	is necessary for the function $\Brtlt$ to be locally concave. 
	Thus, if our concatenation is locally concave, then its derivative~$\Brtlt_{x_2}$ must be continuous.
	The partial derivatives of $\Brtlt$ along the tangents bounding $\Ang(v)$ are also continuous 
	(constant). Therefore, the function~$\Brtlt$ 
	has continuous derivatives along two non-collinear directions, so the derivatives along all the directions are continuous. 
	But a $C^1$-smooth concatenation
	of locally concave functions is locally concave. Hence, the condition $\mrt(v)+\mlt(v) = 2f'(v)$ is also sufficient for the 
	local concavity of the concatenation~$\Brtlt$ provided its components~$\Brt$, $\Bang$ and~$\Blt$ are locally concave. Finally, 
	by~(\ref{e35}) and~(\ref{e39}), the resulting condition is equivalent to the identity
	\begin{equation}\label{e3}
		\mrt''(v)+\mlt''(v) =0.
	\end{equation}

	We summarize this section.
	\begin{Prop}\label{S6}
		Let $u_1 < v < u_2$. Consider the subdomains $\Rt(u_1,v)$ and $\Lt(v,u_2)$ foliated by extremals~\textup{(R)} and~\textup{(L),} respectively.
		We also suppose that the domain~$\Ang(v)$
		lying between them is a domain of linearity. Then a Bellman candidate in the union $\RtLt(u_1,v,u_2)$ of these domains has the form 
		\begin{equation}\label{e320}
			\Brtlt(x;\,u_1,v,u_2) =
			\begin{cases}
				\Brt(x;\,u_1,v),& x \in \Rt(u_1,v);\\
				\Bang(x;\,v),& x \in \Ang(v);\\
				\Blt(x;\,v,u_2),& x \in \Lt(v,u_2),
			\end{cases}  		
		\end{equation}
		where $\Bang(x;\,v) = \alpha_1 x_1 + \alpha_2 x_2 + \alpha_0$, and the coefficients~$\alpha_1$\textup{,} $\alpha_2$\textup, and~$\alpha_0$ are 
		calculated by~\textup{(\ref{e319})}. In addition\textup, the following relations must be fulfilled\textup{:}
		\begin{equation*}
			\left\{
			\begin{aligned}
				&\mrt''(u) \le 0,\quad u\in(u_1,v);\\
				&\mlt''(u) \ge 0,\quad u\in(v,u_2);\\
				&\mrt''(v)+\mlt''(v) = 0.
			\end{aligned} \right.	
		\end{equation*}
	\end{Prop}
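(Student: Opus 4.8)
The plan is to assemble the proposition from the computations already carried out above in this section, separating the identification of the three pieces of $\Brtlt$ from the verification that their concatenation is genuinely a Bellman candidate. First I would record that on $\Rt(u_1,v)$ and on $\Lt(v,u_2)$ the candidate is forced by Proposition~\ref{S4} to coincide with $\Brt(x;u_1,v)$ and with $\Blt(x;v,u_2)$, and that Proposition~\ref{S4} also supplies the admissibility requirements $\mrt''\le 0$ on $(u_1,v)$ and $\mlt''\ge 0$ on $(v,u_2)$. On $\Ang(v)$, which is \emph{assumed} to be a domain of linearity, I would write $\Bang(x;v)=\alpha_1x_1+\alpha_2x_2+\alpha_0$; continuity of $\Brtlt$ across the two one-sided tangents bounding $\Ang(v)$ forces $\Bang$ to agree there with $\Brt$ and $\Blt$, and since those data are affine along the bounding segments it is enough to match the three values at the vertices $V=(v,v^2)$, $(v-\eps,(v-\eps)^2+\eps^2)$ and $(v+\eps,(v+\eps)^2+\eps^2)$. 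Solving that $3\times 3$ linear system produces formulas~(\ref{e319}); the equation at $V$ is exactly $\Bang(v,v^2)=f(v)$, so the lower-parabola boundary condition holds on all of $\RtLt(u_1,v,u_2)$ (on the $\Rt$ and $\Lt$ pieces it is built into $\Brt$ and $\Blt$, and the foliation requirements of Definition~\ref{D2} are immediate from the construction).

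Next I would analyse the smoothness of the concatenation across its two interfaces --- the right tangent through $V$ separating $\Rt$ from $\Ang$, and the left tangent through $V$ separating $\Ang$ from $\Lt$. Since $\Brtlt$ is linear along the extremals~(R),~(L) and along the one-sided tangents bounding $\Ang(v)$, each piece already has a continuous derivative in a direction transversal to $\partial_{x_2}$, so the only possible defect of $C^1$-smoothness is a jump of $\Brtlt_{x_2}$ across an interface. Computing those jumps from~(\ref{e35}),~(\ref{e36}),~(\ref{e39}),~(\ref{e323}) and from~(\ref{e319}) gives $\delta_{\mathrm{R}}=\frac{1}{4\eps}\big(\mrt(v)+\mlt(v)-2f'(v)\big)$ and $\delta_{\mathrm{L}}=-\delta_{\mathrm{R}}$. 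For $\Brtlt_{x_2}$ to be non-increasing in $x_2$ --- a necessary condition for local concavity --- both jumps must be $\le 0$; being opposite in sign they both vanish, i.e. $\mrt(v)+\mlt(v)=2f'(v)$, and by~(\ref{e35}),~(\ref{e39}) this is equivalent to $\mrt''(v)+\mlt''(v)=0$. This establishes necessity of the three displayed conditions.

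For sufficiency I would reverse the last implication: once $\mrt''(v)+\mlt''(v)=0$, the jump of $\Brtlt_{x_2}$ vanishes at both interfaces, so $\Brtlt$ has continuous derivatives along two non-collinear directions across each interface and is therefore $C^1$ on $\RtLt(u_1,v,u_2)$; combined with $\mrt''\le 0$, $\mlt''\ge 0$ and the triviality of the affine middle piece, each of $\Brt$, $\Bang$, $\Blt$ is locally concave, and a $C^1$ concatenation of locally concave functions is locally concave, so $\Brtlt$ satisfies Definition~\ref{D2}. The step I expect to be the main obstacle --- really the only one needing care --- is the sign bookkeeping in the jump computation: one must verify that the jump of $\Brtlt_{x_2}$, oriented in the direction of increasing $x_2$, carries the sign for which $\le 0$ is the correct concavity inequality, and then invoke the fact that a $C^1$ gluing of locally concave functions is again locally concave, which is where the transversality of the gluing directions to $\partial_{x_2}$ is genuinely used.
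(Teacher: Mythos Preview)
Your proposal is correct and follows essentially the same route as the paper: identify the pieces via Proposition~\ref{S4}, solve the $3\times 3$ system at the vertices of $\Ang(v)$ to get~(\ref{e319}), compute the two jumps $\delta_{\mathrm{R}}=-\delta_{\mathrm{L}}$ of the $x_2$-derivative across the interfaces, conclude both vanish, and rewrite $\mrt(v)+\mlt(v)=2f'(v)$ as $\mrt''(v)+\mlt''(v)=0$ via~(\ref{e35}),~(\ref{e39}); sufficiency then follows from the $C^1$-gluing argument exactly as you outline.
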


\subsection{Optimizers in angle}\label{s42}
	Now we construct optimizers for the points inside an angle.
	Suppose~$B$ is a Bellman candidate in~$\Omega_\eps$ and 
	some part of $\Omega_{\eps}$ is represented by the construction $\RtLt(u_1,v,u_2)$ described in Proposition~\ref{S6}.    
	We have already learned (see Section~\ref{s33}) how to build delivery curves and optimizers in $\Rt(u_1,v)$ and $\Lt(v,u_2)$. It turns out that 
	we need information from both right and left neighbors of the angle in order to obtain optimizers for its points. 
	To be more precise, we need two delivery curves already built: a left delivery
	curve~$\dc_-$ that reaches some point~$P^-$ on the right boundary of $\Rt(u_1,v)$, and the right delivery curve~$\dc_+$ that reaches some point~$P^+$ 
	on the left boundary of $\Lt(v,u_2)$. If we have optimizers in two points of $\Ang(v)$, then we can construct an optimizer for any points of 
	the segment that connects them provided this segment lies in $\Ang(v)$ entirely.
	
	Let $x\in\Ang(v)$. We draw some straight line~$L$ that passes through~$x$ and does not intersect the upper parabola. This line intersects 
	both sides of the angle. We denote 
	the points of intersection by~$P^\pm$. 
	Then $x$ will be a convex combination of $P^\pm$: $x=\alpha_-P^-+\alpha_+P^+$, where $\alpha_\pm \ge 0$ and
	$\alpha_-+\alpha_+=1$. 
	We build the optimizer~$\ex_-$ for~$P^-$ on $I_-=[0,\alpha_-]$  and the optimizer~$\ex_+$ for~$P^+$ on
	$I_+=[\alpha_-,1]$ (see Section~\ref{s33}). 
	Concatenating $\ex_-$ and~$\ex_+$, we obtain the function~$\ex$ on $[0,1]$. 
	It is easy to see that $\ex$ satisfies conditions~(2) 
	and~(3) of Definition~\ref{D3}. This follows immediately from the representation of~$x$ as a convex combination of~$P^{\pm}$
	and from the linearity of~$B$ in~$\Ang$:
	\begin{align*}
		x_k=\alpha_-P^-_k+\alpha_+P^+_k
		=\int\limits_0^{\alpha_-}\ex^k_-(s)\,ds+\int\limits_{\alpha_-}^1\ex^k_+(s)\,ds
		=\int\limits_0^1\ex^k(s)\,ds
		=&\av{\ex^k}{[0,1]}\\
		&k=1,2;
		\end{align*}
		\begin{align*}
		B(x)=\alpha_-B(P^-)+\alpha_+B(P^+)
		&=\int\limits_0^{\alpha_-}f(\ex_-(s))\,ds+\int\limits_{\alpha_-}^1 f(\ex_+(s))\,ds
		\\
		&=\int\limits_0^1 f(\ex(s))\,ds=\av{f(\ex)}{[0,1]}.
	\end{align*}
	
\begin{figure}[H]
\begin{center}
\includegraphics{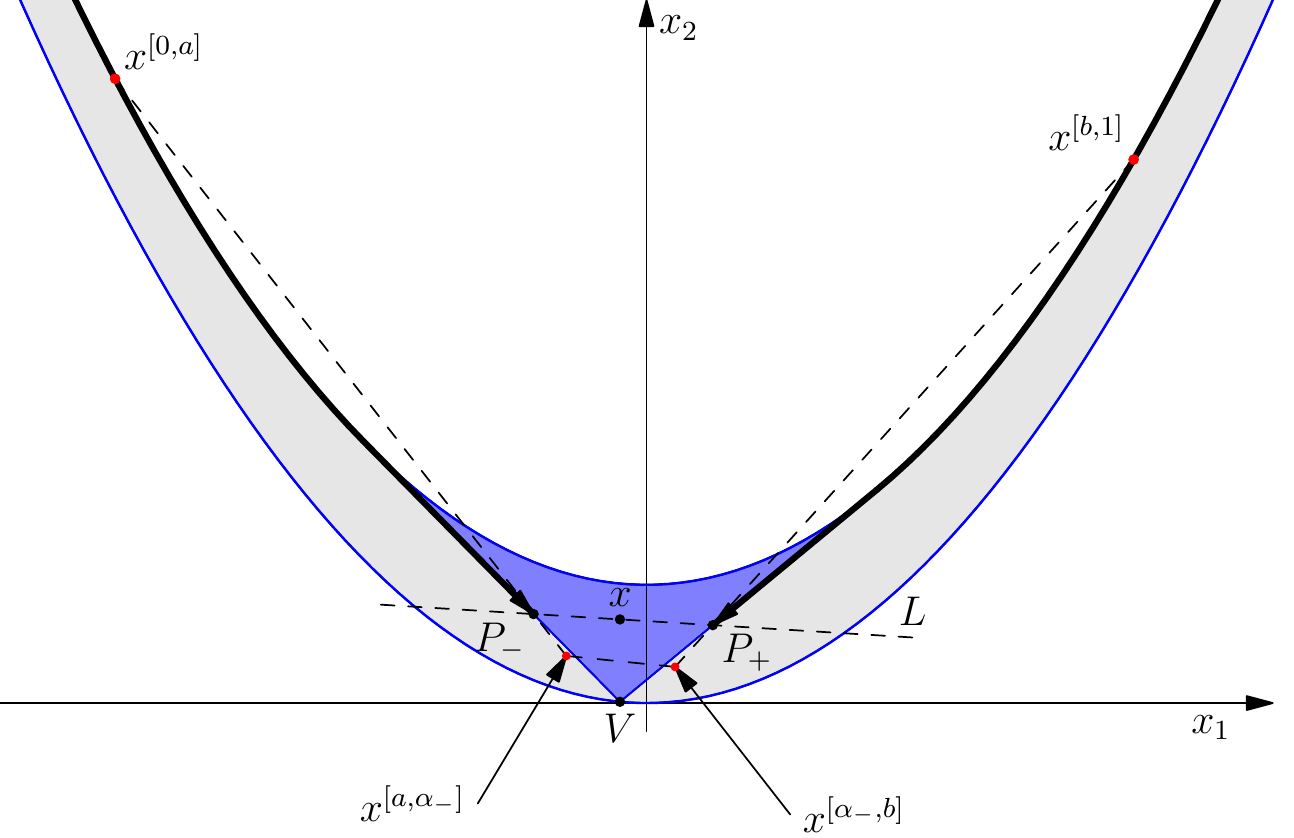}
\caption{Optimizers in $\Ang(v)$.}
\label{fig:ppoa}
\end{center}
\end{figure}	
	In order to prove that~$\ex$ is an optimizer for~$x$, it remains to verify that $\ex\in\BMO_\eps([0,1])$.
	Consider some subinterval $[a,b]\subset[0,1]$ and the Bellman point $x^{[a,b]} = \big(\av{\ex}{[a,b]},\av{\ex^2}{[a,b]}\big)$. 
	If $\alpha_-\notin (a,b)$, then~$x^{[a,b]}$ gets into~$\Omega_\eps$, because $\ex_{\pm}\in\BMO_\eps(I_\pm)$.
	Thus, we only need to consider the intervals $[a,b]$ such that $\alpha_-\in (a,b)$. Note that $P_{-}=x^{[0,\alpha_{-}]}$
	is a convex combination of $x^{[0,a]}$ and $x^{[a,\alpha_{-}]}$ and, therefore, lies on the segment connecting them.
	The point $x^{[0,a]}$ lies somewhere on the delivery curve coming from above and ending at $P_{-}$  
	(we already know how this curve is arranged: it is a convex curve that runs along the upper parabola and then 
	descend along the right tangent down to the point $P_{-}$). 
	Consequently, $x^{[0,a]}$ lies above~$L$, and so  $x^{[a,\alpha_{-}]}$ lies below~$L$.
	Similarly, we can verify that $x^{[\alpha_{-},b]}$ lies under~$L$. 	
	But the point~$x^{[a,b]}$ is a convex combination of~$x^{[a,\alpha_-]}$ 
	and $x^{[\alpha_-,b]}$. Therefore, it lies below~$L$ and, consequently, in~$\Omega_\eps$. 
	As a result, we have constructed optimizers~$\ex$ for all the points~$x$ in $\Ang(v)$.	

\subsection{Function \texorpdfstring{$f'''$}{f'''} changes its sign from minus to plus}\label{s43}
	Propositions~\ref{S5} and~\ref{S6}, together with the existence of optimizers in 
	the domains $\Rt(-\infty,v)$, $\Lt(v,+\infty)$, 
	and $\Ang(v)$, imply the following theorem. 
	\begin{Th}\label{T2}
		Let $0<\eps<\eps_0$ and $f \in \W$. Suppose there exists $v\in\mathbb{R}$ such that
		\begin{equation*}
		\begin{split}
			&\mrt''(u;\,-\infty) \le 0 \quad\mbox{for}\quad u\in(-\infty,v);\\
			&\mlt''(u;\,+\infty) \ge 0 \quad\mbox{for}\quad u\in(v,+\infty);\\
			&\mrt''(v;\,-\infty)+\mlt''(v;\,+\infty) = 0,
		\end{split}
		\end{equation*}
		where $\mrt''(u;\,-\infty)$ and $\mlt''(u;\,+\infty)$ are expressed by~\textup{(\ref{e315})} 
		and~\textup{(\ref{e318})}. Then
		$$
			\Bell(x;\,f) = \Brtlt(x;\,-\infty,v,+\infty),
		$$
		where the function on the right hand side is defined by~\textup{(\ref{e320}),} and its parts 
		$\Brt(x;\,-\infty,v)$ and $\Blt(x;\,v,+\infty)$ are defined 
		by~\textup{(\ref{e313})} and~\textup{(\ref{e316})}.
	\end{Th}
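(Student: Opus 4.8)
The plan is to follow the two-sided scheme used throughout the paper: first show that $\Brtlt(x;\,-\infty,v,+\infty)$ is an admissible locally concave majorant, so that $\Bell\le\Brtlt$ by Statement~\ref{S2}, and then construct an optimizer for every point of $\Omega_\eps$, which forces $\Bell\ge\Brtlt$ and hence equality.

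For the upper bound I would first check that the three pieces $\Rt(-\infty,v)$, $\Ang(v)$, and $\Lt(v,+\infty)$ exhaust $\Omega_\eps$: using~(\ref{e32}) and~(\ref{e33}), every point of $\Omega_\eps$ either lies on a tangent (R) with $u=u_{\mathrm{R}}\le v$, or on a tangent (L) with $u=u_{\mathrm{L}}\ge v$, or strictly between the two one-sided tangents issuing from $V=(v,v^2)$, i.e.\ in $\Ang(v)$. Then I would combine Propositions~\ref{S5} and~\ref{S6}: the hypotheses $\mrt''(u;\,-\infty)\le 0$ on $(-\infty,v)$ and $\mlt''(u;\,+\infty)\ge 0$ on $(v,+\infty)$ make the wings $\Brt(x;\,-\infty,v)$ and $\Blt(x;\,v,+\infty)$ locally concave (Proposition~\ref{S5}), $\Bang(x;\,v)$ is affine hence concave, and the third hypothesis $\mrt''(v;\,-\infty)+\mlt''(v;\,+\infty)=0$ is exactly condition~(\ref{e3}), which by the jump computation of Section~\ref{s41} makes $\Brtlt_{x_2}$ continuous across $\partial\Ang(v)$; since the derivatives along the two one-sided tangents are constant (hence continuous), $\Brtlt$ is $C^1$ and therefore, being a $C^1$-gluing of locally concave functions, locally concave on $\Omega_\eps$. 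Continuity across each tangent seam is automatic because two affine functions that agree at the two endpoints of a segment agree on it, and $\Brtlt(x_1,x_1^2)=f(x_1)$ holds piecewise by~(\ref{e321}),~(\ref{e322}) and at $V$ by the first equation of the system preceding~(\ref{e319}). Thus $\Brtlt(\cdot\,;\,-\infty,v,+\infty)\in\Lambda_{\eps,f}$, and Statement~\ref{S2} gives $\Bell\le\Brtlt$.

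For the lower bound I would exhibit optimizers region by region. Points of $\Rt(-\infty,v)$ on the upper parabola are handled by Proposition~\ref{S13} and then continued down the appropriate right tangent to an arbitrary interior point by Proposition~\ref{S11}; symmetrically for $\Lt(v,+\infty)$ via Propositions~\ref{S14} and~\ref{S12}; points on the lower parabola take the constant optimizer $\ex\equiv u$. For $x\in\Ang(v)$ I would run the angle construction of Section~\ref{s42}: pick a line $L$ through $x$ missing the upper parabola, meeting the right tangent from $V$ at $P^-$ and the left tangent from $V$ at $P^+$, build an optimizer for $P^-$ inside the (complete) domain $\Rt(-\infty,v)$ and one for $P^+$ inside $\Lt(v,+\infty)$ as above, concatenate them in the proportions $\alpha_\pm$ given by $x=\alpha_-P^-+\alpha_+P^+$, and verify $\ex\in\BMO_\eps([0,1])$ by Lemma~\ref{L4} together with the convexity argument of Section~\ref{s42}. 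Having an optimizer at every point of $\Omega_\eps$ yields $\Bell\ge\Brtlt$, completing the proof.

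I expect the main obstacle to be the local-concavity step for the upper bound: one must be sure that the jump analysis and $C^1$-gluing of Section~\ref{s41}, carried out there for finite $u_1,u_2$, survive verbatim when $u_1=-\infty$ and $u_2=+\infty$ — in particular that the one-sided limits $\lim_{u\to v\mp}t_2(u)$ entering $\delta_{\mathrm{R}},\delta_{\mathrm{L}}$ are unaffected by moving the integration endpoint to $\mp\infty$, which is guaranteed by Proposition~\ref{S5} and the limit relations~(\ref{e25}). A secondary, bookkeeping-type point is confirming that the three subdomains tile $\Omega_\eps$ without gaps; once that is settled, everything else is an assembly of results already proved.
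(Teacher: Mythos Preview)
Your proposal is correct and follows exactly the approach the paper takes: the paper states the theorem as an immediate consequence of Propositions~\ref{S5} and~\ref{S6} together with the optimizer constructions of Sections~\ref{s33} and~\ref{s42}, which is precisely the two-sided scheme you outline. Your concern about the $C^1$-gluing with $u_1=-\infty$, $u_2=+\infty$ is unfounded, since the jump computation of Section~\ref{s41} involves only the one-sided limits of $t_2$ at $v$ and does not see the endpoints $u_1,u_2$ at all.
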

	It turns out that
	the conditions of Theorem~\ref{T2} can be satisfied if $f'''$ changes its sign from minus to plus.
	\begin{Th}\label{T2cor}
		Let $0<\eps<\eps_0$ and $f \in \W^{1}$ with $c_{0}=-\infty,$ $c_{1}=+\infty$.
		We denote
		$$
			g_\eps(u) \df (f'''\ast w_\eps)(u),
		$$
		where $w_\eps(t) = e^{-|t|/\eps}$. The function $g_{\eps}$ is continuous\textup, and 
		\begin{enumerate}
			\item[\textup{1)}] if $g_\eps < 0$ on  $\mathbb{R}$\textup, 
			then the conditions of Theorem~\textup{\ref{T1}} are satisfied\textup{;}
			\item[\textup{2)}] if $g_\eps > 0$ on  $\mathbb{R}$\textup, 
			then the conditions of Theorem~\textup{\ref{T1b}} are satisfied\textup{;}
			\item[\textup{3)}]  if $g_\eps(v) = 0$ for some $v\in\mathbb{R}$\textup, then 
			the conditions of Theorem~\textup{\ref{T2}} are satisfied.
		\end{enumerate}
	\end{Th}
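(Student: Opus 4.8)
The plan is to reduce the whole statement to a single identity together with the elementary sign pattern of $f'''$. First I would record that $g_\eps$ is well defined and continuous: since $\eps<\eps_0$ and $f'''\in L^1(\mathbb{R},w_{\eps_0})$, one has $|f'''(t)|\,e^{-|u-t|/\eps}\le C_u\,|f'''(t)|\,e^{-|t|/\eps_0}$ for $|t|$ large, locally uniformly in $u$, so $g_\eps=f'''\ast w_\eps$ is finite and, by dominated convergence, continuous. Splitting the convolution integral defining $g_\eps(u)$ at $t=u$ and recognising the two halves from~(\ref{e315}) and~(\ref{e318}), I obtain the key identity
$$
	\eps^{-1}g_\eps(u)=\mrt''(u;\,-\infty)+\mlt''(u;\,+\infty),\qquad u\in\mathbb{R}.
$$
Since $f\in\W^{1}$ with $c_0=-\infty$, $c_1=+\infty$, we have $f'''<0$ a.e.\ on $(-\infty,v_1)$ and $f'''>0$ a.e.\ on $(v_1,+\infty)$, and inserting this into~(\ref{e315}) and~(\ref{e318}) gives the two ``free'' one-sided signs
$$
	\mrt''(u;\,-\infty)\le0\ \ \text{for}\ u\le v_1,\qquad \mlt''(u;\,+\infty)\ge0\ \ \text{for}\ u\ge v_1.
$$

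With these in hand, parts~1) and~2) are immediate. If $g_\eps<0$ on $\mathbb{R}$, then $\mrt''(u;\,-\infty)\le0$ for $u\le v_1$ by the line above, while for $u>v_1$ the identity and $\mlt''(u;\,+\infty)\ge0$ give $\mrt''(u;\,-\infty)=\eps^{-1}g_\eps(u)-\mlt''(u;\,+\infty)<0$; as $\mrt''(u;\,-\infty)$ is a positive multiple of $\int_{-\infty}^u f'''(t)e^{t/\eps}\,dt$, this is exactly the hypothesis of Theorem~\ref{T1}. The case $g_\eps>0$ on $\mathbb{R}$ is the mirror image and yields $\mlt''(u;\,+\infty)\ge0$ for all $u$, i.e.\ the hypothesis of Theorem~\ref{T1b}.

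Part~3) is where the real work lies. Here I would use that $\tfrac{1}{2\eps}w_\eps$ is the fundamental solution of $-\eps^2\partial^2+1$, so that $g_\eps=w_\eps\ast f'''$ satisfies $-\eps^2 g_\eps''+g_\eps=2\eps f'''$ a.e.\ (with $g_\eps\in C^1$). On $(-\infty,v_1)$, where $f'''<0$ a.e., this makes $g_\eps$ strictly convex on every subinterval where $g_\eps\ge0$, and symmetrically strictly concave on every subinterval of $(v_1,+\infty)$ where $g_\eps\le0$. Combined with the decay $g_\eps(u)\to0$ as $u\to\pm\infty$, a maximum-principle argument then shows that $g_\eps$ changes sign at most once and, when it does, from nonpositive to nonnegative values; in particular, in the situation of part~3) the zero $v$ can be chosen so that $g_\eps\le0$ on $(-\infty,v)$ and $g_\eps\ge0$ on $(v,+\infty)$. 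Given such a $v$: $\mrt''(v;\,-\infty)+\mlt''(v;\,+\infty)=\eps^{-1}g_\eps(v)=0$; for $u<v$, either $u\le v_1$ (so $\mrt''(u;\,-\infty)\le0$ automatically) or $v_1<u<v$, in which case $\mlt''(u;\,+\infty)\ge0$ and hence $\mrt''(u;\,-\infty)=\eps^{-1}g_\eps(u)-\mlt''(u;\,+\infty)\le0$; and symmetrically $\mlt''(u;\,+\infty)\ge0$ for all $u>v$. These are precisely the hypotheses of Theorem~\ref{T2}, whence $\Bell(x;\,f)=\Brtlt(x;\,-\infty,v,+\infty)$.

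The main obstacle is exactly this maximum-principle step — showing that convolution with $w_\eps$ turns the single sign change of $f'''$ into at most one sign change of $g_\eps$ of the same type. The convexity/concavity dichotomy makes it believable, but a rigorous proof must handle the weak regularity of $f'''$ (only $L^1_{\mathrm{loc}}$, so $g_\eps$ is merely $C^1$), must exclude ``touch'' zeros by a local comparison argument near them, and above all must establish the decay $g_\eps(u)\to0$ at $\pm\infty$, which has to be extracted from $f\in\W$ and $\eps<\eps_0$ (e.g.\ by a l'H\^opital-type estimate applied to the explicit formulas for $\mrt''(\cdot;-\infty)$ and $\mlt''(\cdot;+\infty)$). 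One could also bypass the elementary argument by invoking the variation-diminishing property of the bilateral exponential kernel, which is a P\'olya frequency function. All the remaining steps are routine bookkeeping with the identity and the one-sided signs.
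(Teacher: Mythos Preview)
Your treatment of parts~1) and~2) coincides with the paper's: the same identity $\eps^{-1}g_\eps=\mrt''(\cdot;-\infty)+\mlt''(\cdot;+\infty)$ and the same one-sided sign arguments.

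For part~3) you take a genuinely different route. You aim to prove a \emph{global} fact about $g_\eps$ --- that it has at most one sign change, from $-$ to $+$ --- via the differential identity $-\eps^2 g_\eps''+g_\eps=2\eps f'''$ and a maximum-principle/convexity argument (or alternatively via the variation-diminishing property of the P\'olya frequency kernel $w_\eps$). This is correct in spirit, but as you yourself note it requires establishing the decay $g_\eps(u)\to0$ at $\pm\infty$ and some care with the limited regularity of $g_\eps$. The paper bypasses all of this. Given the zero $v$ (say $v\ge v_1$), it simply observes that
\[
e^{u/\eps}\,\mrt''(u;-\infty)=\eps^{-1}\int_{-\infty}^{u} f'''(t)\,e^{t/\eps}\,dt
\]
is \emph{increasing} on $(v_1,v)$ (because $f'''>0$ there) and is negative at $u=v$ (because $\mrt''(v;-\infty)=-\mlt''(v;+\infty)<0$ from the identity and the sign of $f'''$ on $(v_1,\infty)$); hence it is negative on all of $(v_1,v)$. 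Together with the automatic signs on $(-\infty,v_1]$ and $[v,\infty)$, this gives the hypotheses of Theorem~\ref{T2} directly for the \emph{given} $v$. This running-integral monotonicity trick is completely elementary, avoids any decay or ODE considerations, and makes your ``main obstacle'' disappear. Your approach, on the other hand, would yield the stronger structural information that $g_\eps$ has a clean global sign pattern, which is conceptually pleasant but not needed for this theorem.
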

	\begin{proof}
		First, we note that
		$$
		\eps^{-1}g_\eps(u) = \mrt''(u;\,-\infty) + \mlt''(u;\,+\infty),
		$$
 		where $\mrt''(u;\,-\infty)$ and $\mlt''(u;\,+\infty)$ are given  by~(\ref{e315}) and~(\ref{e318}).  		

 		Recall the point where~$f'''$ changes its sign was denoted $v_1$.
 		Consider case~1). 
 		It is clear that for $u \le v_1$ the inequality $\mrt''(u;\,-\infty)<0$ is always fulfilled, because $f'''(u)<0$ a.e. on $(-\infty,v_{1})$. 
 		On the other hand, for $u\ge v_1$ we use the condition $g_{\varepsilon}<0$:
		$$
			\mrt''(u;\,-\infty)=\varepsilon^{-1}g_{\varepsilon}- \mlt''(u;\,+\infty)< -\mlt''(u;\,+\infty).
		$$
		Since $f'''(u)>0$ a.e. for $u\geq v_1$, the inequality $-\mlt''(u;\,+\infty)<0$ is valid. Thus, we see that 
		$\mrt''(u;\,-\infty)<0$ for
		all $u\in\mathbb{R}$, and the conditions of Theorem~\ref{T1} are fulfilled.
		Case~2) can be treated similarly.
		
		Finally, we consider case 3). 
		We may treat only the case  $v\ge v_1$ (the case  $v\le v_1$ can be treated similarly). 
		The sign of $f'''$ is known, and so $\mlt''(u;\,+\infty)>0$ for $u \geq v$ 
		(this is one of the conditions of Theorem~\ref{T2}). We also know that $\mrt''(u;\,-\infty)<0$ for $u \le v_1$. 	
		Thus, it remains to verify
		that $\mrt''(u;\,-\infty)<0$ for $ u\in (v_1,v)$.		
		On the one hand, we have
		$$
	  	\mrt''(v;\,-\infty)=\varepsilon^{-1}g_{\varepsilon}(v)- \mlt''(v;\,+\infty)= -\mlt''(v;\,+\infty)<0.
		$$
		On the other hand, the function $e^{u/\eps}\,\mrt''(u;\,-\infty)$ increases monotonically 
		on  $(v_1,v)$, because
		$$
			e^{u/\eps}\,\mrt''(u;\,-\infty) = \varepsilon^{-1}\int\limits_{-\infty}^{u}f'''(t)e^{t/\varepsilon}\, dt
		$$
		and $f'''$ is positive on this interval. 
		Consequently, $e^{u/\eps}\,\mrt''(u;\,-\infty)$ is negative for all ${u\in(v_1,v)}$. 
		As a result, all the conditions of Theorem~\ref{T2} are fulfilled.
	\end{proof}

\subsection{Examples}
\paragraph{Example 3. The power function.}
	The function $f(t) = |t|^{p}$ was treated in~\cite{SlVa2}. For $p>2$, it gets into the class being considered: 
	$f \in \W^{1}$ with $c_{0}=-\infty$, $c_{1} = +\infty$. Here, we do not write an explicit expression for the Bellman function, but merely verify that
	the conditions of case~3) in Theorem~\ref{T2cor} are satisfied.	
	Indeed, the expression  
	\begin{align*}
	\eps^{-1} g_{\eps}(u) &= \mrt''(u;\,-\infty) + \mlt''(u;\,+\infty)\\
	&=\eps^{-1}\int\limits_{-\infty}^{\infty}\sign t \cdot p(p-1)(p-2)|t|^{p-3}e^{-|u-t|/\eps}\,dt
	\end{align*}
	has the unique root $u = 0$.
	Therefore, the vertex of the angle has coordinates $(0,0)$ for any $\eps \in (0,\infty)$, i.e. it does not
	depend on~$\eps$.

\paragraph{Example 4. The concatenation of two exponential functions.}
	We consider a certain family of functions that depend on a parameter.
  The third derivative of each of these functions changes its sign once from minus to plus. 
  For certain values of our parameter the domain will be foliated entirely by the tangents of the same type, and 
  for other values an angle will arise.

	Namely, we consider a function~$f$ such that its third derivative is given as follows:
	\begin{equation}\label{RR2}
	 f'''(t)=
	\begin{cases}
		\phantom{-} e^{t}, &t \geq 0;\\
	 -e^{ t/\alpha}, &t < 0.
	\end{cases}
	\end{equation}
	For example, we may set
	\begin{equation}\label{bc}
	 f(t)=
	\begin{cases}
		\qquad e^{t},  &t \geq 0;\\
	    -e^{t/\alpha}\alpha^{3}+\frac{t^{2}}{2}(1+\alpha)+t(1+\alpha^{2})+1+\alpha^{3}, &t < 0.
	\end{cases}
	\end{equation}
	For any positive~$\alpha$, this function belongs to $\Wone^{1}$ with $c_{0}=-\infty$, $c_{1} = +\infty$.
	Now we want to find all~$\alpha$ such that the condition of Theorem~\ref{T1b} is satisfied, i.e.
	\begin{equation*}
		\int\limits_{u}^{\infty}f'''(t)e^{-t/\eps}\;dt \geq 0 \quad \text{for}  \quad u  \in \mathbb{R}.
	\end{equation*}
	For $u <0$, we have
	\begin{equation*}
		\int\limits_{u}^{\infty}f'''(t)e^{-t/\eps}\,dt = 	\frac{\eps}{1-\eps}+\frac{\alpha\eps}{\alpha-\eps}
		-\frac{\alpha\eps}{\alpha-\eps}\exp\left(-\frac{u(\alpha-\eps)}{\alpha\eps} \right).
		\end{equation*}
	This expression is non-negative for all $u<0$ if and only if $\alpha < \eps$ and
	$$
		\frac{\eps}{1-\eps}+\frac{\alpha \eps}{\alpha-\eps}\geq 0. 
	$$
	Thus, the condition of Theorem~\ref{T1b} is satisfied when 
	\begin{equation}\label{usla}
		0 < \alpha \leq \frac{\eps}{2-\eps}.
	\end{equation}
	Therefore, in the case where the boundary values are defined by~(\ref{bc}), 
	condition~(\ref{usla}) is necessary and sufficient for 
	$\Omega_\eps$ to be foliated by the left tangents.
	Thus, for such values~$\alpha$, the Bellman function can be easily restored:
	$$
		\Bell(x_{1},x_{2};\,f)=(x_{1}-u)\,\mlt(u;\,+\infty)+f(u),
	$$
	where
	\begin{multline*}
	\mlt(u;\,+\infty) =\eps^{-1}e^{u/\eps}\int\limits_{u}^{\infty}f'(t)e^{-t/\eps}\,dt\\
	=\begin{cases}
	\displaystyle
	e^{u}\frac{1}{1-\eps},   &u \geq 0;\\
	\displaystyle \rule{0pt}{30pt} e^{u/\eps}\frac{\eps^3\!+\!\eps^3\alpha\!-\!2\alpha\eps^2}{(1\!-\!\eps)(\eps\!-\!\alpha)}+e^{u/\alpha}\frac{\alpha^3}{\eps-\alpha}+(u+\eps)(1+\alpha)+(1+\alpha^2),   &u < 0.\\
	\end{cases}
	\end{multline*}
	Also, we recall (see~\ref{e33}) that
	$$
		u  =  x_{1}-\eps+\sqrt{x_{1}^{2}-x_{2}+\eps^{2}}.
	$$	

	We note that for $\alpha$ considered above, case~2) in Theorem~\ref{T2cor} occurs.
	Now we verify that for
	\begin{equation}\label{uslaa}
		\alpha > \frac{\eps}{2-\eps}
	\end{equation}
	case~3) in this theorem comes into play.
	Indeed, if condition~(\ref{uslaa}) is fulfilled, the equation
	\begin{align*}
		\eps^{-1} g_{\eps}(u)= \mrt''(u;\,-\infty) + \mlt''(u;\,+\infty)=0
	\end{align*}
	has the unique root
	\begin{equation*}
		u =
		\begin{cases}
		\displaystyle 
		\frac{\alpha \eps}{\alpha-\eps} \log\left( \frac{2\alpha^{2}(1-\eps)}{(\alpha+\eps)(2\alpha-\alpha\eps-\eps)}\right),&\alpha \neq \eps;\\
		\displaystyle \rule{0pt}{27pt} -\frac{\eps(\eps+1)}{2(1-\eps)},&  \alpha =\eps.
		\end{cases}
	\end{equation*}

\paragraph{Example 5. A fourth-degree polynomial.}
	It is clear that any fourth-degree polynomial belongs to $\W^1$
for all $\eps_0 >0$, $c_0=-\infty$, $c_1=+\infty$, if the leading coefficient
is positive.
	According to Remark~\ref{rem2}, it is sufficient to consider polynomials of the form
	$f = \frac{1}{24}t^{4}-\frac{a}{6}t^{3}$, $a \in \mathbb{R}$.
	In such a case, $f'''(t) = t-a$.

	We do not write an explicit expression for the Bellman function, but only verify that condition~3) in 
	Theorem~\ref{T2cor} is fulfilled and look for the vertex of the angle. The expression
	\begin{equation*}
	\mrt''(u;\,-\infty) + \mlt''(u;\,+\infty) = \eps^{-1}\int\limits_{-\infty}^{\infty}(t-a)e^{-|u-t|/\eps}dt=2(u-a)
	\end{equation*}
	has the unique root $u=a$.
	Thus, for any $\eps \in (0,\infty)$, the vertex of the angle has the coordinates $(a,a^{2})$. 
	We note that the coordinates of the vertex do not depend on~$\eps$.

\paragraph{Example 6. The angle moves when \texorpdfstring{$\eps$}{epsilon} varies.}
Now we consider a more interesting case where the vertex of an angle varies depending on $\eps$.
Let $f$ be a $C^{3}$-smooth function such that
\begin{equation*}
f'''(t)=\begin{cases}
-t^{2},& t\leq 0;\\
\phantom{-}t\phantom{^2},& t>0.
\end{cases}
\end{equation*}
Then $f \in \W^{1}$ for any $\eps_{0}>0$, $c_{0}=-\infty$, $c_{1}=+\infty$.
We want to check condition~3) in Theorem~\ref{T2cor}:
\begin{align}
\mrt''(v;\,-\infty) &+ \mlt''(v;\,+\infty) \nonumber\\
&\hskip-30pt=\eps^{-1}\int\limits_{-\infty}^{v}\big(-t^{2}\chi_{(-\infty,0)}(t)+t\chi_{(0,\infty)}(t)\big)e^{(t-v)/\eps}\,dt \nonumber \\
&+\eps^{-1}\int\limits_{v}^{\infty}\big(-t^{2}\chi_{(-\infty,0)}(t)+t\chi_{(0,\infty)}(t)\big)e^{(v-t)/\eps}\,dt \nonumber \\
&\hskip-30pt=
\begin{cases}
-4\eps^{2}-2 v^{2}+2\eps^{2}e^{v/\eps}+\eps e^{v/\eps},& v< 0; \label{neq_v}\\
-2\eps^{2}e^{-v/\eps}+\eps e^{-v/\eps}+2 v,&v\ge0.
\end{cases} 
\end{align}
If $v \geq 0$, we can write the equation for the vertex of the angle as
\begin{equation}\label{lambert1}
\frac{v}{\eps} e^{\frac{v}{\eps}}=\left( \eps - \frac{1}{2}\right).
\end{equation}
It is clear that this equation has no positive solutions for ${\eps<1/2}$.
Therefore, we consider the case ${\eps\geq 1/2}$. 
We see that the solution of equation~(\ref{lambert1}) is the function
$v(\eps) = \eps W(\eps-1/2)$, where $W(z)$ is the Lambert function. 
The Lambert function is defined by the equation ${W(z)e^{W(z)}=z}$. 
It is clear that $v(1/2) =0$. Thus, since $v \geq 0$ and $W' >0$, it follows that $v'(\eps) >0$ for $\eps \geq 1/2$.
Therefore, for $\eps \geq 1/2$, condition~3) in Theorem~\ref{T2cor} is fulfilled, and
the vertex $v(\eps)$ of the angle moves to the right when $\eps$ grows.  

We claim that for $0 <\eps < 1/2$ the equation for the vertex of the angle has a negative solution.
We equate expression~(\ref{neq_v}) for $v<0$ to zero:
\begin{equation*}
e^{v/\eps}(2\eps^{2}+\eps)-4\eps^{2}-2v^{2}=0.
\end{equation*}
Note that the left hand side of this equation increases monotonically from $-\infty$ to the positive
number $\eps-2\eps^{2}$ as $v$ runs from $-\infty$ to $0$. Consequently, this equation has a unique root $v(\eps)$,
i.e. condition~3) of Theorem~\ref{T2cor} is fulfilled. 
It is easy to see that $v(\eps) \to 0$ as $\eps \to 0$ or $\eps \to 1/2$.
Besides, we can find a number $\tilde\eps$, $0<\tilde\eps<1/2$, with the following properties: 
if $\eps$ decreases from $1/2$ to $\tilde\eps$, then
the vertex of the angle moves from zero to a certain value~$\tilde v$, and if $\eps$ decreases from $\tilde\eps$
to zero, then the vertex returns from $\tilde v$ to zero.

\section{Transition from left tangents to right ones}
	In this chapter, we consider a transition from left tangents to right ones. Such a transition
	is performed through a subdomain foliated by extremal chords whose endpoints lie on the lower 
	parabola. The reader can look at Figure~\ref{fig:plkpl} to understand what is meant. 
	In Section~\ref{s51}, we will describe the form that any Bellman candidate must have in a subdomain foliated
	by extremal chords (see Figure~\ref{fig:ox})
	and also derive some conditions that these chords must satisfy.
	In Section~\ref{s52}, we will construct a Bellman candidate in 
	the domain shown in Figure~\ref{fig:plkpl}. 
	It turns out that in the case where domains~$\Lt$ and~$\Rt$ border on a domain foliated by chords, 
	the corresponding candidates~$\Blt$ and~$\Brt$ can be determined uniquely (i.e. the integration constant
	can be calculated explicitly).
	In Section~\ref{s53}, we will build delivery curves and optimizers in domains foliated by chords.
	As we will see, such a domain is another place (besides $\pm\infty$) where delivery curves can originate.
	Finally, in Section~\ref{s54}, we will prove that if $f\in\W^{0}$ and $c_{0}\neq \pm \infty$ 
	(i.e. $f'''$ changes its sign once, from plus to minus), then the Bellman function
	$\Bell(x;\,f)$ corresponds to the foliation described above.
	
	Before continuing, we recall our agreement on the notation. If a point on the lower boundary is denoted by
	a capital Latin letter, then the corresponding small letter denotes the first coordinate of this point 
	(and vice versa). Throughout this chapter, we use this rule very often.

\subsection{Family of chords}\label{s51}
	Let $A_0$, $A_1$, $B_1$ and $B_0$ be four points on the lower boundary of $\Omega_\eps$ with
	the abscissas $a_0$, $a_1$, $b_1$ and $b_0$ such that ${a_0<a_1< b_1<b_0}$ and ${b_0-a_0\leq 2\eps}$.
	We draw two segments $[A_0,B_0]$ and $[A_1,B_1]$. 
	It is easy to see that both of these segments lie in $\Omega_\eps$ entirely. 
	We consider the subdomain bounded by these segments and two arcs of the lower parabola: the one connects $A_0$ and $A_1$ 
	and the other connects $B_1$ and $B_0$. Suppose this subdomain is foliated entirely by a family of 
	non-intersecting chords with the endpoints
	lying on the different arcs of the lower parabola. We denote such a subdomain by $\Ch(a_0,b_0,a_1,b_1)$ 
	(see Figure~\ref{fig:ox}).
\begin{figure}[H]
\begin{center}
\includegraphics{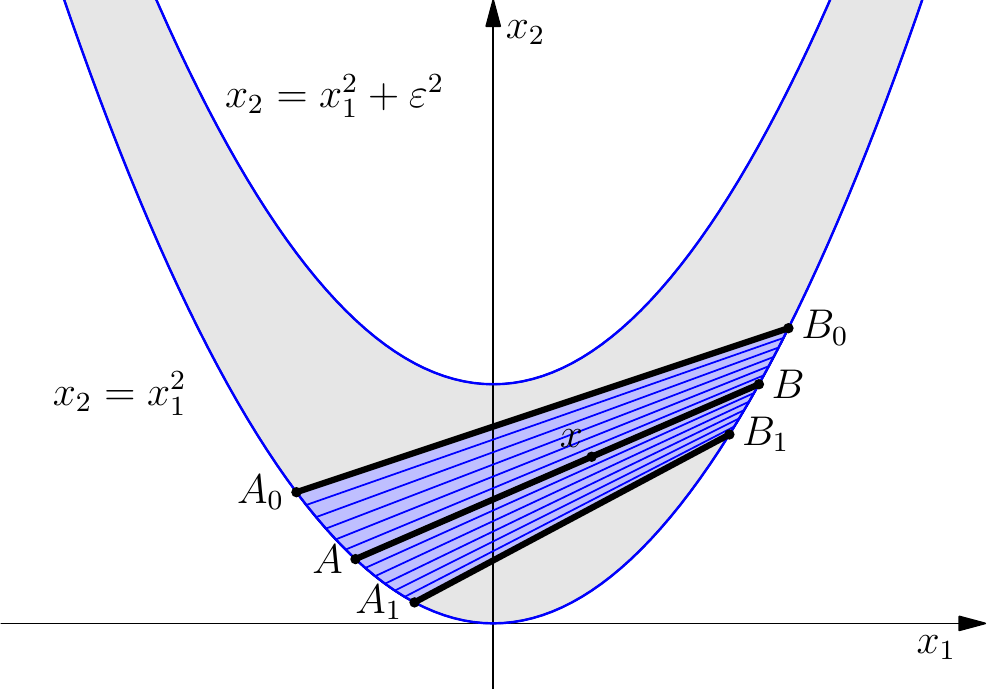}
\caption{A domain $\Ch$ with the chords.}
\label{fig:ox}
\end{center}
\end{figure}

	We see that for any point~$x$ in $\Ch(a_0,b_0,a_1,b_1)$ there are two numbers $a \in [a_0,a_1]$ and $b\in[b_1,b_0]$ such that
	the chord $[A,B]$ belongs to our family and contains~$x$.
	We want to construct a Bellman candidate in $\Ch(a_0,b_0,a_1,b_1)$ whose 
	partial derivatives are constant along the chords in our family. What is more, we derive some conditions on the chords that allow such a candidate to 
	exist at all. We denote the function required by $\Bch(x;\,a_0,b_0,a_1,b_1)$ (sometimes we write $\Bch(x)$ for short).
		
	First, we note that the principal difference between the cases of extremal chords and extremal tangents lies in the fact that 
	using the linearity along the chords, we can restore $\Bch$ in~$\Ch$ uniquely.
	Indeed, if we know that $\Bch(A)=f(a)$, $\Bch(B)=f(b)$, and $\Bch$ is linear along the chord $[A,B]$, then
	we can calculate the value of~$\Bch$ at any point~$x$ lying on this chord:
	\begin{equation}\label{e325}
		\Bch(x)=\frac{f(b)-f(a)}{b-a}x_1+\frac{bf(a)-af(b)}{b-a}.
	\end{equation}	
	However, the function $\Bch$ built in this way is a Bellman candidate only if its derivatives~$\Bch_{x_1}$ and~$\Bch_{x_2}$ are constant along the 
	extremals. We will get some condition on the chords that guarantees the constancy of~$\Bch_{x_1}$ and~$\Bch_{x_2}$ on them.

	We parametrize our chords $[A,B]$ by the values $\ell = b - a$. Then the right endpoint $B(\ell)$ moves to the right, i.e, 
	the function $b(\ell)$ increases. The left endpoint $A(\ell)$ moves to the left at the same time, i.e. the function $a(\ell)$ decreases.
	In addition, we assume that the functions $a$ and $b$ are differentiable and the inequalities $a'<0$ and $b'>0$ are fulfilled.
	The last requirement implies that the chords $[A,B]$ do not intersect. 	
	Domains foliated by extremal chords that share a common point on the boundary, can arise 
	if the boundary function~$f$ is not smooth enough (see~\cite{SlVa2} or~\cite{Va}). We will not encounter such domains due to our 
	assumptions on the smoothness of~$f$.
	
	In its turn, $\ell$ can be treated as a function of $x\in\Ch$, i.e. we consider the function $\ell(x)$. 
	For short, we often omit the arguments of the functions~$a$, $b$ and~$\ell$. We write the equation of the line
	passing through points $A$ and $B$:
	$$
		x_2=(a+b)x_1-ab.
	$$
	Now we calculate $\ell_{x_2}$. By the last relation, if $x_1$ is fixed, then $x_2$ is a differentiable function of $\ell$ with
	$$
		x_2' = x_1(a'+b')-(ab'+ba').
	$$
	But $x_1$ takes values from $a$ to $b$. Therefore, $x_2'$ runs between ${(a-b)a'}$ and ${(b-a)b'}$. 
	Each of this two values is greater than zero, and so
	$x_2'(\ell)>0$. Consequently, the inverse function~$\ell$ is differentiable in $x_2$, and
	\begin{equation}\label{e324}
		\ell_{x_2}=\frac{1}{x_1(a'+b')-(ab'+ba')}.
	\end{equation}
	
	We are ready to calculate the partial derivatives of~$\Bch$. As we have already mentioned, we are searching for a condition on the chords
	under which $\Bch_{x_1}$ and~$\Bch_{x_2}$ are constant along them. Since $\Bch$ is linear along the chords, it is sufficient to obtain
	a condition that guarantees the constancy of $\Bch_{x_2}$ along them.
	Differentiating identity~(\ref{e325}) in $x_2$, we get
	\begin{equation}\label{e333}
		\Bch_{x_2}(x_1,x_2) = \frac{\alpha x_1 + \beta}{(b-a)^2}\,\ell_{x_2},
	\end{equation}
	where
	\begin{align*}
		\alpha = &\big(f'(b)b'-f'(a)a'\big)(b-a)-\big(f(b)-f(a)\big)(b'-a');\\
		\beta = &\big(b'f(a)+bf'(a)a'-a'f(b)-af'(b)b'\big)(b-a)\\
		&-\big(bf(a)-af(b)\big)(b'-a'),
	\end{align*}
	and $\ell_{x_2}$ is given by~(\ref{e324}). 	
	Since $\Bch_{x_2}$ is constant along the chords, it does not depend on $x_1$ if $\ell$ is fixed. 
	But if the quotient of two linear functions does not depend
	on the variable, then their coefficients must be proportional, i.e. 
	$$
		{\alpha}(ab'+ba') = -{\beta}(a'+b').
	$$	
	Substituting the corresponding expressions for $\alpha$ and $\beta$, we obtain, after elementary calculations, the equivalent identity:
	\begin{equation*}
		a'b'\bigg(\frac{f'(a)+f'(b)}{2} - \frac{f(b)-f(a)}{b-a}\bigg)=0.
	\end{equation*}
	Dividing by $a'b'$, we have
	\begin{equation}
		\label{urlun}
		\av{f'}{[a,b]}=\frac{f'(a)+f'(b)}{2}.
	\end{equation}
	Thus, under the assumption $a'b'\ne 0$, the derivatives of~$\Bch$ are constant on the chords $[A,B]$ if and only if their ends 
	satisfy equation~(\ref{urlun}).

	Now we turn to the concavity of the function $\Bch$ constructed above. 	
	We note that at each point of~$\Ch$, our function is linear in one direction. Therefore, as in the case
	of extremal tangents discussed in the previous chapter, it is sufficient to verify the concavity along some other direction. 
	Since the 
	direction~$x_2$ always differs from the direction of chords, it is enough to study the sign of~$\Bch_{x_2x_2}$. 
	First, using~(\ref{urlun}), we simplify formula~(\ref{e333})
	for~$\Bch_{x_2}$. Since the expression for~$\Bch_{x_2}$ does not depend on $x_1$, we have
	\begin{align*}
		\Bch_{x_2}(x_1,x_2) 
		&=\frac{\big(f'(b)b'-f'(a)a'\big)(b-a)-\big(f(b)-f(a)\big)(b'-a')}{(a'+b')(b-a)^2}\\
		&=\frac{2f'(b)b'-2f'(a)a'-\big(f'(b)+f'(a)\big)(b'-a')}{2(a'+b')(b-a)}\\
		&=\frac{f'(b)-f'(a)}{2(b-a)}.
	\end{align*}
	Since $\ell$ strictly increases as $x_2$ grows (this is obvious by the geometric considerations, but the formal proof can be found in 
	the derivation of~(\ref{e324})), it is sufficient to study the sign of~$\Bch_{x_2 \ell}$. By direct calculations, we have
	\begin{equation}\label{e7}
	\begin{aligned}
		2\Bch_{x_2 \ell}
		&=\frac{f''(b)b'-f''(a)a'}{b-a}-\frac{f'(b)-f'(a)}{(b-a)^2}(b'-a')\\
		&=\frac{b'\big(f''(b)-\av{f''}{[a,b]}\big)-a'\big(f''(a)-\av{f''}{[a,b]}\big)}{b-a}\, .
	\end{aligned}
	\end{equation}
	On the other hand, differentiating equation~(\ref{urlun}) with respect to~$\ell$, we get
	\begin{equation}\label{e6}
		b'\big(f''(b)-\av{f''}{[a,b]}\big)+a'\big(f''(a)-\av{f''}{[a,b]}\big)=0\,.
	\end{equation}
	We introduce the following notation:
	\begin{equation}\label{e334}
		\Dlt(a,b) = f''(a)-\av{f''}{[a,b]}\quad\textrm{and}\quad
		\Drt(a,b) = f''(b)-\av{f''}{[a,b]}.
	\end{equation}
	Equation~(\ref{e6}), together with the inequalities $b'> 0$ and $a'< 0$, implies that $\Dlt(a,b)$ and $\Drt(a,b)$ have the same sign for 
	every chord $[A,B]$.
	Thus, by virtue of~(\ref{e7}), we see that $\Bch_{x_2 \ell}\le 0$ if and only if either $\Dlt(a,b)\le 0$ or $\Drt(a,b)\le 0$. What is more, each of 
	these two inequalities implies the other.
	
	We summarize this section in the following proposition.
	\begin{Prop}\label{S8}
		Consider a domain $\Ch(a_0,b_0,a_1,b_1)$ foliated entirely by non-intersecting chords $[A,B]$\textup, and
		parametrize the first coordinates $a$ and $b$ of their endpoints
		by $\ell = b-a$. Suppose $a$ and $b$ are differentiable functions such that $a'<0$ and $b'>0$.
		Under these assumptions\textup, we can build a function $\Bch(x;\,a_0,b_0,a_1,b_1)$ such that its partial derivatives are constant 
		along the chords $[A,B]$\textup,
		if and only if all the chords satisfy~\textup{(\ref{urlun})}. The function~$\Bch$ can be calculated by~\textup{(\ref{e325})}. 
		Also\textup, we have
		\begin{equation}\label{e326}
			\Bch_{x_2}(x) = \frac{f'(b)-f'(a)}{2(b-a)} = \frac{1}{2}\av{f''}{[a,b]},
		\end{equation}
		where $a$ and $b$ are the first coordinates of the endpoints of the chord $[A,B]$ passing through $x$.
		
		The function~$\Bch$ is locally concave \textup{(}and\textup, therefore\textup, it is a Bellman candidate\textup{)} 
		if and only if for every chord $[A,B]$ 
		one of the following two inequalities is fulfilled\textup:
		\begin{equation}\label{eq13}
			\Dlt(a,b)\le 0\quad\mbox{or}\quad\Drt(a,b)\le 0.
		\end{equation}
		Furthermore\textup, each of these two inequalities implies the other one.
	\end{Prop}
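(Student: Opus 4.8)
The statement merely repackages the computations made in the discussion preceding it, so the plan is to organize them into two clean equivalences. First I would pin down the representation formula and uniqueness. Since a Bellman candidate $\Bch$ with the prescribed foliation is linear along each chord $[A,B]$ and, by the boundary condition, satisfies $\Bch(A)=f(a)$ and $\Bch(B)=f(b)$, its graph over that chord is forced to be the line segment joining $(a,a^2,f(a))$ to $(b,b^2,f(b))$; eliminating $x_2$, which is legitimate because $a\neq b$, produces exactly~(\ref{e325}). Thus such a $\Bch$ is unique, and the whole task reduces to deciding, for the function defined by~(\ref{e325}), (i) when $\Bch_{x_1}$ and $\Bch_{x_2}$ are constant along the chords and (ii) when $\Bch$ is locally concave.

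For the first equivalence I would parametrize the chords by $\ell=b-a$. From the chord equation $x_2=(a+b)x_1-ab$ together with the hypotheses $a'<0<b'$ one checks that along each vertical line $x_2\mapsto\ell$ is a $C^1$ increasing bijection with $\ell_{x_2}$ given by~(\ref{e324}); the chain rule applied to~(\ref{e325}) then gives~(\ref{e333}). Because $\Bch$ is linear along every chord, all its partial derivatives are constant along a chord as soon as $\Bch_{x_2}$ is, and by~(\ref{e333}) the latter holds iff $\frac{\alpha x_1+\beta}{(b-a)^2}\,\ell_{x_2}$ is independent of $x_1$ when $\ell$ is frozen. Since $\ell_{x_2}$ is the reciprocal of the linear function $x_1(a'+b')-(ab'+ba')$, this forces $\alpha x_1+\beta$ to be proportional to $x_1(a'+b')-(ab'+ba')$, i.e.\ $\alpha(ab'+ba')=-\beta(a'+b')$; substituting the displayed expressions for $\alpha$ and $\beta$ and cancelling $a'b'\neq0$ collapses this, after a routine but lengthy simplification, to the scalar identity~(\ref{urlun}). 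Conversely, feeding~(\ref{urlun}) back into~(\ref{e333}) makes the $x_1$-dependence vanish and yields~(\ref{e326}), which simultaneously re-proves the constancy of $\Bch_{x_2}$ (hence of every derivative of $\Bch$) along the chords.

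For the concavity statement I would argue as follows. At each point of $\Ch$ the function $\Bch$ is affine along the chord through that point, so $\frac{d^2\Bch}{dx^2}$ annihilates the chord direction; that direction is never vertical (its endpoints have distinct abscissas), so $(0,1)$ is transverse to the one-dimensional kernel and consequently $\frac{d^2\Bch}{dx^2}\le0$ is equivalent to the single inequality $\Bch_{x_2x_2}\le0$. Since $x_2\mapsto\ell$ is strictly increasing, $\Bch_{x_2x_2}\le0$ is equivalent to $\Bch_{x_2\ell}\le0$, and differentiating~(\ref{e326}) gives formula~(\ref{e7}); differentiating the constraint~(\ref{urlun}) in $\ell$ gives~(\ref{e6}), i.e.\ $b'\Drt(a,b)+a'\Dlt(a,b)=0$. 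With $b'>0>a'$ this shows that $\Dlt(a,b)$ and $\Drt(a,b)$ have the same sign, and substituting~(\ref{e6}) into the numerator of~(\ref{e7}) rewrites it as $-2a'\Dlt(a,b)$ (equivalently $2b'\Drt(a,b)$), whence $\Bch_{x_2\ell}\le0$ iff $\Dlt(a,b)\le0$ iff $\Drt(a,b)\le0$. Combining: $\Bch$ is locally concave precisely when~(\ref{eq13}) holds for every chord, and each of the two inequalities entails the other.

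The only point that is not pure computation is the reduction of local concavity to $\Bch_{x_2x_2}\le0$ in the last step: it rests on the transversality noted above together with enough regularity of $\Bch$ in the interior of $\Ch$ for a pointwise Hessian bound to imply concavity on convex subsets — this holds once $a(\ell)$ and $b(\ell)$ are smooth, which is the case for the candidates constructed in the sequel. Everything else is bookkeeping, the messiest part being the algebraic reduction of $\alpha(ab'+ba')=-\beta(a'+b')$ to~(\ref{urlun}).
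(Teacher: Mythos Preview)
Your proposal is correct and follows essentially the same route as the paper: derive~(\ref{e325}) from linearity and the boundary condition, compute $\Bch_{x_2}$ via~(\ref{e333}) and~(\ref{e324}), reduce constancy along chords to the proportionality $\alpha(ab'+ba')=-\beta(a'+b')$ and hence to~(\ref{urlun}), then handle concavity by noting linearity along chords reduces the Hessian test to $\Bch_{x_2x_2}\le0$, which via~(\ref{e7}) and~(\ref{e6}) becomes~(\ref{eq13}). Your explicit rewriting of the numerator of~(\ref{e7}) as $-2a'\Dlt=2b'\Drt$ is a slight sharpening of the paper's presentation, and your closing caveat about the regularity needed for the pointwise Hessian argument is well placed.
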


\subsection{Cup}\label{s52}	
	In the previous section, we dealt with subdomains $\Ch(a_0,b_0,a_1,b_1)$ lying between two chords $[A_0,B_0]$ and $[A_1,B_1]$ in $\Omega_\eps$. 
	Now we consider a subdomain arising in the case $a_1=b_1$.
	\begin{Def}\label{D10}
		Let $0\le b_0 - a_0 \le 2\eps$. Consider the subdomain of~$\Omega_\eps$ that lies between $[A_0,B_0]$ and the lower parabola. 
		Suppose there exists a family of non-intersecting 
		chords that foliate this subdomain entirely and have the following properties:
		\begin{enumerate}
			\item[1)]
				if we parametrize the first coordinates $a$ and $b$ of their endpoints by $\ell = b-a$, we obtain the differentiable functions 
				$a(\ell)$ and $b(\ell)$
				such that $a'<0$ and $b'>0$;
			\item[2)]
				each of these chords satisfies equation~(\ref{urlun});
			\item[3)]
				for each chord, one of two inequalities~(\ref{eq13}) is fulfilled.
		\end{enumerate}
		In such a situation, we call the subdomain being considered \emph{a cup} and denote it by $\Alv(a_0,b_0)$.
	\end{Def}
	The unique point~$c$ lying in the intersection of all the intervals $[a,b]$ is called \emph{the origin} of the cup. 
	The points $a_0$ and $b_0$ are called \emph{the ends} of the cup, and the value $\ell_0 = b_0-a_0$ is  called \emph{the size} of the cup.
	Note that if $\ell_0 = 2\eps$, the chord $[A_0,B_0]$ touches the upper parabola. In such a case, we say that the cup $\Alv(a_0,b_0)$ is \emph{full}. 	
	Also, the case $\ell_0 = 0$ is not excluded from the consideration. In this situation, the cup consists of the single point $(c,c^2)$.
	
	Using~(\ref{e325}), we construct a function in $\Alv(a_0,b_0)$ that is linear along the chords $[A,B]$. 
	Proposition~\ref{S8} implies that such a function is a Bellman candidate in the cup. We denote it by $\Balv(x;\,a_0,b_0)$.
	
	Now we assume that $u_1<a_0<b_0<u_2$ and $b_0-a_0=2\eps$. Consider a full cup $\Alv(a_0,b_0)$ together with two domains 
	$\Lt(u_1,a_0)$ and $\Rt(b_0,u_2)$ adjacent to the cup and foliated
	by extremals~(L) and~(R), respectively (see Figure~\ref{fig:plkpl}).
	
		Consider the union
	$$
		\LtRt(u_1,[a_0,b_0],u_2) \df \Lt(u_1,a_0) \cup \Alv(a_0,b_0) \cup \Rt(b_0,u_2).
	$$
	In this domain, we are looking for a function such that
	its partial derivatives are constant along the chords in~$\Alv$ and, respectively, along the corresponding tangents 
	in~$\Rt$ and~$\Lt$. Denote the function being sought by $\Bltrt(x;\,u_1,[a_0,b_0],u_2)$. In~$\Alv$ 
	it must coincide with~$\Balv$.
	Concerning the subdomains
	$\Lt$ and $\Rt$, the corresponding functions~$\Blt$ and~$\Brt$ are calculated by formulas~(\ref{e322}) and~(\ref{e321}), where 
	the functions~$\mlt$ and~$\mrt$ are not defined uniquely: we have the freedom to choose the values $\mlt(a_0)$ and $\mrt(b_0)$ 
	(see~(\ref{e310}) and~(\ref{e37})). But in the situation being considered, there is the only way to choose $\mlt(a_0)$ and $\mrt(b_0)$
	so that the corresponding functions~$\Blt$ and~$\Brt$ glue with~$\Balv$ continuously.
	Indeed, on the chord with ends~$a_0$ and $b_0$, the function~$\Balv$ can be calculated by the formula
	$$
		\Balv\big(x_1,(a_0+b_0)x_1-a_0b_0\big) = \frac{f(b_0)-f(a_0)}{b_0-a_0}(x_1-a_0)+f(a_0).
	$$
	
\begin{figure}[H]
\begin{center}
\includegraphics{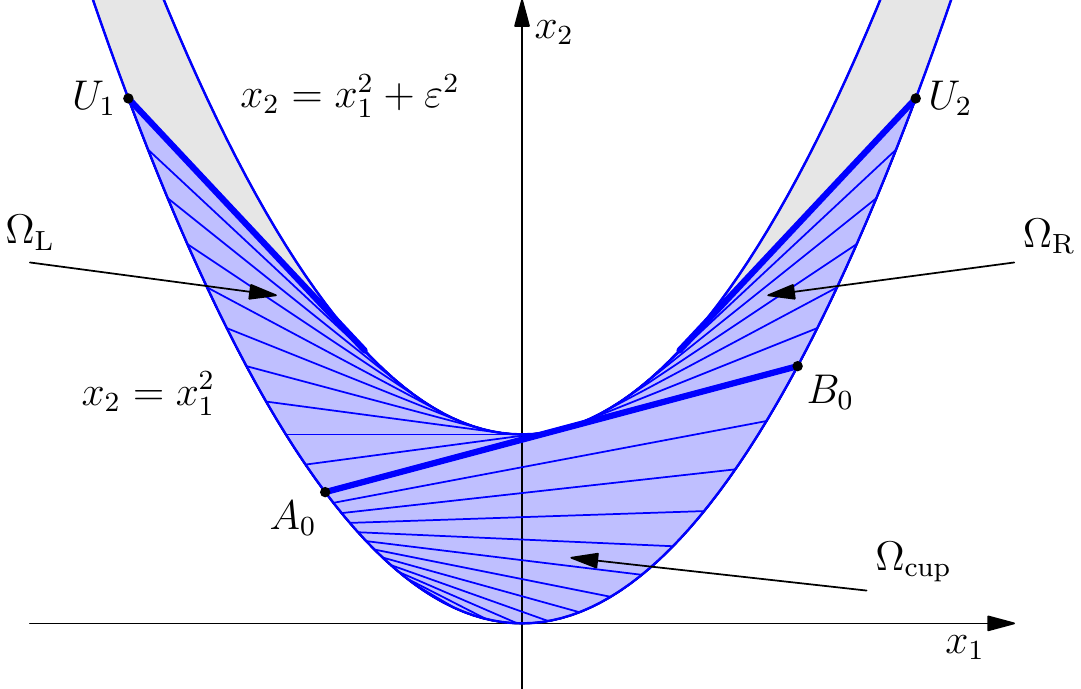}
\caption{A cup $\Alv$ lying between $\Lt$ and $\Rt$.}
\label{fig:plkpl}
\end{center}
\end{figure}
		
	On the other hand, by~(\ref{e322}) the limit values of~$\Blt$ on this chord are equal to $\mlt(a_0)(x_1-a_0)+f(a_0)$. 
	Therefore, the identity
	$$
		\mlt(a_0) = \frac{f(b_0)-f(a_0)}{b_0-a_0} = \Av{f'}{[a_0,b_0]}
	$$
	is necessary and sufficient for the concatenation of~$\Blt$ and~$\Balv$ to be continuous. 	
	Using chord equation~(\ref{urlun}), we can rewrite the equation obtained above as
	\begin{equation}\label{e332}
		\mlt(a_0) = \frac{f'(a_0)+f'(b_0)}{2}.
	\end{equation}
	By $\mlt(u;\,a_0)$ denote the coefficient $\mlt(u)$ satisfying this condition. Using~(\ref{e310}), we get
	\begin{equation}\label{e328}
		\mlt(u;\,a_0) 
		= \frac{f'(a_0) + f'(b_0)}{2}e^{(u-a_0)/\eps} + 
		\eps^{-1}e^{u/\eps}\int\limits_{u}^{a_0} f'(t)e^{-t/\eps}\,dt.
	\end{equation}
	Thus, in $\Lt(u_1,a_0)$, the function $\Bltrt(x;\,u_1,[a_0,b_0],u_2)$ coincides with the function
	\begin{equation}\label{e329}
		\Blt(x;\,u_1,[a_0,b_0]) \df \mlt(u;\,a_0)\, (x_1 - u) + f(u),
	\end{equation}
	where $u = u_{\mathrm{L}}(x_1,x_2)$ can be calculated by~(\ref{e33}).
	
	Using similar considerations, we see that the concatenation of $\Balv$ and $\Brt$ is continuous if and only if $\mrt(u) = \mrt(u;\,b_0)$, where
	\begin{equation}\label{e330}
		\mrt(u;\,b_0)
		= \frac{f'(a_0)+f'(b_0)}{2}e^{(b_0-u)/\eps} + \eps^{-1}e^{-u/\eps}\int\limits_{b_0}^u f'(t)e^{t/\eps}\,dt.
	\end{equation}
	This means that in $\Rt(b_0,u_2)$ the function $\Bltrt(x;\,u_1,[a_0,b_0],u_2)$ being sought must coincide with the function
	\begin{equation}\label{e331}
		\Brt(x;\,[a_0,b_0],u_2) \df \mrt(u;\,b_0)\, (x_1 - u) + f(u),
	\end{equation}
	where $u = u_{\mathrm{R}}(x_1,x_2)$ can be calculated by~(\ref{e32}).
	
	Before discussing the local concavity of the function $\Bltrt(x;\,u_1,[a_0,b_0],u_2)$ constructed above, 
	we show that $\Bltrt$ is not only continuous, but also 
	$C^1$-smooth. 	
	Let $t_2 = \Bltrt_{x_2}$. We treat $t_2$ as a function of~$u$ in $\Lt(u_1,a_0)$, and as a function of $a$~--- 
	the left ends of the extremal chords~--- in $\Alv(a_0,b_0)$.
	Using~(\ref{e326}), we obtain
	$$
		t_2(a_0) = \frac{f'(b_0)-f'(a_0)}{2(b_0-a_0)}.
	$$
	On the other hand, by~(\ref{e39}), (\ref{e323}), and (\ref{e332}), we have
	$$
		\lim_{u\to a_0-}t_2(u) = \frac{\mlt'(a_0;\,a_0)}{2}
		= \frac{\mlt(a_0;\,a_0) - f'(a_0)}{2\eps} =
		\frac{f'(b_0)-f'(a_0)}{2(b_0-a_0)}.
	$$
	Thus, the function $\Bltrt_{x_2}$ is continuous at the junction of~$\Lt$ and $\Alv$. Similarly, we can prove its continuity
	at the junction of~$\Alv$ and~$\Rt$. But the derivative of~$\Bltrt$ in the direction of the chord $[A_0,B_0]$ is also continuous (constant), i.e.
	on the chord just mentioned, the function~$\Bltrt$ has continuous derivatives in two non-collinear directions. Thus,
	the function~$\Bltrt$ turns out to be $C^1$-smooth. This implies that it is locally concave provided its components
	$\Blt(x;\,u_1,[a_0,b_0])$, $\Balv(x;\,a_0,b_0)$, and $\Brt(x;\,[a_0,b_0],u_2)$ are locally concave. 
	As mentioned above, the function~$\Balv$ is concave by the definition of a cup and 
	Proposition~\ref{S8}. Concerning the functions~$\Blt$ and~$\Brt$, they are locally concave if and only if
	the following inequalities are fulfilled:
	\begin{equation*}
	\begin{split}
		&\mlt''(u;\,a_0)\ge 0\quad\mbox{for}\quad u \in (u_1,a_0);\\
		&\mrt''(u;\,b_0)\le 0\quad\mbox{for}\quad u \in (b_0,u_2).
	\end{split}
	\end{equation*}
	Now we get expressions for $\mlt''(u;\,a_0)$ and $\mrt''(u;\,b_0)$.
 	Using equation~(\ref{e39}) differentiated once,
	we can express $\mlt''$ in terms of	$\mlt'$. After that, using~(\ref{e39}) one more time, we can express~$\mlt'$ in 
	terms of~$\mlt$. Applying these considerations to $\mlt''(a_0;\,a_0)$,
	we obtain
	$$
		\mlt''(a_0;\,a_0) = \eps^{-1}\big(\eps^{-1}\mlt(a_0;\,a_0)-\eps^{-1}f'(a_0)-f''(a_0)\big).
	$$
	Substituting expression~(\ref{e332}) for $\mlt(a_0;\,a_0)$ into this identity, we get
	$$
		\mlt''(a_0;\,a_0) = -\eps^{-1}\bigg[f''(a_0) - \frac{f'(b_0)-f'(a_0)}{2\eps}\bigg] = -\eps^{-1}\Dlt(a_0,b_0).
	$$
	Using~(\ref{e312}), we finally have
	\begin{equation}\label{e335}
		\mlt''(u;\,a_0) = -\eps^{-1}\Dlt(a_0,b_0)e^{(u-a_0)/\eps} + 
		\eps^{-1}e^{u/\eps}\int\limits_{u}^{a_0} f'''(t)e^{-t/\eps}\,dt.
	\end{equation}
	Similar reasoning gives the formula for $\mrt''(u;\,b_0)$:
	\begin{equation}\label{e336}
		\mrt''(u;\,b_0) = \eps^{-1}\Drt(a_0,b_0)e^{(b_0-u)/\eps} + 
		\eps^{-1}e^{-u/\eps}\int\limits_{b_0}^u f'''(t)e^{t/\eps}\,dt.
	\end{equation}
	
	As usual, we summarize this section in one proposition.
	\begin{Prop}\label{S7}
		Suppose $b_0 - a_0 = 2\eps$ and $\Alv(a_0,b_0)$ is a full cup. 
		Consider domains $\Lt(u_1,a_0)$ and $\Rt(b_0,u_2)$ adjacent to $\Alv(a_0,b_0)$.
		The Bellman candidate
		in the union $\LtRt(u_1,[a_0,b_0],u_2)$ has the form
		\begin{equation}\label{e327}
			\Bltrt(x;\,u_1,[a_0,b_0],u_2) =  
			\begin{cases}
				\Blt(x;\,u_1,[a_0,b_0]),& x \in \Lt(u_1,a_0);\\
				\Balv(x;\,a_0,b_0),& x \in \Alv(a_0,b_0);\\
				\Brt(x;\,[a_0,b_0],u_2),& x \in \Rt(b_0,u_2),
			\end{cases}   		
		\end{equation}
		where $\Balv(x;\,a_0,b_0)$ can be restored by the linearity on the chords according to~\textup{(\ref{e325})}. 
		The functions $\Blt(x;\,u_1,[a_0,b_0])$ and $\Brt(x;\,[a_0,b_0],u_2)$ can be calculated by~\textup{(\ref{e329})} 
		and~\textup{(\ref{e331})}, respectively.
		In addition\textup, the following inequalities must be fulfilled\textup:
		\begin{equation}\label{eq14}
			\left\{
			\begin{aligned}
				&\mlt''(u;\,a_0)\ge 0\quad\mbox{for}\quad u \in (u_1,a_0);\\
				&\mrt''(u;\,b_0)\le 0\quad\mbox{for}\quad u \in (b_0,u_2),
			\end{aligned} \right.
		\end{equation}
		where $\mlt''(u;\,a_0)$ and $\mrt''(u;\,b_0)$ can be calculated by~\textup{(\ref{e335})} 
		and~\textup{(\ref{e336})}.
		\end{Prop}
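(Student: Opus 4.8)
The plan is to assemble the statement from the ingredients developed in the discussion above, much as Propositions~\ref{S4}, \ref{S5}, and~\ref{S6} were assembled from the corresponding computations. By Definition~\ref{D2}, a Bellman candidate in $\LtRt(u_1,[a_0,b_0],u_2)$ must be $C^1$ on each of the three pieces, be linear along the extremals foliating them (the chords in $\Alv$, the tangents~(L) in $\Lt$, the tangents~(R) in $\Rt$), and have its partial derivatives constant along those extremals; in addition it must be continuous and satisfy the boundary condition. The first step is to show that these requirements determine the candidate uniquely and force the form~\eqref{e327}. In $\Alv(a_0,b_0)$ the candidate must coincide with $\Balv(x;\,a_0,b_0)$ reconstructed by linearity on the chords via~\eqref{e325}; this is the content of Proposition~\ref{S8}. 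In $\Lt(u_1,a_0)$ and $\Rt(b_0,u_2)$ it must be of the forms~\eqref{e322} and~\eqref{e321} by Proposition~\ref{S4}, with free constants hidden in $\mlt(a_0)$ and $\mrt(b_0)$. I would pin these constants down by requiring $\Blt$ (resp.\ $\Brt$) to agree with $\Balv$ on the common chord $[A_0,B_0]$: comparing the two expressions, linear in $x_1$, on that chord gives $\mlt(a_0)=\Av{f'}{[a_0,b_0]}$, and by chord equation~\eqref{urlun} this equals $\frac12(f'(a_0)+f'(b_0))$, i.e.\ \eqref{e332}; substituting into~\eqref{e310} yields~\eqref{e328} and hence formula~\eqref{e329} for $\Blt(x;\,u_1,[a_0,b_0])$, while the symmetric computation yields~\eqref{e330} and~\eqref{e331}.

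The second step is local concavity. The crucial point, already carried out in the discussion above, is that the concatenation~\eqref{e327} is not merely continuous but $C^1$: using~\eqref{e326} on the $\Alv$-side and~\eqref{e39}, \eqref{e323}, \eqref{e332} on the $\Lt$-side, both one-sided limits of $\Bltrt_{x_2}$ at the junction $\Lt\cap\Alv$ equal $\frac{f'(b_0)-f'(a_0)}{2(b_0-a_0)}$, and symmetrically at $\Alv\cap\Rt$; since $\Bltrt$ is also trivially $C^1$ across those junctions in the direction of the common chord $[A_0,B_0]$, and that direction is non-collinear with the $x_2$-direction, all first partials match. As a $C^1$ concatenation of locally concave functions is locally concave, it remains to check each piece. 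For $\Balv$ this holds by conditions 1)--3) in Definition~\ref{D10} together with Proposition~\ref{S8}. For $\Blt(x;\,u_1,[a_0,b_0])$ and $\Brt(x;\,[a_0,b_0],u_2)$, the analysis of Section~\ref{s31} reduces local concavity to $\mlt''(u;\,a_0)\ge 0$ on $(u_1,a_0)$ and $\mrt''(u;\,b_0)\le 0$ on $(b_0,u_2)$ --- exactly~\eqref{eq14}.

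Finally I would record the explicit formulas~\eqref{e335} and~\eqref{e336}. Differentiating~\eqref{e39} and using it again to eliminate $\mlt'$ expresses $\mlt''(a_0;\,a_0)$ through $\mlt(a_0;\,a_0)$, $f'(a_0)$, $f''(a_0)$; inserting~\eqref{e332} collapses this to $\mlt''(a_0;\,a_0)=-\eps^{-1}\Dlt(a_0,b_0)$, and feeding this initial value into the general relation~\eqref{e312} produces~\eqref{e335}. The mirror-image argument starting from~\eqref{e35} and~\eqref{e311} produces~\eqref{e336}. I expect the only mildly delicate point to be the $C^1$-gluing computation at the two junctions --- one must check that matching $\Bltrt_{x_2}$, together with the automatic matching along the chord direction, genuinely forces $\Bltrt_{x_1}$ to match as well; the rest is bookkeeping over results already proved (Propositions~\ref{S4} and~\ref{S8} and formulas~\eqref{e311}, \eqref{e312}, \eqref{e325}, \eqref{e326}). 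No geometric input beyond Definition~\ref{D10} is needed --- in particular the hypothesis $b_0-a_0=2\eps$ is exactly what makes the top chord $[A_0,B_0]$ touch the upper parabola, so that $\Lt$ and $\Rt$ really do border $\Alv$ along it.
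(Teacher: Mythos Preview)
Your proposal is correct and follows essentially the same approach as the paper's own argument in Section~\ref{s52}: pin down the free constants in $\Blt$ and $\Brt$ by matching with $\Balv$ on the chord $[A_0,B_0]$ (yielding~\eqref{e332} and hence~\eqref{e328}--\eqref{e331}), verify $C^1$-smoothness at the junctions by checking $\Bltrt_{x_2}$ and the chord-directional derivative separately, and then reduce local concavity of the concatenation to the sign conditions~\eqref{eq14} via Proposition~\ref{S4}. Your derivation of~\eqref{e335} and~\eqref{e336} via~\eqref{e39}, \eqref{e332}, and~\eqref{e312} (resp.~\eqref{e35} and~\eqref{e311}) is also exactly what the paper does.
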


\subsection{Optimizers on chords}\label{s53}	
	We consider a domain~$\Ch$ foliated by chords (see Section~\ref{s51}). 
	For every point~$x\in\Ch$, there is a unique extremal chord $[A,B]$ passing through it. Therefore,
	a delivery curve coming to~$x$ can only start at $A$ or $B$, because it must run along the extremal.
	Indeed, in the situation being considered, we have left and right delivery curves: the segments $[A,x]$ and $[x,B]$.
	Such curves are generated by a step function~$\ex$ that can take two values: $a$ and $b$.
	Namely, if $x=\alpha_-A+\alpha_+B$, $\alpha_-+\alpha_+ =1$, we set
	$$	
		\ex(s) =	\left\{ 
		\begin{aligned}
			&a,\quad s \in [0,\alpha_-];\\
			&b,\quad s \in (\alpha_-,1].
		\end{aligned} \right.
	$$
	We can see that~$\ex$ is, indeed, an optimizer for~$x$. Property~(1) of Definition~\ref{D3} follows from the fact 
	that all the Bellman points generated
	by~$\ex$ lie on the chord $[A,B]$. Property~(2) is fulfilled by the construction of~$\ex$. 
	Finally, property~(3) follows from the linearity 
	of the Bellman candidate along the chord $[A,B]$.
	
	Further, it is easy to see that the curve
	$$
		\dc\cii{A}(s) =\big(\av{\ex}{[0,s]},\av{\ex^2}{[0,s]}\big),\quad s \in (0,1],
	$$
	is a left delivery curve that starts at~$A$, runs along $[A,B]$, and ends at~$x$. Similarly, 
	we can define the right delivery curve~$\dc\cii{B}$ that starts at~$B$ and
	ends, again, at~$x$.
	
	Now we consider the construction $\LtRt(u_1,[a_0,b_0],u_2)$ described in Section~\ref{s52}. Let~$W_0$ be the tangency point of the chord $[A_0,B_0]$ 
	and the upper parabola. This point is the entry node for both domains $\Lt(u_1,a_0)$ and $\Rt(b_0,u_2)$. After we connect~$A_0$ and~$W_0$ 
	with the left delivery curve~$\dc\cii{A_0}$ 
	generated by the optimizer for~$W_0$, we can continue this curve up to every point in $\Rt(b_0,u_2)$ (see Section~\ref{s33}). 
	On the other hand, the 
	right delivery curve~$\dc\cii{B_0}$ that connects $B_0$ and~$W_0$, can be continued up to every 
	point in $\Lt(u_1,a_0)$.
	
	We conclude that delivery curves can originate not only at $\pm\infty$, but also in cups. 
	Thus, we have all the information required for the construction of delivery curves
	in domains adjacent to cups.
	
\subsection{Function \texorpdfstring{$f'''$}{f'''} changes its sign from plus to minus}\label{s54}
	It turns out that the cup, together with two domains~$\Lt$ and~$\Rt$ adjacent to it, always arises when
	$f'''$ changes its sign once, from plus to minus.
	We state and prove the appropriate theorem.
	\begin{Th}\label{T3}
		Suppose $0<\eps<\eps_0$\textup, $f \in \W^{0}$\textup, and $c_0 \neq \pm \infty$. Then we can build a full cup $\Alv(a_0,b_0)$ originated at $c_0$\textup. We also have 
		$$
			\Bell(x;\,f) = \Bltrt(x;\,-\infty,[a_0,b_0],+\infty),
		$$
		where the function~$\Bltrt$ is defined by~\textup{(\ref{e327})}.
	\end{Th}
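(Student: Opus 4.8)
The plan is to show that the function $\Bltrt(x;\,-\infty,[a_0,b_0],+\infty)$ of~(\ref{e327}) is a Bellman candidate on the whole of $\Omega_\eps$, and then to construct an optimizer for every point of $\Omega_\eps$; Statement~\ref{S2} will then give $\Bell\le\Bltrt$, the optimizers will give the reverse inequality, and the theorem follows. Everything hinges on first building the full cup.

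I would construct the foliating chords $[A,B]$ of the cup, parametrised by their length $\ell=b-a$, as the solution of $b'-a'=1$ together with the chord relation~(\ref{e6}), i.e. of the vector field
\begin{equation*}
	\frac{da}{d\ell}=-\frac{\Drt(a,b)}{\Dlt(a,b)+\Drt(a,b)},\qquad
	\frac{db}{d\ell}=\frac{\Dlt(a,b)}{\Dlt(a,b)+\Drt(a,b)},\qquad b=a+\ell,
\end{equation*}
issued from the degenerate chord $a(0)=b(0)=c_0$. The sign hypothesis on $f'''$ is exactly what makes this work. Writing the trapezoid remainder in Peano form,
$$
	f(b)-f(a)-\tfrac{b-a}{2}\big(f'(a)+f'(b)\big)=\tfrac12\int_a^b(t-a)(t-b)f'''(t)\,dt,
$$
and recalling that $f'''$ keeps a constant sign on each side of $c_0$, we see that chord relation~(\ref{urlun}) forces $a<c_0<b$; along such chords one checks that $\Dlt(a,b)$ and $\Drt(a,b)$ are both negative (so~(\ref{eq13}) holds) and $\Dlt+\Drt\neq0$, whence $a'\in(-1,0)$ and $b'\in(0,1)$. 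In particular $a(\ell),b(\ell)$ never leave the compact interval $[c_0-2\eps,c_0+2\eps]$, so the solution extends up to $\ell=2\eps$; putting $a_0=a(2\eps)$, $b_0=b(2\eps)$ we get a cup $\Alv(a_0,b_0)$ in the sense of Definition~\ref{D10}, full because $b_0-a_0=2\eps$ means $[A_0,B_0]$ is tangent to the upper parabola. The one delicate point is the behaviour of the system at $\ell=0$, where it is formally $\tfrac00$; I expect to resolve this by dividing~(\ref{urlun}) by $(b-a)^3$ and applying the implicit function theorem near $\ell=0$, $c=c_0$. This cup construction is the step I expect to be the main obstacle; the rest is assembly from results already in hand.

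Given the cup, the three families — the left tangents~(L) with feet in $(-\infty,a_0)$, the chords of $\Alv(a_0,b_0)$, and the right tangents~(R) with feet in $(b_0,+\infty)$ — tile $\Omega_\eps$ (the left tangent with foot $a_0$ being collinear with $[A_0,B_0]$, and likewise on the right), so $\Bltrt(x;\,-\infty,[a_0,b_0],+\infty)$ is defined on all of $\Omega_\eps$; its $\Blt$- and $\Brt$-parts are~(\ref{e329}) and~(\ref{e331}), and the integrals defining $\mlt(u;\,a_0)$ and $\mrt(u;\,b_0)$ converge by Lemma~\ref{L3}. The computation of Section~\ref{s52} shows $\Bltrt$ is $C^1$ across the chord $[A_0,B_0]$, hence locally concave as soon as $\mlt''(u;\,a_0)\ge0$ for $u<a_0$ and $\mrt''(u;\,b_0)\le0$ for $u>b_0$; and both are immediate from~(\ref{e335}) and~(\ref{e336}): for $u<a_0<c_0$ one has $f'''>0$ on $(u,a_0)$ and $-\Dlt(a_0,b_0)\ge0$ by~(\ref{eq13}), so $\mlt''(u;\,a_0)\ge0$, and symmetrically $\mrt''(u;\,b_0)\le0$. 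Thus $\Bltrt\in\Lambda_{\eps,f}$ and Statement~\ref{S2} gives $\Bell(x;\,f)\le\Bltrt(x;\,-\infty,[a_0,b_0],+\infty)$ on $\Omega_\eps$.

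For the reverse inequality I would exhibit an optimizer at each $x\in\Omega_\eps$. If $x$ lies on the lower parabola, take $\ex\equiv u$. If $x\in\Alv(a_0,b_0)$, take the two-valued step function on the extremal chord through $x$ (Section~\ref{s53}). Finally, let $W_0$ be the point where $[A_0,B_0]$ touches the upper parabola: by Section~\ref{s53} it is simultaneously the entry node of $\Rt(b_0,+\infty)$ and of $\Lt(-\infty,a_0)$, and the left delivery curve starting at $A_0$ and the right delivery curve starting at $B_0$ both reach $W_0$. Propagating the first into $\Rt(b_0,+\infty)$ by Propositions~\ref{S9} and~\ref{S11}, and the second into $\Lt(-\infty,a_0)$ by Propositions~\ref{S10} and~\ref{S12} — the convexity and ``below the upper boundary'' hypotheses there being met because these curves run along a chord tangent to the upper parabola and then along extremal tangents — we obtain an optimizer for every point of $\Rt$ and $\Lt$. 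Hence for each $x$ there is a test function $\ex$ with $\Av{f(\ex)}{I}=\Bltrt(x)$, so $\Bell\ge\Bltrt$, and together with the previous paragraph this proves $\Bell(x;\,f)=\Bltrt(x;\,-\infty,[a_0,b_0],+\infty)$.
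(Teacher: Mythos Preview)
Your overall architecture is correct and matches the paper: build the full cup, invoke Proposition~\ref{S7} to obtain the candidate $\Bltrt$ on all of $\Omega_\eps$, apply Statement~\ref{S2} for the upper bound, and use the optimizers of Section~\ref{s53} (together with Propositions~\ref{S9}--\ref{S12}) for the lower bound. The Peano-kernel identity you wrote is a nice way to see that any chord solving~\eqref{urlun} must straddle $c_0$.

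The genuine gap is in your cup construction. You propose to solve the ODE for $(a(\ell),b(\ell))$ starting from the degenerate chord $a(0)=b(0)=c_0$, and to handle the $0/0$ singularity at $\ell=0$ by dividing~\eqref{urlun} by $(b-a)^3$ and applying the implicit function theorem there. Writing $\Phi(a,\ell)=\ell\big(f'(a)+f'(a+\ell)\big)-2\big(f(a+\ell)-f(a)\big)$, your rescaled function is
\[
\Psi(a,\ell)=\frac{\Phi(a,\ell)}{\ell^3}=\int_0^1 s(1-s)\,f'''(a+s\ell)\,ds,
\]
and for the implicit function theorem you need $\Psi$ to be $C^1$ near $(c_0,0)$ with $\Psi(c_0,0)=0$. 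But $f\in\W^{0}$ gives only $f\in C^2$ and $f'''\in L^1_{\mathrm{loc}}$; the third derivative need not be continuous at $c_0$ (indeed it changes sign there), so $\Psi$ need not even extend continuously to $\ell=0$, let alone be $C^1$. Your proposed resolution therefore does not go through under the stated hypotheses.

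The paper sidesteps this singularity by running the construction in the opposite direction (Lemma~\ref{L5}): for each fixed $\ell\in(0,2\eps]$ it first uses the intermediate value theorem on $a\mapsto\Phi(a,\ell)$ to produce \emph{some} solution with $a<c_0<b$, proves $\Dlt,\Drt<0$ for any such solution via an auxiliary function $q(t)=f'(t)+\alpha_1 t+\alpha_2$ with $q(a)=q(b)=0$, and then applies the implicit function theorem at $\ell_0=2\eps$ (where $\Phi_a=\ell(\Dlt+\Drt)<0$ needs only $f\in C^2$) and extends \emph{downward} by a maximality argument. One never has to touch $\ell=0$, and only $C^2$-regularity of $f$ is used. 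Replacing your bottom-up ODE with this top-down argument fixes the gap; the remainder of your proof is fine.
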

	First, we note that a cup is a local construction. 	
	Its existence under the conditions of the theorem follows from the general lemma, in which 
	the function~$f$ is considered only in some neighborhood of~$c_0$. 
	\begin{Le}\label{L5}
		Consider a segment $\Delta = [c-\ell_0,c+\ell_0]$\textup, where ${c\in\mathbb{R}}$ is its center and 
		the positive number ${2\ell_0}$ is its length. Consider a 
		function~${f\in C^2(\Delta)\cap W_3^1(\Delta)}$. Suppose ${f'''>0}$ a.e. on the left half 
		${[c-\ell_0,c]}$ 
		of $\Delta$ and ${f'''<0}$ a.e. on the right half ${[c,c+\ell_0]}$. 		
		Then there exist two functions $a(\ell)$ and $b(\ell)= a(\ell) + \ell$\textup, 
		$\ell \in (0,\ell_0]$\textup, with the
		following properties\textup:
		\begin{enumerate}
			\item[\textup{1)}] $a(\ell)<c<b(\ell)$\textup;	
			\item[\textup{2)}] $a(\ell)$ and $b(\ell)$ solve equation~\textup{(\ref{urlun})}\textup;
			\item[\textup{3)}] 
				$\Dlt\big(a(\ell),b(\ell)\big)<0$ and $\Drt\big(a(\ell),b(\ell)\big)<0$\textup;
			\item[\textup{4)}] $a$ and~$b$ are differentiable functions such that $a'<0$ and $b'>0$.
		\end{enumerate}
	\end{Le}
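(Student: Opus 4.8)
The plan is to recast equation~(\ref{urlun}) as the vanishing of a single integral, to solve that equation for each fixed length $\ell$ by the intermediate value theorem, and to read off items 1)--4) from a short list of integration-by-parts identities; the only genuinely non-routine step will be the sign analysis behind item 3).

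First I would record the trapezoidal identity: integrating by parts twice,
$$\int_a^b (t-a)(b-t)\,f'''(t)\,dt=(b-a)\big(f'(a)+f'(b)\big)-2\big(f(b)-f(a)\big),$$
so that, setting $\Phi(a,b)\df\int_a^b (t-a)(b-t)\,f'''(t)\,dt$, equation~(\ref{urlun}) for a chord with endpoints $a<b$ is \emph{exactly} $\Phi(a,b)=0$. The right-hand side above also exhibits $\Phi$ as a jointly $C^1$ function of $(a,b)$, since $f\in C^2$. For $\ell\in(0,\ell_0]$ I set $\psi_\ell(a)\df\Phi(a,a+\ell)$; the task becomes to find a zero $a=a(\ell)$ of $\psi_\ell$ in $(c-\ell,c)$, which forces $a<c<b=a+\ell$ (item 1). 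I would also prepare the identities (obtained by Fubini after writing $f''(a)-f''(t)=-\int_a^t f'''$)
$$(b-a)\,\Drt(a,b)=\int_a^b (s-a)\,f'''(s)\,ds,\qquad (b-a)\,\Dlt(a,b)=-\int_a^b (b-s)\,f'''(s)\,ds,$$
and, substituting $b-s=(b-a)-(s-a)$ resp.\ $s-a=(b-a)-(b-s)$ inside $\Phi$,
$$\Phi(a,b)=(b-a)^2\Drt(a,b)-\int_a^b (s-a)^2 f'''(s)\,ds=-(b-a)^2\Dlt(a,b)-\int_a^b (b-s)^2 f'''(s)\,ds.$$

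For existence, since the kernel $(t-a)(b-t)$ is positive inside $(a,b)$, the sign hypotheses on $f'''$ give $\psi_\ell(c-\ell)=\Phi(c-\ell,c)>0$ and $\psi_\ell(c)=\Phi(c,c+\ell)<0$; by the intermediate value theorem there is a zero $a(\ell)\in(c-\ell,c)$, which is item 2). For item 3) I would use a multiplier trick: with $\lambda\df(c-a)/(b-c)>0$ the quadratic $g(s)\df(s-a)^2-\lambda(s-a)(b-s)=(s-a)\big[(s-a)-\lambda(b-s)\big]$ changes sign \emph{exactly} at $s=c$ (the bracket is affine, increasing, and vanishes at $c$), so $g<0$ on $(a,c)$ and $g>0$ on $(c,b)$; hence $g\,f'''<0$ a.e.\ on all of $(a,b)$ and $\int_a^b g\,f'''<0$. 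But at a zero of $\psi_\ell$ one has $\int_a^b g\,f'''=\int_a^b (s-a)^2 f'''-\lambda\,\Phi(a,b)=\int_a^b(s-a)^2 f'''$, so the displayed formula for $\Phi$ gives $\Drt\big(a(\ell),b(\ell)\big)<0$; the symmetric choice $\lambda'\df(b-c)/(c-a)$ and $h(s)\df(b-s)^2-\lambda'(s-a)(b-s)$ gives $\Dlt\big(a(\ell),b(\ell)\big)<0$. Thus items 1)--3) hold at \emph{every} zero of $\psi_\ell$ in $(c-\ell,c)$.

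Finally, for uniqueness and item 4) I would differentiate. One computes $\psi_\ell'(a)=(b-a)\big(\Dlt(a,b)+\Drt(a,b)\big)$ and $\partial_\ell\Phi(a,a+\ell)=(b-a)\,\Drt(a,b)$, both continuous in $(a,\ell)$ since $f\in C^2$. At any zero $a(\ell)$ one has $\psi_\ell'(a(\ell))=(b-a)(\Dlt+\Drt)<0$ by the previous step, so $\psi_\ell$ is strictly decreasing at each of its zeros in $(c-\ell,c)$; a continuous function differentiable with strictly negative derivative at each of its zeros has at most one zero, so $a(\ell)$ is well defined. The implicit function theorem then makes $\ell\mapsto a(\ell)$ of class $C^1$ with
$$a'(\ell)=-\frac{\Drt}{\Dlt+\Drt}<0,\qquad b'(\ell)=1+a'(\ell)=\frac{\Dlt}{\Dlt+\Drt}>0,$$
the signs being immediate from $\Dlt,\Drt<0$; this is item 4), and local uniqueness glues the $C^1$ branches into one function on $(0,\ell_0]$. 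The hard part is really item 3): that equation~(\ref{e6}) (equivalently Proposition~\ref{S8}) already forces $\Dlt$ and $\Drt$ to share a sign along a chord solving~(\ref{urlun}) is cheap, but the fact that this common sign is \emph{negative} --- which is precisely what turns the cup into a bona fide Bellman candidate --- rests on the slightly delicate observation that the auxiliary quadratics $g$ and $h$ switch sign exactly at the origin $c$ of the cup, allowing the constraint $\Phi=0$ to be traded for a definite sign of $\int_a^b (s-a)^2 f'''$.
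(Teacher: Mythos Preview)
Your proof is correct and reaches the same conclusion, but the route differs from the paper's in two notable places. For item~3), the paper subtracts an affine function from $f'$ to get $q$ with $q(a)=q(b)=0$, then argues geometrically: $q$ is convex on $[a,c]$ and concave on $[c,b]$, and if $q'(a)\ge 0$ or $q'(b)\ge 0$ it could not have the interior sign change that the condition $\av{q}{[a,b]}=0$ forces. Your multiplier trick is more algebraic: you trade the constraint $\Phi=0$ for the sign of $\int_a^b(s-a)^2 f'''$ (resp.\ $\int_a^b(b-s)^2 f'''$) by subtracting a multiple of $\Phi$ whose kernel is engineered to change sign exactly at $c$. Both arguments ultimately exploit that $f'''$ changes sign precisely at $c$; yours is computationally self-contained (no picture needed), while the paper's makes the mechanism visually transparent. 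For item~4) and the global definition of $a(\ell)$, the paper fixes the solution at $\ell=\ell_0$ and runs a continuation argument (maximal interval plus compactness) down to $\ell=0$; you instead first prove that $\psi_\ell$ has a \emph{unique} zero for every $\ell$ (since $\psi_\ell'<0$ at each zero, by item~3)), so $a(\ell)$ is globally well defined from the start and the implicit function theorem only has to supply the local $C^1$ regularity. Your integration-by-parts identities $(b-a)\Drt=\int_a^b(s-a)f'''$ and $(b-a)\Dlt=-\int_a^b(b-s)f'''$ also streamline the bookkeeping for $\psi_\ell'$ and $\partial_\ell\Phi$, whereas the paper computes these derivatives directly from the algebraic form of $\Phi$; the resulting formulas $a'=-\Drt/(\Dlt+\Drt)$ and $b'=\Dlt/(\Dlt+\Drt)$ agree.
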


	Setting $\ell_0 = 2\eps$ and using the lemma just stated, we see that the non-intersecting chords 
	$[A(\ell), B(\ell)]$ form a full cup $\Alv(a_0,b_0)$ with ends
	$a_0 = a(\ell_0)$ and $b_0 = b(\ell_0)$.
	
	Further, since $\Dlt(a_0,b_0)< 0$ and $\Drt(a_0,b_0) < 0$, it follows that conditions~(\ref{eq14}) in 
	Proposition~\ref{S7} are satisfied. 		
	Suppose the domains $\Lt(-\infty,a_0)$ and $\Rt(b_0,+\infty)$ adjoin  our cup. Proposition~\ref{S7}
	tells us that the function
	${\Bltrt(x;\,-\infty,[a_0,b_0],+\infty)}$ defined  
	by~(\ref{e327}) is a Bellman candidate in the domain ${\LtRt(-\infty,[a_0,b_0],+\infty) = \Omega_\eps}$. 	
	Therefore,
	Statement~\ref{S2} guarantees that $\Bell \le \Bltrt$. 
	The converse estimate $\Bell \ge \Bltrt$ follows from the existence of optimizers 
	for each point in~$\Omega_\eps$ (see Section~\ref{s53}). It remains to prove 
	Lemma~\ref{L5}.

\paragraph{Proof of Lemma~\ref{L5}.}
	First, without loss of generality, we can set $c=0$. 
	This follows from the linear substitution in all the conditions on the required functions~$a$ and~$b$.
	
	Now we verify that for any~$\ell$, $0<\ell\le\ell_0$, there exist points $a$ and $b=a+\ell$ solving  
	equation~(\ref{urlun}), and for such points the relation $a<0<b$ is always fulfilled.
	Note that for all the points~$a$ and~$b$ such that ${-\ell_0\le a<b\le 0}$, the left part of chord 
	equation~(\ref{urlun}) is strictly smaller than its right part.
	Indeed, the requirement on the sign of $f'''$ implies that $f''$ is strictly increasing on
	$[-\ell_0,0]$, and so $f'$ is strictly convex on this interval.
	Thus, on $(a,b)$
	the function~$f'$ is strictly less than the linear function whose graph contains the points 
	$(a,f'(a))$ and $(b,f'(b))$. This implies that the average of~$f'$
	over $[a,b]$ is strictly less than the average of this linear function, i.e.
	$$
		\av{f'}{[a,b]} < \frac{f'(a)+f'(b)}{2}.
	$$	
	Similarly, for any points $a$ and $b$ such that ${0\le a<b\le \ell_0}$, the left part of equation~(\ref{urlun}) is 
	strictly greater than its right part.	

	If we fix~$\ell$ and set $b=a+\ell$, then we can treat the difference between the left and right parts 
	of~(\ref{urlun}) as a continuous 	
	function of $a\in[-\ell_0,0]$. We see that this function takes 
	both positive and negative values. Therefore, it vanishes at some point~$a$, and the pair
	$a$ and $b = a + \ell$ solves equation~(\ref{urlun}).  
	Besides, in view of our considerations in the beginning of the proof, we have $a<0$ and $b>0$.
		
	Now we prove that $\Dlt(a,b)<0$ and $\Drt(a,b)<0$ if $a$ and~$b$ solve equation~(\ref{urlun}).
	Consider the function 
	$$
		q(t) = f'(t) + \alpha_1 t + \alpha_2,
	$$ 
	where the coefficients $\alpha_1$ and $\alpha_2$ are chosen so that $q(a)=q(b)=0$. 
	It is easily shown that such a function has the following properties:
	\begin{enumerate}
		\item[1)]
			$q'' = f'''$;
		\item[2)]
			equation~(\ref{urlun}) on the ends of chords is equivalent to the identity $\av{q}{[a,b]} = 0$;
		\item[3)]
			the inequalities $\Dlt(a,b)<0$ and $\Drt(a,b)<0$ can be rewritten as $q'(a)<0$ and $q'(b)<0$, respectively.
	\end{enumerate}
	Further, by the condition on the sign of~$f'''$, the function~$q$ is strictly convex on $[a,0]$ and 
	strictly concave on $[0,b]$. Thus, by simple geometric 
	considerations, $q$ has at most one root on $(a,b)$. If this root does not exist, then the  identity
	$\av{q}{[a,b]} = 0$ cannot hold 
	(this identity means precisely that the areas of two hatched domains on Figure~\ref{fig:fq} are equal).	
	
\begin{figure}[H]
\begin{center}
\includegraphics{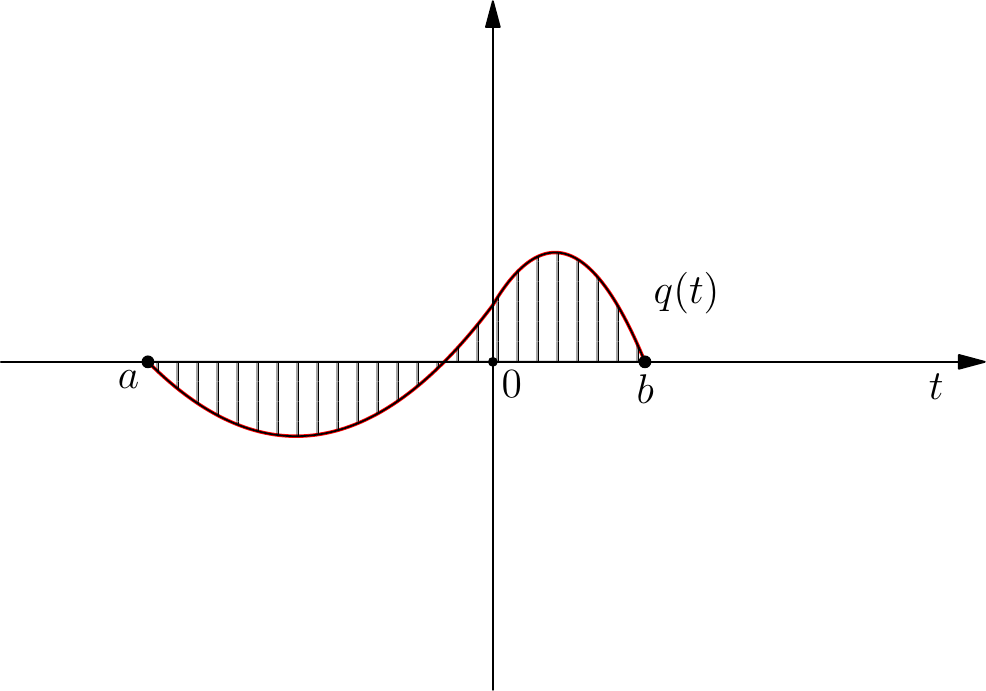}
\caption{A function with zero mean (its convexity changes at $t=0$).}
\label{fig:fq}
\end{center}
\end{figure}
	
	But if $q'(a)\ge 0$ or $q'(b)\ge 0$, the function~$q$ has no roots on~$(a,b)$ by geometric considerations. 
	Thus, we have proved the estimates $\Dlt(a,b)<0$ and $\Drt(a,b)<0$.
	
	Now we find points $a_0$ and $b_0=a_0+\ell_0$ solving equation~(\ref{urlun}). 	
	This equation can be written as ${\Phi(a_0,\ell_0)=0}$, where
	$$
		\Phi(a,\ell) = \ell\big(f'(a)+f'(a+\ell)\big)-2\big(f(a+\ell)-f(a)\big).
	$$
	Differentiating~$\Phi$ with respect to the first variable, we have
	\begin{align*}
		\Phi_a'(a,\ell) &= \ell\big(f''(a)+f''(a+\ell)\big)-2\big(f'(a+\ell)-f'(a)\big) \\
		&= \ell\big(\Dlt(a,b)+\Drt(a,b)\big).
	\end{align*}
	Therefore, $\Phi_a'(a_0,\ell_0)<0$. Consequently, by the implicit function theorem, 
	there exists an interval $(\tilde\ell, \ell_0]$ on which we can define a unique
	differentiable function $a(\ell)$ satisfying the identity $a(\ell_0) = a_0$ and, together with 
	the function $b(\ell) = a(\ell)+\ell$, solving chord equation~(\ref{urlun}).
	In addition,
	$$
		a'(\ell) = -\frac{\Phi_\ell'\big(a(\ell),\ell\big)}{\Phi_a'\big(a(\ell),\ell\big)}.
	$$
	But
	$$
		\Phi_\ell'\big(a,\ell\big) = \ell f''(a+\ell) + f'(a) - f'(a+\ell) = \ell \Drt(a,b),
	$$
	and so $-1<a'(\ell)<0$ and $b'(\ell) = a'(\ell) + 1 > 0$ for $\ell\in(\tilde \ell,\ell_0]$.
	
	Further, let $(\tilde\ell, \ell_0]$ be the union of all the appropriate intervals, i.e. the intervals such that the identity $\Phi(a,\ell) = 0$,
	together with the requirement $a(\ell_0)=a_0$,
	defines a unique differentiable function~$a(\ell)$ on them. We claim that $\tilde\ell = 0$.
	Indeed, let $\tilde\ell > 0$. 	
	We choose some decreasing sequence $\ell_n$ on $(\tilde\ell, \ell_0]$ that
	converges to $\tilde\ell$. Then $a_n = a(\ell_n)$ is an increasing sequence and, besides, $a_n<0$.	
	We denote its limit by $\tilde a$. By continuity, we have 
	$\Phi(\tilde a,\tilde \ell) = 0$. Then, using the implicit function theorem again, we can increase the interval $(\tilde\ell, \ell_0]$. 
	But this contradicts the assumption of its maximality.
	
	As a result, we have the functions~$a$ and~$b$ defined on~$(0,\ell_0]$ and satisfying all the conditions required. 
	\qed

\subsection{Examples}
	\paragraph{Example 7. A fourth-degree polynomial.}
	
	In example~5, we discussed the case of an arbitrary fourth-degree polynomial with positive leading coefficient.
	Now we apply Theorem~\ref{T3} to a fourth-degree polynomial with negative leading coefficient.
	Such a polynomial belongs to~$\W^{0}$ for any $\eps_{0}>0$. From Remark~\ref{rem2}, it follows that, without loss of generality, we may
	set $f(t)=-(t-c)^{4}$. The conditions of Theorem~\ref{T3} are satisfied for such a function, and so it remains
	to find an analytic expression for the Bellman function.
	
	First, we are looking for a domain foliated by chords (a cup). Let $a=c-\sigma$ and $b=c+\tau$. Then after this substitution and
	simple transformations, equation~(\ref{urlun}) takes the form
	$$
		(\sigma-\tau)(\sigma+\tau)^{2}=0.
	$$
	Since the ends of the chords must lie on the opposite sides from the point $c$ (the cup origin), 
	the numbers $\sigma$ and $\tau$
	must have the same sign. Thus, their sum cannot vanish, and so $\sigma=\tau$. Therefore,
	all the chords are parallel to each other and the ends of the cup are $c-\eps$ and $c+\eps$.
	For any $\sigma \in [0,\eps]$, the Bellman function on $[A,B]$, where $a=c-\sigma$ and $b=c+\sigma$,
	can be calculated by the formula
	\begin{align*}
		\Bell(x_{1},x_{2}) &= \Bell(x_{1},(a+b)x_{1}-ab)= \Bell(x_{1},2cx_{1}-c^{2}+\sigma^{2})\\
		&=\frac{f(c+\sigma)-f(c-\sigma)}{2\sigma}(x_{1}-c+\sigma)+f(c-\sigma)
		=-\sigma^{4}.
	\end{align*}

	Now we find the Bellman function in the remaining domains. As we know, the domain on the right of the cup
	is foliated by the right tangents, and so the Bellman function in it is given by
	$$
	\Bell(x_{1},x_{2})=
	\mrt(u;\,c+\eps)\,(x_{1}-u)-(u-c)^4,
	$$
	where $u=x_{1}+\eps-\sqrt{x_{1}^{2}-x_{2}+\eps^{2}}$.
	The function $\mrt(u;\,c+\eps)$ can be calculated by~(\ref{e330}):
	\begin{align*}
		\mrt(u;\,c+\eps)&=-4\eps^{-1}e^{-u/\eps}\int\limits_{c+\eps}^{u}(t-c)^3 e^{t/\eps}\,dt\\
		&=-8\eps^3e^{1-\frac{u-c}{\eps}}-4(u-c)^3+12\eps(u-c)^2
		-24\eps^2(u-c)+24\eps^3.
	\end{align*}

	On the left of the cup, the domain is foliated by the left tangents and we have
	$$
		\Bell(x_{1},x_{2})=
		\mlt(u;\,c-\eps)\,(x_{1}-u)-(u-c)^4,
	$$
	where $u=x_{1}-\eps+\sqrt{x_{1}^{2}-x_{2}+\eps^{2}}$.
	The function $\mlt(u;\,c-\eps)$ can be calculated by~(\ref{e328}):
	\begin{align*}
		\mlt(u;\,c-\eps)&=-4\eps^{-1}e^{u/\eps}\int\limits_{u}^{c-\eps}(t-c)^3 e^{-t/\eps}\,dt\\
		&=8\eps^3e^{1+\frac{u-c}{\eps}}-4(u-c)^3-12\eps(u-c)^2
		-24\eps^2(u-c)-24\eps^3.
	\end{align*}

\section{General case}
	In this chapter we will obtain the function $\Bell(x;\,f)$ for ${f\in\W^N}$, ${N\in\mathbb{Z}_+}$.
	In Sections~\ref{s61} and~\ref{s62}, we will study another construction that is, in some sense, 
	a mixture of an angle and a cup. 
	In Section~\ref{s63}, we will see that all our constructions 
	will suffice for the announced function~$\Bell$ to be built. Also, we will describe the general form
	of this function.
	Finally, in Section~\ref{Alg}, we will explain how to obtain~$\Bell$.
	
\subsection{Trolleybus}\label{s61}
	The following considerations, which are not intended to be rigorous,
	will lead us to a new construction (the last of those that are required for the general case). 
	We have seen in Section~\ref{s43} that in the situation where $f'''$ changes its sign from minus
	to plus, an angle~$\Ang$ can arise. If $f'''$ changes its sign from plus to minus, then the cup~$\Alv$
	arises around the point where the sign changes. Now we assume that $f'''$ changes its sign twice.
	Then one point where the sign changes generates a cup and the other can generate an angle.
	It is not difficult to imagine a situation where the angle and the cup stick together. It turns
	out, that they can not only stick, but ``mix'' with each other and generate one of the constructions
	shown in Figures~\ref{fig:ptr} and~\ref{fig:ltr}. Now we give a rigorous description of such
	constructions and build corresponding Bellman candidates.

\begin{figure}[H]
\begin{center}
\includegraphics{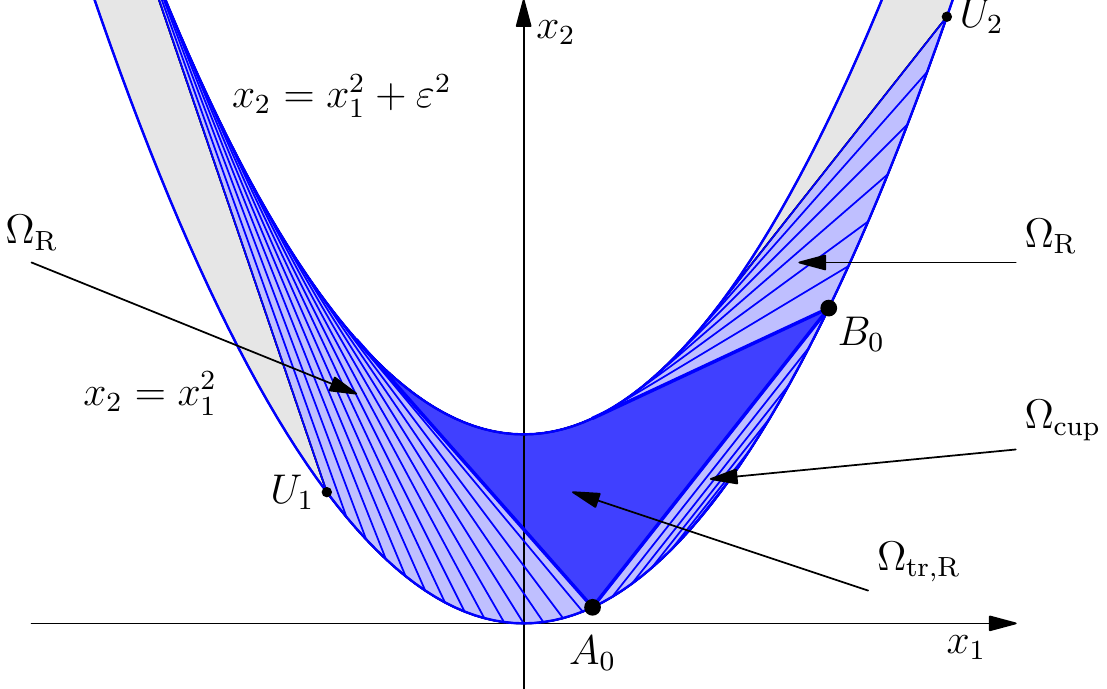}
\caption{A right trolleybus $\RTroll$.}
\label{fig:ptr}
\end{center}
\end{figure}
			
	Suppose ${u_1<a_0<b_0<u_2}$ and $b_0-a_0\le2\eps$. Consider a cup $\Alv(a_0,b_0)$ (it may be not full) and 
	the domains $\Rt(u_1,a_0)$ and $\Rt(b_0,u_2)$ foliated
	by the extremal tangents. The quadrangular subdomain of~$\Omega_\eps$, bounded by the upper chord
	$[A_0,B_0]$, the right tangents coming from $A_0$ and $B_0$, and the arc of the upper parabola, is called
	\emph{the right trolleybus}\footnote{Glancing at Figure~\ref{fig:ptr}, the reader will hardly understand
	why such a name was chosen. The point is the low artistic skills of the authors. When this construction
	was drawn on a blackboard for the first time, the one-sided tangents, bounding the subdomain, 
	were almost parallel and looked like trolley poles drawing the electricity from the upper parabola.} 
	and is denoted by $\RTroll(a_0,b_0)$ (see Figure~\ref{fig:ptr}).	
	Similarly, we can define the left trolleybus $\LTroll(a_0,b_0)$ and 
	the corresponding construction shown in Figure~\ref{fig:ltr}.

\begin{figure}[H]
\begin{center}
\includegraphics{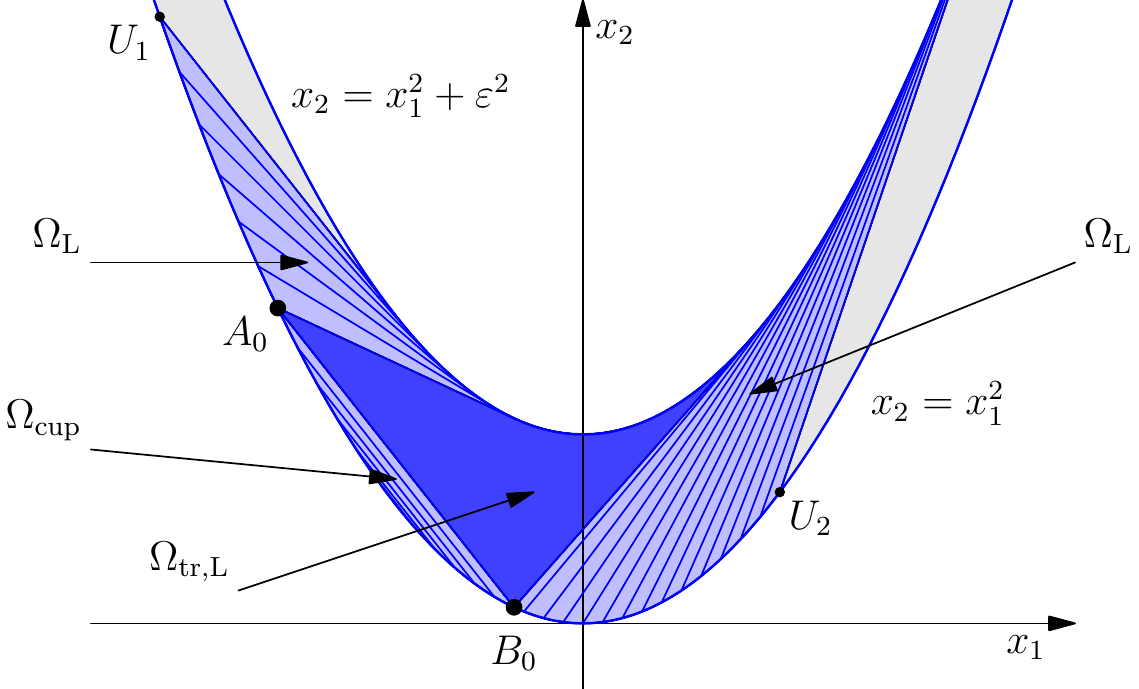}
\caption{A left trolleybus $\LTroll$.}
\label{fig:ltr}
\end{center}
\end{figure}
	
Note that for $b_0-a_0=2\eps$ the trolleybus degenerates into an angle 
adjacent to a cup.
	
We consider the construction with the right trolleybus. Our goal is to 
build a Bellman candidate in the domain
$$
\RtRt(u_1,[a_0,b_0],u_2) = 
\Rt(u_1,a_0) \cup \Alv(a_0,b_0) \cup \RTroll(a_0,b_0) \cup \Rt(b_0,u_2).
$$
We denote the function required by $\Brtrt(x;\,u_1,[a_0,b_0],u_2)$. 
In the trolleybus, our candidate is linear by the minimality:
\begin{align*}
\Brtrt(x;\,u_1,[a_0,b_0],u_2)&=\Brtroll(x;\,a_0,b_0) 
\\
&= \beta_1 x_1 + \beta_2 x_2 + \beta_0,\quad x\in\RTroll(a_0,b_0).
\end{align*}
We already know that the Bellman candidate coincides with $\Balv(x;\,a_0,b_0)$ 
in $\Alv(a_0,b_0)$ and with $\Brt(x;\,b_0,u_2)$ in $\Rt(b_0,u_2)$. 
The latter function is not defined uniquely (the value $\mrt(b_0)$ must be 
chosen). The necessary and sufficient conditions for the concatenation of
$\Brtroll(x;\,a_0,b_0)$, $\Balv(x;\,a_0,b_0)$, 
and $\Brt(x;\,b_0,u_2)$ to be continuous, can be written as
\begin{equation}\label{e8}
	\left\{ 
	\begin{aligned}
		&\beta_1 a_0 + \beta_2 a_0^2 + \beta_0 = f(a_0);\\
		&\beta_1 b_0 + \beta_2 b_0^2 + \beta_0 = f(b_0);\\
		&\mrt(b_0) = \beta_1 + 2(b_0-\eps) \beta_2.
	\end{aligned} \right.
\end{equation}
Indeed, the first two identities must be fulfilled by the boundary condition, 
and they imply that $\Brtroll$ is glued to $\Balv$ continuously. The last 
identity guarantees that the concatenation of $\Brtroll(x;\,a_0,b_0)$ and
$\Brt(x;\,b_0,u_2)$ is continuous. We have obtained this equation expressing
$x_2$ in terms of $x_1$ on the right boundary of the trolleybus (see equation~(R) 
in Section~\ref{s31})	and then equating the coefficient of $x_1$ with $\mrt(b_0)$.

Now, assume that the functions $\Balv(x;\,a_0,b_0)$ and $\Brt(x;\,b_0,u_2)$ 
are locally concave. In order for their concatenation with $\Brtroll(x;\,a_0,b_0)$ 
to be locally concave, it is necessary that the jumps of the derivative in $x_2$ 
are non-positive on the corresponding boundaries of~$\RTroll$. Using~(\ref{e326}), 
we see that on the lower boundary of the trolleybus (i.e. on the chord
$[A_0,B_0]$), the jump can be calculated as follows:
$$
\delta_1=\beta_2-\frac12\Av{f''}{[a_0,b_0]}.
$$
On the right boundary of the trolleybus (i.e. on the right tangent coming
from~$B_0$), the jump can	be calculated by the formula
$$
\delta_2 =\lim_{u\to b_0+}t_2(u)-\beta_2,
$$
where $t_2 = \Brt_{x_2}$ in $\Rt(b_0,u_2)$.	
Using~(\ref{e36}), (\ref{e35}), and, after that, the last identity in~(\ref{e8}), 
we obtain
$$
\lim_{u\to b_0+}t_2(u) = \frac{\mrt'(b_0)}{2}
= \frac{f'(b_0) - \mrt(b_0)}{2\eps} = 
\frac{f'(b_0) - \beta_1 - 2(b_0-\eps) \beta_2}{2\eps}.
$$
Therefore, we have
\begin{equation}\label{e9}	
\delta_2 = \frac{f'(b_0)-\beta_1 - 2\beta_2 b_0}{2\eps}.
\end{equation}
Subtracting the first equation in~(\ref{e8}) from the second one, we get
\begin{equation}\label{eq1}
\beta_1+\beta_2(a_0+b_0) = \frac{f(b_0) - f(a_0)}{b_0-a_0} = \Av{f'}{[a_0,b_0]}.
\end{equation}
Using chord equation~(\ref{urlun}), we obtain
\begin{equation}\label{eq2}
\beta_1+\beta_2(a_0+b_0) = \frac{f'(a_0)+f'(b_0)}{2}. 
\end{equation}
Expressing $\beta_1$ in terms of $\beta_2$ and substituting the resulting 
expression into~(\ref{e9}), we have
\begin{align*}
\delta_2 &= \frac{f'(b_0)-f'(a_0)-2\beta_2(b_0-a_0)}{4\eps} 
\\
&= -\frac{b_0-a_0}{2\eps}\Big(\beta_2 - \frac{1}{2}\av{f''}{[a_0,b_0]}\Big) 
\\
&= -\frac{b_0-a_0}{2\eps}\delta_1.
\end{align*}
But $\delta_1$ and $\delta_2$ must have the same sign and so $\delta_1=\delta_2=0$.
In its turn, this condition implies that the concatenation of $\Balv(x;\,a_0,b_0)$, 
$\Brtroll(x;\,a_0,b_0)$, and $\Brt(x;\,b_0,u_2)$ is $C^1$-smooth (because on the
boundaries of the trolleybus the derivatives in two 
non-collinear directions~--- along $x_2$ and along the corresponding 
boundary~--- are glued continuously). 
But if the concatenation is $C^1$-smooth and its components are locally concave, 
then it is also locally concave.
Therefore, the identity $\delta_1 = 0$ or
\begin{equation}\label{eq3}
\beta_2 = \frac{1}{2}\Av{f''}{[a_0,b_0]}
\end{equation}
is a necessary and sufficient condition for the concatenation of 
the linear function $\Brtroll(x;\,a_0,b_0)$ with the locally concave functions 
$\Balv(x;\,a_0,b_0)$ and $\Brt(x;\,b_0,u_2)$ to be locally concave. 	
Substituting expression~(\ref{eq3}) into~(\ref{eq1}), we get
\begin{equation}\label{eq4}
\beta_1 = \Av{f'}{[a_0,b_0]} - \frac{1}{2}(b_0+a_0)\Av{f''}{[a_0,b_0]}.
\end{equation}
Also, the expression for $\beta_2$ can be substituted in~(\ref{eq2}):
\begin{equation}\label{eq5}
  \begin{split}
	\beta_1 &= \frac{f'(a_0)+f'(b_0)}{2} - (a_0+b_0)\frac{f'(b_0)-f'(a_0)}{2(b_0-a_0)}
		\\
		&= \frac{b_0 f'(a_0)-a_0 f'(b_0)}{b_0-a_0}.
	\end{split}
\end{equation}
Summing the first and the second equations in~(\ref{e8}) and, after that, 
substituting expressions~(\ref{eq3}) and~(\ref{eq4}), we obtain
\begin{equation}\label{eq12}	
\beta_0=\frac{b_0 f(a_0)-a_0 f(b_0)}{b_0-a_0}+\frac{1}{2}a_0 b_0\Av{f''}{[a_0,b_0]}.
\end{equation}
Finally, substituting expressions~(\ref{eq3}) and~(\ref{eq5}) in the last 
equation into~(\ref{e8}), we have
\begin{align*}
\mrt(b_0)
&=\frac{b_0f'(a_0)-a_0f'(b_0)}{b_0-a_0}+(b_0-\eps)\frac{f'(b_0)-f'(a_0)}{b_0-a_0}
\\
&=f'(b_0)-\eps\Av{f''}{[a_0,b_0]}.
\end{align*}
We note that if $b_0-a_0 = 2\eps$, then
$$
\mrt(b_0) = f'(b_0) - \frac{f'(b_0)-f'(a_0)}{2} = \mrt(b_0;\,b_0),
$$
where $\mrt(u;\,b_0)$ is given by~(\ref{e330}) 
(that expression was defined only for the case $b_0-a_0 = 2\eps$). Now we extend 
the notation~$\mrt(u;\,b_0)$ to	the general case $b_0-a_0 \le 2\eps$:
\begin{equation*}
\mrt(u;\,b_0)
	= \big(f'(b_0)-\eps\av{f''}{[a_0,b_0]}\big)e^{(b_0-u)/\eps} + 
		\eps^{-1}e^{-u/\eps}\int\limits_{b_0}^u f'(t)e^{t/\eps}\,dt.
\end{equation*}
It is easy to prove that formula~(\ref{e336}) for~$\mrt''(u;\,b_0)$ remains true. 
	
Thus, we have 
$$
\mrt(u) = \mrt(u;\,b_0),\quad u\in (b_0,u_2),
$$ 
and
$$
\Brtrt(x;\,u_1,[a_0,b_0],u_2) = \Brt(x;\,[a_0,b_0],u_2),\quad x \in \Rt(b_0,u_2),
$$ 
where the function $\Brt(x;\,[a_0,b_0],u_2)$ is still given by~(\ref{e331}).

Now we consider the concatenation of~$\Brtroll$ and $\Brt(x;\,u_1,a_0)$. 
Arguing the same way as for the right boundary of the trolleybus, we get a 
necessary	and sufficient condition for our concatenation to be continuous 
on the left boundary:
$$
\mrt(a_0) = \beta_1 + 2(a_0-\eps) \beta_2.
$$
Substituting expressions~(\ref{eq3}) and~(\ref{eq5}) into this formula, we obtain
\begin{equation}\label{e4}
\begin{split}
\mrt(a_0)
&=\frac{b_0f'(a_0)-a_0f'(b_0)}{b_0-a_0}+(a_0-\eps)\frac{f'(b_0)-f'(a_0)}{b_0-a_0} 
\\
&=f'(a_0)- \eps\Av{f''}{[a_0,b_0]}.
\end{split}
\end{equation}
Using equation~(\ref{e35}) twice, we have
$$
\mrt(a_0) = f'(a_0)-\eps \mrt'(a_0) =
f'(a_0) - \eps \big(f''(a_0)-\eps \mrt''(a_0)\big).
$$	
This allows us to rewrite identity~(\ref{e4}) as
\begin{equation}\label{eq15}
\mrt''(a_0) =  \eps^{-1}\big(f''(a_0) - \av{f''}{[a_0,b_0]}\big) = \eps^{-1}\Dlt(a_0,b_0),
\end{equation}
where $\Dlt$ is defined by the first relation in~(\ref{e334}).

Now we verify that the resulting condition implies not only that the 
concatenation of $\Brt(x;\,u_1,a_0)$ and $\Brtroll(x;\,a_0,b_0)$ is 
continuous, but also that it is	$C^1$-smooth. We set $t_2=\Brt_{x_2}$ 
in $\Rt(u_1,a_0)$. Using (\ref{e36}), (\ref{e35}), and (\ref{e4}), we get
$$
\lim_{u\to a_0-}t_2(u)=\frac{\mrt'(a_0)}2=\frac{f'(a_0)-\mrt(a_0)}{2\eps}
=\frac{1}{2}\Av{f''}{[a_0,b_0]}=\beta_2.		
$$
As usual, this implies the $C^1$-smoothness of the concatenation on the left 
boundary of the trolleybus. But since the concatenation is $C^1$-smooth, 
it is locally concave provided all its components are locally concave. 
	
Now we discuss the left trolleybus $\LTroll(a_0,b_0)$ and construct a candidate
$\Bltlt(x;\,u_1,[a_0,b_0],u_2)$	in the union
$$
\LtLt(u_1,[a_0,b_0],u_2) 
=\Lt(u_1,a_0)\cup\Alv(a_0,b_0)\cup\LTroll(a_0,b_0)\cup\Lt(b_0,u_2).
$$
In order to build such a candidate, we can reason in the same way as we 
did for~$\Brtrt$.
We obtain
\begin{align*}
\Bltlt(x;\,u_1,[a_0,b_0],u_2) &= \Bltroll(x;\,a_0,b_0)
\\ 
&= \beta_1 x_1 + \beta_2 x_2 +\beta_0, \quad x\in\LTroll(a_0,b_0),
\end{align*}
where $\beta_1$, $\beta_2$ and $\beta_0$ are the same as for the right trolleybus. 	
Further, defining the function~$\mlt(u;\,a_0)$, $u\in (u_1,a_0]$, by the formula
$$
\mlt(u;\,a_0) = \big(f'(a_0) + \eps\av{f''}{[a_0,b_0]}\big)e^{(u-a_0)/\eps} + 
\eps^{-1}e^{u/\eps}\int\limits_{u}^{a_0} f'(t)e^{-t/\eps}\,dt
$$
(clearly, this formula coincides with~(\ref{e328}) if $b_0-a_0 = 2\eps$), we obtain
$$
\Bltlt(x;\,u_1,[a_0,b_0],u_2) = \Blt(x;\,u_1,[a_0,b_0]),\quad x\in \Lt(u_1,a_0),
$$
where the function on the right is defined by~(\ref{e329}).
Note that formula~(\ref{e335}) for $\mlt''(u;\,a_0)$ is still correct. 
Concerning the domain $\Lt(b_0,u_2)$, we have
$$
\Bltlt(x;\,u_1,[a_0,b_0],u_2) = \Blt(x;\,b_0,u_2), x\in\Lt(b_0,u_2),
$$
where the coefficient $\mlt(u)$, participating in the definition 
of $\Blt(x;\,b_0,u_2)$, satisfies
\begin{equation}\label{eq16}
\mlt''(b_0) = -\eps^{-1}\Drt(a_0,b_0).
\end{equation}
	
Now we note that by~(\ref{e335}) identity~(\ref{eq15}) is equivalent to the equation
\begin{equation}\label{eq19}
\mrt''(a_0) + \mlt''(a_0;\,a_0) = 0,	
\end{equation}
which has the same form as equation~(\ref{e3}) for the vertex of an angle. 	
Similarly, from~(\ref{e336}), it follows that relation~(\ref{eq16}) is 
equivalent to the	equation
\begin{equation}\label{eq20}
\mlt''(b_0) + \mrt''(b_0;\,b_0) = 0.
\end{equation}
 	
Now, we can formulate a proposition in which our construction with a right 
trolleybus is described.
\begin{Prop}\label{S15}
Let $u_1<a_0<b_0<u_2$ and $b_0-a_0\le2\eps$. Consider a cup 
$\Alv(a_0,b_0)$\textup, two domains $\Rt(u_1,a_0)$ and $\Rt(b_0,u_2)$ 
foliated by the right extremal tangents\textup, and the linearity domain
$\RTroll(a_0,b_0)$ located between them all. A Bellman candidate in the 
union $\RtRt(u_1,[a_0,b_0],u_2)$ of these four domains has the form
\begin{equation*}
\Brtrt(x;\,u_1,[a_0,b_0],u_2) =  
	\begin{cases}
	\Brt(x;\,u_1,a_0),& x \in \Rt(u_1,a_0);\\
	\Balv(x;\,a_0,b_0),& x \in \Alv(a_0,b_0);\\
	\Brtroll(x;\,a_0,b_0),& x \in \RTroll(a_0,b_0);\\
	\Brt(x;\,[a_0,b_0],u_2),& x \in \Rt(b_0,u_2).
  \end{cases}  		
\end{equation*}
Here\textup, $\Brtroll(x;\,a_0,b_0)=\beta_1x_1+\beta_2x_2+\beta_0$ is the 
linear function	with the coefficients given by~\textup{(\ref{eq3}), 
(\ref{eq5}),} and \textup{(\ref{eq12})}. In addition\textup, the following 
conditions must be satisfied\textup:
\begin{equation*}
 \left\{
		\begin{aligned}
				&\mrt''(u;\,b_0) \le 0\quad\mbox{for}\quad u \in (b_0,u_2);\\
				&\mrt''(a_0) + \mlt''(a_0;\,a_0)= 0;\\
				&\mrt''(u) \le 0 \quad\mbox{for}\quad u \in (u_1,a_0),
		\end{aligned} 
 \right.
\end{equation*}
where $\mlt''(u;\,a_0)$ and $\mrt''(u;\,b_0)$ are given by~\textup{(\ref{e335})}
and~\textup{(\ref{e336}),} respectively.
\end{Prop}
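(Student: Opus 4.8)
The plan is to check the four requirements of a Bellman candidate (Definition~\ref{D2}) for the piecewise function $\Brtrt$: the boundary condition on the lower parabola, the prescribed foliation/linearity structure, continuity, and local concavity. The first two are essentially free. On $\Rt(u_1,a_0)$, $\Rt(b_0,u_2)$ and $\Alv(a_0,b_0)$ the candidate is declared to coincide with functions already produced by Propositions~\ref{S4} and~\ref{S8}, which satisfy $B(u,u^2)=f(u)$ and have partial derivatives constant along their extremals by construction; the trolleybus $\RTroll(a_0,b_0)$ meets the lower parabola only at the shared vertices $A_0,B_0$, so it contributes nothing to the boundary condition, and on it $\Brtrt$ is linear, so every segment there is an extremal. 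Thus the real content is (a) continuity, which pins down the free constants, and (b) local concavity.

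For (a) I would assemble the computation carried out before the statement. Continuity of $\Brtroll$ with $\Balv$ along the two one-sided right tangents bounding $\RTroll$ forces the first two equations of~(\ref{e8}); continuity of $\Brtroll$ with $\Brt(x;\,b_0,u_2)$ along the right tangent from $B_0$ forces the third equation of~(\ref{e8}) after reading off the slope from equation~(R) of Section~\ref{s31}; and the analogous gluing with $\Brt(x;\,u_1,a_0)$ along the tangent from $A_0$ gives $\mrt(a_0)=\beta_1+2(a_0-\eps)\beta_2$. Solving this system together with the chord equation~(\ref{urlun}), which holds throughout the cup, yields $\beta_1,\beta_2,\beta_0$ as in~(\ref{eq3}), (\ref{eq5}), (\ref{eq12}), the value $\mrt(b_0)=f'(b_0)-\eps\av{f''}{[a_0,b_0]}$ and hence the extended formula for $\mrt(u;\,b_0)$, and the relation $\mrt(a_0)=f'(a_0)-\eps\av{f''}{[a_0,b_0]}$, which by~(\ref{e35}) is exactly the second listed condition $\mrt''(a_0)+\mlt''(a_0;\,a_0)=0$ (equivalently~(\ref{eq15})). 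This determines the candidate uniquely from the data $a_0,b_0,u_1,u_2$.

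For (b) the components are locally concave under the stated hypotheses: $\Balv$ by the definition of a cup and Proposition~\ref{S8}, $\Brt(x;\,u_1,a_0)$ iff $\mrt''(u)\le0$ on $(u_1,a_0)$, and $\Brt(x;\,[a_0,b_0],u_2)$ iff $\mrt''(u;\,b_0)\le0$ on $(b_0,u_2)$ --- all three appearing among the required conditions. It remains to see that the concatenation is $C^1$. Across each of the three boundary segments of $\RTroll$ the derivative of $\Brtrt$ along the segment is already continuous (constant on both sides), so it suffices to kill the jump of the $x_2$-derivative; local concavity of the adjacent pieces makes each such jump $\le0$, and the computation preceding the statement shows the jump $\delta_2$ across the right tangent from $B_0$ equals $-\tfrac{b_0-a_0}{2\eps}\delta_1$ (with $\delta_1$ the jump across the lower chord), so both vanish, while $\lim_{u\to a_0-}t_2(u)=\tfrac12\av{f''}{[a_0,b_0]}=\beta_2$ kills the jump across the tangent from $A_0$. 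Two non-collinear directions of continuous derivative at each junction give full $C^1$-smoothness, and a $C^1$ concatenation of locally concave functions is locally concave, as used already for the angle and the cup.

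The step I expect to need the most care is the \emph{left} junction, where the trolleybus abuts an ordinary right-tangent domain $\Rt(u_1,a_0)$ rather than a cup end: one must verify that the single continuity equation $\mrt(a_0)=\beta_1+2(a_0-\eps)\beta_2$ already produces $C^1$-matching --- that is, that substituting the values of $\beta_1,\beta_2$ forces $\lim_{u\to a_0-}t_2(u)=\beta_2$ automatically --- and that the constraint this places on $\mrt$ at $a_0$ is precisely~(\ref{eq15}); confirming this is consistent with the sign requirement $\mrt''(u)\le0$ on $(u_1,a_0)$, which uses~(\ref{e311}) and $\mrt''(a_0)=\eps^{-1}\Dlt(a_0,b_0)\le0$ (the latter from the cup conditions), is the last point to settle. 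Once this is done, combining the local concavity of the pieces with the $C^1$ gluing gives the proposition, the listed hypotheses being exactly the inequalities needed to make the pieces locally concave.
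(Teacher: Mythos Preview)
Your plan is correct and follows essentially the same route as the paper: set up the continuity system~(\ref{e8}) together with the left gluing equation, use the jump relation $\delta_2=-\tfrac{b_0-a_0}{2\eps}\delta_1$ to force $\beta_2=\tfrac12\av{f''}{[a_0,b_0]}$, read off $\beta_1,\beta_0,\mrt(b_0)$ and $\mrt(a_0)$, and then check $C^1$-smoothness at all three junctions to conclude local concavity. One small correction to your description: continuity alone does not determine the $\beta_i$ (system~(\ref{e8}) is underdetermined by one); it is the concavity-forced vanishing of $\delta_1$ that supplies the missing equation~(\ref{eq3}), so that step belongs logically to (a), not (b) --- and the first two equations of~(\ref{e8}) encode gluing with $\Balv$ along the chord $[A_0,B_0]$, not along the tangents.
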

	
We also give a symmetric proposition for a left trolleybus.	
\begin{Prop}\label{S16}
Let $u_1<a_0<b_0< u_2$ and $b_0-a_0\le2\eps$. Consider a cup 
$\Alv(a_0,b_0)$\textup, two domains $\Lt(u_1,a_0)$ and $\Lt(b_0,u_2)$ 
foliated by the left extremal tangents\textup, and the linearity domain
$\LTroll(a_0,b_0)$ located between them all. A Bellman candidate in the 
union $\LtLt(u_1,[a_0,b_0],u_2)$ of these four domains has the form
\begin{equation*}
\Bltlt(x;\,u_1,[a_0,b_0],u_2) =  
	\begin{cases}
		\Blt(x;\,u_1,[a_0,b_0]),& x \in \Lt(u_1,a_0);\\
		\Balv(x;\,a_0,b_0),& x \in \Alv(a_0,b_0);\\
		\Bltroll(x;\,a_0,b_0),& x \in \LTroll(a_0,b_0);\\
		\Blt(x;\,b_0,u_2),& x \in \Lt(b_0,u_2).
	\end{cases}  		
\end{equation*}
Here\textup, $\Bltroll(x;\,a_0,b_0)=\beta_1x_1+\beta_2x_2+\beta_0$ 
is the linear function with the coefficients given by~\textup{(\ref{eq3}),
(\ref{eq5}),} and \textup{(\ref{eq12})}. In addition\textup, the following 
conditions must be fulfilled\textup:
\begin{equation*}
	\left\{
	\begin{aligned}
		&\mlt''(u;\,a_0) \ge 0\quad\mbox{for}\quad u \in (u_1,a_0);\\
		&\mlt''(b_0) + \mrt''(b_0;\,b_0) = 0;\\
		&\mlt''(u) \ge 0 \quad\mbox{for}\quad u \in (b_0,u_2).
	\end{aligned} 
	\right.
\end{equation*}
\end{Prop}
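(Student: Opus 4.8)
The plan is to deduce Proposition~\ref{S16} from the already-established Proposition~\ref{S15} by the reflection $x_1\mapsto -x_1$, rather than redo the whole computation of this section with ``left'' in place of ``right''. Set $g(t)\df f(-t)$ and let $R(x_1,x_2)\df(-x_1,x_2)$ (the corresponding identity for the Bellman function itself is Remark~\ref{remm2}). The map $R$ is an affine change of variables, so it preserves local concavity, maps the parabolic strip $\Omega_\eps$ onto itself, turns the boundary condition $G(u,u^2)=f(u)$ for $G$ into the boundary condition $g(u)=G(R^{-1}(u,u^2))$, and carries right extremal tangents into left ones (and chords into chords). Consequently, pushing forward through $R$ the right-trolleybus candidate $\Brtrt$ for $g$ built on $\RtRt(u_1^g,[a_0^g,b_0^g],u_2^g)$ with $u_1^g=-u_2$, $a_0^g=-b_0$, $b_0^g=-a_0$, $u_2^g=-u_1$ produces a function on $\LtLt(u_1,[a_0,b_0],u_2)$ which I claim is $\Bltlt$. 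Since $R$ sends a Bellman candidate (in the sense of Definition~\ref{D2}) for $g$ to a Bellman candidate for $f$, Proposition~\ref{S16} will follow once the translation of all the data is checked.

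First I would set up the dictionary induced by $R$: $\Rt(u_1^g,a_0^g)\leftrightarrow\Lt(b_0,u_2)$, $\Rt(b_0^g,u_2^g)\leftrightarrow\Lt(u_1,a_0)$, $\Alv(a_0^g,b_0^g)\leftrightarrow\Alv(a_0,b_0)$, $\RTroll(a_0^g,b_0^g)\leftrightarrow\LTroll(a_0,b_0)$, together with $\mrt\leftrightarrow\mlt$ and $\Drt\leftrightarrow\Dlt$ (a right-end quantity becoming a left-end one). Under this dictionary the three hypotheses of Proposition~\ref{S15} become exactly the three displayed conditions of Proposition~\ref{S16}: the sign in the one-variable concavity conditions flips ($\mrt''\le0$ turns into $\mlt''\ge0$) because the direction of the extremals flips, and the gluing identity $\mrt''(a_0)+\mlt''(a_0;a_0)=0$ at the left end of the trolleybus, which by~(\ref{e335}) and~(\ref{e336}) is the form~(\ref{eq19}), is carried to the identity~(\ref{eq20}), i.e. $\mlt''(b_0)+\mrt''(b_0;b_0)=0$, at the right end. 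A short computation then confirms that the coefficients of the reflected linear piece $\Bltroll$ are given by~(\ref{eq3}), (\ref{eq5}), (\ref{eq12})---this uses that $\Av{f''}{[a_0,b_0]}$, $\tfrac{b_0f'(a_0)-a_0f'(b_0)}{b_0-a_0}$ and the expression~(\ref{eq12}) are invariant under the substitution $a_0\leftrightarrow b_0$ combined with the sign change in $f'$, while the coefficient of $x_1$ picks up the sign---and that the formula for $\mlt(u;a_0)$ produced this way is the one introduced just before the statement of Proposition~\ref{S16}, with~(\ref{e335}) still valid.

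Alternatively, one can avoid the symmetry and repeat the derivation verbatim: linearity of $\Bltroll$ on $\LTroll(a_0,b_0)$ by minimality; continuity at the chord $[A_0,B_0]$ and at the two one-sided left tangents issuing from $A_0$ and $B_0$, giving the analogue of system~(\ref{e8}) and hence $\beta_1,\beta_2,\beta_0$; the observation that the jumps of $\Bltroll_{x_2}$ across the lower chord and across the outer tangent are proportional with a negative coefficient, so both vanish and the concatenation is in fact $C^1$-smooth; and finally the principle that a $C^1$-smooth concatenation of locally concave functions is locally concave, the pieces being locally concave precisely under the stated $\mlt''$ inequalities (for $\Balv$ by the definition of a cup together with Proposition~\ref{S8}). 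Either way, I expect the only real obstacle to be bookkeeping---keeping the left/right relabelling and the sign flips consistent so that the three hypotheses collapse to exactly the three conditions listed, in particular so that it is $\mlt''(b_0)+\mrt''(b_0;b_0)=0$, and not some other combination, that is the correct gluing condition at $b_0$. No new idea beyond those used for $\Brtrt$ is needed.
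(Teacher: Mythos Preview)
Your proposal is correct. The paper itself does not spell out a proof of Proposition~\ref{S16}; it simply says ``we can reason in the same way as we did for~$\Brtrt$'' and then records the outcome (the formulas for $\Bltroll$, for $\mlt(u;a_0)$, and the gluing condition~(\ref{eq20})). That is exactly your ``alternatively'' paragraph.

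Your primary approach---deducing the left-trolleybus proposition from the right one via the reflection $R(x_1,x_2)=(-x_1,x_2)$, $g=f(-\,\cdot\,)$---is a route the paper does not take explicitly, but it is perfectly sound and your dictionary is accurate: $m_{\mathrm R}^g(u)=-m_{\mathrm L}^f(-u)$ gives $(m_{\mathrm R}^g)''(u)=-(m_{\mathrm L}^f)''(-u)$, so the two sign conditions and the gluing identity~(\ref{eq19}) for $g$ at $a_0^g=-b_0$ become precisely the three displayed conditions of Proposition~\ref{S16}, and the linear coefficients transform to~(\ref{eq3}), (\ref{eq5}), (\ref{eq12}) as you claim. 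The reflection argument buys you the result without redoing any of the jump/smoothness computations; the paper's direct repetition has the minor advantage of producing the explicit formula for $\mlt(u;a_0)$ along the way rather than having to translate it through the symmetry.
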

	
\subsection{Optimizers in trolleybuses}\label{s62}	
In this subsection we discuss delivery curves and optimizers in constructions with trolleybuses. 
We treat in detail only the case of a right trolleybus.
	
Suppose $B$ is a Bellman candidate in the whole domain~$\Omega_\eps$ and some 
part of the corresponding	foliation forms the construction 
$\RtRt(u_1,[a_0,b_0],u_2)$ described in Proposition~\ref{S15}.
We already know how to build delivery curves in the domain~$\Rt(u_1,a_0)$ and 
in the cup $\Alv(a_0,b_0)$\footnote{It is worth noting that $\Rt(u_1,a_0)$ 
and $\Alv(a_0,b_0)$ are not connected with each other by delivery curves. 
This situation should not be confused with the case of a full cup and two 
domains adjacent to it.} (see Sections~\ref{s33} and~\ref{s53}).	
Let~$A_1$ and~$B_1$ be the points where the rear and the front ``trolley poles'' 
of $\RTroll(a_0,b_0)$ touch 
the upper parabola: $A_1 = \big(a_0-\eps,(a_0-\eps)^2+\eps^2\big)$ and
$B_1 = \big(b_0-\eps,(b_0-\eps)^2+\eps^2\big)$. Let~$\dc$ be the left delivery 
curve that runs along the upper parabola in $\Rt(u_1,a_0)$ and ends at 
the point~$A_1$. This point is the entry node of $\RTroll$ and, as we will 
see later, the curve~$\gamma$ can	be continued from~$A_1$ up to each point 
of the trolleybus. To get an idea of how we are going to do this, 
the reader can look at Figure~\ref{fig:ptrkd}, which shows various delivery 
curves in the trolleybus.
\begin{figure}[H]
\begin{center}
\includegraphics{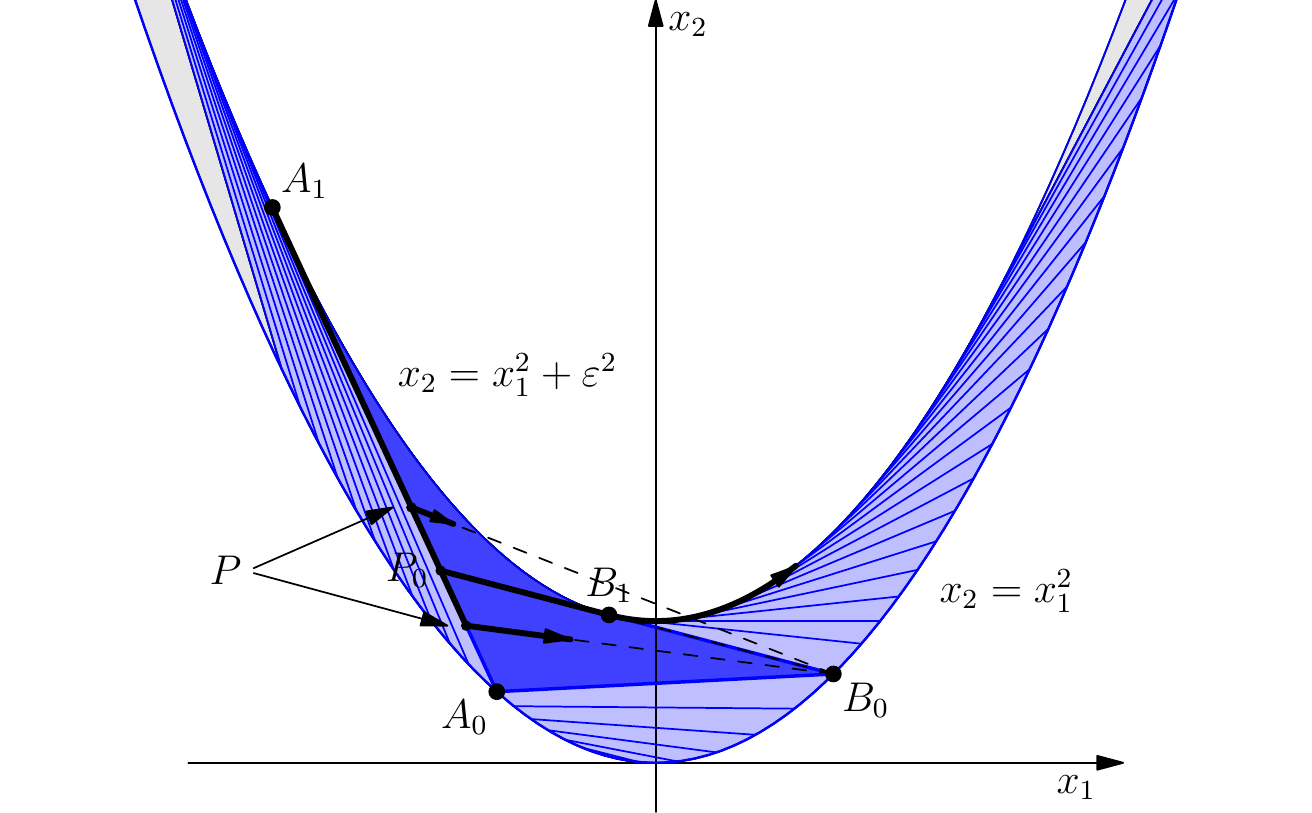}
\caption{A right trolleybus $\RTroll$ and  delivery curves.}
\label{fig:ptrkd}
\end{center}
\end{figure}

Let~$P_0$ be the point where the straight line, containing ``the front pole''
$[B_1,B_0]$ of the trolleybus, intersects ``the rear pole'' $[A_1,A_0]$. We use 
Proposition~\ref{S11} from Section~\ref{s33} twice and continue~$\dc$ with the
segment $[A_1,P_0]$ and, after that, with the segment $[P_0,B_1]$. 	
An important feature of the curve just constructed is that it ``transits'' 
through the trolleybus and ends at the entry node of~$\Rt(b_0,u_2)$. Then 
this curve can be continued up to any point of $\Rt(b_0,u_2)$ 
(see Section~\ref{s33} again). Now we consider the points lying in the 
triangle with vertices $P_0$, $A_0$, and $B_0$. The curve~$\dc$ can be continued 
up to any such a point~$x$ in the same way as above. First, we find the point~$P$ 
where the straight line, containing the segment $[x,B_0]$, intersects the 
segment $[A_1,A_0]$. After that, we continue~$\dc$ with	the segments $[A_1,P]$ 
and $[P,x]$.

It remains to consider the points of the trolleybus that get into the triangle 
with vertices $P_0$, $A_1$, and $B_1$. First, we find point~$P\in[A_1,A_0]$ 
in the same way as described above and continue~$\dc$ with the segment $[A_1,P]$. 
By Proposition~\ref{S11}, the new curve is still a left delivery curve. 	
We continue it with the segment $[P,x]$. Although the conditions of 
Proposition~\ref{S11} are not satisfied this time, we still obtain a left 
delivery curve as a result. The fact is that the conditions of another
proposition~--- some modification	of Proposition~\ref{S11}~--- are fulfilled. 
This modification allows us to overcome the difficulties appearing
from the fact that our curve is continued along the segment intersecting 
the upper boundary transversally.
	
\begin{Prop}\label{S18}
Let~$\dc$ be a convex left delivery curve that is generated by a test 
function~$\ex$ defined on $I=[l,r]$.
Suppose~$\dc$ ends with a straight segment described in
Proposition~\textup{\ref{S11}}.
By $A_0$ we denote the point where the line\textup, containing this 
segment\textup, intersects the lower parabola.
On the lower parabola\textup, we also choose a point~$B_0$ such that 
$0<b_0-a_0\le 2\eps$.
Further\textup, let $x\in[\dc(r),B_0]$ be a point such that 
the candidate~$B$ is linear on $[\dc(r),x]$. Also\textup, 
we assume that on the straight segment that the curve~$\dc$ ends with\textup,
there exists a point $\dc(s_0)$\textup, $s_0\in I$\textup, such that the 
line~$L$\textup, containing the segment $[\dc(s_0),x]$\textup, does not 
intersect the upper parabola 
\textup(see Figure~\ref{fig:nkdt}\textup). 		
If we now continue the curve~$\dc$ with the segment $[\dc(r),x]$\textup, 
then the resulting curve~$\widetilde\dc$ remains a left delivery curve. 
It is generated by the function
$$
\widetilde\ex(s) = 
	\begin{cases}
		\ex(s), &s \in I;\\
		b_0, &s \in [r,\widetilde r].
	\end{cases}
$$
\end{Prop}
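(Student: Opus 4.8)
The plan is to imitate the proofs of Propositions~\ref{S9} and~\ref{S11}: three things must be verified, namely that $\widetilde\ex$ generates $\widetilde\dc$ in the sense of~(\ref{eq7}), that $\widetilde\ex\in\BMO_\eps([l,\widetilde r])$, and that equation~(\ref{B_extr_curve}) holds for $\widetilde\ex$, $\widetilde\dc$, and the candidate~$B$. The first point is a direct computation: for $s>r$ one has $\int_l^s\widetilde\ex^k=\int_l^r\ex^k+(s-r)b_0^k$ for $k=1,2$, so that
\begin{equation*}
	\widetilde\dc(s)=\frac{r-l}{s-l}\,\dc(r)+\frac{s-r}{s-l}\,B_0 ,
\end{equation*}
and hence $\widetilde\dc$ runs along the segment from $\dc(r)$ toward $B_0$, reaching the prescribed point $x\in[\dc(r),B_0]$ for a suitable choice of $\widetilde r$. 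The fact that the value to be appended is precisely $b_0$ comes, as in the discussion preceding Proposition~\ref{S11}, from the quadratic equation for $\widetilde\ex(s)$ produced by the collinearity of the three points $\dc(r)$, $\widetilde\dc(s)$, $B_0$, the other root corresponding to the backward motion along the line through $\dc(r)$ and $B_0$.

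The equation~(\ref{B_extr_curve}) is then routine, exactly as in Proposition~\ref{S11}: since $B$ is linear on $[\dc(r),x]$, the map $s\mapsto B(\widetilde\dc(s))$ is the affine interpolant along that segment, and combining the convex-combination formula above with $B(\dc(r))=\av{f(\ex)}{[l,r]}$ and the boundary condition $B(B_0)=f(b_0)$ gives $B(\widetilde\dc(s))=\frac{r-l}{s-l}\av{f(\ex)}{[l,r]}+\frac{s-r}{s-l}f(b_0)=\av{f(\widetilde\ex)}{[l,s]}$. One should also note that $\widetilde\dc$ is convex and stays in $\Omega_\eps$, which follows from the position of $\dc(r)$ on the rear tangent line together with $x\in[\dc(r),B_0]$, as in the constructions of the previous sections. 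It then remains to check that $\widetilde\ex\in\BMO_\eps([l,\widetilde r])$; one argues by cases on a subinterval $[a,b]\subset[l,\widetilde r]$, estimating the Bellman point $x^{[a,b]}=\big(\av{\widetilde\ex}{[a,b]},\av{\widetilde\ex^2}{[a,b]}\big)$. If $b\le r$ then $x^{[a,b]}\in\Omega_\eps$ because $\ex\in\BMO_\eps(I)$; if $a\ge r$ then $x^{[a,b]}=(b_0,b_0^2)$ lies on the lower parabola; and if $a\ge s_0$ together with $b>r$, then $\widetilde\ex\equiv a_0$ on $[s_0,r]\supseteq[a,r]$ (this is the straight segment supplied by Proposition~\ref{S11}, so $A_0=(a_0,a_0^2)$), whence $x^{[a,r]}=A_0$ and $x^{[a,b]}$ is a convex combination of $A_0$ and $B_0$, hence lies on the chord $[A_0,B_0]\subset\Omega_\eps$ since $0<b_0-a_0\le2\eps$.

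The remaining case $a<s_0<r<b$ is where the hypothesis on the line $L$ enters, and it is the main obstacle, because the appended segment $[\dc(r),x]$ may cross the upper boundary of $\Omega_\eps$ transversally, so that Lemma~\ref{L4} does not apply to $\widetilde\dc$ directly. Here I would write $x^{[a,b]}$ as a convex combination of $y:=x^{[a,s_0]}$ and $z:=x^{[s_0,b]}$; then $y\in\Omega_\eps$ (because $[a,s_0]\subset I$), and $z\in[A_0,B_0]\subset\Omega_\eps$ (because on $[s_0,b]$ the function $\widetilde\ex$ takes only the two values $a_0$ and $b_0$), so it suffices to show that the segment $[y,z]$ lies below the upper parabola. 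This is the point at which one uses that $\dc$ is convex and that $\dc(s_0)$ lies on the last straight segment, which is contained in the line through $A_0$: these facts force $y$ to lie below the rear tangent line and close to $A_0$, and a direct geometric argument then shows that $y$ and $z$—and in fact the whole segment $[y,z]$—lie below the line $L$ through $\dc(s_0)$ and $x$. Since $L$ does not meet the upper parabola and is below it at the point $\dc(s_0)\in\Omega_\eps$, the line $L$, and therefore the segment $[y,z]$, lies below the upper boundary, so $x^{[a,b]}\in\Omega_\eps$. I expect this last geometric bookkeeping—locating $y$ and $z$ relative to $L$—to be the only step that is not a direct transcription of the proofs of Propositions~\ref{S9} and~\ref{S11}.
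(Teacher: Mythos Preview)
Your proposal is correct and follows the paper's approach almost verbatim: the generation formula~(\ref{eq7}) and the verification of~(\ref{B_extr_curve}) are handled exactly as in Proposition~\ref{S11}, and the case split on $[a,b]$ for the $\BMO_\eps$ check matches the paper's cases ($b\le r$; $a\ge r$; $s_0\le a\le r$; $a<s_0$).

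The only place where you diverge is in the last case $a<s_0<r<b$. You split $x^{[a,b]}$ as a convex combination of $y=x^{[a,s_0]}$ and $z=x^{[s_0,b]}$ and then try to locate $y$ and $z$ below $L$. This works, but your justification (``$y$ lies below the rear tangent line and close to $A_0$'') is not the right fact: lying below the rear tangent does not by itself put $y$ below $L$, and $[A_0,B_0]$ is not entirely below $L$ (indeed $B_0$ can lie above $L$), so $z\in[A_0,B_0]$ alone is not enough either. What actually pins down $y$ below $L$ is that $\dc(s_0)$ lies on $L$ while $\dc(a)$ lies \emph{above} $L$ (because $\widetilde\dc$ is convex and $L$ is the secant through $\widetilde\dc(s_0)$ and $\widetilde\dc(\widetilde r)=x$), and $\dc(s_0)$ sits between $\dc(a)$ and $y$ on a line; the same collinearity trick with $\widetilde\dc(b)$ in place of $\dc(a)$ gives $z$ below $L$.

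The paper avoids the split at $s_0$ altogether and applies this collinearity trick once: since $\widetilde\dc$ is convex and $L$ is its secant through $\widetilde\dc(s_0)$ and $x$, the point $\widetilde\dc(a)=x^{[l,a]}$ lies above $L$ (as $a<s_0$) and $\widetilde\dc(b)=x^{[l,b]}$ lies below $L$ (as $s_0\le r<b\le\widetilde r$); but $\widetilde\dc(b)$ is a convex combination of $\widetilde\dc(a)$ and $x^{[a,b]}$, so $x^{[a,b]}$ must lie below $L$. This is a one–line replacement for your two-step decomposition, and it is exactly the ``geometric bookkeeping'' you anticipated.
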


\begin{figure}[H]
\begin{center}
\includegraphics{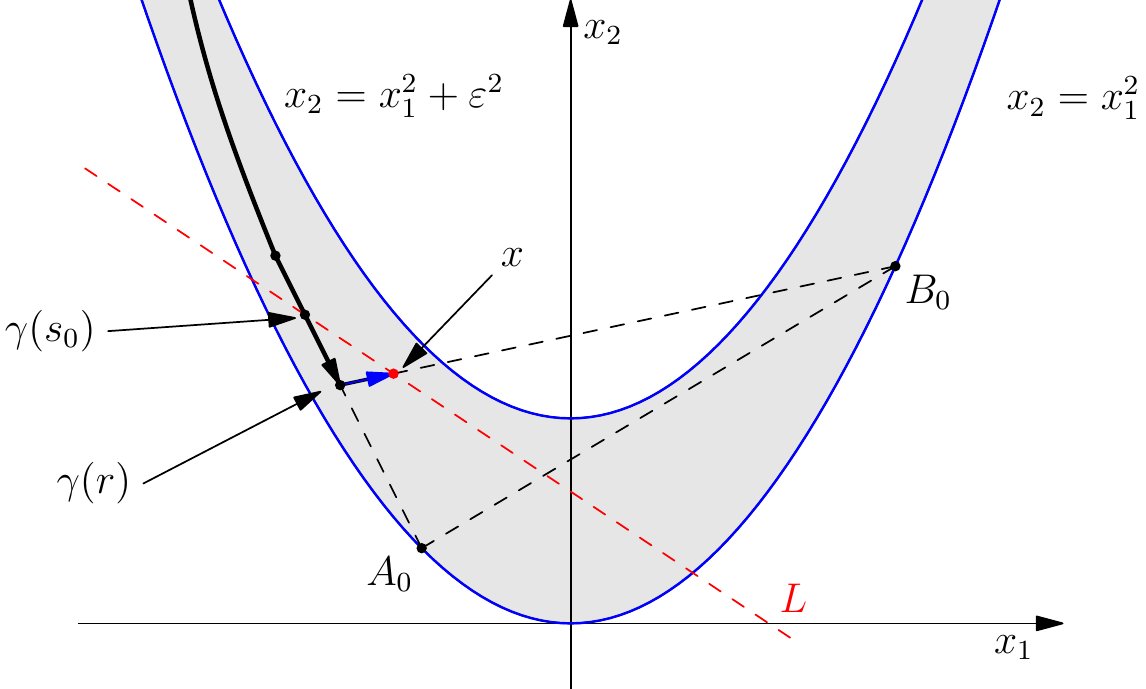}
\caption{Illustration to the proof of Proposition~\ref{S18}.}
\label{fig:nkdt}
\end{center}
\end{figure}

\begin{proof}
The fact that~$\widetilde\ex$, $\widetilde\dc$, and~$B$ are related 
by~(\ref{eq7}) and~(\ref{B_extr_curve}), can be proved in the same 
way as in Proposition~\ref{S11}. Thus, we only need to verify that 
${\widetilde\ex\in\BMO_\eps([l,\widetilde r])}$. We take an arbitrary 
interval ${[c,d]\subset[l,\widetilde r]}$
and see where the Bellman point 
$x^{[c,d]} = \big(\av{\ex}{[c,d]},\av{\ex^2}{[c,d]}\big)$ is located.
If $d\le r$, then the required estimate, as usual, follows from the fact 
that $\ex\in\BMO_\eps(I)$. Therefore, it is sufficient to consider 
the case where $d>r$ and the point $\widetilde\dc(d) = x^{[l,d]}$
lies on the added segment $[\dc(r),x]$ (in such a case, this point, of course, 
lies under the line~$L$).
		
Next, if $c>r$, then~$\widetilde\ex$ is identically equal to~$b_0$ on $[c,d]$, 
and there is nothing to prove. If $s_0\le c\le r$, then on $[c,d]$ 
the function~$\widetilde\ex$ is a step function with values~$a_0$ and~$b_0$.
But then~$x^{[c,d]}$ lies on the chord $[A_0,B_0]$, which is contained 
in~$\Omega_\eps$ entirely, because $|a_{0}-b_{0}|\le2\eps$.

Now, let $c < s_0$. In this case, the point $\widetilde\dc(c)=x^{[l,c]}$ 
lies on the initial delivery curve above~$L$.
But the points~$\widetilde\dc(c)$, $x^{[c,d]}$, and~$\widetilde\dc(d)$ 
lie on one line. The last point is a convex combination
of the first two and locates between them. Hence, $x^{[c,d]}$ lies below~$L$ 
and, therefore, under the upper parabola.		
\end{proof}

Thus, since certain delivery curves in trolleybus intersect the upper parabola 
transversally, it is not always possible to employ Lemma~\ref{L4} and 
Proposition~\ref{S11} directly. But we can overcome this difficulty using
Proposition~\ref{S18}.

\begin{figure}[H]
\begin{center}
\includegraphics{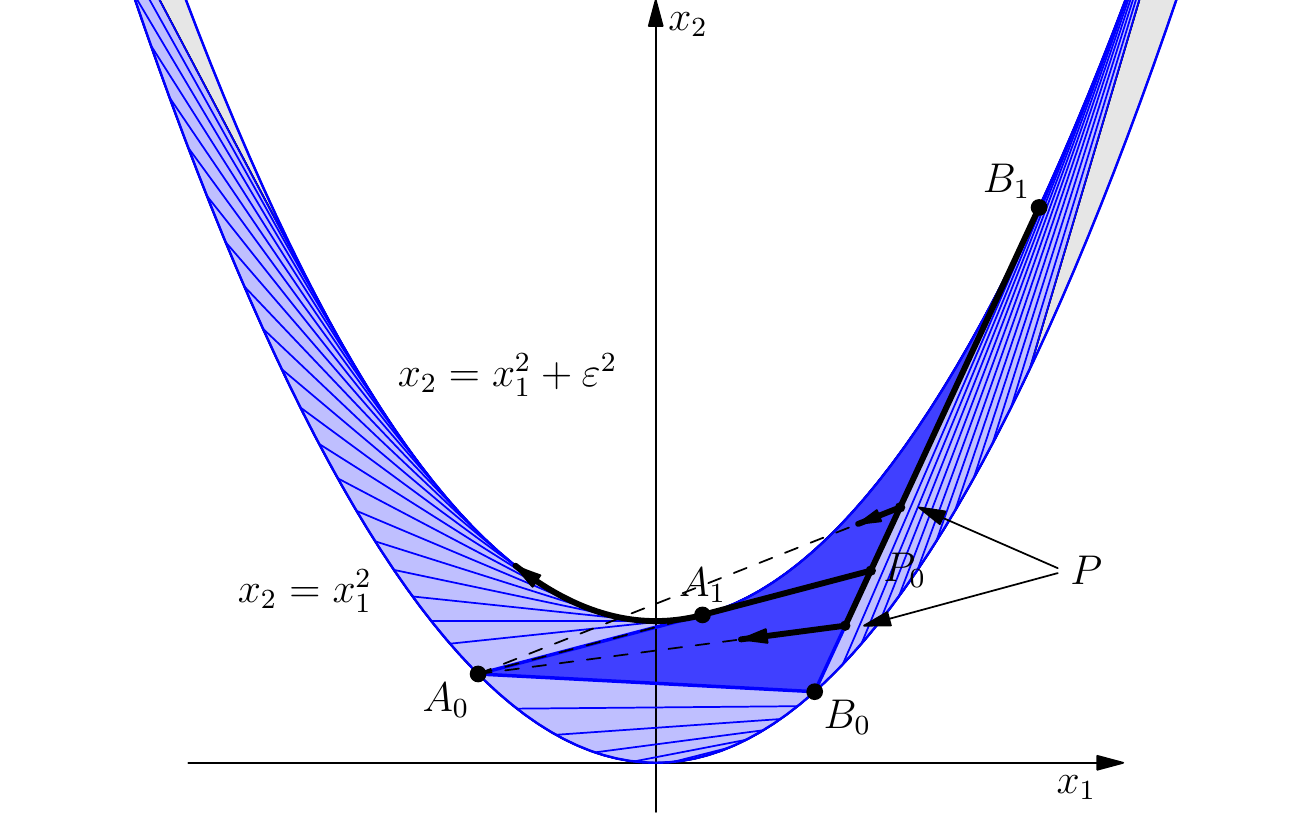}
\caption{A left trolleybus $\LTroll$ and  delivery curves.}
\label{fig:ltrkd}
\end{center}
\end{figure}
In left trolleybuses, delivery curves can be constructed exactly the same way. 
We omit detailed arguments for this case 
(however, Figure~\ref{fig:ltrkd} clarifies the matter entirely).

\subsection{Foliation in  general case}\label{s63}	
Now, using the components already constructed, we build a global Bellman 
candidate in the whole domain~$\Omega_\eps$.
First, we fix some signature~$\Sigma$ consisting of a finite number of  
symbols~$\mathrm{R}$ and~$\mathrm{L}$ that are arranged in an arbitrary order. 
We associate the pairs~$\mathrm{R}\mathrm{L}$ in this signature with angles, 
the pairs~$\mathrm{L}\mathrm{R}$ with full cups, and the pairs 
$\mathrm{R}\mathrm{R}$ and $\mathrm{L}\mathrm{L}$ with
trolleybuses (right and left, respectively) attached to cups 
(not necessarily full). We suppose these angles and cups are pairwise 
disjoint and arranged in the same order as the corresponding pairs of 
symbols in~$\Sigma$. We notice that all the domains located between 
them, together with two domains on the edges, have the form $\Rt$ or $\Lt$.
We assume that these domains are foliated by the suitable tangents.
Then, by one of Propositions~\ref{S5}, \ref{S7}, \ref{S15}, or~\ref{S16}, 
Bellman candidates are defined uniquely in these domains.  
If we now assume that near each angle and each cup the conditions of the 
corresponding proposition~--- either one of the propositions just
listed or Proposition~\ref{S6} about an angle~--- are satisfied, then we 
obtain some candidate~$B^{\Sigma}$ in the whole domain~$\Omega_\eps$.
It turns out that the Bellman function we are looking for has precisely 
such a form.	

\begin{figure}[H]
\begin{center}
\includegraphics{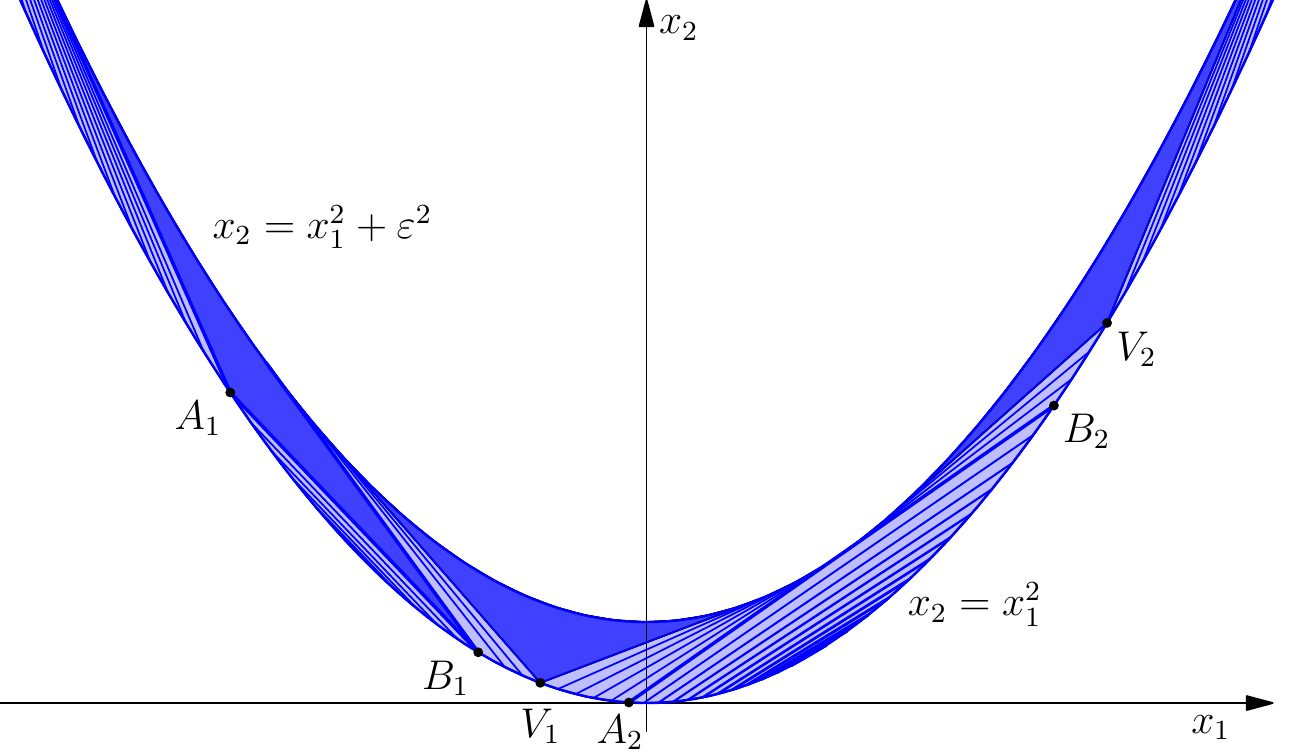}
\caption{A global candidate $B^{\mathrm{RRLRL}}$.}
\end{center}
\end{figure}	

\begin{Th}\label{T4}
Suppose $0<\eps<\eps_0$\textup, $N\in\mathbb{Z}_+$\textup, and $f\in\W^N$. 
Then we can choose a signature~$\Sigma$ such that a certain Bellman 
candidate~$B^\Sigma$ corresponds to it. In this case\textup, we have
$\Bell(x;\,f) = B^{\Sigma}(x)$.
\end{Th}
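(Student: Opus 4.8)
The plan is to build the global candidate $B^{\Sigma}$ by concatenating the local pieces developed so far --- the one–sided tangent domains of Chapter~\ref{s3}, the angles and cups of the two transition chapters, and the trolleybuses of Section~\ref{s61} --- one piece per sign change of $f'''$, and then to deduce $\Bell(x;f)=B^{\Sigma}(x)$ from Statement~\ref{S2} together with the existence of optimizers. First I would read off the signature from the sign pattern of $f'''$. Since $f'''$ has a definite sign near $\pm\infty$ (positive near $-\infty$ and negative near $+\infty$, unless $c_0=-\infty$ or $c_N=+\infty$), the two unbounded edge strips are of type $\Lt$ and $\Rt$ and Proposition~\ref{S5} fixes their integration constants. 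Around each point $c_k$, where $f'''$ passes from $+$ to $-$, I would run Lemma~\ref{L5} to obtain a cup; around each $v_j$, where $f'''$ passes from $-$ to $+$, I would try to insert an angle $\Ang(v)$ as in Proposition~\ref{S6}, with vertex $v$ a zero of the appropriate local analogue of $g_\eps=f'''\ast w_\eps$; when no such zero lies in the available window, the neighbouring cup absorbs $v_j$ and, instead of ``cup followed by angle'', one gets a (possibly non-full) cup carrying a left or right trolleybus as in Propositions~\ref{S15} and~\ref{S16}, whose size is pinned by the matching identities \eqref{eq19}, \eqref{eq20}. The separation hypothesis $|c_k-v_j|\ge 2\eps_0>2\eps$ guarantees that distinct cups are disjoint, that each cup meets at most its one neighbouring $v$-point, and that the left-to-right order of the inserted features matches the order of the corresponding pairs $\mathrm{RL}$, $\mathrm{LR}$, $\mathrm{RR}$, $\mathrm{LL}$ in $\Sigma$; every leftover strip is of type $\Rt$ or $\Lt$, on which Propositions~\ref{S4}, \ref{S5}, \ref{S7}, \ref{S15}, \ref{S16} determine the candidate uniquely once the boundary data at the adjoining feature is known. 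This yields a function $B^{\Sigma}$ that is a Bellman candidate in the sense of Definition~\ref{D2} on each piece.

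Next I would check that $B^{\Sigma}\in\Lambda_{\eps,f}$, i.e.\ that it is continuous and locally concave on all of $\Omega_\eps$. Continuity and $C^1$-smoothness at every seam follow exactly as in the propositions of the previous chapters: each seam is an extremal or a one–sided tangent along which $B^{\Sigma}$ is linear, matching of $B^{\Sigma}_{x_2}$ forces the gradient to be continuous there, and a $C^1$-gluing of locally concave pieces is locally concave. What remains is the global verification of the sign conditions --- $\mrt''\le 0$ on every $\Rt$-strip, $\mlt''\ge 0$ on every $\Lt$-strip, the angle identity \eqref{e3}, and the trolleybus identities \eqref{eq19}--\eqref{eq20}. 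Here I would argue as in the proofs of Theorems~\ref{T2cor} and~\ref{T3}: on a strip where $f'''$ keeps one sign, the recursions \eqref{e311}, \eqref{e312}, \eqref{e335}, \eqref{e336} show that $e^{\pm u/\eps}m''$ is monotone, so the sign of $m''$ there is controlled by the value handed in from the previous feature; the bound $|c_k-v_j|\ge 2\eps_0$ ensures that between two consecutive features $f'''$ keeps one sign long enough for the correct sign of $m''$ to be re-established before the next feature; and at a $v_j$ the intermediate–value argument of Theorem~\ref{T2cor} locates the vertex (or, if no zero occurs in the window, signals that the trolleybus alternative must be used instead). Propagating these facts from $-\infty$ rightward, with the data explicitly reset at each cup edge by \eqref{e332} and its mirror, closes up all the inequalities and shows in particular that the signature chosen in the first step is self-consistent.

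Finally I would produce an optimizer for every $x\in\Omega_\eps$ and conclude. Delivery curves originate only at $\pm\infty$ (Propositions~\ref{S13} and~\ref{S14}) and inside cups (Section~\ref{s53}); from these entry nodes they are propagated through $\Rt$- and $\Lt$-strips by Propositions~\ref{S9}--\ref{S12}, through angles by the convex–combination construction of Section~\ref{s42}, and through trolleybuses by Propositions~\ref{S11} and~\ref{S12} together with Proposition~\ref{S18}, which handles the delivery segments crossing the upper parabola transversally; walking along $\Sigma$ one checks that every point of $\Omega_\eps$ is reached (points on the lower parabola being trivial). Then Statement~\ref{S2} gives $\Bell\le B^{\Sigma}$ and the optimizers give the reverse inequality, hence $\Bell(x;f)=B^{\Sigma}(x)$. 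I expect the main obstacle to be the bookkeeping of the second step: one must show that the sign pattern of $f'''$ together with the spacing $|c_k-v_j|\ge 2\eps_0$ force the chain of $m''$-sign conditions and vertex identities to close consistently for \emph{some} admissible choice of angle-versus-trolleybus at each $v_j$ --- equivalently, that the matching (``shooting'') problem along $\mathbb{R}$ always admits a solution of the claimed combinatorial type --- and to carry this out uniformly in $N$.
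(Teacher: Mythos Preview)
Your overall architecture is the same as the paper's: construct a global candidate $B^{\Sigma}$ by gluing the local pieces, invoke Statement~\ref{S2} for the upper bound, and thread delivery curves through the foliation for the lower bound. The third step is fine and essentially complete as you describe it.

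The genuine gap is the one you yourself flag at the end: the ``shooting problem'' of deciding, for each $v_j$, whether to place an angle or a trolleybus, and of choosing the cup sizes so that all the sign conditions and balance identities close up simultaneously. Your left-to-right propagation does not obviously terminate or even make sense, because the unknowns are coupled in both directions. When a trolleybus is involved, the cup size $\ell$ is not determined by data on one side only: equation~\eqref{eq19} (resp.~\eqref{eq20}) ties $\mrt''(a_0)$, which carries information arriving from the left, to $\mlt''(a_0;a_0)=-\eps^{-1}\Dlt(a_0,b_0)$, which depends on the cup size, which in turn is constrained by what must happen on the right. A naive sweep can oscillate. Moreover, your appeal to the intermediate-value argument of Theorem~\ref{T2cor} is not quite enough: there one balances two forces coming from $\pm\infty$, whereas here the ``force'' arriving from a neighbouring cup depends on that cup's own (as yet undetermined) size.

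The paper resolves this with a piece of machinery you do not have. It packages the relevant quantities into a single \emph{force function} $F(u,\ell)$ attached to each $c_k$, equal to $\eps\mlt''(u;a(\ell))$ on the left, $\eps\mrt''(u;b(\ell))$ on the right, and the differentials $\Dlt,\Drt$ on the screen $[a(\ell),b(\ell)]$. The key analytic fact (Corollary~\ref{preobrazovanie}) is that $F$ is strictly monotone in the screen size $\ell$ off the screen: shrinking a cup makes its force stronger and its tails longer. This monotonicity turns the coupled matching problem into a terminating iterative procedure. One starts with all screens of size $2\eps$, performs a \emph{cleaning} (discard any $c_k$ already swallowed by another force's tail), obtains a balanced family by Corollary~\ref{balance}, and then runs \emph{compressions}: whenever a balance point lands inside a screen, shrink that screen until the balance point sits on its edge. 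Monotonicity guarantees that a right compression can only push subsequent balance points leftward (never back into a left screen), and symmetrically for left compressions; a single left-to-right pass followed by a single right-to-left pass therefore suffices. Lemma~\ref{metia} and Lemma~\ref{balance2} ensure the cleaning is harmless and that sources never lie in each other's tails after balancing. This is the content of Proposition~\ref{S17} and Section~\ref{Alg}, and it is the substantive part of the proof of Theorem~\ref{T4} that your outline is missing.
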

	
In order to prove this theorem, we need some preparation. First, we present
some new definitions.

Let $c$ be a point where the third derivative $f'''$ changes its sign from
$+$ to $-$. By Lemma~\ref{L5}, there exist continuously differentiable functions
$a(\ell)$ and $b(\ell)=a(\ell)+\ell$ on $[0,2\eps]$ that generate
a cup originated at $c$, i.e. $a(0)=b(0)=c$. 
Together with the pair $a$ and $b$ we will need another pair of mutually inverse
functions $\tilde a$ and $\tilde b$. These functions have the same values as $a$ and $b$, but 
their arguments are different. They are the first coordinates of the opposite
ends of the chord. Namely, each of the pairs $\{\tilde a(u),u\}$ and
$\{u,\tilde b(u)\}$ is a pair of points $\{a,b\}$ satisfying the cup equation~(\ref{urlun}).

Fixing the cup size $\ell$, we define the following function $D$ 
on  $[a(\ell),b(\ell)]$:
$$
D(u)\df
\begin{cases}
-\Dlt(u,\tilde b(u)),&\quad a(\ell)\le u<c;
\\
\phantom-\Drt(\tilde a(u),u),&\quad c<u\le b(\ell).
\end{cases}
$$
Recall that the differentials $\Dlt(a,b)$ and $\Drt(a,b)$ 
were introduced by~(\ref{e334}):
$$
\Dlt(a,b)=f''(a)-\Av{f''}{[a,b]},\qquad \Drt(a,b)=f''(b)-\Av{f''}{[a,b]}.
$$
The function $D$ can be naturally continued to $c$ by zero.
Note that the value of $\ell$ determines the domain of $D$ only, the value of 
$D$ at any fixed point does not depend on $\ell$.

\begin{Def}
The function  
$$
F(u,\ell)\df
\begin{cases} 
\displaystyle
e^{u/\eps}\Big[D(a(\ell))e^{-a(\ell)/\eps}
+\int_u^{a(\ell)}\!\!\!\!f'''(t)e^{-t/\eps}dt\Big], &u\in(-\infty,a(\ell));
\\
D(u), & u\in[a(\ell),b(\ell)]\,;
\\
\displaystyle
e^{-u/\eps}\Big[D(b(\ell))e^{b(\ell)/\eps}
+\int_{b(\ell)}^u\!\!f'''(t)e^{t/\eps}dt\Big], &u\in(b(\ell),+\infty)\,,
\end{cases}
$$
defined on the entire real axis,
will be called \emph{a force function} or simply \emph{a force}.
\end{Def}

It is natural to call the point $c$ \emph{the source} of the force~$F$. We call $[a(\ell),b(\ell)]$ \emph{the screen} of~$F$.
We refer to the left and right parts $[a,c]$ and $[c,b]$ as to \emph{the left and right screens} respectively.

Let us try to explain some mnemonic sense of the terminology introduced above.
We know that one can build a cup around the source of any force. The reader can
imagine that the force of any cup tries to push an angle away. As
a result, an angle between two cups will be placed exactly at the point
of balance of these two forces, where their sum is zero. There is
only one counter-intuitive point: a force pushing an angle to the right
is negative, and it is positive if it acts in the opposite direction.
The segment $[a(\ell),b(\ell)]$ is called a screen because it screens, in a
sense, the action of the force: we will see that the larger the screen is the
less (in absolute value) is the power off the screen.

Some special cases should be mentioned separately. The formula for a force without a screen 
(zero screen: $\ell=0$) is especially simple:
$$
F(u,0)=
\begin{cases} 
\displaystyle
e^{u/\eps}\int_u^c\!\!f'''(t)e^{-t/\eps} dt, & u\in(-\infty,c]\,;
\\
\displaystyle
e^{-u/\eps}\int_c^u\!\!f'''(t)e^{t/\eps} dt, & u\in[c,+\infty)\,.
\end{cases}
$$
Forces with sources 
at infinity provide the simplest cases of the last formula. If $c=+\infty$, then
$$
F(u,\ell)=
e^{u/\eps}\int\limits_u^{+\infty} f'''(t)e^{-t/\eps} dt,
\qquad u\in(-\infty,+\infty)\,,
$$
and if $c=-\infty$, then
$$
F(u,\ell)=
e^{-u/\eps}\int\limits_{-\infty}^u f'''(t)e^{t/\eps} dt,\qquad u\in(-\infty,+\infty)\,.
$$
These two expressions do not depend on $\ell$ (a finite screen at infinity
cannot touch finite points). Therefore, we can always assume that the forces
originated at infinity have zero screens, i.e. $\ell=0$.

Note that the formula for a force almost coincides with the second derivative of the coefficient~$m$ in the expression
for the Bellman function. Namely, formula~(\ref{e335}) for $\eps\mlt''(u,a(\ell))$ gives the
force on the left of the screen, and formula~(\ref{e336}) for $\eps\mrt''(u,b(\ell))$
coincides with the same force on the right of the screen. Thus,
we glue two expressions for $\eps\mrt''$ and $\eps\mlt''$, extending them
continuously to the screen $[a(\ell),b(\ell)]$.

We introduce a few more notions.
\begin{Def}
An interval $[\,c,t^+]$, where
$$
t^+=t^+(\ell)\df\sup\{t \mid F(s,\ell)\le0,\;\forall s,\; c\le s\le t\},
$$ 
is called \emph{the right tail} of the force $F$. \emph{The left tail} 
is an interval $[t^-,c\,]$, where 
$$
t^-=t^-(\ell)\df\inf\{t \mid F(s,\ell)\ge0,\;\forall s,\; t\le s\le c\}\,.
$$
The points $t^-(\ell)$ and $t^+(\ell)$ will be called the ends of the left
and right tails correspondingly.
\end{Def}

We recall that the requirements on the signs of $\mlt''(u;\,a_0)$ and
$\mrt''(u;\,b_0)$ appear in Propositions~$\ref{S7}$, $\ref{S15}$, and~\ref{S16},
where they guarantee the local concavity of the candidate on the left and
on the right of the cup. Note that the tails are the maximal intervals where 
the force has the sign required. They show the size of the maximal region around the cup that the tangents can foliate. 

Further, we recall that the equation
$$
\mrt''(v;\,b_1)+\mlt''(v;\,a_2)=0
$$
appears in Proposition~\ref{S6} as an equation for the vertex of the 
angle~$\Ang(v)$ and, also, in Propositions~\ref{S15} and~\ref{S16} as an 
equation for one of the trolleybus vertices. This inspires the following
definition.

\begin{Def}
Two forces are called \emph{balanced} if they satisfy two following conditions.  First, their tails have non-empty
intersection. Second, in this intersection we can choose a point~$v$ lying strictly between
the sources of the forces and such that
\begin{equation}
\label{balance_eq}
F_1(v,\ell_1)+F_2(v,\ell_2)=0\,.
\end{equation}
This point $v$ will be called \emph{the balance point} and the equation above will be called
\emph{the balance equation}. A family of forces is called  \emph{balanced}
if either this family consists of one element, or each pair of neighbor forces
is balanced.
\end{Def}

Suppose we have found a balanced family of forces with $2\eps$-screens such that 
the union of the tails covers the entire real axis. Moreover, let no balance point be inside
any screen. Then, as it was explained in the beginning of this section, we are 
done.  We have the desired foliation consisting of alternating cups and angles
with vertices at balance points. The corresponding function $B^\Sigma$
would be the desired Bellman candidate. However, if some points of balance 
are inside the screens, then such a family does not help to finish the construction
so quickly. The following definition helps us to overcome these difficulties.

\begin{Def}\label{polbalans}
A balanced family of force functions is called
\emph{completely balanced} if it is under two following conditions.  First, there are no balance points
inside any screen. Second, at least one end of each screen whose size
is less than $2\eps$ coincides with a balance point.
\end{Def} 

We are now ready to state the proposition that immediately implies Theorem~\ref{T4}.
	
\begin{Prop}\label{S17}
For any $f\in\W^N$, there exists a family of completely balanced forces
such that their tails cover the entire axis.
\end{Prop}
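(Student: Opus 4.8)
The plan is to realise the foliation we want as a mechanical equilibrium of the force functions and to locate that equilibrium by intermediate--value and implicit--function arguments in the screen sizes, organised as an induction on $N$. The sources of the forces are the points $c_0,\dots,c_N$ (a source at $\pm\infty$ being a screenless force, $\ell\equiv0$), and one balance point is to sit in each gap $(c_{k-1},c_k)$ near the sign--change point $v_k$. For a finite source $c$, Lemma~\ref{L5} applied on $[c-2\eps,c+2\eps]$ — a segment that lies inside the two sign--intervals of $f'''$ adjacent to $c$, since $|c_k-v_j|\ge2\eps_0>2\eps$ — furnishes, for every $\ell\in[0,2\eps]$, chord data $a(\ell)<c<b(\ell)$ with $b(\ell)-a(\ell)=\ell$, $a'<0<b'$, and $\Dlt<0$, $\Drt<0$; hence $F(\cdot,\ell)$ is well defined and continuous, $F(c,\ell)=0$, $F=D>0$ on the left screen and $F=D<0$ on the right screen (by~(\ref{e334}) and Lemma~\ref{L5}), and $F>0$ on the whole sign--interval left of $c$, $F<0$ on the whole one right of it (by the sign of $f'''$ there). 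In particular the right tail of the $c_k$--force contains $[c_k,v_{k+1}]$ (or all of $[c_k,+\infty)$ if $k=N$ and $c_N$ is finite), and symmetrically on the left, so the tails of two neighbour forces always overlap around the intermediate $v_k$. With this it suffices to choose screen sizes $\ell_0,\dots,\ell_N\in[0,2\eps]$ ($\ell=0$ forced at infinite sources) and points $v_k^\ast\in(c_{k-1},c_k)$ so that: (i) $v_k^\ast$ lies in the overlap of the right tail of the $c_{k-1}$--force and the left tail of the $c_k$--force and $F_{k-1}(v_k^\ast,\ell_{k-1})+F_k(v_k^\ast,\ell_k)=0$; (ii) no $v_k^\ast$ is interior to a screen; (iii) every screen of size $<2\eps$ has an endpoint equal to some $v_k^\ast$; (iv) the tails cover $\mathbb{R}$. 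These are exactly the requirements of Definition~\ref{polbalans} together with the covering condition.

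The main technical tool I would isolate first is a \emph{monotonicity lemma}: for $u$ to the right of the $c$--screen, $\ell\mapsto F(u,\ell)$ is monotone, with $F(u,\ell)\uparrow0$ as $\ell$ increases and $F(u,\ell)\to F(u,0)$ as $\ell\downarrow0$, and symmetrically on the left. Equivalently, enlarging a screen weakens the force off it, so the right--tail end $t^+(\ell)$ is nonincreasing, the left--tail end $t^-(\ell)$ is nondecreasing, and both depend continuously on $\ell$. I would prove this by differentiating the two off--screen branches of $F$ (which are essentially $\eps\mlt''(\cdot;a(\ell))$ and $\eps\mrt''(\cdot;b(\ell))$, formulas~(\ref{e335}) and~(\ref{e336})) in $\ell$, using $b'=a'+1\in(0,1)$ from Lemma~\ref{L5} and, for the screen--boundary term, the $\ell$--derivative of $\Av{f''}{[a,b]}$ together with equation~(\ref{urlun}) differentiated in $\ell$, i.e.~(\ref{e6}); the fixed sign of $f'''$ on the relevant interval closes the estimate. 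This is the only step needing a real computation.

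The core is an induction on $N$. For $N=0$ the claim is Corollary~\ref{cor1} (a single screenless force whose unique tail is all of $\mathbb{R}$, when $c_0=\pm\infty$) or Theorem~\ref{T3} (a single full cup, whose tails are $(-\infty,c_0]$ and $[c_0,+\infty)$ by the sign of $f'''$). For the step, replace $f$ by a function $\tilde f\in\W^{N-1}$ that coincides with $f$ on a neighbourhood of $[v_1,+\infty)$ and has $\tilde f'''>0$ on $(-\infty,v_1)$, so that $\tilde f$ has sources $c_1,\dots,c_N$, sign changes $v_2,\dots,v_N$, and inherits the gap condition and the membership in $\W$. Apply the inductive hypothesis to $\tilde f$. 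Since $f=\tilde f$ near $[v_1,\infty)$, and every screen, every balance point, and every right tail $[c_{k-1},v_k]$ ($k\ge2$) of that family lives in this region, it is a completely balanced family of forces for $f$ as well. It remains to glue in the source $c_0$ and the balance point near $v_1$: if $c_0=-\infty$, take the screenless $(-\infty)$--force, whose tail is unbounded to the left and reaches past $v_1$, and produce $v_1^\ast$ by the intermediate value theorem on the overlap of tails — no screen is added, so (ii)--(iv) persist; if $c_0$ is finite, start from the full cup $\ell_0=2\eps$ (so (iii) is vacuous for it), obtain a balance point $v_1^\ast(\ell_0)$ by the intermediate value theorem, and, if it ever enters the $c_0$--screen, decrease $\ell_0$ until continuity forces $b(\ell_0)=v_1^\ast(\ell_0)$, which gives (ii) and (iii) for that cup, $\ell_0$ still $>0$.

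The genuinely delicate point — the main obstacle — is that decreasing the $c_0$--screen pushes $v_1^\ast$ to the right, and it may reach the left end of the $c_1$--screen or overtake the next balance point; the only remedy is then to shrink the $c_1$--screen too (which, again by the monotonicity lemma, strengthens $F_1$ and pulls $v_1^\ast$ back while moving the $c_1$--$c_2$ balance point), possibly triggering a shrink of the $c_2$--screen, and so on: a finite cascade of screen shrinkings propagating rightward. Showing that this cascade terminates with (i)--(iv) holding simultaneously — equivalently, solving the coupled system of balance equations~(\ref{balance_eq}) subject to the screen--endpoint constraints, say by a Brouwer/degree argument on $\prod_k[0,2\eps]$ — is the crux, and is precisely where the monotonicity lemma and the alternating sign pattern of $f'''$ are used in full. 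Once Proposition~\ref{S17} is established, Theorem~\ref{T4} follows by feeding the completely balanced family into Propositions~\ref{S6}, \ref{S7}, \ref{S15}, \ref{S16} to build $B^\Sigma$, and using the optimizers of Sections~\ref{s33}, \ref{s42}, \ref{s53}, \ref{s62}.
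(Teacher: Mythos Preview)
Your monotonicity lemma is exactly the paper's Corollary~\ref{preobrazovanie} (derived from Lemma~\ref{dlF}), and it is indeed the engine of the whole proof; the direction you state is correct, though ``$F(u,\ell)\uparrow 0$'' is not---the force is monotone in $\ell$ but does not tend to $0$ at $\ell=2\eps$. The paper, however, does \emph{not} argue by induction on $N$ with an auxiliary $\tilde f$: it runs a direct algorithm. Start with \emph{all} forces $F_0,\dots,F_N$ carrying full $2\eps$-screens; first \emph{clean} (remove any $c_k$ lying in a tail of another force---Lemma~\ref{metia} guarantees nothing is lost), which by Corollary~\ref{balance} already produces a balanced family; then \emph{compress}: whenever a balance point sits in the interior of a screen, shrink that screen until the balance point becomes an endpoint. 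The termination argument is the one you sketch for the cascade but organised globally: a right compression (shrinking the screen of $F_j$ because $u_{j+1}$ lies in it) only alters balance points to the \emph{left} of $u_{j+1}$, and symmetrically for left compressions, so a single left-to-right pass of left compressions followed by a right-to-left pass of right compressions finishes without backtracking. No Brouwer or degree argument is needed.

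Your inductive route can be made to work, but it has a real gap as written. You assume that after gluing in $F_0$ a balance point $v_1^\ast$ with $F_1$ always exists; Lemma~\ref{balance2} says it does \emph{not} whenever one source lies in the other's tail. Concretely, with $\ell_0=2\eps$ the right tail of $F_0$ may well reach past $c_1$ (nothing in the hypotheses prevents $\int_{b_0}^{c_1}f'''e^{t/\eps}dt$ from being small), and then $F_0+F_1$ has no root on $(c_0,c_1)$. The paper handles this by the cleaning step: drop every $c_j$ covered by $F_0$'s tail (by Lemma~\ref{metia} and Corollary~\ref{ptintail} coverage is preserved) and only then balance $F_0$ against the first survivor. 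In your scheme this means the ``add one force'' step may have to delete several of the inductively constructed forces and re-balance across the gap, which destroys the clean inductive picture and effectively rebuilds the paper's algorithm. The symmetric case (drop $F_0$ because $c_0$ lies in the left tail of $F_1$) is easier---your inductive family already works for $f$---but you do not treat it either.

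So: correct identification of the key monotonicity, and a cascade idea that is morally the paper's compression, but the missing cleaning step is a genuine obstruction to the induction as you set it up; the paper's one-shot algorithm avoids the bookkeeping entirely.
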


Now, we explain how to derive our theorem from this proposition.	
In the simplest case of one force, the Bellman function is already known 
(see Theorem~\ref{T1} for $c=-\infty$, Theorem~\ref{T1b} for $c=+\infty$,
and Theorem~\ref{T3} for a finite $c$). Thus, we consider the situation when 
we have several completely balanced forces whose tails cover the entire axis. 
For each force with a source at a finite point, we build a cup whose size
is equal to the size of the corresponding screen. If a cup is not full, then we have a balance point at least at one of its ends. 
We build the right or left trolleybus over such a cup depending on where (at what
end of the cup) we have a balance point. After that, we construct the angles 
with vertices at the remaining balance points. In such a way, we obtain a 
collection of disjoint constructions, which includes cups, trolleybuses,
and angles. We foliate all the remaining subdomains by the left or right tangents. By our definition of balance points and tails, and also by
Propositions~\ref{S5}, \ref{S6}, \ref{S7}, \ref{S15}, and~\ref{S16}, 
we obtain a Bellman candidate~$B^{\Sigma}$ with a corresponding 
signature~$\Sigma$ consisting of symbols~$\mathrm{R}$ and~$\mathrm{L}$. 
As usual, Statement~\ref{S2} implies the estimate $\Bell\le B^{\Sigma}$, 
and the reverse inequality $B^{\Sigma}\le\Bell$ follows from the existence 
of optimizers in each of the constructions involved	(the reader can easily 
imagine the delivery curves that originate in the full cups or $\pm\infty$,
``transit'' through the trolleybuses, and continue up to the angles).

\subsection{Properties of  force functions}

In this section, we investigate the properties of force functions,
needed to prove Proposition~\ref{S17}. 

\begin{Le}\label{strict}
The strict inequality $F<0$ is fulfilled at all interior points of the right tail\textup,
except possibly for the points where $f'''$ changes its sign from $+$ to $-$.
With the same possible exception, $F>0$ at each interior point of the left tail.
\end{Le}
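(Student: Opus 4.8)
The plan is to transfer the question to a monotonicity statement for the auxiliary function $\Psi(u):=e^{u/\eps}F(u,\ell)$ on the part of the tail lying off the screen, and to read the strict inequality on the screen off directly from Lemma~\ref{L5}. First I would dispose of the degenerate cases: if the source $c$ of $F$ equals $\pm\infty$, then $f'''$ keeps a single sign on all of $\mathbb{R}$, hence so does $F$, and the claim is trivial; and because the two tails are mirror images of one another (interchange $e^{u/\eps}F$ with $e^{-u/\eps}F$, and $\Drt$ with $-\Dlt$), it suffices to handle the right tail $[c,t^+]$, with $t^+=t^+(\ell)$, and to prove $F<0$ on $(c,t^+)$ away from the points where $f'''$ passes from $+$ to $-$.

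On the screen, i.e.\ for $c<u\le b(\ell)$, one has $F(u,\ell)=D(u)=\Drt(\tilde a(u),u)$, where $\{\tilde a(u),u\}$ are the ends of a chord of a cup of some size $\ell'\in(0,\ell]$ originated at $c$; Lemma~\ref{L5}(3) then yields $\Drt(\tilde a(u),u)<0$, so $F<0$ strictly on $(c,b(\ell)]$, and in particular $t^+\ge b(\ell)$. (When $\ell=0$ the screen is the single point $c$ and this step is void.) Off the screen, for $u>b(\ell)$ the definition of $F$ gives $\Psi(u)=D(b(\ell))e^{b(\ell)/\eps}+\int_{b(\ell)}^{u}f'''(t)e^{t/\eps}\,dt$, an absolutely continuous function with $\Psi'(u)=f'''(u)e^{u/\eps}$ a.e.; moreover $\Psi\le0$ on $[b(\ell),t^+]$, since $F\le0$ there by the definition of the tail. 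Suppose $F(u_0,\ell)=0$, equivalently $\Psi(u_0)=0$, at some interior point $u_0\in(b(\ell),t^+)$ that is \emph{not} a $+\to-$ sign-change point of $f'''$. Then $u_0$ is a local maximum of $\Psi$. Since for $f\in\W^N$ the sign changes of $f'''$ are finite in number and $f'''$ is a.e.\ strictly signed on the intervals between them, at least one of the following holds: (i) $f'''>0$ a.e.\ on a two-sided neighborhood of $u_0$; (ii) $f'''<0$ a.e.\ on a left neighborhood $(u_0-\delta,u_0)$. In case (i), $\Psi$ is strictly increasing near $u_0$, so $\Psi(v)>\Psi(u_0)=0$ for $v$ slightly to the right of $u_0$, contradicting $\Psi\le0$ on $[b(\ell),t^+]$. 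In case (ii), $\Psi$ is strictly decreasing on $(u_0-\delta,u_0)$, so $\Psi(v)>0$ for $v\in(u_0-\delta,u_0)$; for $\delta$ small such $v$ still lies in $(b(\ell),t^+)$, again a contradiction. Hence $F(u_0,\ell)<0$ at every such $u_0$, which together with the screen step proves the assertion for the right tail; the left tail follows by the symmetric argument, now using $D(u)=-\Dlt(u,\tilde b(u))>0$ on the left screen and $(e^{-u/\eps}F)'=-f'''(u)e^{-u/\eps}$ off it.

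The step I expect to need the most care is the local-maximum argument just sketched. Since $f'''$ exists only a.e.\ (as the derivative of the absolutely continuous function $f''$), the monotonicity of $\Psi$ must be run through the integral identity $\Psi(v)-\Psi(u)=\int_{u}^{v}f'''(t)e^{t/\eps}\,dt$ rather than through pointwise values of $\Psi'$, and one must check that the only interior zero of $F$ that can survive it is the one sitting at a $+\to-$ sign change of $f'''$, i.e.\ exactly where $\Psi$ turns from increasing to decreasing and may legitimately touch $0$ from below; sign changes of the opposite ($-\to+$) type and interior points of an interval of constancy of the sign of $f'''$ are both ruled out by (i) and (ii), which is why they carry no exception. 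Everything else --- the identity $(e^{u/\eps}F)'=f'''(u)e^{u/\eps}$ off the screen, the continuity of $F$ across $b(\ell)$, and the sign facts $\Drt<0$, $\Dlt<0$ from Lemma~\ref{L5} --- is routine or already available from the material preceding the statement.
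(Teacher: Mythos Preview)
Your proposal is correct and is essentially the paper's own argument, only rephrased through the auxiliary function $\Psi(u)=e^{u/\eps}F(u,\ell)$: the paper simply observes that if $F(u_0,\ell)=0$ with $u_0\in(b(\ell),t^+)$, then $F(u,\ell)=e^{-u/\eps}\int_{u_0}^{u}f'''(t)e^{t/\eps}\,dt$ near $u_0$, and reads off from $F\le0$ on both sides that $f'''\ge0$ a.e.\ on a left neighborhood and $f'''\le0$ a.e.\ on a right neighborhood, i.e.\ $u_0$ is one of the points $c_j$. Your monotonicity/local-maximum formulation via $\Psi$ and your case split (i)/(ii) encode exactly the same information; the extra care you take with the a.e.\ nature of $f'''$, the screen step via Lemma~\ref{L5}, and the degenerate cases $c=\pm\infty$ are all fine and go a little beyond what the paper writes explicitly.
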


\begin{proof}
The fact that the strict inequality is fulfilled in the screen, was proved in Lemma~\ref{L5}
($\Dlt<0$ and $\Drt<0$). Let $F(u_0,\ell)=0$ for some $u_0\in(b,t^+)$. Then
$$
F(u,\ell)=\int\limits_{u_0}^u f'''(t)e^{-(u-t)/\eps}dt
$$
in some neighborhood of $u_0$. Since $F(u,\ell)\le0$, the function $f'''$ must be 
non-positive in some right neighborhood of $u_0$ and non-negative in some 
left neighborhood, i.e. $u_0$ coincides with one of the points~$c_j$.
\end{proof}

We also prove two formulas we use for calculating derivatives of $F$.

\begin{Le}\label{dD}
\begin{align}
\label{dDlt}
d\Dlt(a,b)=\Big(f'''(a)+\frac{2\Dlt(a,b)}{b-a}\Big)da,
\\
\label{dDrt}
d\Drt(a,b)=\Big(f'''(b)-\frac{2\Drt(a,b)}{b-a}\Big)db.
\end{align}
\end{Le}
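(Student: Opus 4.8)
The plan is to compute the two differentials directly from definition~(\ref{e334}) and then collapse the result onto a single variable using the chord equation. First I would record the elementary identity $\av{f''}{[a,b]}=\frac{f'(b)-f'(a)}{b-a}$ and abbreviate $M\df\av{f''}{[a,b]}$, so that~(\ref{e334}) reads $\Dlt(a,b)=f''(a)-M$ and $\Drt(a,b)=f''(b)-M$; hence everything reduces to computing $dM$. Here $d$ is the differential along the one‑parameter family of chords that foliate the cup, parametrized by $\ell=b-a$: along this family both $a$ and $b$ are functions of~$\ell$, and the chord equation~(\ref{urlun}) holds identically. Differentiating the quotient $M=\frac{f'(b)-f'(a)}{b-a}$ and regrouping the numerator with the help of $f''(b)(b-a)-(f'(b)-f'(a))=(b-a)\Drt(a,b)$ and $f''(a)(b-a)-(f'(b)-f'(a))=(b-a)\Dlt(a,b)$, I expect to obtain
\[
  dM=\frac{\Drt(a,b)\,db-\Dlt(a,b)\,da}{b-a}.
\]

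Next I would use the relation obtained earlier by differentiating the chord equation~(\ref{urlun}) in~$\ell$, namely~(\ref{e6}); in differential form it says $\Dlt(a,b)\,da+\Drt(a,b)\,db=0$ along the cup family. Substituting $\Drt\,db=-\Dlt\,da$ into the formula for $dM$ gives $dM=-\frac{2\Dlt(a,b)}{b-a}\,da$, whence
\[
  d\Dlt(a,b)=f'''(a)\,da-dM=\Big(f'''(a)+\frac{2\Dlt(a,b)}{b-a}\Big)\,da,
\]
which is~(\ref{dDlt}). Symmetrically, substituting $\Dlt\,da=-\Drt\,db$ gives $dM=\frac{2\Drt(a,b)}{b-a}\,db$, and therefore $d\Drt(a,b)=f'''(b)\,db-dM=\big(f'''(b)-\frac{2\Drt(a,b)}{b-a}\big)\,db$, which is~(\ref{dDrt}).

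The computation is routine, so there is no genuine obstacle; the only point that has to be stated with care is that the differentials in the lemma are taken \emph{along the cup family}, so that $a$ and $b$ are not independent but tied by~(\ref{urlun}), and its consequence~(\ref{e6}) is available — it is precisely~(\ref{e6}) that removes the $db$‑term from $d\Dlt$ and the $da$‑term from $d\Drt$. Without that constraint one would only get $d\Dlt(a,b)=\big(f'''(a)+\frac{\Dlt(a,b)}{b-a}\big)\,da-\frac{\Drt(a,b)}{b-a}\,db$, and the lemma is the restriction of this expression to the one‑dimensional family of chords.
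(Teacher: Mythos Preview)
Your proof is correct and follows essentially the same route as the paper: compute $dM$ with $M=\Av{f''}{[a,b]}$, simplify it using the differentiated chord relation $\Dlt\,da+\Drt\,db=0$ (the paper's~\eqref{dcup}, your reference to~\eqref{e6}) to get $dM=-\tfrac{2\Dlt}{b-a}\,da=\tfrac{2\Drt}{b-a}\,db$, and then read off~\eqref{dDlt} and~\eqref{dDrt}. Your closing remark that the lemma is precisely the restriction of the two‑variable differential to the one‑parameter cup family is exactly the point.
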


\begin{proof}
We begin with writing down the derivative of the cup equation~\eqref{e6}. We use
an invariant form not depending on the parametrization:
\begin{equation}
\label{dcup}
\Drt(a,b)db+\Dlt(a,b)da=0\,.
\end{equation}
Using this relation, we write down the differential of the average $\Av{f''}{[a,b]}$
in two different forms
\begin{align*}
d\Av{f''}{[a,b]}&=\frac{f''(b)db-f''(a)da}{b-a}-\frac{f'(b)-f'(a)}{(b-a)^2}(db-da)
\\
&=\frac{\Drt db-\Dlt da}{b-a}
=\frac{2\Drt db}{b-a}
=-\frac{2\Dlt da}{b-a}\,.
\end{align*}
This immediately yields both~\eqref{dDlt} and~\eqref{dDrt}.
\end{proof}

After this preparation, it is easy to find the partial derivative of the force
with respect to the screen size.

\begin{Le}\label{dlF}
$$
\frac\partial{\partial\ell}F(u,\ell)=
\Big(\frac2\ell-\frac1\eps\Big)
\begin{cases}
-D(a)a'e^{-(a-u)/\eps}, & u\in(-\infty,a(\ell))\,;
\\
\qquad 0, & u\in(a(\ell),b(\ell))\,;\rule{0pt}{17pt}
\\
-D(b)b'e^{-(u-b)/\eps}, & u\in(b(\ell),+\infty)\,,\rule{0pt}{17pt}
\\
\end{cases}
$$
or
$$
\frac\partial{\partial\ell}F(u,\ell)=
\Big(\frac2\ell-\frac1\eps\Big)\frac{\Dlt\Drt}{\Dlt+\Drt}
\begin{cases}
\phantom-e^{-(a-u)/\eps}, & u\in(-\infty,a(\ell))\,;
\\
\qquad 0, & u\in(a(\ell),b(\ell))\,;\rule{0pt}{17pt}
\\
-e^{-(u-b)/\eps}, & u\in(b(\ell),+\infty)\,.\rule{0pt}{17pt}
\end{cases}
$$
\end{Le}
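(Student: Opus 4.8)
The plan is to differentiate $F(u,\ell)$ in $\ell$ separately on the three pieces of its definition; the only nontrivial work is at the two matching points $u=a(\ell)$ and $u=b(\ell)$, where Lemma~\ref{dD} will do the job. On the screen, $u\in(a(\ell),b(\ell))$, we have $F(u,\ell)=D(u)$, and since (as noted just after the definition of $D$) the value of $D$ at a fixed point does not depend on the screen size, $\partial_\ell F(u,\ell)=0$ there.

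For $u$ off the screen I would first record the first–order ODE in $u$ that $F$ satisfies on each outer piece. Writing $F=e^{u/\eps}g$ for $u<a(\ell)$ and differentiating, the fundamental theorem of calculus gives $\eps F_u-F=-\eps f'''(u)$; likewise $\eps F_u+F=\eps f'''(u)$ for $u>b(\ell)$. The right–hand sides do not involve $\ell$, so differentiating in $\ell$ shows that $\partial_\ell F$ satisfies $\eps\,\partial_u(\partial_\ell F)=\partial_\ell F$ on the left piece and $\eps\,\partial_u(\partial_\ell F)=-\partial_\ell F$ on the right one. Hence
$$\partial_\ell F(u,\ell)=C_-(\ell)\,e^{u/\eps}\quad(u<a(\ell)),\qquad\partial_\ell F(u,\ell)=C_+(\ell)\,e^{-u/\eps}\quad(u>b(\ell)),$$
and it remains only to identify the two constants. (Equivalently, one may simply differentiate formulas~\eqref{e335} and~\eqref{e336} for $\eps\mlt''(u;a(\ell))$ and $\eps\mrt''(u;b(\ell))$ directly in $\ell$, since these agree with $F$ off the screen; the ODE remark just explains why the answer is purely exponential in $u$.)

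To pin down $C_-(\ell)$ I would total–differentiate the continuity relation $F(a(\ell),\ell)=D(a(\ell))$ in $\ell$, getting $F_u(a(\ell),\ell)\,a'(\ell)+\partial_\ell F(a(\ell),\ell)=D'(a(\ell))\,a'(\ell)$. Substituting $F_u(a(\ell),\ell)=\eps^{-1}D(a(\ell))-f'''(a(\ell))$ from the ODE and $\partial_\ell F(a(\ell),\ell)=C_-(\ell)e^{a(\ell)/\eps}$ and solving, one obtains
$$C_-(\ell)\,e^{a(\ell)/\eps}=a'(\ell)\bigl[D'(a(\ell))-\eps^{-1}D(a(\ell))+f'''(a(\ell))\bigr].$$
The bracket is computed with Lemma~\ref{dD}: on the left screen $D=-\Dlt(\cdot,\tilde b(\cdot))$, so reading~\eqref{dDlt} along the cup family gives $D'(a(\ell))=-\bigl(f'''(a(\ell))+2\Dlt(a(\ell),b(\ell))/\ell\bigr)$ — here one uses the crucial point that the chord issuing from the \emph{endpoint} $a(\ell)$ of the screen is exactly $[a(\ell),b(\ell)]$, of length $b(\ell)-a(\ell)=\ell$. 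Since also $D(a(\ell))=-\Dlt(a(\ell),b(\ell))$, the two $f'''(a(\ell))$ terms cancel and the bracket collapses to $\bigl(\tfrac2\ell-\tfrac1\eps\bigr)D(a(\ell))$, so that $\partial_\ell F(u,\ell)=\bigl(\tfrac2\ell-\tfrac1\eps\bigr)D(a(\ell))\,a'(\ell)\,e^{-(a(\ell)-u)/\eps}$; this is the left case of the statement. The right piece is handled identically, now with $D=\Drt(\tilde a(\cdot),\cdot)$, identity~\eqref{dDrt}, $D(b(\ell))=\Drt(a(\ell),b(\ell))$, and $\eps F_u+F=\eps f'''$ at $u=b(\ell)$, producing $\partial_\ell F(u,\ell)=-\bigl(\tfrac2\ell-\tfrac1\eps\bigr)D(b(\ell))\,b'(\ell)\,e^{-(u-b(\ell))/\eps}$. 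Finally the second displayed form follows by inserting $a'(\ell)=-\Drt/(\Dlt+\Drt)$ and $b'(\ell)=a'(\ell)+1=\Dlt/(\Dlt+\Drt)$ from the proof of Lemma~\ref{L5} (with $\Dlt=\Dlt(a(\ell),b(\ell))$, $\Drt=\Drt(a(\ell),b(\ell))$), which turn $D(a)a'$ and $D(b)b'$ into $\Dlt\Drt/(\Dlt+\Drt)$.

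The step I expect to be the main obstacle is precisely this endpoint computation and the attendant sign bookkeeping: one must remember that $D'(u)$ in general involves the length $\tilde b(u)-u$ of the chord through $u$, which equals $\ell$ only at the two screen endpoints, and one must orient the differentials $d\Dlt,d\Drt$ correctly via the invariant cup relation~\eqref{dcup}; the differentiation–under–the–integral step is routine. As a sanity check, the prefactor $\tfrac2\ell-\tfrac1\eps$ vanishes when $\ell=2\eps$, i.e. for a full cup, in agreement with the fact that there $F$ coincides with the expressions appearing in Proposition~\ref{S7}.
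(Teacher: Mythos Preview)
Your argument is correct and uses the same essential ingredient as the paper --- Lemma~\ref{dD} to differentiate $\Dlt$ and $\Drt$ along the cup family --- but the organization differs. The paper simply differentiates the defining formula for $F$ in~$\ell$ directly: for $u<a$ this produces three terms (from $D(a)$, from $e^{-a/\eps}$, and from the upper limit of the integral), and substituting~\eqref{dDlt} collapses them to the exponential answer in one line; the second displayed form then follows from $a'=-\Drt/(\Dlt+\Drt)$, exactly as you do. Your route --- first extracting the homogeneous ODE for $\partial_\ell F$ and then pinning down the constant by total--differentiating the continuity relation $F(a(\ell),\ell)=D(a(\ell))$ --- reaches the same place with one extra step, but has the pleasant feature of explaining \emph{a priori} why the answer is purely exponential in~$u$. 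One small remark: your left--piece result $\bigl(\tfrac2\ell-\tfrac1\eps\bigr)D(a)\,a'\,e^{-(a-u)/\eps}$ agrees with the paper's own derivation and with the second displayed form of the lemma, but the first displayed form carries an extra minus sign on the left case; your computation is the correct one, and after substituting $D(a)a'=(-\Dlt)\bigl(-\Drt/(\Dlt+\Drt)\bigr)=\Dlt\Drt/(\Dlt+\Drt)$ it matches the second form exactly.
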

\begin{proof}
We can easily check this formulas by direct calculation. Consider,
for example, the case $u<a$:
$$
\frac\partial{\partial\ell}F(u,\ell)=
e^{u/\eps}\Big[-\frac{d\Dlt(a,b)}{d\ell}e^{-a/\eps}
+\Dlt(a,b)\frac{a'}{\eps}e^{-a/\eps}
+f'''(a)e^{-a/\eps}a'\Big]\,.
$$
Using formula~\eqref{dDlt}, we simplify this expression:
$$
\frac\partial{\partial\ell}F(u,\ell)=
\Dlt(a,b)\Big(\frac1\eps-\frac2\ell\Big)e^{-(a-u)/\eps}a'.
$$
Thus, we have got the first representation of the derivative. To
obtain the second one, we must express $a'$
in terms of $\Dlt$ and $\Drt$. 
Taking into account that $db=d\ell+da$ and using~\eqref{dcup}, we have:
$$
a'=
-\frac{\Drt(a,b)}{\Dlt(a,b)+\Drt(a,b)}\,.
$$
Similarly, we can check the formulas for the case $u>b$.
\end{proof}

\begin{Cor}\label{preobrazovanie}
The force is strictly
increasing with respect to the screen size on the right of the screen and strictly decreasing on the left.
Inside the screen\textup, the force does not depend on this size.
\end{Cor}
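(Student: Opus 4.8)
The plan is to deduce the corollary directly from Lemma~\ref{dlF} by a bookkeeping of signs, using information already in hand about the range of screen sizes and about cup differentials. No new computation is needed: everything reduces to determining the sign of each factor occurring in the two displayed formulas for $\partial_\ell F$.

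First I would fix the admissible range of the screen size. By the construction of cups (Lemma~\ref{L5} applied with $\ell_0=2\eps$, together with the trolleybus constructions, where $b_0-a_0\le 2\eps$) the screen size satisfies $\ell\in(0,2\eps]$; hence the scalar prefactor $\tfrac2\ell-\tfrac1\eps$ appearing in Lemma~\ref{dlF} is nonnegative, and is strictly positive for $\ell\in(0,2\eps)$. Next I would invoke the sign of the cup differentials: by Lemma~\ref{L5} (equivalently by Proposition~\ref{S8}) every chord of a cup satisfies $\Dlt(a,b)<0$ and $\Drt(a,b)<0$, so $\Dlt(a,b)\Drt(a,b)>0$ while $\Dlt(a,b)+\Drt(a,b)<0$, and therefore $\dfrac{\Dlt(a,b)\Drt(a,b)}{\Dlt(a,b)+\Drt(a,b)}<0$. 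The exponential weights $e^{-(a-u)/\eps}$ (for $u<a(\ell)$) and $e^{-(u-b)/\eps}$ (for $u>b(\ell)$) are of course positive. Here $a(\ell)$ and $b(\ell)$ are taken to be the differentiable branches supplied by Lemma~\ref{L5}, so that Lemma~\ref{dlF} applies verbatim.

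Feeding these signs into the second representation of $\partial_\ell F$ from Lemma~\ref{dlF}, for $\ell\in(0,2\eps)$ I obtain: to the right of the screen ($u>b(\ell)$) the product is $(+)\cdot(-)\cdot(-e^{-(u-b)/\eps})>0$, so $F(u,\cdot)$ is strictly increasing; to the left ($u<a(\ell)$) the product is $(+)\cdot(-)\cdot(e^{-(a-u)/\eps})<0$, so $F(u,\cdot)$ is strictly decreasing; and inside the screen ($a(\ell)<u<b(\ell)$) the derivative vanishes, i.e. $F(u,\ell)=D(u)$ does not depend on $\ell$, consistent with the remark made just after the definition of $D$.

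The only care needed is to phrase the monotonicity correctly. For a fixed point $u>c$, ``$u$ lies to the right of the screen'' is the condition $b(\ell)<u$, which, since $b$ is increasing in $\ell$, holds on an initial subinterval of $(0,2\eps]$; on that subinterval $\partial_\ell F>0$, except possibly at the right endpoint $\ell=2\eps$, where the prefactor degenerates to $0$. Since $F(u,\cdot)$ is continuous in $\ell$ (the three pieces of $F$ are glued continuously, as observed in the trolleybus discussion), strict monotonicity persists up to and including that endpoint. The symmetric statement is obtained for $u<c$ and the left of the screen, and for forces with zero screen or source at infinity the force is independent of $\ell$ by definition, so the assertion is trivial. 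I do not anticipate any genuine obstacle; the only mildly delicate points are this endpoint-continuity argument and the bookkeeping of which $\ell$-interval corresponds to a given $u$ lying off the screen, while the underlying sign count is immediate from Lemma~\ref{dlF}.
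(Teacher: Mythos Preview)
Your proposal is correct and takes essentially the same approach as the paper: read off the sign of $\partial_\ell F$ from Lemma~\ref{dlF}, using the signs $\Dlt<0$, $\Drt<0$ (and implicitly $a'<0$, $b'>0$) from Lemma~\ref{L5}. The only cosmetic difference is that you use the second displayed representation of $\partial_\ell F$ (via $\tfrac{\Dlt\Drt}{\Dlt+\Drt}$), whereas the paper quotes the signs of $a'$, $b'$, $\Dlt$, $\Drt$ directly; your extra care about the endpoint $\ell=2\eps$ and about which $\ell$-interval keeps a given $u$ off the screen is more detail than the paper gives, but harmless.
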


\begin{proof}
In Lemma~\ref{L5}, it was proved that $\Dlt(a,b)<0$, $\Drt(a,b)<0$, $a'<0$,
and $b'>0$. Therefore, on the whole interval $\ell\in(0,2\eps)$ we have
\begin{align*}
\frac\partial{\partial\ell}F(u,\ell)>0&\qquad\text{for}\quad u>b\,;
\\
\frac\partial{\partial\ell}F(u,\ell)<0&\qquad\text{for}\quad u<a\,.
\end{align*}
Thus we are done.
\end{proof}

Some simple corollaries of this fact are listed below.

\begin{Cor}\label{rostxvostov}
The tails grow as the screen shrinks.
\end{Cor}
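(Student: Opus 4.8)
The plan is to deduce the corollary from the monotonicity of the force in its screen size recorded in Corollary~\ref{preobrazovanie}. Fix two admissible screen sizes $\ell_1<\ell_2$; since $a'<0$ and $b'>0$ by Lemma~\ref{L5}, the corresponding screens are nested, $[a(\ell_1),b(\ell_1)]\subset[a(\ell_2),b(\ell_2)]$. The key claim is that
\[
F(u,\ell_1)\le F(u,\ell_2)\ \text{ for all }u\ge c,\qquad
F(u,\ell_1)\ge F(u,\ell_2)\ \text{ for all }u\le c,
\]
i.e. that $\ell\mapsto F(u,\ell)$ is non-decreasing for $u\ge c$ and non-increasing for $u\le c$. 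Granting this, the conclusion is immediate: by definition of the right tail $F(s,\ell_2)\le0$ on $[c,t^+(\ell_2)]$, hence $F(s,\ell_1)\le0$ on the same interval, so $t^+(\ell_1)\ge t^+(\ell_2)$; symmetrically $F(s,\ell_2)\ge0$ on $[t^-(\ell_2),c]$ forces $F(s,\ell_1)\ge0$ there, so $t^-(\ell_1)\le t^-(\ell_2)$. Thus both tails enlarge when $\ell$ decreases (and, by continuity as $\ell_1\to0$, they are largest for the zero screen).

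To prove the monotonicity claim I would argue pointwise in $u$, say for $u\ge c$ (the case $u\le c$ being mirror-symmetric, using that the force is strictly decreasing in $\ell$ to the left of the screen). If $c\le u\le b(\ell_1)$, then $u$ lies in both screens and $F(u,\ell_1)=D(u)=F(u,\ell_2)$, since the value of $D$ at a fixed point does not depend on the screen size. If $u>b(\ell_2)$, then $u$ is strictly to the right of both screens and Corollary~\ref{preobrazovanie} (equivalently, Lemma~\ref{dlF}) gives $\partial_\ell F(u,\cdot)>0$ on the whole admissible range, so $F(u,\ell_1)<F(u,\ell_2)$. In the intermediate case $b(\ell_1)<u\le b(\ell_2)$, let $\ell^\ast$ be the unique screen size with $b(\ell^\ast)=u$ (it exists because $b$ is continuous and increasing); then $\ell_1<\ell^\ast\le\ell_2$. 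On $(0,\ell^\ast]$ the point $u$ stays on or to the right of the screen, so $F(u,\cdot)$ is non-decreasing there and, by the ``outside the screen'' formula for $F$ (in which the integral $\int_{b(\ell)}^{u}f'''(t)e^{t/\eps}\,dt$ vanishes when $b(\ell)=u$), its value at $\ell^\ast$ equals $D(u)$; hence $F(u,\ell_1)\le D(u)$. Since $\ell_2\ge\ell^\ast$ places $u$ inside the screen of $\ell_2$, we have $F(u,\ell_2)=D(u)$, and therefore $F(u,\ell_1)\le F(u,\ell_2)$.

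The only point that needs a little care, and which bridges the two regimes in the intermediate case, is the joint continuity of $F(u,\ell)$ across the moving screen endpoints, specifically that $\lim_{\ell\uparrow\ell^\ast}F(u,\ell)=D(u)$ as $b(\ell)\to u$. This is routine: in the outside-the-screen formula the integral term tends to $0$ while $D(b(\ell))\to D(u)$ by continuity of $f''$ and of the inverse functions $\tilde a,\tilde b$, and $a(\ell),b(\ell)$ are continuous by Lemma~\ref{L5}; so $F$ is jointly continuous, which is all that is needed. There is no genuine obstacle here — the corollary is essentially a bookkeeping consequence of Corollary~\ref{preobrazovanie} together with the nesting of the screens.
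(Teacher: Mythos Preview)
Your argument is correct and follows the same route as the paper: the corollary is an immediate consequence of Corollary~\ref{preobrazovanie} (the monotonicity of $F(u,\ell)$ in $\ell$ on each side of the screen), and you simply spell out the pointwise comparison $F(u,\ell_1)\le F(u,\ell_2)$ for $u\ge c$ (and the mirror inequality for $u\le c$), from which the inclusion of tails follows by definition. The only place where you add genuine detail beyond what the paper states is the intermediate case $b(\ell_1)<u\le b(\ell_2)$, where you correctly invoke continuity of $F$ across the moving screen endpoint to bridge the ``outside'' regime to the screen value $D(u)$; the paper leaves this implicit.
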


\begin{Cor}\label{noscreen} If $\ell>0$\textup, then
\begin{align*}
F(u,\ell)>F(u,0)&\qquad\text{for}\quad u>c\,;
\\
F(u,\ell)<F(u,0)&\qquad\text{for}\quad u<c\,.
\end{align*}
\end{Cor}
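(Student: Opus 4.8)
The statement is a bookkeeping corollary of Corollary~\ref{preobrazovanie} (equivalently, of the sign computation in Lemma~\ref{dlF}): the force is monotone in the screen size $\ell$, but \emph{relative to the screen}, which itself moves as $\ell$ grows, so one has to keep track of when the growing screen $[a(\ell),b(\ell)]$ overtakes the fixed abscissa $u$. We may assume the source $c$ is finite, since a force with a source at infinity always carries the zero screen and the assertion is then vacuous. Fix $u>c$ and put $g(\ell)\df F(u,\ell)$ for $\ell\in[0,2\eps]$; the claim for $u>c$ is that $g(\ell)>g(0)$ whenever $\ell>0$, and the case $u<c$ will be symmetric.

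First I would record the continuity of $\ell\mapsto g(\ell)$, which follows from the construction of $F$; in particular, at $\ell=0$ we have $a(\ell),b(\ell)\to c$, and since $f''$ is continuous, $\Av{f''}{[a(\ell),b(\ell)]}\to f''(c)$, so that $D(a(\ell))=-\Dlt(a(\ell),b(\ell))\to0$ and $D(b(\ell))=\Drt(a(\ell),b(\ell))\to0$; substituting this into the off-screen formula for $F$ (which is the relevant one, since for small $\ell$ the point $u$ lies off the screen) gives $g(\ell)\to F(u,0)=g(0)$. Next, since $u>c>a(\ell)$ for every $\ell$, the point $u$ is never to the left of the screen; it lies strictly to the right of it exactly when $b(\ell)<u$, and inside it when $b(\ell)\ge u$. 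By Corollary~\ref{preobrazovanie}, $g$ is constant on the (closed) set of $\ell$ for which $u$ lies in the screen and strictly increasing on the set where $u$ lies strictly to the right; because $b$ is continuous, strictly increasing, with $b(0)=c<u$, the latter set contains an interval $(0,\delta)$. Thus $g$ is non-decreasing on all of $[0,2\eps]$ and strictly increasing on $(0,\delta)$, so $g(\delta/2)>g(0)$ by the continuity at $0$, and then $g(\ell)\ge g(\delta/2)>g(0)$ for $\ell\ge\delta/2$, while $g(\ell)>g(0)$ directly for $0<\ell\le\delta/2$. This proves $F(u,\ell)>F(u,0)$ for all $u>c$, $\ell>0$. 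The inequality $F(u,\ell)<F(u,0)$ for $u<c$ is obtained by the mirror argument: $u$ is then never to the right of the screen, $a$ is strictly decreasing with $a(0)=c$, and the force is strictly decreasing in the screen size to the left of the screen.

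There is no serious obstacle here. The only delicate points are the two continuity remarks — that $\ell\mapsto F(u,\ell)$ is continuous at $\ell=0$ (handled above via $D(a(\ell)),D(b(\ell))\to0$) and at the parameter value where the screen first reaches $u$ (which is already subsumed in the fact, used in defining $F$, that $F$ is continuous across the screen boundary) — together with the need to split the range of $\ell$ according to whether $u$ has been absorbed into the screen. Once these are in place, Corollary~\ref{preobrazovanie} supplies the monotonicity and the proof closes immediately.
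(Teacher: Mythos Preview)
Your argument is correct and follows the same route the paper implicitly intends: the paper states Corollary~\ref{noscreen} without proof, as an immediate consequence of Corollary~\ref{preobrazovanie} (and hence of Lemma~\ref{dlF}). You have simply filled in the details the paper omits --- the continuity of $\ell\mapsto F(u,\ell)$ at $\ell=0$ and the bookkeeping of when the growing screen absorbs the fixed point $u$ --- so there is no genuine difference in approach.
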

 
The last inequalities will be used together with the following relation
between two forces.

\begin{Le}\label{F12}
Let $F_1$ and $F_2$ be two forces with sources $c_1$ and $c_2$\textup,
$c_1<c_2$. Then two following relations between these forces are fulfilled\textup:
\begin{align*}
F_1(u,\ell_1)=e^{(c_2-u)/\eps}F_1(c_2,\ell_1)+F_2(u,0)\,,&\qquad u\ge c_2\,;
\\
F_2(u,\ell_2)=e^{(u-c_1)/\eps}F_2(c_1,\ell_2)+F_1(u,0)\,,&\qquad u\le c_1\,.
\end{align*}
\end{Le}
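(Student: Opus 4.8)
The plan is to derive both identities by a direct computation from the three‑branch formula that defines a force, the one non‑routine ingredient being that the screen of the ``left'' force $F_1$ cannot reach the source $c_2$ of the ``right'' one. So I would first establish the following geometric fact: if $c_1<c_2$ are finite sources of $F_1$ and $F_2$ and $[a(\ell_1),b(\ell_1)]$ is the screen of $F_1$, then $b(\ell_1)<c_2$, and symmetrically $c_1<a(\ell_2)$. Indeed, $f'''$ changes sign from $+$ to $-$ at both $c_1$ and $c_2$, hence $f'''<0$ just to the right of $c_1$ and $f'''>0$ just to the left of $c_2$; therefore $f'''$ must change sign from $-$ to $+$ at some point $v$ with $c_1<v<c_2$, and by the structure of $\W^N$ this $v$ is one of the points $v_j$. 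Condition~2) in the definition of $\W^N$ then yields $v-c_1\ge 2\eps_0$, while part~1) of Lemma~\ref{L5} gives $a(\ell_1)\le c_1$ and of course $\ell_1\le 2\eps$; hence $b(\ell_1)=a(\ell_1)+\ell_1\le c_1+2\eps<c_1+2\eps_0\le v<c_2$. The bound $c_1<a(\ell_2)$ follows in the same manner from $c_2-v\ge 2\eps_0$. (If one of the sources lies at $\pm\infty$ the argument below simplifies, since then the corresponding force is given by a single global formula and one of the two identities becomes vacuous.)

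With this in hand, fix $u\ge c_2$. By the previous paragraph both $u$ and $c_2$ lie strictly to the right of the screen of $F_1$, so the third branch of the force definition gives
\[
e^{u/\eps}F_1(u,\ell_1)=D\big(b(\ell_1)\big)e^{b(\ell_1)/\eps}+\int_{b(\ell_1)}^{u}f'''(t)e^{t/\eps}\,dt,
\]
and the same identity with $c_2$ in place of $u$. Subtracting, the boundary term $D(b(\ell_1))e^{b(\ell_1)/\eps}$ cancels and the two integrals collapse to $\int_{c_2}^{u}f'''(t)e^{t/\eps}\,dt$, that is,
\[
e^{u/\eps}F_1(u,\ell_1)-e^{c_2/\eps}F_1(c_2,\ell_1)=\int_{c_2}^{u}f'''(t)e^{t/\eps}\,dt.
\]
Since $c_2$ is the source of $F_2$ and $u\ge c_2$, the explicit formula for a force with zero screen gives $e^{u/\eps}F_2(u,0)=\int_{c_2}^{u}f'''(t)e^{t/\eps}\,dt$. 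Equating and dividing by $e^{u/\eps}$ produces the first relation $F_1(u,\ell_1)=e^{(c_2-u)/\eps}F_1(c_2,\ell_1)+F_2(u,0)$.

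The second relation is obtained by the mirror‑image computation: for $u\le c_1$ both $u$ and $c_1$ lie strictly to the left of the screen of $F_2$, so the first (left) branch of the force formula gives $e^{-u/\eps}F_2(u,\ell_2)=D(a(\ell_2))e^{-a(\ell_2)/\eps}+\int_{u}^{a(\ell_2)}f'''(t)e^{-t/\eps}\,dt$ and the analogous identity at $c_1$; subtracting cancels the boundary term and leaves $\int_{u}^{c_1}f'''(t)e^{-t/\eps}\,dt$, which is exactly $e^{-u/\eps}F_1(u,0)$ because $c_1$ is the source of $F_1$. Rearranging gives $F_2(u,\ell_2)=e^{(u-c_1)/\eps}F_2(c_1,\ell_2)+F_1(u,0)$; the cases with a source at $\pm\infty$ follow from the same additivity of the integral applied to the global formula for $F$. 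The only genuinely delicate step is the geometric claim of the first paragraph --- that a screen never engulfs a neighbouring source --- and this is precisely where the $2\eps_0$-separation hypothesis built into $\W^N$, combined with $\eps<\eps_0$, is used; everything afterwards is bookkeeping with the additivity of $\int f'''(t)e^{\pm t/\eps}\,dt$ against the three‑branch definition of $F$.
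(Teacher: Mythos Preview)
Your proof is correct and follows essentially the same route as the paper: both write out the third (respectively first) branch of the force definition at $u$ and at the neighbouring source, and observe that the boundary term cancels while the additivity of $\int f'''(t)e^{\pm t/\eps}\,dt$ produces the zero-screen force. The paper simply declares the identity ``trivial'' after unpacking the definitions; you add the justification that $b(\ell_1)<c_2$ (so that $c_2$ indeed lies in the off-screen region of $F_1$), which the paper leaves implicit but which is exactly where the $2\eps_0$-separation built into $\W^N$ enters.
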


\begin{proof}
The statement of the lemma becomes trivial being rewritten by the 
definition of forces:
\begin{align*}
e^{-u/\eps}&\bigg[D_1(b_1)e^{b_1/\eps}
+\int\limits_{b_1}^u f'''(t)e^{t/\eps}dt\bigg]
\\
&=e^{(c_2-u)/\eps}e^{-c_2/\eps}\bigg[D_1(b_1)e^{b_1/\eps}
+\int\limits_{b_1}^{c_2} f'''(t)e^{t/\eps}dt\bigg]
+e^{-u/\eps}\int\limits_{c_2}^u f'''(t)e^{t/\eps}dt\,,
\\
&\hskip200pt u\in[c_2,+\infty)\,.
\end{align*}
The second identity is similar.
\end{proof}

We state the following simple corollary.
\begin{Cor}\label{ptintail}
Let $F_1$ and $F_2$ be two forces with sources $c_1$ and $c_2$\textup,
$c_1<c_2$. If $c_2$ gets into the tail of $F_1$\textup, then $F_1(u)\le F_2(u)$ for
$u\ge c_2$. If $c_1$ gets into the tail of $F_2$\textup, then the same inequality is true for $u\le c_1$.
\end{Cor}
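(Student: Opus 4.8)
The plan is to deduce everything from the two identities of Lemma~\ref{F12}, supplemented by Corollary~\ref{noscreen} to account for the screens. Write $\ell_1,\ell_2$ for the screen sizes of $F_1,F_2$. Suppose first that $c_2$ lies in the tail of $F_1$; since $c_1<c_2$, this is necessarily the \emph{right} tail of $F_1$, on which $F_1(\,\cdot\,,\ell_1)\le0$ by the definition of $t^+(\ell_1)$ (together with continuity of $F_1$ at the right endpoint, in case $c_2=t^+(\ell_1)$). In particular $F_1(c_2,\ell_1)\le0$. Now apply the first relation of Lemma~\ref{F12}: for $u\ge c_2$,
\[
F_1(u,\ell_1)=e^{(c_2-u)/\eps}F_1(c_2,\ell_1)+F_2(u,0).
\]
The exponential factor is positive, hence the first summand is $\le0$ and $F_1(u,\ell_1)\le F_2(u,0)$. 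Finally, since $u\ge c_2$ and $c_2$ is the source of $F_2$, Corollary~\ref{noscreen} gives $F_2(u,0)\le F_2(u,\ell_2)$ (with equality at $u=c_2$, where both sides vanish). Combining the two inequalities yields $F_1(u)\le F_2(u)$ for all $u\ge c_2$.

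The second statement is proved symmetrically. If $c_1$ lies in the tail of $F_2$, then, since $c_1<c_2$, it lies in the \emph{left} tail of $F_2$, on which $F_2(\,\cdot\,,\ell_2)\ge0$; in particular $F_2(c_1,\ell_2)\ge0$. The second relation of Lemma~\ref{F12} reads, for $u\le c_1$,
\[
F_2(u,\ell_2)=e^{(u-c_1)/\eps}F_2(c_1,\ell_2)+F_1(u,0),
\]
so $F_2(u,\ell_2)\ge F_1(u,0)$. On the other hand $u\le c_1$ lies to the left of the source $c_1$ of $F_1$, so Corollary~\ref{noscreen} gives $F_1(u,\ell_1)\le F_1(u,0)$, again with equality at $u=c_1$. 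Chaining these, $F_1(u)\le F_2(u)$ for all $u\le c_1$.

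There is essentially no obstacle here beyond bookkeeping of signs: the whole argument is a one-line substitution into Lemma~\ref{F12}. The two places where care is needed are (i) correctly identifying which tail $c_2$ (resp.\ $c_1$) falls into --- the right tail of $F_1$ carries $F_1\le0$ while the left tail of $F_2$ carries $F_2\ge0$ (cf.\ Lemma~\ref{strict}) --- and (ii) invoking Corollary~\ref{noscreen} with the correct orientation: enlarging the screen of $F_2$ \emph{increases} the force to the right of its source, while enlarging the screen of $F_1$ \emph{decreases} the force to the left of its source, which is exactly the direction needed to pass from the zero-screen forces $F_2(\,\cdot\,,0)$, $F_1(\,\cdot\,,0)$ appearing in Lemma~\ref{F12} to the actual forces $F_1(u)=F_1(u,\ell_1)$ and $F_2(u)=F_2(u,\ell_2)$.
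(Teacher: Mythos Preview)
Your proof is correct and follows essentially the same route as the paper: apply the identity from Lemma~\ref{F12}, use the sign of $F_1(c_2,\ell_1)$ (resp.\ $F_2(c_1,\ell_2)$) coming from the tail hypothesis to drop the exponential term, and then invoke Corollary~\ref{noscreen} to pass from the zero-screen force to the actual one. The paper's proof is just the compressed one-line version of what you wrote.
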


\begin{proof}
If $c_2\le t_1^+$, then using Lemma~\ref{F12} and Corollary~\ref{noscreen}
we can write the following inequality:
$$
F_1(u,\ell_1)=e^{(c_2-u)/\eps}F_1(c_2,\ell_1)+F_2(u,0)\le F_2(u,0)\le F_2(u,\ell_2)
$$
for $u\in[c_2,+\infty)$.
In a similar way, if $c_1\ge t_2^-$, then
$$
F_2(u,\ell_2)=e^{(u-c_1)/\eps}F_2(c_1,\ell_2)+F_1(u,0)\ge F_1(u,0)\ge F_1(u,\ell_1)
$$
for $u\in(-\infty,c_1]$.
\end{proof}

Till now, we were investigating the dependence of a force from the size of its screen.
Now we treat the behavior of a force with respect to the first variable.

\begin{Le}\label{duF}
$$
\frac\partial{\partial u}F(u,\ell)= 
\begin{cases}
-f'''(u)+\eps^{-1}F(u,\ell), & u\in(-\infty,a(\ell))\,;
\\
\displaystyle
-f'''(u)+\frac2{\tilde b(u)-u}F(u,\ell), & u\in(a(\ell),c)\,;\rule{0pt}{25pt}
\\
\displaystyle
\phantom-f'''(u)-\frac2{u-\tilde a(u)}F(u,\ell), & u\in(c,b(\ell))\,;\rule{0pt}{25pt}
\\
\phantom-f'''(u)-\eps^{-1}F(u,\ell), & u\in(b(\ell),+\infty)\,.\rule{0pt}{20pt}
\\
\end{cases}
$$
\end{Le}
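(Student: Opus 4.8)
The plan is to differentiate the defining formula for $F$ one branch at a time; the only external input needed is Lemma~\ref{dD}, and there is no real obstacle beyond keeping track of which chord endpoint equals $u$ in the two middle cases.

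First I would treat the two unbounded branches, where the computation is purely mechanical: holding $\ell$ fixed, the quantities $a(\ell)$, $b(\ell)$, $D(a(\ell))$, $D(b(\ell))$ are constants, so on $(-\infty,a(\ell))$ only the factor $e^{u/\eps}$ and the lower limit of the integral depend on~$u$. Differentiating gives
$$
\frac\partial{\partial u}F(u,\ell)=\eps^{-1}e^{u/\eps}\Big[D(a(\ell))e^{-a(\ell)/\eps}+\int_u^{a(\ell)}f'''(t)e^{-t/\eps}\,dt\Big]-f'''(u)=\eps^{-1}F(u,\ell)-f'''(u),
$$
and the branch $u\in(b(\ell),+\infty)$ is symmetric, producing $f'''(u)-\eps^{-1}F(u,\ell)$.

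Next I would handle the two screen branches. There $F(u,\ell)=D(u)$, and since the value $D(u)$ does not depend on the screen size (only its domain does), the partial derivative $\partial_u F$ at fixed $\ell$ coincides with the total derivative of $D$ taken along the cup family. The one thing to get right is the bookkeeping of variables: when $u$ runs over the left screen $(a(\ell),c)$ it is the left endpoint of the chord through it, i.e. $a=u$ and $b=\tilde b(u)$, so $da=du$; whereas when $u$ runs over the right screen $(c,b(\ell))$ it is the right endpoint, i.e. $b=u$ and $a=\tilde a(u)$, so $db=du$. With this identification, formula~\eqref{dDlt} gives, for $u\in(a(\ell),c)$ and $D(u)=-\Dlt(u,\tilde b(u))$,
$$
\frac{d}{du}D(u)=-\frac{d}{du}\Dlt\big(u,\tilde b(u)\big)=-\Big(f'''(u)+\frac{2\Dlt(u,\tilde b(u))}{\tilde b(u)-u}\Big)=-f'''(u)+\frac{2}{\tilde b(u)-u}\,D(u),
$$
and formula~\eqref{dDrt} gives, for $u\in(c,b(\ell))$ and $D(u)=\Drt(\tilde a(u),u)$,
$$
\frac{d}{du}D(u)=\frac{d}{du}\Drt\big(\tilde a(u),u\big)=f'''(u)-\frac{2}{u-\tilde a(u)}\,D(u).
$$
Substituting $F(u,\ell)=D(u)$ on these two intervals yields exactly the four asserted formulas. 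The only care needed is the variable identification in the two middle cases just described, after which Lemma~\ref{dD} applies verbatim; the lemma makes no claim at the transition points $u\in\{a(\ell),c,b(\ell)\}$, so nothing further is required there.
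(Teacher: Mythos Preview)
Your proof is correct and follows essentially the same route as the paper: the outer branches are handled by direct differentiation (the paper simply calls these ``evident''), and the screen branches are obtained by identifying $a=u$, $b=\tilde b(u)$ on the left and $b=u$, $a=\tilde a(u)$ on the right and then invoking Lemma~\ref{dD}, exactly as in the paper's argument.
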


\begin{proof}
The formulas for the derivatives out of the screen are evident. We use Lemma~\ref{dD} to calculate $D'(u)$. 
On the left screen, we have
$a=u$, $b=\tilde b(u)$, and $D(u)=-\Dlt(u,\tilde b(u))$. Therefore, formula~\eqref{dDlt}
yields
$$
D'(u)=-f'''(u)+\frac{2D(u)}{{\tilde b-u}}\,.
$$
Similarly, using~\eqref{dDrt}, we get 
$$
D'(u)=f'''(u)-\frac{2D(u)}{u-\tilde a}
$$
on the right screen.
\end{proof} 

To determine balance points, we need to know the behavior of the sum of
two neighbor forces.

\begin{Cor}\label{carmoebuli}
If $F_1$ and $F_2$ are two forces with sources $c_1$ and $c_2$\textup,
$c_1<c_2$\textup, then
{\small
$$
\eps\frac\partial{\partial u}(F_1(u,\ell_1)+F_2(u,\ell_2))= 
\begin{cases}
\displaystyle
F_2(u,\ell_2)-\frac{2\eps}{u-\tilde a_1(u)}\,F_1(u,\ell_1), & u\in(c_1,b_1)\,;
\\
\quad F_2(u,\ell_2)-F_1(u,\ell_1), & u \in (b_1,a_2)\,;\rule{0pt}{20pt}
\\
\displaystyle\rule{0pt}{25pt}
\frac{2\eps}{\tilde b_2(u)-u}\,F_2(u,\ell_2)-F_1(u,\ell_1), & u\in(a_2,c_2)\,.
\end{cases}
$$}
\end{Cor}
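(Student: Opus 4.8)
The plan is to obtain the formula by differentiating $F_1$ and $F_2$ separately via Lemma~\ref{duF} and adding the two results. First I would record the geometric set-up: the screens of the forces in the family are pairwise disjoint and ordered by their sources, so the screen $[a_1(\ell_1),b_1(\ell_1)]$ of $F_1$ lies entirely to the left of the screen $[a_2(\ell_2),b_2(\ell_2)]$ of $F_2$; in particular $b_1\le a_2$. Hence for every $u$ in one of the three ranges of the statement we have $c_1<u<c_2$. More precisely, on $(c_1,b_1)$ the point $u$ lies in the right screen of $F_1$ and to the left of the screen of $F_2$; on $(b_1,a_2)$ it lies to the right of the screen of $F_1$ and to the left of the screen of $F_2$; and on $(a_2,c_2)$ it lies to the right of the screen of $F_1$ (because $u>a_2\ge b_1$) and in the left screen of $F_2$.

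Next I would read off the matching branch of Lemma~\ref{duF} for each force on each of these three intervals. On $(c_1,b_1)$ this gives $\frac{\partial}{\partial u}F_1=f'''(u)-\frac{2}{u-\tilde a_1(u)}F_1(u,\ell_1)$ and $\frac{\partial}{\partial u}F_2=-f'''(u)+\eps^{-1}F_2(u,\ell_2)$; on $(b_1,a_2)$ it gives $\frac{\partial}{\partial u}F_1=f'''(u)-\eps^{-1}F_1(u,\ell_1)$ and $\frac{\partial}{\partial u}F_2=-f'''(u)+\eps^{-1}F_2(u,\ell_2)$; and on $(a_2,c_2)$ it gives $\frac{\partial}{\partial u}F_1=f'''(u)-\eps^{-1}F_1(u,\ell_1)$ and $\frac{\partial}{\partial u}F_2=-f'''(u)+\frac{2}{\tilde b_2(u)-u}F_2(u,\ell_2)$. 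Adding the two lines in each case, the two copies of $f'''(u)$ enter with opposite signs and cancel; multiplying the remainder by $\eps$ reproduces exactly the three displayed formulas.

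I do not expect any real obstacle here: the whole content of the corollary is this cancellation of the $f'''$ terms together with the correct bookkeeping of which branch of Lemma~\ref{duF} is active on each subinterval, and the branch formulas inside the screens themselves rest on Lemma~\ref{dD}, which is already established. The only point requiring a word of care is the degenerate configurations --- a source at $\pm\infty$, a zero screen, or $b_1=a_2$ --- but in each of these one or more of the three intervals in the statement becomes empty, so nothing needs to be verified there.
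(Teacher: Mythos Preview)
Your argument is correct and is precisely the approach the paper intends: the corollary is stated immediately after Lemma~\ref{duF} without proof because it is obtained exactly as you describe, by adding the appropriate branches of that lemma for $F_1$ and $F_2$ and observing that the $f'''(u)$ terms cancel. Your bookkeeping of which branch applies on each subinterval, and your remark about degenerate cases reducing to empty intervals, are both accurate.
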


\begin{Cor}\label{increasing}
If $F_1$ and $F_2$ are two forces with sources $c_1$ and $c_2$\textup,
$c_1<c_2$\textup, then the sum $F_1+F_2$ is strictly increasing in the
intersection of the right tail of $F_1$ and the left tail of $F_2$.
\end{Cor}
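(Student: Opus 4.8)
The plan is to differentiate $F_1+F_2$ by means of Corollary~\ref{carmoebuli} and to check that the resulting expression is non-negative throughout the overlap of the two tails, vanishing at only finitely many points. Write the right tail of $F_1$ as $[c_1,t_1^+]$ and the left tail of $F_2$ as $[t_2^-,c_2]$; their intersection is a segment $[p,q]$ with $p=\max\{c_1,t_2^-\}$ and $q=\min\{t_1^+,c_2\}$, so its interior lies in $(c_1,c_2)$ and Corollary~\ref{carmoebuli} applies there, away from the two points $b_1$ and $a_2$. For every $u\in[p,q]$ one has $F_1(u,\ell_1)\le0$, since $u$ belongs to the right tail of $F_1$, and $F_2(u,\ell_2)\ge0$, since $u$ belongs to the left tail of $F_2$. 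These two inequalities, together with the case formulas of Corollary~\ref{carmoebuli}, will do almost everything.

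I then go through the three cases. If $u\in(c_1,b_1)$, then $u$ lies in the right screen of $F_1$, so $F_1(u,\ell_1)=\Drt(\tilde a_1(u),u)<0$ strictly by Lemma~\ref{L5}; as $u-\tilde a_1(u)>0$ and $F_2(u,\ell_2)\ge0$, the expression $\eps\,\partial_u(F_1+F_2)=F_2(u,\ell_2)-\frac{2\eps}{u-\tilde a_1(u)}F_1(u,\ell_1)$ is strictly positive. If $u\in(a_2,c_2)$, then symmetrically $u$ lies in the left screen of $F_2$, so $F_2(u,\ell_2)=-\Dlt(u,\tilde b_2(u))>0$ strictly by Lemma~\ref{L5}; since $\tilde b_2(u)-u>0$ and $F_1(u,\ell_1)\le0$, the expression $\eps\,\partial_u(F_1+F_2)=\frac{2\eps}{\tilde b_2(u)-u}F_2(u,\ell_2)-F_1(u,\ell_1)$ is again strictly positive. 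Finally, if $u\in(b_1,a_2)$, then $\eps\,\partial_u(F_1+F_2)=F_2(u,\ell_2)-F_1(u,\ell_1)\ge0$ because $F_1(u,\ell_1)\le0\le F_2(u,\ell_2)$.

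It remains to upgrade ``non-decreasing'' to ``strictly increasing'', and this is the only point that needs a little care, since $F_1+F_2$ is in general only piecewise $C^1$ (with possible corners at $b_1$ and $a_2$, where the left and right screens meet the off-screen regions). On the two screen intervals the derivative is already strictly positive. On $(b_1,a_2)$ the derivative can vanish at a point $u$ only if $F_1(u,\ell_1)=0$ and $F_2(u,\ell_2)=0$ simultaneously; but by Lemma~\ref{strict} a zero of $F_1$ in the interior of its right tail, or of $F_2$ in the interior of its left tail, can occur only at a point where $f'''$ changes sign from $+$ to $-$, and there are only finitely many such points because $f\in\W^N$. Hence $F_1+F_2$ is continuous on $[p,q]$, has one-sided derivatives that are everywhere non-negative, and its derivative is not identically zero on any subinterval; such a function is strictly increasing on $[p,q]$, which is exactly the assertion. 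The main (and only) obstacle is thus the bookkeeping of signs above, for which one uses the screen inequalities $\Dlt<0$, $\Drt<0$ of Lemma~\ref{L5} and the description of the tails in Lemma~\ref{strict}; no genuinely new estimate is required.
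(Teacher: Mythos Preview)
Your proof is correct and follows exactly the same route as the paper: apply the case formulas of Corollary~\ref{carmoebuli}, use the tail sign conditions together with the strict screen inequalities $\Dlt,\Drt<0$ from Lemma~\ref{L5}, and invoke Lemma~\ref{strict} to confine the possible zeros of the derivative to finitely many points. The paper compresses all of this into a single sentence, whereas you spell out the three cases and the piecewise-$C^1$ bookkeeping at $b_1$ and $a_2$, but there is no substantive difference in method.
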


\begin{proof}
By the formula from the preceding corollary, we have
$\frac\partial{\partial u}(F_1+F_2)>0$ for all
$u\in(c_1,t_1^+)\bigcap(t_2^-,c_2)$, except possibly for a finite number of points
(see Lemma~\ref{strict}).
\end{proof}

\begin{Cor}\label{balance}
If $F_1$ and $F_2$ are two forces with sources $c_1$ and $c_2$ such that
${c_1<t_2^-\le t_1^+<c_2}$\textup, then the sum $F_1+F_2$ has exactly one root 
in the intersection of the tails\textup, $[t_2^-,t_1^+]$.
\end{Cor}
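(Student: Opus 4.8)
The plan is to combine the strict monotonicity already established in Corollary~\ref{increasing} with a sign check at the two endpoints of the segment $[t_2^-,t_1^+]$, and then invoke the intermediate value theorem; uniqueness will come for free from monotonicity.

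First I would unpack what the hypothesis $c_1<t_2^-\le t_1^+<c_2$ buys us. The right tail of $F_1$ is $[c_1,t_1^+]$ and the left tail of $F_2$ is $[t_2^-,c_2]$, so the intersection of the two tails is exactly the segment $[t_2^-,t_1^+]$; moreover the chain of inequalities forces both $t_2^-$ and $t_1^+$ to be \emph{finite} real numbers. By the definitions of $t^+$ and $t^-$, together with the continuity of the force functions (the three pieces of a force match at $a(\ell)$ and $b(\ell)$), we have $F_1(\cdot,\ell_1)\le0$ on $[c_1,t_1^+]$ and $F_2(\cdot,\ell_2)\ge0$ on $[t_2^-,c_2]$.

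Next I would pin down the value of each force at the end of its own tail. I claim $F_1(t_1^+,\ell_1)=0$: if $F_1(t_1^+,\ell_1)<0$, then by continuity $F_1(\cdot,\ell_1)<0$ on a right neighbourhood of the finite point $t_1^+$, so $F_1(\cdot,\ell_1)\le0$ on $[c_1,t_1^++\delta]$ for small $\delta>0$, contradicting the maximality of $t_1^+$. Symmetrically $F_2(t_2^-,\ell_2)=0$. Hence, using $t_2^-\in[c_1,t_1^+]$ and $t_1^+\in[t_2^-,c_2]$,
\[
(F_1+F_2)(t_2^-)=F_1(t_2^-,\ell_1)\le0,\qquad (F_1+F_2)(t_1^+)=F_2(t_1^+,\ell_2)\ge0.
\]

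Finally, Corollary~\ref{increasing} gives that $F_1+F_2$ is strictly increasing on $(c_1,t_1^+)\cap(t_2^-,c_2)=(t_2^-,t_1^+)$, and, being continuous, it is then strictly increasing on the closed segment $[t_2^-,t_1^+]$. A continuous, strictly increasing function on a segment that is $\le 0$ at the left endpoint and $\ge 0$ at the right endpoint has exactly one zero in that segment (in the degenerate case $t_2^-=t_1^+$ the two displayed inequalities force the common value to be $0$), and this is the asserted unique root of $F_1+F_2$ in $[t_2^-,t_1^+]$. There is essentially no obstacle here; the only point needing care is the claim that $F_1$ and $F_2$ vanish at the ends of their tails, which relies on those ends being finite — and finiteness is precisely what the hypothesis $c_1<t_2^-\le t_1^+<c_2$ provides.
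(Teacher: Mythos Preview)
Your proof is correct and follows essentially the same approach as the paper's own proof: use Corollary~\ref{increasing} for strict monotonicity on $[t_2^-,t_1^+]$, check the signs at the endpoints via $F_i(t_i^\pm)=0$, and conclude by the intermediate value theorem. You are simply more explicit than the paper in justifying why the forces vanish at the (finite) ends of their tails and in handling the degenerate case $t_2^-=t_1^+$.
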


\begin{proof}
By the preceding corollary, the sum $F_1+F_2$ is strictly increasing on $[t_2^-,t_1^+]$. Therefore, since the continuous function 
$F_1+F_2$ has opposite signs at $t_2^-$ and $t_1^+$ (because $F_i(t_i^\pm)=0$), it has exactly one root on this interval.
\end{proof}

We conclude our investigation of the force functions with other two
important facts.

\begin{Le}\label{metia}
If the source of a force function belongs to a tail of another force\textup, 
then both tails of the first force are included in the tail of the second.
\end{Le}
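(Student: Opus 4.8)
The plan is to reduce the statement to two forces and argue directly with the comparison results already established, chiefly Corollaries~\ref{ptintail} and~\ref{increasing}. By symmetry (relabelling $F_1\leftrightarrow F_2$) we may assume the two forces have sources $c_1<c_2$ and that the source of $F_1$ lies in a tail of $F_2$. Since $c_1<c_2$, this source can only fall into the \emph{left} tail of $F_2$, so $t_2^-\le c_1<c_2$, where $t_2^-$ is the end of that left tail. Writing $t_1^-,t_1^+$ for the ends of the tails of $F_1$, the assertion ``both tails of $F_1$ are contained in the tail of $F_2$'' becomes the pair of inequalities $t_2^-\le t_1^-$ and $t_1^+\le c_2$, i.e. $[t_1^-,t_1^+]\subseteq[t_2^-,c_2]$.

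For the left end: since $c_1$ lies in the tail of $F_2$, Corollary~\ref{ptintail} gives $F_1(u)\le F_2(u)$ for all $u\le c_1$. On the left tail $[t_1^-,c_1]$ of $F_1$ we have $F_1\ge0$ by definition, hence $F_2\ge0$ there as well; and $F_2\ge0$ on $[c_1,c_2]$ because this interval sits inside the left tail $[t_2^-,c_2]$ of $F_2$. Consequently $F_2\ge0$ on all of $[t_1^-,c_2]$, and by the very definition of $t_2^-$ as the infimum of the points from which $F_2$ stays nonnegative up to $c_2$, this forces $t_2^-\le t_1^-$.

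For the right end I would argue by contradiction: suppose $t_1^+>c_2$. Then $c_2$ is an interior point of the right tail $[c_1,t_1^+]$ of $F_1$, so the intersection of the right tail of $F_1$ with the left tail of $F_2$ is exactly the nondegenerate interval $[c_1,c_2]$, and Corollary~\ref{increasing} tells us $F_1+F_2$ is strictly increasing on it. But at the source of $F_1$ we have $(F_1+F_2)(c_1)=F_2(c_1)\ge0$ (using $F_1(c_1)=0$ and $c_1$ in the left tail of $F_2$), while at the source of $F_2$ we have $(F_1+F_2)(c_2)=F_1(c_2)\le0$ (using $F_2(c_2)=0$ and $c_2$ in the right tail of $F_1$). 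Strict monotonicity then yields $F_2(c_1)<F_1(c_2)$, contradicting $F_2(c_1)\ge0\ge F_1(c_2)$. Hence $t_1^+\le c_2$, and together with the previous step we obtain $[t_1^-,t_1^+]\subseteq[t_2^-,c_2]$, which is precisely the tail of $F_2$ containing $c_1$.

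The routine-but-necessary points to verify are that Corollaries~\ref{ptintail} and~\ref{increasing} genuinely apply here — in particular that the relevant intersection of half-tails really is $[c_1,c_2]$ and not a single point, which is exactly where the strict inequality $c_1<c_2$ enters — and the degenerate possibilities in which one source sits at $\pm\infty$ (there the corresponding tail is a half-line, one uses the limit of the force at infinity as in Section~\ref{s32}, and the inequality $t_1^+\le c_2$ or its mirror image is automatic or immediate). I expect the only real subtlety is the bookkeeping of which tail is which, rather than any new estimate.
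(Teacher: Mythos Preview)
Your proof is correct and follows essentially the same approach as the paper. The paper works with the symmetric case (assuming $c_2$ lies in the right tail of $F_1$ rather than $c_1$ in the left tail of $F_2$) and organizes the argument slightly differently: it first isolates your contradiction step as a standalone observation (``both sources cannot be covered by each other's tails''), then uses this for the inner inequality and Corollary~\ref{ptintail} for the outer inequality, exactly as you do.
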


\begin{proof} First, we note that both sources of two forces cannot be covered 
by the tails of each other. Indeed, if this occurs, the sum $F_1+F_2$ would be non-negative
at the left end of the segment $[c_1,c_2]$ ($F_1(c_1)=0$, $F_2(c_1)\ge0$) and non-positive
at its right end ($F_1(c_2)\le0$, $F_2(c_2)=0$). But since $F_1+F_2$ is strictly increasing on $[c_1,c_2]$ (see Corollary~\ref{increasing}), 
this is impossible.

Assume that $c_2$ lies in the right tail of $F_1$. We have to check that both 
tails of $F_2$ are in the right tail of $F_1$, i.e.
$[t_2^-,t_2^+]\subset[c_1,t_1^+]$. We have just proved that $c_1<t_2^-$. 
The second inequality $t_1^+>t_2^+$ is contained in Corollary~\ref{ptintail}.
The case where $c_1$ is in the left tail of $F_2$, can be treated similarly.
\end{proof} 

\begin{Le}
\label{balance2}
If two forces are balanced\textup, then the source of one of them cannot lie in a tail of another one.
\end{Le}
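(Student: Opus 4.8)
The plan is to argue by contradiction, combining the structural information of Lemma~\ref{metia} with the strict monotonicity of the sum of two neighbouring forces on the overlap of their tails (Corollary~\ref{increasing}).

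Let $F_1$ and $F_2$ be balanced forces with sources $c_1<c_2$ and screen sizes $\ell_1,\ell_2$, and let $v$ be their balance point, so that $c_1<v<c_2$, the point $v$ lies in the intersection of a tail of $F_1$ and a tail of $F_2$, and $F_1(v,\ell_1)+F_2(v,\ell_2)=0$. Since $v>c_1$ the relevant tail of $F_1$ is its right tail, and since $v<c_2$ the relevant tail of $F_2$ is its left tail. Suppose, towards a contradiction, that the source of one of the forces lies in a tail of the other. There are two cases, $c_2$ lying in a tail of $F_1$ and $c_1$ lying in a tail of $F_2$; they are interchanged by the reflection $u\mapsto-u$ (which preserves the type of a source and swaps left and right tails), so I would treat only the first. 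As $c_2>c_1$, the only tail of $F_1$ that can contain $c_2$ is its right tail, hence $c_1<c_2\le t_1^+$.

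Next I would apply Lemma~\ref{metia}: since the source $c_2$ of $F_2$ lies in the right tail of $F_1$, both tails of $F_2$ are contained in that right tail, $[t_2^-,t_2^+]\subset[c_1,t_1^+]$; in particular $c_1<t_2^-$. Consequently the left tail $[t_2^-,c_2]$ of $F_2$ lies entirely inside the right tail $[c_1,t_1^+]$ of $F_1$, so on the whole interval $[t_2^-,c_2]$ the running point is simultaneously in the right tail of $F_1$ and in the left tail of $F_2$, and Corollary~\ref{increasing} gives that $F_1+F_2$ is strictly increasing on $[t_2^-,c_2]$. The balance point satisfies $v\in[t_2^-,c_2)$, whence $0=(F_1+F_2)(v)<(F_1+F_2)(c_2)$. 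On the other hand $F_2(c_2,\ell_2)=0$ (it is the source of $F_2$) and $F_1\le0$ throughout its right tail, to which $c_2$ belongs, so $(F_1+F_2)(c_2)=F_1(c_2,\ell_1)\le0$. This yields $0<(F_1+F_2)(c_2)\le0$, the desired contradiction, and the proof is complete.

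The only real care needed is bookkeeping: one must keep track of which of the four tails is being used at each step and verify that the interval on which Corollary~\ref{increasing} is invoked genuinely lies inside both the right tail of $F_1$ and the left tail of $F_2$. The borderline situations — a source at $\pm\infty$, or $c_2=t_1^+$ so that $F_1(c_2,\ell_1)=0$ — need no separate treatment, since in each of them the inequalities above only become more favourable; it is worth a one-line remark noting that they are covered.
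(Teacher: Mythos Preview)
Your argument is correct and follows essentially the same route as the paper: evaluate $F_1+F_2$ at one of the sources (you use $c_2$, the paper uses $c_1$ or $c_2$ depending on the case), observe it has the ``wrong'' sign there, and then invoke the strict monotonicity of Corollary~\ref{increasing} to rule out a root strictly between the sources. The only difference is that you pass through Lemma~\ref{metia} to guarantee that the whole interval $[t_2^-,c_2]$ lies in the right tail of $F_1$; the paper skips this detour by working directly on $[v,c_2]$ (both endpoints are in both tails, hence so is the interval), which makes the appeal to Lemma~\ref{metia} unnecessary. Your symmetry remark via $u\mapsto -u$ is fine in spirit but would be cleaner as ``the other case is analogous'', since the reflection does not literally send the present setup to itself.
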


\begin{proof}
Let $c_1<c_2$. If we assume that $c_1\ge t_2^-$, then
$$
	F_1(c_1)+F_2(c_1)=F_2(c_1)\ge0.
$$ 
If $c_2\le t_1^+$, then
$$
	F_1(c_2)+F_2(c_2)=F_1(c_2)\le0.
$$ In any case, the sum $F_1+F_2$ 
cannot have a root on
$(c_1,c_2)$, i.e. the forces $F_1$ and $F_2$ cannot be balanced.
\end{proof}
\subsection{Algorithm}\label{Alg}

\paragraph{Cleaning.}
Consider some collection of points $\{c_k\}_{k=0}^N$ and forces $\{F_k\}$ generated by these points. Then we can remove from the collection those points $c_k$ that lie in a tail of some force function $F_j$, $j\ne k$. We call such an operation \emph{the cleaning}. We denote 
the set $\{c_{k_j}\}_{j=0}^m$ of points that remain after the cleaning by $\{c_{k}\}_{k=0}^N$, though the number $N$ may have changed. 
What is more, the symbol $c_k$ may denote another point after cleaning.

The union of forces' tails cannot become smaller after the cleaning. Indeed, the cleaning removes only those forces whose tails are contained entirely in a tail of some other force.

\paragraph{Compression.}

Let $\{F_k\}$ be a balanced collection of forces. Suppose some~$u_{j+1}$ --- the balance point of $F_j$ and $F_{j+1}$~--- got into the screen of $F_j$. We generate a new collection of forces by the following rule. First, we reduce the screen of $F_j$ in such a way that $u_{j+1}$ becomes the right end of this screen. The point $u_{j+1}$ remains to be a balance point of newly defined $F_j$ and old $F_{j+1}$. The reduction of the screen enlarges the tails of $F_j$, so they could cover some neighbor points $c_k$. Then we have to make the cleaning. The procedure just described is called 
\emph{the right compression}. A similar procedure (the decreasing of  $\ell_{j+1}$ and the cleaning), where $u_{j+1}$ gets into the screen of $F_{j+1}$, is called \emph{the left compression}. We note that the left compression can change the structure of the force collection only on the right of $u_{j+1}$, and the right compression does not change the structure of the forces on the right of $u_{j+1}$.

Indeed, consider the right compression. The new tail of the force cannot reach the point $c_{j+1}$ (see Lemma~\ref{balance2}), because the forces $F_j$ and $F_{j+1}$ are still balanced. So, all the forces on the right of $u_{j+1}$ remain the same. But what can happen on the left? Nothing can happen provided $j = 0$: either $c_0 = -\infty$ and there is nothing on the left, or the point $c_0$ is the last point and its left tail still reaches $-\infty$. But if $j>0$, the numeration of the remaining forces could change. Assume that the former point $c_j$ got a number $i$, $i \leq j$, after the compression. The balance point of the forces $F_i$ and $F_{i-1}$ could move only to the left, because the new force $F_i$ is not less than the old one (either by Lemma~\ref{preobrazovanie} if there was no cleaning, or by Corollary~\ref{ptintail} if the cleaning was performed). Consequently, the only new balance point that could  get inside a screen, is the point in the right screen of $F_{i-1}$. Thus, the new balance points cannot get into the left screens after the right compression. The only point that can get into the right screen, lies on the left of the compressed screen.

The situation is symmetric for the left compression. All the changes occur on the right of the screen being compressed. What is more, the only screen that can get a new balance point is the left screen of the first newly defined force on the right of the screen being compressed.

\paragraph{The whole algorithm }\hskip-8pt consists of a series of left compressions beginning from $F_1$ and going to the right, and the right compressions being performed from right to left. Of course, we can change the order of the left and the right compressions. We note, that in fact we begin not from the leftmost and rightmost forces, because there are no balance points both on the left of $c_0$ and on the right of $c_N$.  Indeed, either $c_{0}=-\infty$, or the left tail of $F_0$ fills  the ray $(-\infty,c_0]$. Similarly, either $c_{N}=+\infty$, or the right tail of $F_N$ fills the ray $[c_N,+\infty$).

Our algorithm begins with the cleaning of the family $\{F_k(u,2\eps)\}_{k=0}^N$. The tails of neighbor forces have non-empty intersection, because $[c_{j-1},v_j]$ lies in the tail of $F_{j-1}$, and $[v_j,c_j]$ lies in the tail of $F_j$. This property persists after the cleaning, and by Lemma~\ref{balance} we got a balanced family of forces. 

Thus, in order to prove that the algorithm provides a system balanced completely, it remains to verify that there are no balance points inside the screens. Indeed, one of the ends of the small screens (those that are smaller than $2\eps$) coincides with a balance point. Each small screen was compressed, so a balance point arrived at one of its ends. All the points that lied inside the left screens were sent to the boundary of their screens, as we performed the left compressions. Hence, we removed all the balance points from the left screens with the left compressions, what is more, this procedure did not send any balance points into the right screens. Similarly, the passage from right to left (execution of the right compressions) removed the balance points from the right screens and did not change the situation inside the left ones. So there are no balance points inside the screens, and we are done.

The only thing we have to mention is that the union of tails of the achieved collection coincides with the whole real line. This is 
a consequence of the fact that all the tails
of the initial family cover the whole line, and both the cleaning and the compression do not reduce this cover.

\subsection{Examples}
\paragraph{Example 8. A fifth-degree polynomial.}\label{ex8}

As usual, it is the third derivative that mainly influences the geometry of
extremals. So, we have to choose essential parameters in the formula for 
the third derivative, in order to deal with more pleasant expressions 
throughout our computations. Using Remarks~\ref{rem2} and~\ref{remm2}, 
we can investigate only the case 
$$
f=\frac1{60}t^5-\frac{d}{6}t^3.
$$
The easiest case appears when $d \leq 0$. Then we have $f''' \geq 0$, so the whole
parabolic strip is foliated by the left tangents, due to Corollary~\ref{cor1}. 
So we assume that $d > 0$. Therefore, the function~$f'''$ has two roots: 
\begin{equation}\label{240}
u_{\pm}= \pm\sqrt{d}.  
\end{equation}
In our case, the function~$f'''$ is positive on $(-\infty, u_-) \cup(u_+,+\infty)$ 
and negative on $(u_-,u_+)$. Thus, $c_0=u_-$, $v_1=u_+$, and $c_1=+\infty$.

Consider the force originated at $+\infty$:
\begin{equation}
\label{ml}
F_1(u)=e^{u/\eps}\int\limits_{u}^{+\infty}f'''(t)e^{-t/\eps}dt
=u^2\eps+2u\eps^2+2\eps^3-d\eps.
\end{equation}
By Theorem~\ref{T1b}, the whole domain $\Omega_{\eps}$ is foliated by the 
left tangents provided this value is non-positive everywhere, i.e. the left
tail of $F_1$ covers the entire real axis. It is clear that
\begin{equation}
F_1(u)\geq 0\qquad\Longleftrightarrow\qquad \eps\geq\sqrt{d}
\label{negeometria}.
\end{equation}
The last inequality can be easily reformulated in terms of the distance 
between the roots:

\begin{equation}
u_{+}-u_{-}\leq 2\eps. \label{geometria}
\end{equation}
So, if condition~(\ref{negeometria}) is fulfilled, then the whole parabolic 
strip is foliated by the left tangents, i.e. $\Bell=\Blt$. Now, we 
suppose
\begin{equation}\label{smalleps}
\eps < \sqrt{d}.
\end{equation}
It is worth mentioning that this condition is equivalent to $u_{+}-u_{-}> 2\eps$. 
In other words, $f \in \mathfrak{W}^{1}_{\eps}$ for every $\eps$. We can expect either
$\Bell=\Bltlt$ or $\Bell=B^{\mathrm{LRL}}$, because the case $\Bell=\Blt$ has
already been excluded. Therefore, we have to balance two forces: $F_0$ originated at 
$c_0=-\sqrt{d}$ and $F_1$ originated at $c_{1}=+\infty$. There are two options:
the balance point is either inside the screen of $F_0$ or outside it. In the first case
we have to choose $\ell$, $\ell\le2\eps$, (the size of the screen) such that the balance 
point $v$ coincides with the right end $b(\ell)$ of the screen, and $\Bell=\Bltlt$.
In this situation, the balance equation~(\ref{balance_eq}) is
\begin{equation}
\label{inside}
F_0(v,\ell(v))+F_1(v)=0\,.
\end{equation}
In the second case, the function~$F_0$ has $2\eps$-screen and the balance equation is
\label{outside}
\begin{equation}
F_0(v,2\eps)+F_1(v)=0\,.
\end{equation}

The function~$F_1$ is given by~\eqref{ml}. In order to avoid unnecessary computation, we do not write down 
the general expression for $F_0$. We have to calculate this function at the end of the right screen
for $\ell\in(0,2\eps]$ and in the right tail off the screen for $\ell=2\eps$. In any case, we need 
the expression for the right differential $\Drt$.

For this purpose, we write down equation~(\ref{urlun}) for the cup with origin $c_0=-\sqrt{d}$. 
After that, we express the left end of the cup in terms of the right one, i.e. find the function
$\tilde a$, and after that find the relation between all the parameters of the cup: $a$, $b$,
and $\ell$.

In our case, the cup equation
$$
\frac{f(a)-f(b)}{a-b} - \frac{f'(a)+f'(b)}{2}=0
$$
turns into 
$$
\frac{1}{120}(a-b)^{2} (3a^2+4ab+3b^2-10d)=0.
$$
Since $a \neq b$, we have two possible solutions:
$$
a_{\pm}(b) = \frac{-2b\pm\sqrt{30d-5b^2}}3.
$$
To satisfy the initial condition $\tilde a (u_{-})=u_{-}$, we have to choose $\tilde a = a_-$, i.e. 
\begin{equation}\label{levykonec}
\tilde a (b)\df \frac{-2b-\sqrt{30d-5b^2}}3.
\end{equation}
Solving the equation $b-\tilde a (b) =\ell$, we get
\begin{equation}\label{rtend}
b(\ell) = \frac12\ell-\sqrt{d-\frac1{20}\ell^2}.
\end{equation}
We have chosen the minus sign for the square root, because the chord must shrink to the 
origin of the cup  as $\ell\to 0$.
Let $b_*$ denote the right end of the $2\eps$-screen, then
\begin{equation}\label{zvezda}
b_* = \eps-\sqrt{d-\frac15\eps^2}.
\end{equation}
So we have to look for a solution of the equation
$$
F(v)=0\,,
$$
where
\begin{equation}\label{F}
F(v) =
\begin{cases}
F_0(v,\ell(v))+F_1(v)\,,\quad& -\sqrt d\le v\le b_*\,;
\\
F_0(v,2\eps)+F_1(v)\,,\quad& \phantom{-}b_* \le v<+\infty\,.
\end{cases}
\end{equation}

Calculating the expression for $\Drt(a,b)$ by definition~(\ref{e334}), we get
\begin{equation}
\label{Drt}
\Drt(a,b)=\frac1{30}(b-a)^2(3b+2a).
\end{equation}
We substitute $b=v$ and $a=\tilde a(v)$ from~(\ref{levykonec}) and see that
$$
F_0(v)=\frac1{405}\left[100v^3-225vd-(30d-5v^2)^{3/2}\right]
$$
for $v\le b_*$.

Thus, we have 
\begin{equation}
\label{phiminus}
F(v) =\frac1{405}\left[100v^3-225vd-(30d-5v^2)^{3/2}\right]+\eps\left[(v+\eps)^2+\eps^2-d\right]
\end{equation}
for $v\in[u_-,b_*]$.

Now, we suppose ${v\in [b_*,+\infty)}$.
Substituting $a_*=b_*-2\eps$ into~\eqref{Drt}, we get
$$
\Drt(a_*,b_*)=\frac{2\eps^{2}}{15}\bigg(\eps-5\sqrt{d-\frac15\eps^2}\;\bigg).
$$
The integral term in $F_0$ is
\begin{align*}
e^{-v/\eps}&\int\limits_{b^{*}}^v f'''(t)e^{t/\eps}\,dt
\\
&=\eps\big[(v-\eps)^2+\eps^2-d\big]-
\eps\big[(b^{*}-\eps)^2+\eps^2-d\big]e^{-(v-b^{*})/\eps}.
\end{align*}
As a result, we have the following formula for $v\in [b_*,+\infty)$:
\begin{equation}
\label{phiplus}
F(v) =-\frac{2\eps^2}3\bigg(\eps+\sqrt{d-\frac15\eps^2}\;\bigg)e^{-(v-b_*)/\eps}
+\left[2v^2+4\eps^2-2d\right]\eps.
\end{equation}

We are also interested in the critical value~$\eps_{*}$ that separates these two cases. 
The function $F(v)$ has a root on $[b^{*},+\infty)$ for $\eps < \eps_{*}$, and this case 
corresponds to~$B^{\mathrm{LRL}}$. For $\eps >\eps_{*}$, there is
a root on $[u_{-},b^{*}]$, and the desired solution is $\Bltlt$. In the boundary case 
$\eps = \eps_{*}$, we get the function $\Bltlt$ with the full cup and an angle adjacent to it. 
We obtain that critical value $\eps_*$ from the equation 
$$
F(b_*)=0.
$$

Although formulas~(\ref{phiplus}) and~(\ref{phiminus}) give the same value at $b_*$, 
it is more convenient to use the first one, because we have already used the identity 
$b_*-a_*=2\eps$. So,
$$
F(b_*) =-\frac{2\eps^2}3\bigg(\eps+\sqrt{d-\frac15\eps^2}\;\bigg)+\left[2b_*^2+4\eps^2-2d\right]\eps.
$$
Substituting~(\ref{zvezda}) for~$b_*$, we get
\begin{equation}
\label{eps-star}
F(b_*) =\frac{2\eps^2}{15}\bigg(37\eps-35\sqrt{d-\frac15\eps^2}\;\bigg).
\end{equation}
Consequently, the desired critical value $\eps_*$ is  
$$
\eps_{*} =\frac{35}{\sqrt{1614}}\sqrt d.
$$

First, we note that $\lim_{v \to +\infty}F(v)= +\infty$. Second, ${F(u_-)=F_1(u_{-})<0}$.
The first claim follows from~(\ref{phiplus}), the second is a consequence of
inequality~(\ref{smalleps}). Indeed, once it is fulfilled,  
the inequality $F_1(-\sqrt d)<0$ is satisfied by virtue of~(\ref{negeometria}).

Thus, the existence of the root on $(u_{-},+\infty)$ is clear. In order to get a more 
precise localization of the root, we transform $F(b_*)$ expressing $d$ in terms of 
$\eps_*$ in~(\ref{eps-star}):
$$
F(b_{*})=\frac{2\eps^2}{15}\bigg(37\eps-35\sqrt{\frac{1614}{35^2}\eps_*^2-\frac15\eps^2}\bigg)
= \frac{1076}5\cdot\frac{\eps^2(\eps^2-\eps_*^2)}{37\eps+\sqrt{1614\eps_*^2-245\eps^2}}.
$$
We see that the last expression is negative for $\eps < \eps_{*}$. Thus, the solution 
$B^{\mathrm{LRL}}$ is realized (i.e. the angle lies on the right of $b_{*}$), because 
the continuous function $F$ must have a root on $(b_{*},+\infty)$. In the case $\eps> \eps_{*}$, 
the solution is $\Bltlt$, because $F$ has a root on $(u_{-},b_{*})$.

We sum up our results in a proposition.

\begin{Prop}
Suppose $f(t) = t^{5} + pt^{4}+qt^{3}+P_{2}(t)$ is  a fifth-degree polynomial\textup, where 
$P_{2}$ is an arbitrary quadratic polynomial.
Then an analytic expression for the Bellman function~\textup{(\ref{e11})} is determined by 
the value $\frac{d}{\eps^2}$\textup, $d=\frac{p^2}{25}-\frac{q}{10}$\textup, as follows.
\begin{itemize}
 \item If  $\frac{d}{\eps^2} \leq 1$\textup, then Theorem~\textup{\ref{T1b}} 
 works\textup: $\Omega_{\eps}$ is fully foliated by left tangents\textup,  
 $\Bell(x) = \Blt(x;\,-\infty, +\infty)$.

\item If $1< \frac{d}{\eps^{2}}\leq \frac{1614}{1225}$\textup, then
$\Bell(x)=\Bltlt(x;\,-\infty,[\tilde a (v),v],+\infty)$ \textup(see Proposition~\textup{\ref{S16})}\textup, 
where $v$ is the root of~\textup{(\ref{phiminus})} and $a(v)$ is defined
 by~\textup{(\ref{levykonec})}. A single left trolleybus is realized in $\Omega_{\eps}$.   

\item  If $\frac{1614}{1225}<\frac{d}{\eps^{2}}$\textup, then $\Bell(x)=B^{\mathrm{LRL}}(x)$.
There is a full cup with the origin at~$u_{-}$ \textup{(}where $u_{-}$ is the left root of~$f'''$\textup{)}  
and a separated angle with the vertex at the root of~\textup{(\ref{phiplus})}.
\end{itemize}
\end{Prop}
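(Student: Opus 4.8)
The plan is to reduce the five-parameter family to a one-parameter normal form using the invariances of the Bellman function, read off the sign pattern of $f'''$, and then either apply Theorem~\ref{T1b} directly or run the balancing algorithm of Section~\ref{Alg} with the (only) two force functions that occur, classifying the outcome by where the balance point sits relative to a full $2\eps$-screen.

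\emph{Reduction.} Shifting $t\mapsto t-p/5$ (Remark~\ref{remm2}, with $\alpha=1$, so $\eps$ is unchanged) kills the quartic term and turns $f$ into $t^5+(q-\tfrac{2p^2}{5})t^3$ plus a quadratic polynomial; by Remark~\ref{rem2} the quadratic part only adds an affine function of $x$ to $\Bell$ and may be discarded, and the remaining degree-five term may be rescaled. Hence it suffices to treat $f=\tfrac1{60}t^5-\tfrac d6 t^3$, and matching the $t^3$-coefficients gives $d=\tfrac{p^2}{25}-\tfrac q{10}$. Since Remark~\ref{remm2} with $\alpha=\lambda$ sends $(\eps,d)\mapsto(\lambda\eps,\lambda^2 d)$ up to a coordinate change, the answer depends only on $d/\eps^2$; equivalently, all the conditions below ($d\lessgtr\eps^2$, $\eps\lessgtr\eps_*$) will be manifestly homogeneous of degree zero in $(\eps,d)$. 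For this $f$ one has $f'''(t)=t^2-d$. If $d\le 0$ then $f'''\ge0$ and Corollary~\ref{cor1} gives $\Bell=\Blt(x;-\infty,+\infty)$.

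\emph{The flat regime $0<d\le\eps^2$.} The force originating at $+\infty$ is $F_1(u)=e^{u/\eps}\int_u^{+\infty}(t^2-d)e^{-t/\eps}\,dt=\eps\big((u+\eps)^2+\eps^2-d\big)$, whose minimum over $u$ (attained at $u=-\eps$) equals $\eps(\eps^2-d)\ge0$. Thus $\int_u^{\infty}f'''(t)e^{-t/\eps}\,dt\ge0$ for all $u$, Theorem~\ref{T1b} applies, and $\Bell=\Blt(x;-\infty,+\infty)$. Together with the $d\le0$ case this gives case~(i), $d/\eps^2\le1$.

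\emph{The regime $d>\eps^2$.} Here $f'''$ has simple roots $u_\pm=\pm\sqrt d$, is positive outside and negative inside $[u_-,u_+]$, so $f\in\W^{1}$ with $c_0=u_-$, $v_1=u_+$, $c_1=+\infty$ (the separation $u_+-u_-=2\sqrt d>2\eps$ lets one pick an admissible $\eps_0\in(\eps,\sqrt d]$). By Theorem~\ref{T4}, $\Bell=B^\Sigma$ for a signature $\Sigma$ produced by a completely balanced family of forces; since $c_0$ is the only finite point where $f'''$ changes sign (from $+$ to $-$) and $c_1=+\infty$, that family consists of exactly two forces, $F_1$ above and the force $F_0$ with source $c_0=-\sqrt d$, so the task is to pin down $\Sigma$. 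I would make $F_0$ explicit along the lines of Example~8: the cup equation~\eqref{urlun} for $f$ reduces to $3a^2+4ab+3b^2-10d=0$, the branch with $\tilde a(u_-)=u_-$ is $\tilde a(b)=\tfrac13(-2b-\sqrt{30d-5b^2})$, whence $\ell=b-\tilde a(b)$ inverts to $b(\ell)=\tfrac\ell2-\sqrt{d-\tfrac{\ell^2}{20}}$ with $b_*:=b(2\eps)=\eps-\sqrt{d-\tfrac{\eps^2}5}$, and $\Drt(a,b)=\tfrac1{30}(b-a)^2(3b+2a)$ by~\eqref{e334}. Feeding these into the on-screen value $D_0$ of the force and into the integral formula for the part of $F_0$ off its screen yields the continuous, piecewise-defined balance function $F(v)=F_0(v,\ell(v))+F_1(v)$ on $[u_-,b_*]$ and $F(v)=F_0(v,2\eps)+F_1(v)$ on $[b_*,+\infty)$, i.e. exactly the expressions displayed in Example~8.

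\emph{Locating the balance point and concluding.} We have $F(u_-)=F_1(u_-)=2\eps^2(\eps-\sqrt d)<0$ and $F(v)\to+\infty$ as $v\to+\infty$, so $F$ has a root; it is the unique one, since $F_0+F_1$ is strictly increasing on the overlap of the right tail of $F_0$ with the left tail of $F_1$ (Corollaries~\ref{increasing} and~\ref{balance}), where any root must lie. Whether this root lies inside or outside the $2\eps$-screen of $F_0$ is decided by $\operatorname{sign}F(b_*)$, and substituting $b_*$ gives $F(b_*)=\tfrac{2\eps^2}{15}\big(37\eps-35\sqrt{d-\tfrac{\eps^2}5}\big)$, which one rewrites (multiplying through by the conjugate) as a positive multiple of $\eps^2(\eps^2-\eps_*^2)$ with $\eps_*=\tfrac{35}{\sqrt{1614}}\sqrt d$, so that $d/\eps_*^2=\tfrac{1614}{1225}$. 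For $1<d/\eps^2<\tfrac{1614}{1225}$ (i.e. $\eps>\eps_*$) we get $F(b_*)>0$, the root $v$ lies in $(u_-,b_*)$, the screen of $F_0$ is compressed to size $\ell(v)$ with right end $v$, the signature is $\mathrm{LL}$, and $\Bell=\Bltlt(x;-\infty,[\tilde a(v),v],+\infty)$ by Proposition~\ref{S16}; for $d/\eps^2>\tfrac{1614}{1225}$ (i.e. $\eps<\eps_*$) we get $F(b_*)<0$, the root lies in $(b_*,+\infty)$ off a full $2\eps$-screen, the signature is $\mathrm{LRL}$, and $\Bell=B^{\mathrm{LRL}}$ with a full cup at $u_-$ and a detached angle at that root; the borderline $d/\eps^2=\tfrac{1614}{1225}$ is the degenerate trolleybus (full cup with an adjacent angle), which belongs to the $\Bltlt$ family. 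The main obstacle is the bookkeeping in the third paragraph — solving the cup equation, selecting the right square-root branch, deriving the screen-reduction formula $b(\ell)$ and the closed form of the piecewise $F$, and confirming that the completely balanced configuration really involves just these two forces with no point surviving the cleaning that would enlarge $\Sigma$; once $F$ and $b_*$ are in hand, the trichotomy is a one-line sign analysis of $F(b_*)$.
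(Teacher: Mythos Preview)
Your proof is correct and follows essentially the same approach as the paper's own argument in Example~8: the reduction via Remarks~\ref{rem2} and~\ref{remm2} to the normal form $\tfrac{1}{60}t^5-\tfrac{d}{6}t^3$, the computation of $F_1$ for the $d\le\eps^2$ regime, the explicit solution of the cup equation and the piecewise balance function $F$ for $d>\eps^2$, and the sign analysis of $F(b_*)$ yielding the threshold $\eps_*=\tfrac{35}{\sqrt{1614}}\sqrt d$ all match the paper step for step. The only additions you make are the explicit value $F_1(u_-)=2\eps^2(\eps-\sqrt d)$ and the homogeneity remark that the answer depends only on $d/\eps^2$, both of which are correct and slightly sharpen the exposition.
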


\begin{Prop}
Suppose $f(t) = -t^{5} + pt^{4}-qt^{3}+P_{2}(t)$ is a fifth-degree polynomial\textup, where 
$P_{2}$ is an arbitrary quadratic polynomial.
Then an analytic expression for the Bellman function~\textup{(\ref{e11})} is determined by the value 
$\frac{d}{\eps^2}$\textup, where $d=\frac{p^2}{25}-\frac{q}{10}$.
In this case\textup, the extremals of $\Bell$ are symmetric to the extremals for 
$t^{5} + pt^{4}+qt^{3}$ with respect to the $y$-axis.
\begin{itemize}
 \item If $\frac{d}{\eps^{2}} \leq 1$\textup, then $\Bell(x) = \Brt(x;\,-\infty, +\infty)$.

\item  If  $1< \frac{d}{\eps^{2}}\leq \frac{1614}{1225}$\textup, then
 $\Bell(x)=\Brtrt(x;\,-\infty,[v,\tilde b (v)],+\infty)$.

\item If $\frac{1614}{1225}<\frac{d}{\eps^{2}}$\textup, then $\Bell(x)=B^{\mathrm{RLR}}(x)$.

\end{itemize}
\end{Prop}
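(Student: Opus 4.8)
The plan is to deduce this proposition from the previous one by the reflection symmetry of the extremal problem, captured by Remarks~\ref{rem2} and~\ref{remm2}. I would first apply Remark~\ref{remm2} with $\alpha=-1$, $\beta=0$: for any admissible boundary function $h$ it gives $\Bell_\eps(x_1,x_2;\,h(-t))=\Bell_\eps(-x_1,x_2;\,h(t))$, so taking $h(t)=f(-t)$ yields
\[
\Bell_\eps(x_1,x_2;\,f(t))=\Bell_\eps(-x_1,x_2;\,g(t)),\qquad g(t):=f(-t).
\]
A direct computation gives $g(t)=-(-t)^5+p(-t)^4-q(-t)^3+P_2(-t)=t^5+pt^4+qt^3+P_2(-t)$, which is precisely a quintic of the form handled in the preceding proposition, with the same $p$, the same $q$, and the (still arbitrary) quadratic $P_2(-t)$ in place of $P_2$. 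In particular $g$ has positive leading coefficient, hence, when $d>0$, $g\in\W^{1}$ with $c_0,v_1$ finite and $c_1=+\infty$, exactly as in Example~8. Since the governing quantity $d=\frac{p^2}{25}-\frac{q}{10}$ depends only on $p$ and $q$ it is unchanged, and by Remark~\ref{rem2} passing from $P_2$ to $P_2(-\cdot)$ only adds an affine function of $(x_1,x_2)$ to the Bellman function, so it does not affect the extremal picture. Therefore the preceding proposition applies to $g$ verbatim and produces $\Blt(\,\cdot\,;-\infty,+\infty)$, $\Bltlt(\,\cdot\,;-\infty,[\tilde a(v),v],+\infty)$, or $B^{\mathrm{LRL}}$ according to whether $d/\eps^2\le1$, $1<d/\eps^2\le\frac{1614}{1225}$, or $\frac{1614}{1225}<d/\eps^2$.

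Next I would transport these answers for $g$ back through the substitution $x_1\mapsto-x_1$. The geometric point is that $(x_1,x_2)\mapsto(-x_1,x_2)$ is an affine automorphism of $\Omega_\eps$ that interchanges the right tangents~(R) with the left tangents~(L): it sends a tangency point $(w,w^2+\eps^2)$ to its mirror image and reverses orientation along the lower parabola, and indeed $u_{\mathrm R}(-x_1,x_2)=-u_{\mathrm L}(x_1,x_2)$ by~(\ref{e32})--(\ref{e33}). Consequently it carries cups, angles, and right/left trolleybuses to cups, angles, and left/right trolleybuses with mirrored data, and a foliation of signature $\Sigma=\sigma_1\cdots\sigma_k$ for $g$ becomes the foliation of the reversed, swapped signature $\bar\sigma_k\cdots\bar\sigma_1$ for $f$ (with $\bar{\mathrm R}=\mathrm L$, $\bar{\mathrm L}=\mathrm R$); in particular a single $\mathrm L$-sheet goes to a single $\mathrm R$-sheet, the left-trolleybus construction $\mathrm{LL}$ to the right-trolleybus construction $\mathrm{RR}$, and $\mathrm{LRL}$ to $\mathrm{RLR}$ --- the three cases asserted, which is also why the extremals of $\Bell(\,\cdot\,;f)$ are the mirror images across the $x_2$-axis of those for $t^5+pt^4+qt^3$. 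To turn this into an identity of the named candidates I would match the explicit formulas: since $g^{(r)}(-u)=(-1)^{r}f^{(r)}(u)$, the change of variables $t\mapsto-t$ in the integral~(\ref{e317}) gives $\mlt^{[g]}(-u;+\infty)=-\mrt^{[f]}(u;-\infty)$, whence, using $u_{\mathrm L}(-x_1,x_2)=-u_{\mathrm R}(x_1,x_2)$ and $g(-u)=f(u)$,
\[
\Blt^{[g]}(-x_1,x_2;-\infty,+\infty)=\Brt^{[f]}(x;-\infty,+\infty);
\]
the same computation with the cup-anchored coefficients~(\ref{e328}), (\ref{e330}) and with the linear pieces on the cup and on the trolleybus matches $\Bltlt^{[g]}$ with $\Brtrt^{[f]}$ over the mirrored cup, and the reflected $B^{\mathrm{LRL}}$-candidate for $g$ with the $B^{\mathrm{RLR}}$-candidate for $f$. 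Alternatively one avoids all of this by noting that reflection preserves local concavity and transforms the boundary data correctly, so the minimal locally concave competitor for $f$ is the mirror of that for $g$; together with the preceding proposition and the optimizers built in Sections~\ref{s53} and~\ref{s62}, this already pins down $\Bell(\,\cdot\,;f)$.

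Finally I would record the cosmetic relabellings implicit in the statement: in the middle regime the cup $[\tilde a(v),v]$ of $g$, originating at $u_-=-\sqrt d$ in depressed coordinates, reflects to a cup with its two ends swapped, so it is written $[v,\tilde b(v)]$ for $f$, with $\tilde b^{[f]}$ the mirror of $\tilde a^{[g]}$ and the balance point $v$ now to the right; likewise $\Brtrt$ is the ``$\mathrm{RR}$'' mirror of the ``$\mathrm{LL}$'' object $\Bltlt$. No genuinely new estimate is required --- the entire analytic content (the forces $F_0$, $F_1$, the localization of the root, the critical value $\eps_*=\frac{35}{\sqrt{1614}}\sqrt d$) already lives in the preceding proposition --- and the only step that demands care is keeping the reflection dictionary consistent when identifying the transported objects with $\Brt$, $\Brtrt$, $B^{\mathrm{RLR}}$; that bookkeeping (the matching of the $m$-formulas after $t\mapsto-t$) is the single real obstacle, and it is routine.
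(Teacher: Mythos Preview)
Your approach is correct and matches the paper's intent: the paper does not give a separate proof of this proposition but simply asserts that the extremals are mirror images of those for $t^5+pt^4+qt^3$, relying implicitly on the reflection $t\mapsto -t$ (Remark~\ref{remm2}) and the resulting swap $\mathrm{L}\leftrightarrow\mathrm{R}$. Your write-up makes this symmetry argument explicit and verifies the dictionary ($\Blt\!\to\!\Brt$, $\Bltlt\!\to\!\Brtrt$, $B^{\mathrm{LRL}}\!\to\!B^{\mathrm{RLR}}$, $[\tilde a(v),v]\!\to\![v,\tilde b(v)]$), which is exactly what is needed.
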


We set $\rho = 0$ if the equation $f'''=0$ has no solutions. Otherwise, we set ${\rho = |u_{+}-u_{-}|}$, where
$u_{-}$ and $u_{+}$ are the roots of $f'''(t)=0$.
Now we can treat the statement about the fifth-degree polynomial in terms of $\rho$. If  
$\rho$ satisfies the inequality $\rho<2\eps$, then the domain is foliated by the tangents entirely
(their direction is determined by the leading coefficient of the polynomial). Next, if 
$\rho=2\eps$, there appears a zero-length trolleybus in the point where $f'''$ changes its 
sign from $+$ to $-$ (notice that the condition $\rho=2\eps$ makes the segment, connecting 
the roots of the third derivative of the polynomial, to touch the upper parabola). 
Further, in the case  $2\eps<\rho<2\frac{\sqrt{1614}}{35}\eps$,  the trolleybus occurs. 
In the case  $\rho =2\frac{\sqrt{1614}}{35}\eps$, the trolleybus sits on the full cup. If 
$\rho> 2\frac{\sqrt{1614}}{35}\eps$, there is a full cup and an angle separated from it. 

The reader is welcome to watch a series of pictures (see Figure~\ref{Series}), where we fix one root of $f'''$ (e.g.~$u_{-}$), 
while the second one $u_{+}$ runs away from it. The red line denotes the segment connecting 
$U_{-}$ and $U_{+}$.

\begin{figure}[H]
\begin{center}
\includegraphics[scale=0.75]{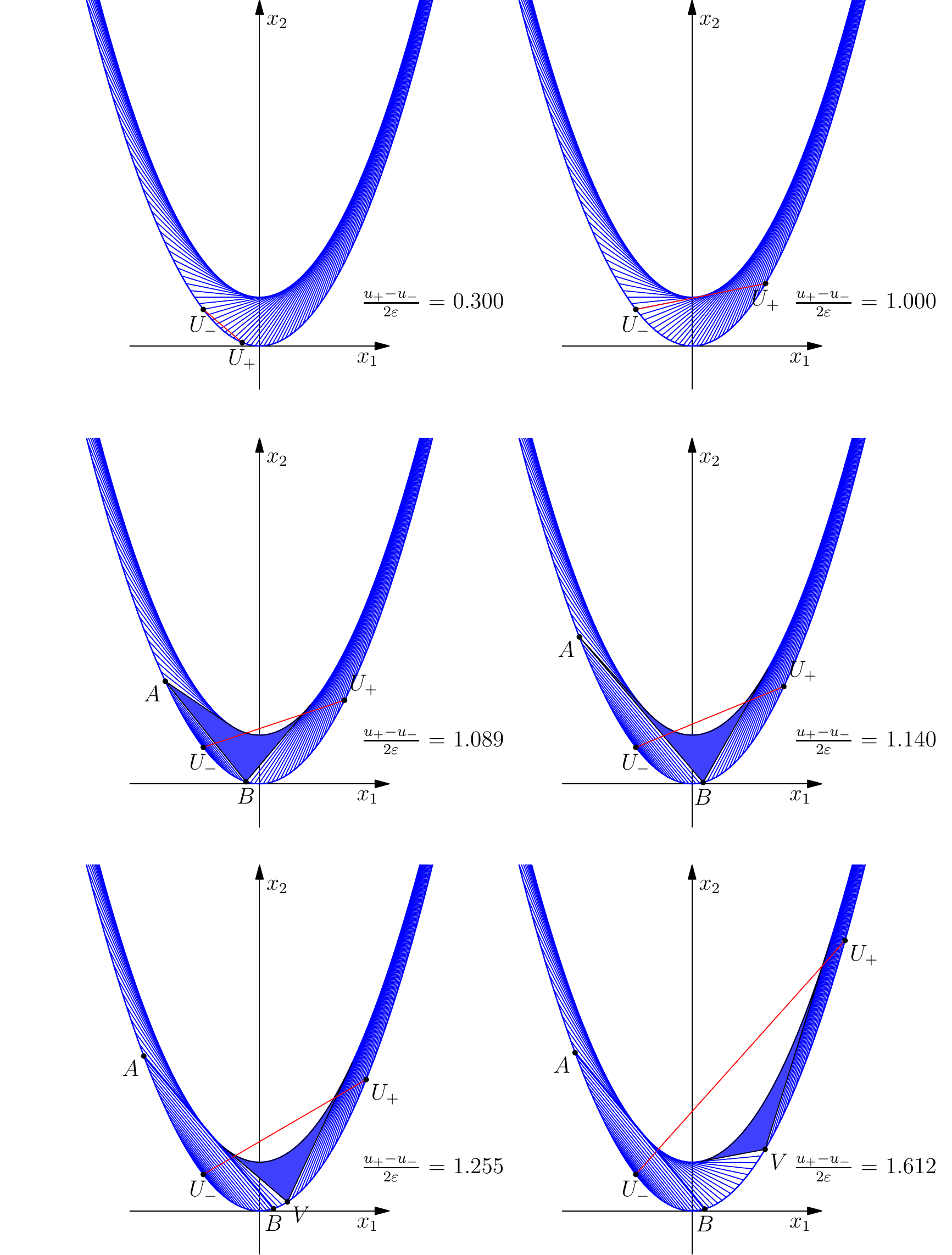}
\caption{Pictures for Example~8.}
\label{Series}
\end{center}
\end{figure}
\section{Conclusion}\label{s7}

We conclude the paper with a brief description of results we achieved and knowledge we acquired while writing it. But first we describe the things we understood from the beginning but did not write down in order to simplify our arguments.

We have not described all the geometric structures of extremals that can occur. To avoid the mixing of the cups, we have assumed that the roots of~$f'''$ are well separated. A figure that arises if two cups meet together is called a multicup. Another new figure is a birdie. Though, it occurs in this text implicitly: it is the union of a non-full cup and two angles adjacent to it from both sides (if there is only one angle, we have a trolleybus). Formally speaking, we can say that we have already considered this figure. Indeed, we can treat it as a union of a trolleybus and an angle; the tangent domain between them has reduced to a single extremal. However, it is more convenient to think of this construction as a figure of some new type. The reason is that it has its own dynamical properties: it can either be stable or break into a trolleybus and an angle.

Now we discuss the dynamics in $\eps$, i.e. we fix a boundary function~$f$ and observe the evolution of the foliation. For $\eps$ small enough, the picture is relatively simple: we have a sequence of alternating cups and angles that sit near the roots of $f'''$. When $\eps$ increases, the cups grow and the angles move from side to side. We know that when some angle meets a cup, they form a trolleybus. But there can be more difficult constructions when several figures meet together (e.g., a birdie or a multicup mentioned above), and we will study their evolutional properties in the forthcoming paper.
What is more, we will provide another algorithm for the calculation of our Bellman function, based on the evolutional approach. It will allow us to calculate the Bellman function for all $\eps$ simultaneously and to find critical values of $\eps$ (we already saw such a value $\eps_{*}$ in the last example) in which the structure of the picture changes.  

Surely, we will abandon the absurd condition $f'''\ne 0$ a.e. and, therefore, ``thick'' roots (intervals where 
$f'''\equiv 0$) will appear.
This will bring a multicup built on a continuum of zero cups. We did not consider this case here in order to avoid multicups.

Also we are going to make the function $f$ less smooth. Some Bellman functions with boundary functions that have jumps are already known. In general, we understand the nature of the subject, though not all the formal proofs are still written down. We have a hypothesis that the geometric picture for non-smooth $f$ can be obtained by passing to the limit in an appropriate sequence of smooth functions. For example, this approach explains the fact that if 
the function~$f$ has a jump at some point, then for all $\eps$ there is a singular cup. Indeed, we have 
$f''' = \delta''$ at that point, so at least two changes of sign are ``compressed'' there. 

Finally, we should mention the study of the Bellman function behavior at the limit value $\eps_0$ of the parameter $\eps$. By this we mean that for all $\eps$ greater than $\eps_0$, the Bellman function is infinite. Surprisingly, the set of $\eps$ for which the Bellman function is finite, can be both closed and open. The case of an open set happens, for example, for the integral John--Nirenberg inequality. We want to achieve the Bellman function not only for all values~$\eps$, $\eps < \eps_0$, but also for $\eps = \eps_0$. 

To end up the conclusion, we say a few words about what lies beyond $\BMO$. It is widely known that for the John--Nirenberg inequality and for the reverse H\"older inequality for the $A_p$-weights, the Bellman functions can be
constructed similarly (compare \cite{Va,Va2,SlVa} and \cite{Va3,Va4,rez}). So we will employ the technique designed in
this paper not only for the parabolic strip, but in a much more general setting.

\end{document}